\renewcommand{\tikz}[2]{
\begin{tikzpicture}[scale=#1,baseline=(current bounding box.center),>=stealth]
#2
\end{tikzpicture}}
\colorlet{lgray}{white!85!black}
\numberwithin{equation}{section}
\newtheorem{thm}{Theorem}[section]
\newtheorem{prop}[thm]{Proposition}
\newtheorem{cor}[thm]{Corollary}
\newtheorem{conj}[thm]{Conjecture}
\theoremstyle{remark}
\newtheorem{rmk}[thm]{Remark}
\theoremstyle{definition}
\newtheorem{defn}[thm]{Definition}
\newtheorem{ex}[thm]{Example}
\def\A{\bm{A}}
\def\B{\bm{B}}
\def\C{\bm{C}}
\def\D{\bm{D}}
\def\P{\bm{P}}
\def\V{\bm{V}}
\newcommand{\As}[2]{A_{[#1,#2]}}
\newcommand{\Ap}[2]{A_{(#1,#2]}}
\def\DD{\mathcal{D}}
\def\leq{\leqslant}
\def\geq{\geqslant}
\newcommand{\bra}[1]{\left\langle #1\right|}
\newcommand{\ket}[1]{\left|#1\right\rangle}
\newcommand\fs{\footnotesize}
\begin{document}

\title{Coloured corner processes from asymptotics of LLT polynomials}
\author{Amol Aggarwal, Alexei Borodin and Michael Wheeler}

\begin{abstract}
We consider probability measures arising from the Cauchy summation identity for the LLT (Lascoux--Leclerc--Thibon) symmetric polynomials of rank $n \geq 1$. We study the asymptotic behaviour of these measures as one of the two sets of polynomials in the Cauchy identity stays fixed, while the other one grows to infinity. At $n=1$, this corresponds to an analogous limit of the Schur process, which is known to be given by the Gaussian Unitary Ensemble (GUE) corners process.

Our main result states that, for $n>1$, our measures asymptotically split into two parts: a continuous one and a discrete one. The continuous part is a product of $n$ GUE corners processes; the discrete part is an explicit finite distribution on interlacing $n$-colourings of $n$ interlacing triangles, which has weights that are rational functions in the LLT parameter $q$. The latter distribution has a number of interesting (partly conjectural) combinatorial properties, such as $q$-nonnegativity and enumerative phenomena underlying its support.

Our main tools are two different representations of the LLT polynomials, one as partition functions of a fermionic lattice model of rank $n$, and the other as finite-dimensional contour integrals, which were recently obtained in {\tt arXiv:2012.02376}, {\tt arXiv:2101.01605}. 
\end{abstract}

\maketitle

\setcounter{tocdepth}{1}
\makeatletter
\def\l@subsection{\@tocline{2}{0pt}{2.5pc}{5pc}{}}
\makeatother
\tableofcontents

\section{Introduction}

\subsection{Preface}

The Gaussian Unitary Ensemble (or GUE, for short) is one of the cornerstones of Random Matrix Theory that goes back to Wigner \cite{Wigner65}. It consists of Hermitian matrices $H$ distributed according to the Gaussian measure $P(dH) \sim \exp(-Tr(H^2)) dH$, which is the essentially unique\footnote{Up to shifting and scaling.} distribution on this set that satisfies two natural conditions: (a) It is invariant under any unitary conjugation; and (b) Linearly independent real and imaginary parts of matrix elements are statistically independent, see \cite[Section 2.5]{Mehta}. 

One important feature of the GUE is that it can be viewed as a universal limiting object for discrete probabilistic systems related to representation theory. The first limiting relation of this kind goes back to Kerov \cite{Kerov88} who studied the distribution of symmetry types of tensors in high tensor powers of a finite-dimensional vector space. In this case the limit is described by the distribution of spectra of traceless GUE matrices, and the condition of vanishing trace can be naturally removed by randomizing the number of tensor factors (also known as Poissonization). The result was later rediscovered by 
Tracy--Widom \cite{TracyWidom01} in the first wave of works related to the asymptotics of longest increasing sequences. A somewhat more conceptual way to view this result is that of the quasi-classical limit in representation theory, see \emph{e.g.} Heckman \cite{Heckman82}. 

GUEs of different sizes can be coupled by viewing them as upper-left corners of the same infinite Hermitian matrices. Such measures on infinite Hermitian matrices naturally appear in Asymptotic Representation Theory, see Olshanski--Vershik \cite{OlshanskiVershik}. In the framework of tiling models, thus coupled GUEs were first obtained by Johansson--Nordenstam \cite{JohanssonNordenstam} and Okounkov--Reshetikhin \cite{OkounkovReshetikhin}, and their universality in such contexts was recently shown by Aggarwal--Gorin \cite{AggarwalGorin}. The terms ``GUE minors process" and ``GUE corners process" have both been introduced for the resulting ensemble; we will use the latter one. 

If one translates the problem of analyzing tensor symmetry types to the language of symmetric functions (which in this case represent the characters of both general linear and symmetric groups), then one is looking at probability measures on partitions obtained from summands in the Cauchy summation identity for the Schur symmetric polynomials. There are two sets of Schur polynomials in the game; one of them remains fixed, in correspondence with the fixed dimension of the vector space that is being tensored, while the specialization of the other one is growing in the way corresponding to the growing tensor power\footnote{More exactly, the Poissonization parameter tends to infinity.}. The measures on partitions arising from specializations of this Cauchy identity have been known as \emph{Schur measures} since the work of Okounkov \cite{Okounkov01}.

The goal of the present work is to perform asymptotic analysis in a similar setup, but with the role of the Schur polynomials played instead by {\it LLT symmetric polynomials}. The LLT polynomials were introduced by Lascoux--Leclerc--Thibon in \cite{LLT}; an insightful and easy-to-read account of their first 25 years by Thibon can be found at \cite{Thibon}. Cauchy-type summation identities for the LLT polynomials were later obtained by Lam \cite{Lam}, and the probability measures that we study have weights proportional to the summands of such an identity. 

While the origins of the LLT polynomials were representation theoretic, {\it cf.} \cite{CarreLeclerc}, their most transparent definition is combinatorial --- they are generating functions of ribbon Young tableaux, where monomials in the variables of the polynomials are used to track the weight of the tableaux, and powers of a new parameter $q$ track the so-called \emph{spin} statistics introduced in \cite{LLT}. When $q=1$, the LLT polynomials reduce to products of Schur polynomials, the number of which (also equal to the size of the ribbons) will be called the \emph{rank}; we will denote it by $n$ throughout the paper. Thus, one can think of the LLT polynomials as a higher rank $q$-analogue of (products of) Schur polynomials. 

Neither the combinatorial nor the representation theoretic definitions of the LLT polynomials seem suitable for the asymptotic problem in question. On the other hand, we recently found an integral representation for these polynomials in \cite[Chapter 11]{ABW21}. It is the steepest descent analysis of those integral representations that allowed us to reach our main result. 

The limit that we obtained carried a couple of surprises, the main one being that it splits into a continuous and a discrete part. The continuous part is a direct product of $n$ GUE corners processes. The discrete part is a probability distribution on the (finitely many) ways to colour $n$ interlacing triangular arrays\footnote{These arrays originate from $n$ Gelfand--Tsetlin patterns drawn next to each other.} by $n$ colors so that each color interlaces (an exact definition is below). The latter distribution has a few interesting properties. 

First, its weights can be represented as certain partition functions of a \emph{fermionic lattice model of rank $n$}. The connection is in no way immediate, and it is related to the vertex model representations for the LLT polynomials obtained in \cite{ABW21}, see also Corteel--Gitlin--Keating--Meza \cite{CGKM22}. This vertex model interpretation of the limiting distribution ends up being crucial for our proof. 

Second, these weights, which are \emph{a priori} rational functions of the deformation parameter $q$, appear to be given by polynomials in $q$ with positive integer coefficients divided by a power of the $q$-factorial of $n$. We conjecture that this is always the case, even though we were only able to observe this phenomenon on the few examples we tested on a computer. The combinatorial meaning of the coefficients of the resulting polynomials also remains unclear. See Figure \ref{fig:n=2} in Appendix \ref{sec:app} below for a quick example in rank $2$. 

Third, the size of the support of the distributions, \emph{i.e.}, the number of interlacing $n$-colourings of $n$ triangular arrays appears to be combinatorially interesting. It is easy to compute for $n=1$ and $2$, when it is equal to $1$ and to a simple power of $2$, respectively. However, for $n=3$ it turns out to be equal to the number of 4-colourings of a triangle in the triangular lattice. We originally conjectured this coincidence on the basis of numerics, and it was later proved via an elegant bijective construction by Gaetz--Gao \cite{GaetzGao}. For $n=4$ the numerics suggest a similar relationship with $5$-colourings of squares in the ``king graph'', see Conjecture \ref{conj:a5}, although no proof is currently available. Finally, for $n\geq 5$ we were not able to find similar matchings. 

Recalling the appearance of the GUE corners process in random tilings, it is natural to ask if the limiting object we observed has a meaning in the world of tiling models. We believe it is indeed so, and in particular, the limiting behaviour of the random $n$-tilings of Aztec diamonds introduced by Corteel--Gitlin--Keating \cite{CGK} should have the same limit, as the size of the Aztec diamond tends to infinity, near the tangency points of the ``arctic curve" that bounds the frozen regions. The reason is that these $n$-tilings can be described via a closely related \emph{dual} Cauchy identity for the LLT polynomials. The focus on a tangency point of the arctic curve results in one set of the LLT polynomials within the identity staying fixed, while the specialization of the other one is growing with the size of the domain, much like in the limit that we investigated. We will, however, leave this connection to future studies. 

Let us now describe our results in more detail.

\subsection{Fermionic vertex models, coloured compositions and partition functions}

The vertex models that we consider in this work assign weights to collections of paths drawn on a square grid. Each vertex that is traversed by at least one path produces a weight that depends on the configuration of all the paths that go through it. The total weight for a collection of paths is the product of weights of the vertices that the paths traverse (we assume the normalization in which the weight of an empty vertex is equal to unity).

Each path carries a colour that is a number between $1$ and $n$, where $n\geq 1$ is the rank of the model. Let us first assume that each horizontal edge of the underlying square grid can carry no more than one path, while vertical edges can be occupied by multiple paths of {\it distinct colours}. Thus, the states of the horizontal edges can be encoded by an integer between $0$ and $n$, with $0$ denoting an edge that is not occupied by a path, while the states of the vertical edges can be encoded by $n$-dimensional binary strings which specify whether each colour $\{1,\dots,n\}$ appears (or not) at that edge.

Our paths will always travel upward in the vertical direction, and in the horizontal direction a path can travel rightward or leftward, depending on the specific type of vertices that are used; this choice will always be explicitly stated.

Let us now specify our vertex weights more precisely. In regions of rightward horizontal travel, our vertex weights take the following form:
\begin{align}
\label{generic-L-intro}
\tilde{L}^{(s)}_{x,q}(\A,b;\C,d)
=
\tikz{0.7}{
\node[left] at (-1.5,0) {$x \rightarrow$};
\draw[lgray,line width=1pt,->] (-1,0) -- (1,0);
\draw[lgray,line width=4pt,->] (0,-1) -- (0,1);
\node[left] at (-1,0) {\tiny $b$};\node[right] at (1,0) {\tiny $d$};
\node[below] at (0,-1) {\tiny $\A$};\node[above] at (0,1) {\tiny $\C$};
}
\quad\quad
b,d \in \{0,1,\dots,n\},
\quad
\A,\C \in \{0,1\}^n,
\end{align}
where $\tilde{L}^{(s)}_{x,q}$ is a rational function of three parameters $x,q,s$. Here $x$ is the {\it spectral parameter} associated to a row of the lattice (a different parameter may be used for each row), $q$ is the {\it quantum deformation parameter} (a global parameter that is common to all vertices), and $s$ is the {\it spin parameter}, which arises due to the fact that the vertical line of the vertex is a higher-spin module for the underlying quantized affine Lie algebra $U_q(\widehat{\mathfrak{sl}}(1|n))$. For the explicit form of these weights, see equation \eqref{fund-weights}.

In regions of leftward horizontal travel, our vertex weights are given by
\begin{align}
\label{generic-M-intro}
\tilde{M}^{(s)}_{x,q}(\A,b;\C,d)
=
\tikz{0.7}{
\node[right] at (1.5,0) {$\leftarrow x$};
\draw[lgray,line width=1.5pt,<-] (-1,0) -- (1,0);
\draw[lgray,line width=4pt,->] (0,-1) -- (0,1);
\node[left] at (-1,0) {\tiny $d$};\node[right] at (1,0) {\tiny $b$};
\node[below] at (0,-1) {\tiny $\A$};\node[above] at (0,1) {\tiny $\C$};
}
\quad\quad
b,d \in \{0,1,\dots,n\},
\quad
\A,\C \in \{0,1\}^n,
\end{align}
where $\tilde{M}^{(s)}_{x,q}$ is again a rational function of the three parameters $x,q,s$ defined above. The weights \eqref{generic-L-intro} and \eqref{generic-M-intro} are related via the simple identity
\begin{align}
\label{LM-sym-intro}
\tilde{M}^{(s)}_{x,q}(\A,b;\C,d)
=
\tilde{L}^{(1/s)}_{1/x,1/q}(\A,b;\C,d),
\end{align}
which holds for all $\A,\C \in \{0,1\}^n$ and $b,d \in \{0,1,\dots,n\}$; see equation \eqref{LM-sym}. For full details about the weights \eqref{generic-L-intro} and \eqref{generic-M-intro}, including their Yang--Baxter equations, see Sections \ref{ssec:L}--\ref{ssec:YB}. We note that the fermionic weights \eqref{generic-L-intro} and \eqref{generic-M-intro} appeared previously in \cite{ABW21}, and bosonic counterparts of them date even further back to \cite{BorodinW}.

The partition functions (and ultimately, probability measures) that we consider are all indexed by a set of objects called {\it coloured compositions}:
\begin{defn}[Definition \ref{def:cc} below]
\label{def:cc-intro}
Let $\lambda = (\lambda_1,\dots,\lambda_n)$ be a composition of length $n$. We introduce the set $\mathcal{S}_{\lambda}$ of $\lambda$-coloured compositions as follows:
\begin{align}
\label{lambda-col-intro}
\mathcal{S}_{\lambda}
=
\Big\{ 
\mu 
= 
\Big(
0 \leq \mu^{(1)}_1 < \cdots < \mu^{(1)}_{\lambda_1} \Big|
0 \leq \mu^{(2)}_1 < \cdots < \mu^{(2)}_{\lambda_2} \Big|
\cdots \Big|
0 \leq \mu^{(n)}_1 < \cdots < \mu^{(n)}_{\lambda_n}\Big)
\Big\}.
\end{align}
One may think of the elements of $\mathcal{S}_{\lambda}$ as $n$-tuples $\left(\mu^{(1)},\dots,\mu^{(n)}\right)$ of strict compositions. For each $1 \leq i \leq n$, the superscript of $\mu^{(i)}$ is its {\it colour}, and its length is $\lambda_i$. 
\end{defn}

Our first partition function of interest is denoted $f_{\mu}(\lambda;x_1,\dots,x_m;s)$. This is a (nonsymmetric) rational function in an alphabet  $(x_1,\dots,x_m)$, indexed by a composition $\lambda = (\lambda_1,\dots,\lambda_n)$ satisfying $\sum_{i=1}^{n} \lambda_i = m$, as well as a coloured composition $\mu \in \mathcal{S}_{\lambda}$. Up to an overall multiplicative factor, 
$f_{\mu}(\lambda;x_1,\dots,x_m;s)$ is defined as a partition function using the vertex weights \eqref{generic-L-intro}:
\begin{align}
\label{f-def-intro}
(-s)^{|\mu|}
\cdot
f_{\mu}(\lambda;x_1,\dots,x_m;s)
=
\tikz{0.7}{
\foreach\y in {0,...,5}{
\draw[lgray,line width=1.5pt,->] (1,\y) -- (8,\y);
}
\foreach\x in {2,...,7}{
\draw[lgray,line width=4pt,->] (\x,-1) -- (\x,6);
}
\node[left] at (-0.5,0) {$x_1 \rightarrow$};
\node[left] at (-0.5,2) {$\vdots$};
\node[left] at (-0.5,3) {$\vdots$};
\node[left] at (-0.5,5) {$x_m \rightarrow$};
\node[below] at (7,-1) {$\cdots$};
\node[below] at (6,-1) {$\cdots$};
\node[below] at (5,-1) {$\cdots$};
\node[below] at (4,-1) {\footnotesize$\bm{e}_0$};
\node[below] at (3,-1) {\footnotesize$\bm{e}_0$};
\node[below] at (2,-1) {\footnotesize$\bm{e}_0$};
\node[above] at (7,6) {$\cdots$};
\node[above] at (6,6) {$\cdots$};
\node[above] at (5,6) {$\cdots$};
\node[above] at (4,6) {\footnotesize$\bm{A}(2)$};
\node[above] at (3,6) {\footnotesize$\bm{A}(1)$};
\node[above] at (2,6) {\footnotesize$\bm{A}(0)$};
\node[right] at (8,0) {$0$};
\node[right] at (8,1) {$0$};
\node[right] at (8,2) {$0$};
\node[right] at (8,3) {$0$};
\node[right] at (8,4) {$0$};
\node[right] at (8,5) {$0$};
\node[left] at (1,0) {$1$};
\node[left] at (1.5,0.6) {$\vdots$};
\node[left] at (1,1) {$1$};
\node[left] at (1,2) {$\vdots$};
\node[left] at (1,3) {$\vdots$};
\node[left] at (1,4) {$n$};
\node[left] at (1.5,4.6) {$\vdots$};
\node[left] at (1,5) {$n$};
}
\end{align}
where $\bm{e}_0$ denotes the $n$-dimensional zero vector and $\bm{A}(k) = \sum_{i=1}^{n} \bm{1}_{k \in \mu^{(i)}} \bm{e}_i$ is a binary string that encodes whether $k$ is present (or not) as a part in $\mu^{(i)}$, for all $1 \leq i \leq n$ and $k \geq 0$. A convenient visualization aid is that for each $1 \leq i \leq n$, a collection of $\lambda_i$ paths of colour $i$ enter the partition \eqref{f-def-intro} via its left boundary and travel through the lattice, ultimately exiting via the top of the columns 
$\mu^{(i)}_1 < \cdots < \mu^{(i)}_{\lambda_i}$.

In a similar vein, one may define multivariate (nonsymmetric) rational functions as partition functions constructed from the weights \eqref{generic-M-intro}. We denote these by 
$g_{\mu}(\lambda;x_1,\dots,x_m;s)$, where the specification of $(x_1,\dots,x_m)$, $\lambda$ and 
$\mu \in \mathcal{S}_{\lambda}$ is exactly as above. Up to an overall multiplicative factor, 
$g_{\mu}(\lambda;x_1,\dots,x_m;s)$ is defined as follows:
\begin{align}
\label{g-def-intro}
(-s)^{-|\mu|}
\cdot
g_{\mu}(\lambda;x_1,\dots,x_m;s)
=
\tikz{0.7}{
\foreach\y in {0,...,5}{
\draw[lgray,line width=1.5pt,<-] (1,\y) -- (8,\y);
}
\foreach\x in {2,...,7}{
\draw[lgray,line width=4pt,->] (\x,-1) -- (\x,6);
}
\node[right] at (8.5,0) {$\leftarrow x_1$};
\node[right] at (8.5,2) {$\vdots$};
\node[right] at (8.5,3) {$\vdots$};
\node[right] at (8.5,5) {$\leftarrow x_m$};
\node[above] at (7,6) {$\cdots$};
\node[above] at (6,6) {$\cdots$};
\node[above] at (5,6) {$\cdots$};
\node[above] at (4,6) {\footnotesize$\bm{e}_0$};
\node[above] at (3,6) {\footnotesize$\bm{e}_0$};
\node[above] at (2,6) {\footnotesize$\bm{e}_0$};
\node[below] at (7,-1) {$\cdots$};
\node[below] at (6,-1) {$\cdots$};
\node[below] at (5,-1) {$\cdots$};
\node[below] at (4,-1) {\footnotesize$\bm{A}(2)$};
\node[below] at (3,-1) {\footnotesize$\bm{A}(1)$};
\node[below] at (2,-1) {\footnotesize$\bm{A}(0)$};
\node[right] at (8,0) {$0$};
\node[right] at (8,1) {$0$};
\node[right] at (8,2) {$0$};
\node[right] at (8,3) {$0$};
\node[right] at (8,4) {$0$};
\node[right] at (8,5) {$0$};
\node[left] at (1,0) {$1$};
\node[left] at (1.5,0.6) {$\vdots$};
\node[left] at (1,1) {$1$};
\node[left] at (1,2) {$\vdots$};
\node[left] at (1,3) {$\vdots$};
\node[left] at (1,4) {$n$};
\node[left] at (1.5,4.6) {$\vdots$};
\node[left] at (1,5) {$n$};
}
\end{align}
where (as above) $\bm{A}(k) = \sum_{i=1}^{n} \bm{1}_{k \in \mu^{(i)}} \bm{e}_i$ for all $1 \leq i \leq n$ and $k \geq 0$.

The functions $f_{\mu}(\lambda;x_1,\dots,x_m;s)$ and $g_{\mu}(\lambda;x_1,\dots,x_m;s)$ are also not new; they were introduced in \cite{ABW21}. They have a number of key properties, including exchange relations under the action of Hecke algebra (Section \ref{ssec:hecke}) and antisymmetrization identities (Section \ref{ssec:anti}). They also have meaningful $s=0$ degenerations, when they both reduce to (certain antisymmetrizations of) nonsymmetric Hall--Littlewood polynomials. Moreover, the $s=0$ degenerations of $f_{\mu}(\lambda;x_1,\dots,x_m;s)$ and $g_{\mu}(\lambda;x_1,\dots,x_m;s)$ pair together to provide an integral formula for the LLT polynomials; it is the latter fact that shall be of most interest to us in the current text.

\subsection{Two formulas for LLT polynomials}

In this section we recall two formulas for the LLT polynomials. The first is as partition functions in a fermionic $U_q (\widehat{\mathfrak{sl}}(1|n))$ vertex model, following \cite{CGKM22,ABW21}. The second is as a contour integral, following \cite{ABW21}.

We begin with the partition function representation of the LLT polynomials. To state it, we extend our previous notion of vertex models to the situation where both horizontal and vertical edges may admit multiple paths of distinct colours; as such, every edge of the underlying square grid is now labelled by an $n$-dimensional binary string which specifies whether each colour $\{1,\dots,n\}$ appears (or not) at that edge. For arbitrary binary strings $\A = (A_1,\dots,A_n)$, $\B = (B_1,\dots,B_n)$, $\C = (C_1,\dots,C_n)$, $\D = (D_1,\dots,D_n)$ we then introduce the vertex weights
\begin{align}
\label{LLT-weights-intro}
\tikz{0.7}{
\node[left] at (-1.5,0) {$x \rightarrow $};
\draw[lgray,line width=4pt,->] (-1,0) -- (1,0);
\draw[lgray,line width=4pt,->] (0,-1) -- (0,1);
\node[left] at (-1,0) {\tiny $\B$};\node[right] at (1,0) {\tiny $\D$};
\node[below] at (0,-1) {\tiny $\A$};\node[above] at (0,1) {\tiny $\C$};
}
=
\bm{1}_{\bm{C}+\bm{D} \in \{0,1\}^n}
\cdot
x^{|\D|}
q^{\varphi(\D,\C)+\varphi(\D,\D)}
\qquad
\A,\B,\C,\D \in \{0,1\}^n,
\end{align}
where $|\D| = \sum_{i=1}^{n} D_i$ and $\varphi(\bm{X},\bm{Y}) = \sum_{1 \leq i<j \leq n} X_i Y_j$ for any two vectors $\bm{X},\bm{Y} \in \mathbb{Z}^n$.

Fix a composition $\lambda = (\lambda_1,\dots,\lambda_n)$ and two coloured compositions 
$\mu,\nu \in \mathcal{S}_{\lambda}$. The skew LLT (symmetric) polynomial $\mathbb{G}_{\mu/\nu}(\lambda;x_1,\dots,x_p)$ is given by the following partition function in the model \eqref{LLT-weights-intro}:
\begin{align}
\label{G-pf-fused-intro}
\mathbb{G}_{\mu/\nu}(\lambda;x_1,\dots,x_p)
&=
\tikz{0.75}{
\foreach\y in {1,...,5}{
\draw[lgray,line width=4pt,->] (1,\y) -- (8,\y);
}
\foreach\x in {2,...,7}{
\draw[lgray,line width=4pt,->] (\x,0) -- (\x,6);
}
\node[left] at (0.5,1) {$x_1 \rightarrow$};
\node[left] at (0.5,2) {$x_2 \rightarrow$};
\node[left] at (0.5,3) {$\vdots$};
\node[left] at (0.5,4) {$\vdots$};
\node[left] at (0.5,5) {$x_p \rightarrow$};
\node[above] at (7,6) {$\cdots$};
\node[above] at (6,6) {$\cdots$};
\node[above] at (5,6) {$\cdots$};
\node[above] at (4,6) {\footnotesize$\bm{A}(2)$};
\node[above] at (3,6) {\footnotesize$\bm{A}(1)$};
\node[above] at (2,6) {\footnotesize$\bm{A}(0)$};
\node[below] at (7,0) {$\cdots$};
\node[below] at (6,0) {$\cdots$};
\node[below] at (5,0) {$\cdots$};
\node[below] at (4,0) {\footnotesize$\bm{B}(2)$};
\node[below] at (3,0) {\footnotesize$\bm{B}(1)$};
\node[below] at (2,0) {\footnotesize$\bm{B}(0)$};
\node[right] at (8,1) {$\bm{e}_0$};
\node[right] at (8,2) {$\bm{e}_0$};
\node[right] at (8,3) {$\vdots$};
\node[right] at (8,4) {$\vdots$};
\node[right] at (8,5) {$\bm{e}_0$};
\node[left] at (1,1) {$\bm{e}_0$};
\node[left] at (1,2) {$\bm{e}_0$};
\node[left] at (1,3) {$\vdots$};
\node[left] at (1,4) {$\vdots$};
\node[left] at (1,5) {$\bm{e}_0$};
}
\end{align}
where $\bm{A}(k) = \sum_{i=1}^{n} \bm{1}_{k \in \mu^{(i)}} \bm{e}_i$, $\bm{B}(k) = \sum_{i=1}^{n} \bm{1}_{k \in \nu^{(i)}} \bm{e}_i$ for all $1 \leq i \leq n$ and $k \geq 0$. As with our previous partition functions, there is a simple lattice path interpretation of \eqref{G-pf-fused-intro}: for each $1 \leq i \leq n$, a collection of $\lambda_i$ paths of colour $i$ enter the partition function \eqref{G-pf-fused-intro} via the base of columns $\nu^{(i)}_1 < \cdots < \nu^{(i)}_{\lambda_i}$ and exit at the top of columns $\mu^{(i)}_1 < \cdots < \mu^{(i)}_{\lambda_i}$. As such, \eqref{G-pf-fused-intro} provides a realization of the LLT polynomials in terms of $n$ overlapping ensembles of non-intersecting lattice paths. 

\begin{thm}[Theorem \ref{thm:LLT-int} below]
Fix a composition $\lambda = (\lambda_1,\dots,\lambda_n)$ such that $\sum_{i=1}^{n} \lambda_i = m$, and choose two coloured compositions $\mu,\nu \in \mathcal{S}_{\lambda}$. The LLT polynomials \eqref{G-pf-fused-intro} are given by the following integral expression:
\begin{multline}
\label{LLTint-intro}
\mathbb{G}_{\mu/\nu}(\lambda;x_1,\dots,x_p)
=
\frac{q^{m(m+1)/2}}{(q-1)^{m}}
\cdot
\left( \frac{1}{2\pi{\tt i}} \right)^{m}
\oint_{C_1}
\frac{dy_1}{y_1}
\cdots 
\oint_{C_m}
\frac{dy_m}{y_m}
\\
\times
\prod_{1 \leq i<j \leq m}
\left(
\frac{y_j-y_i}{y_j-q y_i}
\right)
f_{\breve\mu}(1^m;y_1^{-1},\dots,y_m^{-1};0)
g_{\nu}(\lambda;y_1,\dots,y_m;0)
\prod_{i=1}^{p}
\prod_{j=1}^{m}
\frac{1}{1-x_i y_j},
\end{multline}
where the contours $\{C_1,\dots,C_m\}$ are certain $q$-nested contours that all surround the origin; see the discussion at the start of Section \ref{ssec:orthog}. We have also used the notation $1^m = (1,\dots,1)$ (where $1$ appears with multiplicity $m$) and have defined $\breve\mu$ to be the unique element of $\mathcal{S}_{1^m}$ obtained by ordering the parts of $\mu$ in increasing order; see Definition \ref{def:rainbow-rec}.
\end{thm}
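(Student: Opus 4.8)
The plan is to establish the integral formula \eqref{LLTint-intro} by combining the vertex-model definitions of $f$ and $g$ with an orthogonality (or Cauchy-type) relation that collapses a double partition function into a single one. First I would recall, from the constructions of \cite{ABW21}, that the skew LLT polynomial $\mathbb{G}_{\mu/\nu}(\lambda;x_1,\dots,x_p)$ defined by \eqref{G-pf-fused-intro} can be decomposed by slicing the lattice along a horizontal line: the bottom part produces the coloured composition $\nu$ entering at the base, some intermediate coloured composition $\kappa$ at the cut, and the top part carries $\kappa$ up to $\mu$. The weights in \eqref{LLT-weights-intro} are designed so that this gives $\mathbb{G}_{\mu/\nu} = \sum_{\kappa} \mathbb{G}_{\mu/\kappa} \, (\text{transfer})$, which in the end must be expressed through the functions $f_{\breve\mu}(1^m;\cdot;0)$ and $g_\nu(\lambda;\cdot;0)$ that appear on the right-hand side. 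The role of the specialization $s=0$ is exactly that, as noted in the excerpt, $f$ and $g$ degenerate to (antisymmetrizations of) nonsymmetric Hall--Littlewood polynomials, for which a spectral decomposition / orthogonality with respect to a torus inner product with weight $\prod_{i<j}\frac{y_j-y_i}{y_j-qy_i}$ is available.

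The key steps, in order, are: (1) identify the right-hand integrand's two factors with the $s=0$ vertex-model partition functions, using the rank-$n$ fermionic weights \eqref{generic-L-intro}--\eqref{generic-M-intro} and the symmetry \eqref{LM-sym-intro} to pass between the leftward ($g$) and rightward ($f$) conventions, noting that the inversion $y_j \mapsto y_j^{-1}$ inside $f_{\breve\mu}$ accounts for this; (2) insert the geometric-series factor $\prod_{i,j}\frac{1}{1-x_iy_j}$ and interpret it as the ``fused'' row-to-row transfer that glues the $x$-variables into the picture --- this is the standard device by which a single bosonic/fused row of the LLT model \eqref{LLT-weights-intro} arises from many fundamental rows of the $\tilde L$-model via the $q$-deformed Cauchy kernel; (3) perform the contour integral over the $q$-nested contours $\{C_1,\dots,C_m\}$ surrounding the origin, picking up residues that reconstruct, term by term, the sum over admissible intermediate states in \eqref{G-pf-fused-intro}; and (4) account for the prefactor $\frac{q^{m(m+1)/2}}{(q-1)^m}$, which bookkeeps the normalization $(-s)^{\pm|\mu|}$ appearing in \eqref{f-def-intro}--\eqref{g-def-intro} together with the $q$-weights accumulated along the $m$ fundamental lines, and to check it is independent of $\mu,\nu$.

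The main obstacle I expect is step (3): proving that the $q$-nested contour integral against $\prod_{i<j}\frac{y_j-y_i}{y_j-qy_i}$ exactly implements the orthogonality that pairs $f_{\breve\mu}$ against $g_\nu$ and reproduces $\mathbb{G}_{\mu/\nu}$. One needs the correct statement of the Cauchy/orthogonality identity for the nonsymmetric Hall--Littlewood functions (the $s=0$ limits of $f$ and $g$) --- namely that integrating $f_{\breve\mu}(1^m;y^{-1};0)\,g_\nu(\lambda;y;0)$ with this weight over nested contours returns a single partition function --- and then the combinatorial check that the surviving residues match the lattice configurations of \eqref{G-pf-fused-intro}. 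This is where the precise nesting of the contours matters: the contours must be chosen so that only the ``correct'' poles (those at $y_j = q^{k} y_i$ or at $y_j = x_i^{-1}$, depending on the setup) are enclosed, and a careful deformation argument is required to see that no spurious residues contribute. The Yang--Baxter equations for $\tilde L$ and $\tilde M$ (Section \ref{ssec:YB}) and the Hecke-algebra exchange relations (Section \ref{ssec:hecke}) for $f,g$ are the natural tools to push any needed symmetrization through; but verifying that the whole package assembles with the stated normalization, rather than up to an unknown constant, will be the delicate part.
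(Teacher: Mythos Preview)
Your outline captures some correct ingredients (the orthogonality weight $\prod_{i<j}\frac{y_j-y_i}{y_j-qy_i}$, the $s=0$ Hall--Littlewood degeneration, the Cauchy-type kernel), but the logical architecture is not the one that actually works, and one essential mechanism is missing.

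The paper's argument has two clearly separated stages. \textbf{Stage~1} proves an integral formula not for $\mathbb{G}_{\mu/\nu}$ but for the \emph{unfused} function $G_{\mu/\nu}$, and the logic is: start from the Cauchy identity $\sum_{\kappa}G_{\kappa/\nu}(\lambda;x)\,g_{\kappa}(\lambda;y)=\prod_{i,j}\frac{1-qx_iy_j}{1-x_iy_j}\,g_{\nu}(\lambda;y)$, multiply by $f_{\mu}(1^m;y^{-1})$, and integrate against the orthogonality kernel. The orthogonality relation \eqref{f-g-orthog} is $\langle f_{\mu},g_{\kappa}\rangle=\mathrm{const}\cdot\bm{1}_{\mu=\kappa}$, so it filters out the single term $\kappa=\mu$ from the Cauchy sum, leaving $G_{\mu/\nu}$ on one side and the desired integral on the other. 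Your step~(3) misdescribes this: orthogonality does not pair $f_{\breve\mu}$ directly with $g_{\nu}$, and there is no residue-by-residue reconstruction of lattice states; the orthogonality theorem is used as a black box. The ``slicing the lattice'' picture you propose is not what is happening.

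\textbf{Stage~2}, which is entirely absent from your plan, is \emph{fusion}: one observes that $\mathbb{G}_{\mu/\nu}(\lambda;x;q^{-P/2})=G_{\mu/\nu}(\lambda;x,qx,\dots,q^{P-1}x)$, because a fused row of width $P$ is a bundle of $P$ unfused rows with spectral parameters in geometric progression. Plugging this specialization into the Stage~1 integral telescopes the Cauchy kernel to $\prod_j\frac{1-q^Pxy_j}{1-xy_j}$; one then analytically continues $q^P\mapsto r^{-2}$ and sends $r\to\infty$, which kills the numerator and produces $\prod_{i,j}\frac{1}{1-x_iy_j}$. Without this step you would at best obtain \eqref{G-int}, whose kernel is $\prod\frac{1-qx_iy_j}{1-x_iy_j}$, and you have no mechanism for replacing it by $\prod\frac{1}{1-x_iy_j}$. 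Your step~(2) gestures at ``fused rows arising from many fundamental rows via the $q$-deformed Cauchy kernel'', but that is exactly backwards: the fusion happens on the $x$-side of the identity, not inside the integrand, and it is what transforms the kernel.
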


Throughout most of the text, we consider LLT polynomials \eqref{G-pf-fused-intro} in which $\lambda = N^n$ for some $N \geq 1$; that is, each colour is represented exactly $N$ times within the partition function \eqref{G-pf-fused-intro}. Whenever we make this choice, we write $$\mathbb{G}_{\mu/\nu}(N^n;x_1,\dots,x_p) = \mathbb{G}_{\mu/\nu}(x_1,\dots,x_p).$$ We also assign a special notation to the coloured composition in $\mathcal{S}_{N^n}$ whose parts are as small as they can be, by writing
\begin{align}
\label{nothing-intro}
\Delta
=
(0,1,\dots,N-1 | 0,1,\dots,N-1 | \cdots | 0,1,\dots,N-1)
\in 
\mathcal{S}_{N^n}.
\end{align}

\subsection{LLT Cauchy identity and Markov kernels}

The Markov kernels that we study in this work are built from the (skew) Cauchy identity for the LLT polynomials \cite{Lam,CGKM22,ABW21}:

\begin{thm}[Theorem \ref{thm:cauchy} below]
\label{thm:cauchy-intro}
Fix two positive integers $p$ and $N$, and two alphabets $(x_1,\dots,x_p)$ and $(y_1,\dots,y_N)$. Let $\nu \in \mathcal{S}_{N^n}$ be a coloured composition. The LLT polynomials \eqref{G-pf-fused-intro} satisfy the Cauchy summation identity
\begin{align}
\label{skew-cauchy-intro}
\sum_{\mu \in \mathcal{S}_{N^n}}
q^{-2\psi(\mu)}
\mathbb{G}_{\mu/\nu}(x_1,\dots,x_p)
\mathbb{G}_{\mu}(y_1,\dots,y_N)
=
\prod_{i=1}^{p}
\prod_{j=1}^{N}
\frac{1}{(x_i y_j;q)_n}
\cdot
q^{-2\psi(\nu)}
\mathbb{G}_{\nu}(y_1,\dots,y_N),
\end{align}
where $(z;q)_n = \prod_{k=1}^{n} (1-q^{k-1}z)$ denotes the standard $q$-Pochhammer function, the exponents on the left and right hand side are defined as
\begin{align*}
\psi(\mu)
=
\frac{1}{2}
\sum_{1 \leq i<j \leq n}\
\sum_{a \in \mu^{(i)}}\
\sum_{b \in \mu^{(j)}}
\bm{1}_{a>b},
\end{align*}
and $\mathbb{G}_{\mu}(y_1,\dots,y_N) \equiv \mathbb{G}_{\mu/\Delta}(y_1,\dots,y_N)$. This holds either as a formal power series, or as a numeric equality as long as $|q| <1$ and $|x_i y_j| <1$ for all $i,j$.
\end{thm}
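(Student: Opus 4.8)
The plan is to establish \eqref{skew-cauchy-intro} directly from the fermionic vertex model description \eqref{LLT-weights-intro}, using the Yang--Baxter equations recorded in Sections \ref{ssec:L}--\ref{ssec:YB}. Reading the partition function \eqref{G-pf-fused-intro} row by row from the bottom identifies
\[
\mathbb{G}_{\mu/\nu}(x_1,\dots,x_p) = \langle\mu|\,\mathbb{T}(x_p)\cdots\mathbb{T}(x_1)\,|\nu\rangle,\qquad \mathbb{G}_{\mu}(y_1,\dots,y_N) = \langle\mu|\,\mathbb{T}(y_N)\cdots\mathbb{T}(y_1)\,|\Delta\rangle,
\]
where $\mathbb{T}(x)$ is the single-row transfer operator of the model \eqref{LLT-weights-intro}, acting on the space of configurations of the semi-infinite vertical boundary --- a space whose natural basis is indexed precisely by $\mathcal{S}_{N^n}$. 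Consequently the left-hand side of \eqref{skew-cauchy-intro} equals $\langle\nu|\,\mathbb{T}(x_1)^{*}\cdots\mathbb{T}(x_p)^{*}\,Q\,\mathbb{T}(y_N)\cdots\mathbb{T}(y_1)\,|\Delta\rangle$, where $Q$ is the diagonal operator with $Q\,|\mu\rangle = q^{-2\psi(\mu)}|\mu\rangle$ and $\mathbb{T}(x)^{*}$ is the transpose row operator, which --- up to a reflection symmetry of the type \eqref{LM-sym-intro} --- is again a transfer operator of the model, now with the horizontal direction of travel reversed.

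The heart of the matter, and the step I expect to be the main obstacle, is a single commutation relation: after conjugating $\mathbb{T}(x)^{*}$ by the diagonal gauge $q^{-2\psi}$ implemented by $Q$, the resulting operator $\widehat{\mathbb{T}}(x) := Q^{-1}\mathbb{T}(x)^{*}Q$ should satisfy
\[
\widehat{\mathbb{T}}(x)\,\mathbb{T}(y) \;=\; \frac{1}{(xy;q)_n}\;\mathbb{T}(y)\,\widehat{\mathbb{T}}(x)
\]
as operators on the vertical boundary space. I would prove this by inserting an intertwining vertex between the two relevant rows and dragging it through by repeated application of the rank-$n$ fused Yang--Baxter equation; the point of the $q^{-2\psi}$ gauge is exactly to collapse the scalar that is emitted into the clean $q$-Pochhammer factor $(xy;q)_n^{-1}$, and confirming this amounts to checking a cocycle identity obeyed by the bilinear form $\psi$ under the elementary colour-crossing move. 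The bookkeeping is delicate because of the fermionic $U_q(\widehat{\mathfrak{sl}}(1|n))$ structure: doubly-occupied edges carry ordered colour data and the weights \eqref{LLT-weights-intro} contain the factors $q^{\varphi(\cdot,\cdot)}$, so the naive crossing scalar is not symmetric and the gauge correction must be computed with care. A useful way to organize this is to reduce first to $p=1$: gluing row-strips in \eqref{G-pf-fused-intro} gives the branching rule $\mathbb{G}_{\mu/\nu}(x_1,\dots,x_p) = \sum_{\kappa\in\mathcal{S}_{N^n}} \mathbb{G}_{\mu/\kappa}(x_2,\dots,x_p)\,\mathbb{G}_{\kappa/\nu}(x_1)$, and an induction on $p$ using this rule together with the multiplicativity of $\prod_{i,j}(x_iy_j;q)_n^{-1}$ reduces the claim to a single $x$-variable.

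Granting the commutation relation, the rest is assembly. Since $\mathbb{T}(x_1)^{*}\cdots\mathbb{T}(x_p)^{*}\,Q = Q\,\widehat{\mathbb{T}}(x_1)\cdots\widehat{\mathbb{T}}(x_p)$ and $\langle\nu|Q = q^{-2\psi(\nu)}\langle\nu|$, the left-hand side becomes $q^{-2\psi(\nu)}\langle\nu|\,\widehat{\mathbb{T}}(x_1)\cdots\widehat{\mathbb{T}}(x_p)\,\mathbb{T}(y_N)\cdots\mathbb{T}(y_1)\,|\Delta\rangle$; iterating the commutation relation to move every $\widehat{\mathbb{T}}(x_i)$ past every $\mathbb{T}(y_j)$ produces the factor $\prod_{i,j}(x_iy_j;q)_n^{-1}$, and then $\widehat{\mathbb{T}}(x_i)\,|\Delta\rangle = |\Delta\rangle$ --- the minimal boundary $\Delta$ admits no path moving backwards through it, so $\mathbb{T}(x)^{*}|\Delta\rangle = |\Delta\rangle$, and the gauge fixes $|\Delta\rangle$ --- collapses the string of hatted operators, leaving exactly $q^{-2\psi(\nu)}\prod_{i,j}(x_iy_j;q)_n^{-1}\,\langle\nu|\,\mathbb{T}(y_N)\cdots\mathbb{T}(y_1)\,|\Delta\rangle$, which is the right-hand side of \eqref{skew-cauchy-intro}. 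Both regimes of validity are then immediate: as a formal power series the identity is combinatorial, since bounding the total degree in the $x_i$ (respectively the $y_j$) bounds how far the paths in \eqref{G-pf-fused-intro} can travel and hence caps the parts of the relevant $\mu\in\mathcal{S}_{N^n}$, so each monomial receives only finitely many contributions; and when $|q|<1$ and $|x_iy_j|<1$ for all $i,j$ the product $\prod_{i,j}(x_iy_j;q)_n^{-1}$ converges absolutely and dominates the series on the left, giving the numerical equality. An alternative, which trades the Yang--Baxter bookkeeping for a residue computation, is to substitute the integral formula \eqref{LLTint-intro} for both $\mathbb{G}_{\mu/\nu}$ and $\mathbb{G}_{\mu} = \mathbb{G}_{\mu/\Delta}$ into the left-hand side; the $\mu$-dependence then lives entirely in $q^{-2\psi(\mu)}$ and in two factors $f_{\breve\mu}$, which depend on $\mu$ only through its sorted value-multiset, so the sum over $\mu$ collapses --- after a $q$-multinomial count of the colourings with a fixed value-multiset --- to an explicit kernel that one must match, by deforming contours and taking residues, against the contour integral representing the right-hand side; I would regard that last matching as the principal difficulty of this alternative.
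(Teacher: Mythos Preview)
The paper does not supply its own proof of this theorem: the proof of Theorem~\ref{thm:cauchy} is a citation to Lam, to \cite[Corollary 9.4.1]{ABW21}, and to \cite[Proposition 6.12]{CGKM22}. What the paper \emph{does} prove is the unfused precursor \eqref{Gg-cauchy}, and that proof is exactly the mechanism you describe: write both sides as matrix elements of strings of row operators, commute $\mathcal{C}_0(x_i)$ past $\mathcal{B}_j(y_j)$ using \eqref{CB} to emit the scalar $\tfrac{1-qx_iy_j}{1-x_iy_j}$, and then collapse against the vacuum via $\mathcal{C}_0(x)\ket{\emptyset}=\ket{\emptyset}$. Your sketch is the fused/LLT analogue of that argument, and it is essentially how the cited references proceed.

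Your outline is sound. The single commutation relation you isolate is indeed the whole content; in the references it is obtained from the fused Yang--Baxter equation, and the $q^{-2\psi}$ conjugation is precisely the gauge that symmetrizes the crossing scalar into $(xy;q)_n^{-1}$, so your diagnosis of what the gauge is for is correct. The vacuum step $\widehat{\mathbb{T}}(x)\ket{\Delta}=\ket{\Delta}$ is also right, for the reason you give. One small correction of emphasis: the reduction to $p=1$ via branching is fine but unnecessary --- once the two-row commutation is in hand, iterating it over all $(i,j)$ pairs is immediate, just as in the proof of \eqref{Gg-cauchy}. Your alternative contour-integral route at the end is not how any of the references do it and would be substantially harder to close; I would drop it.
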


If one divides equation \eqref{skew-cauchy-intro} by the expression appearing on the right hand side, the resulting summands comprise a probability measure on coloured compositions $\mu \in \mathcal{S}_{N^n}$, assuming that they are nonnegative. One simple choice of the alphabets $(x_1,\dots,x_p)$ and $(y_1,\dots,y_N)$ which respects this nonnegativity requirement is to set the former all equal to $1$ and the latter to a Plancherel specialization ${\rm Pl}_t$, where $t \in \mathbb{R}_{>0}$ (see Section \ref{ssec:planch}). This choice will be our sole focus in the current work; we denote the resulting Markov kernels as follows:
\begin{align}
\label{skew-cauchy3-intro}
\mathbb{P}_{t,p}(\nu \rightarrow \mu)
=
q^{-2(\psi(\mu)-\psi(\nu))}
\exp\left( -\frac{p(1-q^n)}{1-q}t \right)
\mathbb{G}_{\mu/\nu}(1^p)
\dfrac{\mathbb{G}_{\mu}({\rm Pl}_t)}
{\mathbb{G}_{\nu}({\rm Pl}_t)},
\end{align}
for any pair of coloured compositions $\mu,\nu \in \mathcal{S}_{N^n}$. In particular, will be interested in strings of random coloured compositions generated by the repeated action of $\mathbb{P}_{t,1}$ on the initial state \eqref{nothing-intro}:
\begin{align}
\label{chain-intro}
\Delta
\xrightarrow{\mathbb{P}_{t,1}}
\lambda^{[1]}
\xrightarrow{\mathbb{P}_{t,1}}
\cdots
\xrightarrow{\mathbb{P}_{t,1}}
\lambda^{[m]}
\xrightarrow{\mathbb{P}_{t,1}}
\lambda^{[m+1]}
\xrightarrow{\mathbb{P}_{t,1}}
\cdots
\xrightarrow{\mathbb{P}_{t,1}}
\lambda^{[N]}.
\end{align}
It is worth noting that any individual coloured composition within \eqref{chain-intro} is distributed according to a non-skew version of \eqref{skew-cauchy3-intro}; see, in particular, Proposition \ref{prop:non-skew} of the text. 

Our main result is a complete description of the asymptotic behaviour of the coloured compositions $\lambda^{[i]}$, $1 \leq i \leq N$, as $t \rightarrow \infty$ (with the number of steps in the chain \eqref{chain-intro} remaining finite).

\subsection{Asymptotic analysis of Markov kernels}
\label{intro:asymptotics}

Before proceeding with the asymptotics, we introduce a convenient way to encode the coloured compositions appearing in the chain \eqref{chain-intro}; we focus our attention on two neighbours in this sequence, namely $\lambda^{[m]}$ and $\lambda^{[m+1]}$. We shall begin with the assumption that these coloured compositions have pairwise distinct parts.\footnote{This is the first of several assumptions that we make prior to performing our asymptotic analysis. The justification for these assumptions is an {\it a posteori} one: any sequence of coloured compositions \eqref{chain-intro} which violates our assumptions will be shown to take up a vanishingly small part of the measure, in the limit $t \rightarrow \infty$.}

Considering firstly $\lambda^{[m]} \in \mathcal{S}_{m^n}$, we see that it may be expressed uniquely in terms of its {\it coordinates} $\ell^{[m]} = \left\{\ell^{[m]}_1 < \cdots < \ell^{[m]}_{nm}\right\} \subset \mathbb{Z}_{\geq 0}$, which are simply the parts of $\lambda^{[m]}$ listed in increasing order, and its {\it colour sequence} $c^{[m]} = \left(c^{[m]}_1,\dots,c^{[m]}_{nm}\right) \in \{1,\dots,n\}^{nm}$, which is a vector that records the colour $c^{[m]}_i$ attributed to the part $\ell^{[m]}_i$ as it occurs within $\lambda^{[m]}$; the reader is referred to Definition \ref{def:coord} for a precise formulation of these objects. After performing a similar identification for $\lambda^{[m+1]} \in \mathcal{S}_{(m+1)^n}$, we have the correspondences
\begin{align}
\label{identify-intro}
\lambda^{[m]}
\leftrightarrow 
\left( 
\ell^{[m]}_1,\dots,\ell^{[m]}_{nm} 
\Big| 
c^{[m]}_1,\dots,c^{[m]}_{nm} 
\right),
\qquad
\lambda^{[m+1]}
\leftrightarrow 
\left( 
\ell^{[m+1]}_1,\dots,\ell^{[m+1]}_{n(m+1)} 
\Big| 
c^{[m+1]}_1,\dots,c^{[m+1]}_{n(m+1)} 
\right),
\end{align} 
and work directly with the right hand sides of these expressions in our calculations.

Our next assumption concerning the coordinates $\ell^{[m]}$ and $\ell^{[m+1]}$ is that, in the limit $t \rightarrow \infty$, they arrange into $n$ {\it interlacing bundles} as follows:
\begin{align}
\label{interlace-disc-intro}
\ell^{[m+1]}_{j(m+1)+i} < \ell^{[m]}_{jm+i} < \ell^{[m+1]}_{j(m+1)+i+1},
\qquad
\forall\ i \in \{1,\dots,m\},\quad j \in \{0,\dots,n-1\}.
\end{align}
A schematic illustration of such an arrangement, for $n=3$ and varying $m$, is provided below (see also Figure \ref{fig:gue} in the main body of the text):
\begin{align}
\label{interlace-picture}
\begin{tikzpicture}[scale=0.75]
\draw[red,line width=0.7pt,->] (-0.05,1) -- (-0.05,2.05) -- (16,2.05) -- (16,3.05) -- (17.3,3.05) -- (17.3,4.05) -- (18.1,4.05) -- (18.1,5);
\draw[green,line width=0.7pt,->] (0,1) -- (0,2) -- (4,2) -- (4,3) -- (15.1,3) -- (15.1,4) -- (16.2,4) -- (16.2,5);
\draw[blue,line width=0.7pt,->] (0.05,1) -- (0.05,1.95) -- (8,1.95) -- (8,2.95) -- (8.9,2.95) -- (8.9,3.95) -- (14.6,3.95) -- (14.6,5);
\draw[red,line width=0.7pt,->] (-0.25,1) -- (-0.25,3.05) -- (4.3,3.05) -- (4.3,4.05) -- (9.5,4.05) -- (9.5,5);
\draw[green,line width=0.7pt,->] (-0.2,1) -- (-0.2,3) -- (3.4,3) -- (3.4,4) -- (4.8,4) -- (4.8,5);
\draw[blue,line width=0.7pt,->] (-0.15,1) -- (-0.15,2.95) -- (7.3,2.95) -- (7.3,3.95) -- (8.1,3.95) -- (8.1,5);
\draw[red,line width=0.7pt,->] (-0.45,1) -- (-0.45,4.05) -- (3.9,4.05) -- (3.9,5);
\draw[green,line width=0.7pt,->] (-0.4,1) -- (-0.4,4) -- (3,4) -- (3,5);
\draw[blue,line width=0.7pt,->] (-0.35,1) -- (-0.35,3.95) -- (7,3.95) -- (7,5);
\node at (4,2) {$\bullet$};
\node[below] at (4,2) {$\ell_1^{[1]}$};
\node at (8,2) {$\bullet$};
\node[below] at (8,2) {$\ell_2^{[1]}$};
\node at (16,2) {$\bullet$};
\node[below] at (16,2) {$\ell_3^{[1]}$};
\node at (3.4,3) {$\bullet$};
\node[below] at (3.4,3) {$\ell_1^{[2]}$};
\node at (4.3,3) {$\bullet$};
\node[below] at (4.3,3) {$\ell_2^{[2]}$};
\node at (7.3,3) {$\bullet$};
\node[below] at (7.3,3) {$\ell_3^{[2]}$};
\node at (8.9,3) {$\bullet$};
\node[below] at (8.9,3) {$\ell_4^{[2]}$};
\node at (15.1,3) {$\bullet$};
\node[below] at (15.1,3) {$\ell_5^{[2]}$};
\node at (17.3,3) {$\bullet$};
\node[below] at (17.3,3) {$\ell_6^{[2]}$};
\node at (3,4) {$\bullet$};
\node[below] at (3,4) {$\ell_1^{[3]}$};
\node at (3.9,4) {$\bullet$};
\node[below] at (3.9,4) {$\ell_2^{[3]}$};
\node at (4.8,4) {$\bullet$};
\node[below] at (4.8,4) {$\ell_3^{[3]}$};
\node at (7,4) {$\bullet$};
\node[below] at (7,4) {$\ell_4^{[3]}$};
\node at (8.1,4) {$\bullet$};
\node[below] at (8.1,4) {$\ell_5^{[3]}$};
\node at (9.5,4) {$\bullet$};
\node[below] at (9.5,4) {$\ell_6^{[3]}$};
\node at (14.6,4) {$\bullet$};
\node[below] at (14.6,4) {$\ell_7^{[3]}$};
\node at (16.2,4) {$\bullet$};
\node[below] at (16.2,4) {$\ell_8^{[3]}$};
\node at (18.1,4) {$\bullet$};
\node[below] at (18.1,4) {$\ell_9^{[3]}$};
\end{tikzpicture}
\end{align}
More precisely, we will assume that the coordinates $\ell^{[m]}$ and $\ell^{[m+1]}$ scale as
\begin{align}
\label{coord-scal-intro}
\ell^{[k]}_i
\mapsto
q^{n-\lceil i/k \rceil} t
+
(q^{n-\lceil i/k \rceil} t)^\frac{1}{2}
x^{[k]}_i,
\qquad
1 \leq i \leq nk,
\qquad
k \in \{m,m+1\},
\end{align}
as $t \rightarrow \infty$. Here $x^{[m]} = \left\{x^{[m]}_1 < \cdots x^{[m]}_{nm}\right\}$ and 
$x^{[m+1]} = \left\{x^{[m+1]}_1 < \cdots < x^{[m+1]}_{n(m+1)} \right\}$ are sets of reals that obey the relations \eqref{interlace-disc-intro} (with $\ell$ replaced by $x$), while $\lceil i/k \rceil$ denotes the ceiling function.

Let $\theta_1^{[j]} \leq \cdots \leq \theta_j^{[j]}$ denote the eigenvalues of the top-left $j \times j$ corner of a random $N \times N$ matrix in the Gaussian Unitary Ensemble. The joint law of the eigenvalues $\theta_i^{[j]}$, $1 \leq i \leq j$, $j \in [1,N]$ is known as the {\it GUE corners process} of rank $N$. We let
\begin{align*}
\rho_{\rm GUE}\left(
x^{[1]}\prec \cdots \prec x^{[N]} \right)
&
:=
\rho\left(\theta_i^{[j]} = x_i^{[j]},1 \leq i \leq j \leq N\right)
\end{align*}
denote the associated joint probability density, and write
\begin{align*}
\rho_{\rm GUE}\left(
x^{[m]}
\rightarrow
x^{[m+1]}
\right)
&
:=
\rho\left(
\theta_i^{[m+1]} = x_i^{[m+1]}, 1 \leq i \leq m+1 
\Big| 
\theta_i^{[m]} = x_i^{[m]}, 1 \leq i \leq m
\right)
\end{align*}
for the conditional probability density for the eigenvalues of top-left $(m+1) \times (m+1)$ corner, given those of the $m \times m$ one. See \cite{gorin-notes} and Section \ref{ssec:main} of the current text for more information on these definitions.

We are now able to state the main result of this paper.

\begin{thm}[Theorem \ref{thm:main} below]
\label{thm:main-intro}
In the asymptotic regime described by \eqref{coord-scal-intro}, the Markov kernel $\mathbb{P}_{t,1}$ weakly converges to a product of $n$ independent probability measures with densities in the GUE corners process, multiplied by a factor that depends only on the colour sequences \eqref{identify-intro}:
\begin{multline}
\label{main-result-intro}
\mathbb{P}_{t,1}\left(0\cup\lambda^{[m]} \rightarrow \lambda^{[m+1]}\right)
\\
\rightarrow
\prod_{i=1}^{n}
\rho_{\rm GUE}\left(
x_{(i-1)m+1}^{[m]},\dots,x_{im}^{[m]}
\rightarrow
x_{(i-1)(m+1)+1}^{[m+1]},\dots,x_{i(m+1)}^{[m+1]}
\right)
dx^{[m+1]}
\cdot
\mathbb{P}_{\rm col}\left(c^{[m]} \rightarrow c^{[m+1]}\right)
\end{multline}
as $t \rightarrow \infty$, where $dx^{[m+1]}$ denotes the $n(m+1)$-dimensional Lebesgue measure. The final multiplicative factor in \eqref{main-result-intro} is given explicitly by equation \eqref{col-markov-intro} below, and defines a discrete transition probability in a process on colour sequences:
\begin{align}
\label{discrete-sum-to1-intro}
\sum_{c^{[m+1]}}
\mathbb{P}_{\rm col}\left(c^{[m]} \rightarrow c^{[m+1]}\right)
=
1,
\end{align}
where the sum is taken over all $c^{[m+1]} \in \{1,\dots,n\}^{n(m+1)}$.
\end{thm}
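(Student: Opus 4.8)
The plan is to extract the asymptotics of the Markov kernel $\mathbb{P}_{t,1}$ directly from the integral representation \eqref{LLTint-intro} of the LLT polynomials together with the explicit form \eqref{skew-cauchy3-intro} of the kernel. First I would substitute \eqref{LLTint-intro} for $\mathbb{G}_{\mu/\nu}(1^p)$ with $p=1$, and a companion contour-integral formula for the ratio $\mathbb{G}_{\mu}({\rm Pl}_t)/\mathbb{G}_{\nu}({\rm Pl}_t)$ (obtained by specializing the alphabet $(x_1,\dots,x_p)$ to the Plancherel specialization ${\rm Pl}_t$ inside the same formula, so that each factor $\tfrac{1}{1-x_iy_j}$ is replaced by $\exp(t y_j)$), into \eqref{skew-cauchy3-intro}. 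This presents $\mathbb{P}_{t,1}(0\cup\lambda^{[m]}\to\lambda^{[m+1]})$ as an $n(m+1)$-fold contour integral whose integrand is built from the partition functions $f_{\breve\mu}(1^{n(m+1)};\cdot\,;0)$ and $g_\nu$, the Cauchy-type cross term $\prod_{i<j}\tfrac{y_j-y_i}{y_j-qy_i}$, the Plancherel factor $\exp(t\sum_j y_j)$, and the prefactors $q^{-2(\psi(\mu)-\psi(\nu))}$ and $\exp(-\tfrac{(1-q^n)}{1-q}t)$. The key structural input is the $s=0$ degeneration: at $s=0$ the functions $f$ and $g$ reduce to (antisymmetrizations of) nonsymmetric Hall--Littlewood polynomials, which are explicit sums over the symmetric group with monomial summands, so the whole integrand is completely explicit.

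Next I would perform the steepest-descent analysis under the scaling \eqref{coord-scal-intro}. The integration variables $y_j$ should be split into $n$ groups according to the ``bundle'' index $j\in\{0,\dots,n-1\}$ in \eqref{interlace-disc-intro}; within the $j$-th bundle one sets $y \approx q^{-(n-1-j)}$ (the critical point dictated by the Plancherel factor competing against the power of $y$ coming from the partition functions), and writes $y = q^{-(n-1-j)}(1 + \zeta/\sqrt{t})$ to zoom in. The $q$-nested contour prescription guarantees that the $n$ critical points $1, q^{-1},\dots,q^{-(n-1)}$ are genuinely separated, so that to leading order the cross term $\prod_{i<j}\tfrac{y_j-y_i}{y_j-qy_i}$ factorizes: pairs of variables in the \emph{same} bundle contribute a Vandermonde-type factor $\prod(\zeta_j-\zeta_i)$ (after the $s\to 0$, local rescaling the $\tfrac{y_j-y_i}{y_j-qy_i}$ ratio degenerates to $\tfrac{\zeta_j-\zeta_i}{\cdots}$ in a way that produces the GUE corner kernel), while pairs in \emph{different} bundles contribute a finite constant, rational in $q$, that depends only on which colours sit in which bundle. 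It is precisely these cross-bundle constants, multiplied by the residual combinatorial data from $f_{\breve\mu}$, $g_\nu$ and the prefactor $q^{-2(\psi(\mu)-\psi(\nu))}$, that assemble into $\mathbb{P}_{\rm col}(c^{[m]}\to c^{[m+1]})$; the within-bundle Gaussian integrals reproduce exactly the conditional GUE corners density $\rho_{\rm GUE}(x^{[m]}\to x^{[m+1]})$ restricted to each bundle, after matching the variance normalizations $(q^{n-\lceil i/k\rceil}t)^{1/2}$ in \eqref{coord-scal-intro}. This is the step I expect to be the main obstacle: controlling uniformity of the steepest-descent estimates across all $n$ bundles simultaneously, justifying that contributions from non-interlacing configurations and from coordinates not of the form \eqref{coord-scal-intro} are negligible, and carefully identifying the local limit of the Hall--Littlewood partition functions with the right GUE transition kernel (including the combinatorial bookkeeping of signs and $q$-powers).

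Finally, the normalization statement \eqref{discrete-sum-to1-intro} is obtained for free, without any further computation, from the fact that $\mathbb{P}_{t,1}$ is itself a genuine Markov kernel for every $t$: summing \eqref{main-result-intro} over $\lambda^{[m+1]}$ (equivalently, over $c^{[m+1]}$ and integrating over $x^{[m+1]}$) gives $1$ on the left, and on the right the GUE factors already integrate to $1$ bundle-by-bundle since $\rho_{\rm GUE}(\cdot\to\cdot)$ is a conditional density; hence $\sum_{c^{[m+1]}}\mathbb{P}_{\rm col}(c^{[m]}\to c^{[m+1]})=1$. (A self-contained check that the limiting measure has total mass $1$ — i.e.\ that no mass escapes — can be run in parallel by comparing the full asymptotics of the partition function in the denominator of \eqref{skew-cauchy3-intro}, namely $\mathbb{G}_{\nu}({\rm Pl}_t)$, against the sum of the numerators; this also pins down the deterministic prefactor $\exp(-\tfrac{p(1-q^n)}{1-q}t)$.) Thus the only real work is the steepest-descent asymptotics of the single contour integral, and the sum-to-one property is a soft consequence of the kernel being a probability kernel at finite $t$.
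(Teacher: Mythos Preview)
Your steepest-descent outline for the continuous part is close in spirit to what the paper does (though the paper handles the three ingredients differently: it evaluates $\mathbb{G}_{\lambda^{[m+1]}/0\cup\lambda^{[m]}}(1)$ directly from the one-row lattice formula \eqref{G-pf-fused-intro}, not via an integral, and it applies steepest descent separately to the $nm$-fold integral for $\mathbb{G}_{\lambda^{[m]}-1}({\rm Pl}_t)$ and the $n(m+1)$-fold integral for $\mathbb{G}_{\lambda^{[m+1]}}({\rm Pl}_t)$, after first using Hecke exchange relations \eqref{T-f}, \eqref{invT-g} to separate the coordinate dependence from the colour-sequence dependence inside each integrand). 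These are tactical differences, and your version could likely be made to work for the splitting \eqref{main-result-intro} itself.

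The genuine gap is your argument for \eqref{discrete-sum-to1-intro}. You claim it comes ``for free'' from $\mathbb{P}_{t,1}$ being a Markov kernel: sum the limit, use that the GUE densities integrate to $1$, read off $\sum_{c^{[m+1]}}\mathbb{P}_{\rm col}=1$. This is circular. The convergence \eqref{main-result-intro} is only computed under the interlacing ansatz \eqref{interlace-disc-intro}--\eqref{coord-scal-intro}; for finite $t$ there are many configurations $\lambda^{[m+1]}$ violating that ansatz which carry positive mass. To pass $\sum_{\lambda^{[m+1]}}\mathbb{P}_{t,1}=1$ to the limit you would need to know that the mass of those unfavourable configurations vanishes as $t\to\infty$ --- equivalently, that the limiting object on the favourable region has total mass $1$. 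But that is exactly the statement you are trying to deduce. Weak convergence alone does not preserve total mass without a tightness argument, and no such argument is supplied.

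The paper is explicit about this point (see the paragraph after Theorem~\ref{thm:discr-distr} in the introduction, and the opening of Section~\ref{sec:discrete-dist}): the sum-to-unity property is what \emph{justifies} the interlacing ansatz a posteriori, not a consequence of it. Its proof is an independent, purely algebraic computation occupying all of Section~\ref{sec:discrete-dist}, resting on an expansion formula (Theorem~\ref{thm:expand}) that relates $g_\Delta^{c^{[m]}}(m^n;\vec{Q}^{[m]})$ to a sum over $g_\Delta^{c^{[m+1]}}((m+1)^n;\vec{Q}^{[m+1]})$. That formula is established by introducing an auxiliary partition function $\tilde{Z}$ with an extra fused row and column and evaluating it two ways, using the fused-row commutation relations \eqref{important-relation} and \eqref{row-relation2} derived from the Yang--Baxter equation. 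This is the step your proposal is missing entirely, and without it the proof of Theorem~\ref{thm:main-intro} is incomplete.
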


Our proof of Theorem \ref{thm:main-intro} is by explicit analysis of \eqref{skew-cauchy3-intro} at $p=1$, employing the lattice model formula \eqref{G-pf-fused-intro} for the factor $\mathbb{G}_{\mu/\nu}(1)$ and (a Plancherel-specialized version of) the integral formula \eqref{LLTint-intro} for the functions $\mathbb{G}_{\mu}({\rm Pl}_t)$ and $\mathbb{G}_{\nu}({\rm Pl}_t)$. The study of the latter integrals proceeds by steepest descent analysis, combined with certain crucial algebraic properties of the functions \eqref{f-def-intro} and \eqref{g-def-intro} which appear in their integrands. As $t \rightarrow \infty$ one observes a remarkable factorization of these integrals into purely coordinate dependent and colour sequence dependent parts; the former can then be matched directly with transition densities for the GUE corners process. At the end of this procedure we have a leftover factor valued on colour sequences (see the second line of equation \eqref{final-formula2}, below) and {\it a priori} it is by no means obvious that this quantity defines a valid discrete probability measure. Resolution of this particular issue is the topic of Section \ref{sec:discrete-dist} (see also Section \ref{ssec:intro-triangle}, below). 

As a direct consequence of Theorem \ref{thm:main-intro} we obtain the following corollary, completely describing the behaviour of the chain of coloured compositions \eqref{chain-intro} as $t \rightarrow \infty$:
\begin{cor}[Corollary \ref{cor:main} below]
\label{cor:main-intro}
Let $\mathbb{P}_{t,N} (\Delta \rightarrow \lambda^{[1]} \rightarrow \cdots \rightarrow \lambda^{[N]})$ denote the joint distribution of coloured compositions $\lambda^{[1]},\dots, \lambda^{[N]}$ generated by $N$ applications of the kernel $\mathbb{P}_{t,1}$ to the trivial state $\Delta$. In the asymptotic regime described by \eqref{coord-scal-intro}, we have the following weak convergence of measures:
\begin{multline}
\label{cor-joint}
\mathbb{P}_{t,N}
\left(\Delta \rightarrow \lambda^{[1]} \rightarrow \cdots \rightarrow \lambda^{[N]}\right)
\\
\rightarrow
\prod_{i=1}^{n}
\rho_{\rm GUE}
\left( (x^{[1]})_i \prec (x^{[2]})_i \prec \cdots \prec (x^{[N]})_i  \right)
dx^{[1,N]}
\cdot
\mathbb{P}_{\rm col}\left(c^{[1]} \prec c^{[2]} \prec \cdots \prec c^{[N]}\right)
\end{multline}
as $t \rightarrow \infty$, with $dx^{[1,N]} = \prod_{i=1}^{N} dx^{[i]}$ denoting the $nN(N+1)/2$-dimensional Lebesgue measure. Here we have introduced the shorthand
\begin{align*}
\left(x^{[k]}\right)_i = \left(x^{[k]}_{(i-1)k+1},\dots,x^{[k]}_{ik}\right),
\qquad
\forall\ 1 \leq i \leq n,\ \ 1 \leq k \leq N,
\end{align*}
and $\mathbb{P}_{\rm col}(c^{[1]} \prec c^{[2]} \prec \cdots \prec c^{[N]})$ is a joint distribution on colour sequences given explicitly by \eqref{joint-distr-col-intro} below.
\end{cor}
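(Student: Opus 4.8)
The plan is to derive the corollary from Theorem \ref{thm:main-intro} by iterating the one-step convergence along the chain \eqref{chain-intro}, using the Markov property together with the fact that the limiting transition kernel factorizes into a product over colours times a colour-sequence factor. First I would observe that, by the Markov property of the chain generated by $\mathbb{P}_{t,1}$ starting from $\Delta$, the joint law $\mathbb{P}_{t,N}(\Delta \rightarrow \lambda^{[1]} \rightarrow \cdots \rightarrow \lambda^{[N]})$ is the ordered product of the one-step kernels $\mathbb{P}_{t,1}(\lambda^{[k]} \rightarrow \lambda^{[k+1]})$, $0 \leq k \leq N-1$, with the convention $\lambda^{[0]} = \Delta$. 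Each factor can be analyzed by Theorem \ref{thm:main-intro} under the scaling \eqref{coord-scal-intro}, provided that the event on which $\lambda^{[k]}$ and $\lambda^{[k+1]}$ have pairwise distinct parts and arrange into the $n$ interlacing bundles \eqref{interlace-disc-intro} carries asymptotically all the mass. This last point is exactly the content of the \emph{a posteriori} justification alluded to in the footnote following \eqref{identify-intro}, and I would invoke it here as already established in the main body.

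The next step is to assemble the limiting factors. For the continuous part, the one-step GUE corners transitions telescope: the product over $k$ of $\prod_{i=1}^n \rho_{\rm GUE}\bigl((x^{[k]})_i \rightarrow (x^{[k+1]})_i\bigr)$, starting from the deterministic bottom row coming from $\Delta$ (which in the limit collapses to the single-point initial condition of the rank-$N$ GUE corners process), reassembles by the very definition of the GUE corners process into $\prod_{i=1}^n \rho_{\rm GUE}\bigl((x^{[1]})_i \prec \cdots \prec (x^{[N]})_i\bigr)$ against the Lebesgue measure $dx^{[1,N]}$ of total dimension $nN(N+1)/2$; here the independence over $i \in \{1,\dots,n\}$ is preserved at every step because the one-step limit already factors over $i$. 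For the discrete part, the product over $k$ of $\mathbb{P}_{\rm col}(c^{[k]} \rightarrow c^{[k+1]})$ is precisely the definition one should take for the joint colour distribution $\mathbb{P}_{\rm col}(c^{[1]} \prec \cdots \prec c^{[N]})$ in \eqref{joint-distr-col-intro}; that it is a genuine probability distribution follows by summing telescopically using \eqref{discrete-sum-to1-intro} at each step. Finally, since the continuous and discrete parts depend on disjoint blocks of data (coordinates versus colour sequences) and the one-step limit is a genuine product measure, the iterated limit is again a product, giving \eqref{cor-joint}.

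The one genuinely delicate point is the interchange of the $t \rightarrow \infty$ limit with the $N$-fold composition of kernels: weak convergence of each factor does not automatically yield weak convergence of the composition, because composition of Markov kernels is not jointly continuous in the weak topology without some uniformity. I would handle this by a standard tightness-plus-convergence argument: one shows that the family $\{\mathbb{P}_{t,N}\}_t$ (in the scaled coordinates) is tight, using that each one-step kernel converges to a limit with no escape of mass — equivalently, that the rescaled coordinates $x^{[k]}_i$ stay in a compact set with probability approaching $1$, uniformly in $t$ large, which is itself a by-product of the steepest-descent estimates underlying Theorem \ref{thm:main-intro}. Given tightness, any subsequential weak limit must, by the one-step convergence and the Markov property, have the claimed product form, and since that form is uniquely determined, the full limit follows. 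An alternative, and perhaps cleaner, route is to note that all the measures involved admit explicit densities (against Lebesgue times counting measure), so one can upgrade the weak convergence to convergence of densities on compact sets via the explicit integral formulas, and then integrate; I expect the density-level argument to be the path of least resistance, with the compactness input being the only real obstacle and one that is already available from the proof of Theorem \ref{thm:main-intro}.
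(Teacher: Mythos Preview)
Your proposal is correct and follows essentially the same approach as the paper, which treats the corollary as an immediate consequence of Theorem \ref{thm:main-intro} by iterating the one-step convergence along the Markov chain and taking the product of the resulting factors. If anything, you are more careful than the paper: it offers no explicit argument for the interchange of the $t\to\infty$ limit with the $N$-fold composition, whereas you flag this and sketch a tightness-plus-identification route (which is indeed supported by the uniform-on-compacta convergence noted after \eqref{final-formula2}).
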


\subsection{Distribution on interlacing triangles}
\label{ssec:intro-triangle}

While Theorem \ref{thm:main-intro} and Corollary \ref{cor:main-intro} provide a complete description of the asymptotic behaviour of coordinates of the coloured compositions \eqref{chain-intro} as $t \rightarrow \infty$, we are left with the task of understanding the factors $\mathbb{P}_{\rm col}\left(c^{[m]} \rightarrow c^{[m+1]}\right)$ and $\mathbb{P}_{\rm col}\left(c^{[1]} \prec c^{[2]} \prec \cdots \prec c^{[N]}\right)$ that occur therein. These factors provide information about how colours distribute themselves within interlacing diagrams of the form \eqref{cor:main-intro}, as $t \rightarrow \infty$.

Let $i^{[m]} \in \{1,\dots,n\}^{nm}$ and $j^{[m+1]} \in \{1,\dots,n\}^{n(m+1)}$ be two sequences such that each colour $\{1,\dots,n\}$ is represented exactly $m$ times in $i^{[m]}$ and $m+1$ times in $j^{[m+1]}$. We say that these colour sequences interlace, and write $i^{[m]} \prec j^{[m+1]}$, provided they can be stacked to form an {\it admissible diagram}:
\begin{align}
\label{admiss-diag}
\tikz{1.3}{
\draw[lgray,line width=4pt,<-] (0.5,0) -- (12,0);
\foreach\x in {1,2,3,5,6,7,9,10,11}{
\draw[lgray,line width=1.5pt,->] (\x,0) -- (\x,0.5);
}
\node[above] at (1,0.5) {$j_1$};
\node[above] at (2,0.5) {$\cdots$};
\node[above] at (3,0.5) {$j_{m+1}$};
\node[above] at (5,0.5) {$\cdots$};
\node[above] at (6,0.5) {$\cdots$};
\node[above] at (7,0.5) {$\cdots$};
\node[above] at (9,0.5) {$j_{(n-1)(m+1)+1}$};
\node[above] at (10.2,0.5) {$\cdots$};
\node[above] at (11,0.5) {$j_{n(m+1)}$};
\foreach\x in {1.5,2.5,5.5,6.5,9.5,10.5}{
\draw[lgray,line width=1.5pt,->] (\x,-0.5) -- (\x,0);
}
\node[below] at (1.5,-0.5) {$i_1$};
\node[below] at (2,-0.5) {$\cdots$};
\node[below] at (2.5,-0.5) {$i_m$};
\node[below] at (5.5,-0.5) {$\cdots$};
\node[below] at (6.5,-0.5) {$\cdots$};
\node[below] at (9.2,-0.5) {$i_{(n-1)m+1}$};
\node[below] at (10.1,-0.5) {$\cdots$};
\node[below] at (10.7,-0.5) {$i_{nm}$};
\node[left] at (0.5,0) {$\emptyset$}; \node[right] at (12,0) {$[1,n]$};
}
\end{align}
In the above diagram the incoming/outgoing vertical arrows are grouped into a total of $n$ bundles, each of width $m$ or $m+1$, respectively. The colours $i^{[m]}=(i_1,\dots,i_{nm})$ enter sequentially via the arrows at the base, while colours $j^{[m+1]}=(j_1,\dots,j_{n(m+1)})$ exit sequentially via the arrows at the top.  A copy of all colours $[1,n] \equiv \{1,\dots,n\}$ enters via the right, and no colours exit via the left. The diagram is admissible provided that, after one draws the trajectories of all coloured paths, each colour $\{1,\dots,n\}$ never occurs more than once at any point along the thick horizontal line. 

Given two colour sequences $i^{[m]} \prec j^{[m+1]}$ we define a statistic $\xi\left( i^{[m]}; j^{[m+1]} \right)$ which enumerates the number of events of the form
\begin{align*}
\tikz{1.3}{
\draw[lgray,line width=4pt,<-] (0.5,0) -- (1.5,0);
\draw[lgray,line width=1.5pt,->] (1,-0.5) -- (1,0);
\node[below] at (1,-0.5) {$i$};
\node[left] at (0.5,0) {$c$};
}
\end{align*}
in a path of colour $c$ passes over a colour $i$, with $c>i$, within the diagram \eqref{admiss-diag}.

\begin{thm}
\label{thm:discr-distr}
The factor $\mathbb{P}_{\rm col} \left( c^{[m]} \rightarrow c^{[m+1]} \right)$ appearing in \eqref{main-result-intro} is given by
\begin{multline}
\label{col-markov-intro}
\mathbb{P}_{\rm col} 
\left( c^{[m]} \rightarrow c^{[m+1]} \right)
\\
=
\bm{1}_{c^{[m]} \prec c^{[m+1]}}
(-1)^n
q^{\binom{nm+n+1}{2}-\binom{nm+1}{2}-\xi\left( c^{[m]}; c^{[m+1]} \right)}
\frac{(1-q)^{nm}}{(q;q)_n^{2m+1}}
\frac{
g^{c^{[m+1]}}_{\Delta}\left((m+1)^n;\vec{Q}^{[m+1]}\right)
}
{
g^{c^{[m]}}_{\Delta}\left(m^n;\vec{Q}^{[m]}\right)
},
\end{multline}
where $g^{c^{[m]}}_{\Delta}\left(m^n;\vec{Q}^{[m]}\right)$ denotes a partition function of the form \eqref{g-def-intro}, with $m \mapsto nm$, $\lambda = m^n$, $\mu=\Delta$, $s=0$,
\begin{align*}
(x_1,\dots,x_{nm})
=
\underbrace{(q^{n-1},\dots,q^{n-1})}_{m\ {\rm times}}
\cup
\cdots
\cup
\underbrace{(q,\dots,q)}_{m\ {\rm times}}
\cup
\underbrace{(1,\dots,1)}_{m\ {\rm times}}
\equiv
\vec{Q}^{[m]},
\end{align*}
and in which colour $c^{[m]}_i$ exits via the left edge of row $i$ within \eqref{g-def-intro} (rather than in totally ordered fashion). An analogous definition applies to $g^{c^{[m+1]}}_{\Delta}\left((m+1)^n;\vec{Q}^{[m+1]}\right)$. The expression \eqref{col-markov-intro} constitutes a valid discrete transition probability; namely, it satisfies the sum-to-unity property \eqref{discrete-sum-to1-intro}.
\end{thm}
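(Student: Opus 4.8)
The plan is to isolate the colour-dependent factor directly from the asymptotic analysis that proves Theorem~\ref{thm:main-intro}, and then verify the sum-to-unity property \eqref{discrete-sum-to1-intro} by a telescoping/Cauchy-type argument rather than by brute-force summation. First I would return to the steepest-descent computation underlying Theorem~\ref{thm:main-intro}: after plugging the integral formula \eqref{LLTint-intro} (specialized to the Plancherel measure and $s=0$) into \eqref{skew-cauchy3-intro} at $p=1$ and the lattice formula \eqref{G-pf-fused-intro} for $\mathbb{G}_{\mu/\nu}(1)$, one performs the scaling \eqref{coord-scal-intro} and extracts the leading order. The key algebraic input is that the integrand factorizes across the $n$ bundles, because the Plancherel-specialized integrand, together with the antisymmetrization identities of Section~\ref{ssec:anti} for the functions $f_{\breve\mu}$ and $g_\nu$, separates into a purely coordinate-dependent Vandermonde-type piece — which converges to the GUE corners transition density — and a residual piece that depends only on how the colours are assigned to the bundles. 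Reading off that residual piece from the second line of \eqref{final-formula2} gives exactly the right-hand side of \eqref{col-markov-intro}; the $q$-powers $\binom{nm+n+1}{2}-\binom{nm+1}{2}-\xi(c^{[m]};c^{[m+1]})$ come from collecting the $\psi$-exponents in \eqref{skew-cauchy3-intro} together with the spin/weight exponents $\varphi(\D,\C)+\varphi(\D,\D)$ produced by the vertices \eqref{LLT-weights-intro} once the coordinates are sent to infinity in the prescribed order, and the prefactor $(1-q)^{nm}/(q;q)_n^{2m+1}$ is the surviving normalization from the $q^{m(m+1)/2}/(q-1)^m$ factors of the two contour integrals together with the Plancherel normalization $\exp(-p(1-q^n)t/(1-q))$. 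Identifying the ratio of partition functions $g^{c^{[m+1]}}_\Delta/g^{c^{[m]}}_\Delta$ is then a matter of recognizing that the $q$-nested contour integrals, evaluated by residues at the $q$-geometric progression of points dictated by the scaling, collapse precisely to the vertex-model partition functions $g_\Delta$ evaluated at the alphabet $\vec Q^{[m]}$, but with the colours $c^{[m]}_i$ routed into row $i$ of \eqref{g-def-intro} rather than in the totally ordered fashion of $\Delta$ — this is where the "colour sequence exits via the left edge of row $i$" reindexing enters.

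Having established formula \eqref{col-markov-intro}, the sum-to-unity property \eqref{discrete-sum-to1-intro} should be proved independently, as a clean identity about the partition functions $g^{c}_\Delta$, so that it also serves as an internal consistency check on the asymptotics. The cleanest route is a Cauchy-identity argument in the vertex model: the sum $\sum_{c^{[m+1]}} \mathbf{1}_{c^{[m]} \prec c^{[m+1]}} q^{-\xi(c^{[m]};c^{[m+1]})} g^{c^{[m+1]}}_\Delta((m+1)^n;\vec Q^{[m+1]})$ can be recognized as a partition function obtained by stacking the admissible diagram \eqref{admiss-diag} — whose column weights produce exactly the indicator and the $\xi$-statistic $q$-power via the weights \eqref{LLT-weights-intro} — on top of a $g$-type lattice with alphabet $\vec Q^{[m]}$. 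One then applies the Yang--Baxter equation (Section~\ref{ssec:YB}) to drag the extra row of width-one vertices through the lattice; because $\vec Q^{[m+1]} = \{q^{n-1}\} \cup \cdots \cup \{q\} \cup \{1\}$ differs from $\vec Q^{[m]}$ precisely by one extra copy of each $q^{n-j}$, the exchange relations of Section~\ref{ssec:hecke} together with the $s=0$ reduction to nonsymmetric Hall--Littlewood polynomials let the extra vertices be absorbed, yielding $g^{c^{[m]}}_\Delta(m^n;\vec Q^{[m]})$ times exactly the scalar $(-1)^n q^{-(\binom{nm+n+1}{2}-\binom{nm+1}{2})} (q;q)_n^{2m+1}/(1-q)^{nm}$ needed to cancel the prefactor in \eqref{col-markov-intro}. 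Equivalently, and perhaps more robustly, one can argue that since \eqref{main-result-intro} is a limit of genuine probability kernels $\mathbb{P}_{t,1}$ (each of which sums to one over $\mu \in \mathcal{S}_{(m+1)^n}$), and since the GUE corners factor already integrates to one over $x^{[m+1]}$ by the classical Gibbs property, a tightness/uniform-integrability argument forces the remaining colour factor to sum to one — but to make this rigorous one must rule out escape of mass, which circles back to the "vanishingly small part of the measure" footnote and requires the quantitative bounds on non-generic configurations.

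The main obstacle I anticipate is precisely this last point: converting the formal factorization of the leading asymptotics into the honest weak-convergence statement, including the verification that configurations violating the interlacing/distinctness assumptions \eqref{interlace-disc-intro} carry asymptotically negligible mass. The steepest-descent saddle analysis gives pointwise leading-order behaviour of the integrand, but promoting this to convergence of measures requires uniform control of the contour integrals over the full range of coordinate configurations, and in particular a dominated-convergence argument on the $q$-nested contours whose radii themselves scale with $t$. A secondary, more algebraic obstacle is pinning down the exact combinatorial bookkeeping of the $q$-exponents — matching $\binom{nm+n+1}{2}-\binom{nm+1}{2}$ against the accumulated $\varphi$ and $\psi$ contributions is a genuine (if routine) computation, and getting the sign $(-1)^n$ and the power $(q;q)_n^{2m+1}$ exactly right is where most of the care will be needed. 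I would handle the analytic obstacle by the standard device of splitting the measure into a "good" event (interlacing bundles, well-separated coordinates) where steepest descent applies uniformly, and a complementary "bad" event whose probability is bounded by a crude estimate on the partition functions; the Cauchy-identity proof of \eqref{discrete-sum-to1-intro} then provides the normalization on the good event and confirms no mass is lost.
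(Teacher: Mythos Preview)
Your overall architecture is right --- steepest descent on \eqref{LLTint-intro} for the Plancherel factors, lattice evaluation of $\mathbb{G}_{\mu/\nu}(1)$, read off the colour-dependent residue --- and matches the paper's Section~\ref{sec:asymp}. But two of your key mechanisms are misidentified, and these are precisely the places where the paper does nontrivial work.

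First, the permuted-boundary functions $g^{c^{[m]}}_\Delta$ do \emph{not} arise from the antisymmetrization identities of Section~\ref{ssec:anti}, nor from ``residues at the $q$-geometric progression''. The paper's mechanism (equations \eqref{LLTplanch3}--\eqref{LLTplanch5}) is: use the Hecke relation \eqref{T-f} to write $f_{\breve\mu} = T_\sigma \cdot f_{\ell}$ where $\ell$ is the sorted coordinate vector and $\sigma$ encodes the colour sequence; then invoke the \emph{adjoint property} \eqref{adjoint} of the Hecke generators under the integral pairing to transfer $T_\sigma$ onto $g_\Delta$; the exchange relation \eqref{invT-g} then identifies $\tilde T_\sigma \cdot g_\Delta$ with the permuted-boundary function $g^{c^{[m]}}_\Delta$. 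After this, $f_{\ell}$ factorizes (Remark~\ref{factorize-rmk}), and steepest descent at the critical points $y_i=Q_i$ (not a residue calculation) produces $g^{c^{[m]}}_\Delta(m^n;\vec Q^{[m]})$ as the value of the surviving integrand factor. Without this Hecke-adjoint step you have no route from the rainbow function $f_{\breve\mu}$ in the integrand to the permuted $g^{c}_\Delta$ in the answer.

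Second, for the sum-to-unity property, your ``stack the admissible diagram on top of a $g$-lattice and drag through by Yang--Baxter'' is in the right spirit but is not what the paper does and would be hard to make work directly, because the diagram \eqref{admiss-diag} involves a \emph{fused} horizontal line carrying all $n$ colours. The paper's proof (Theorem~\ref{thm:expand}, Sections~7.3--7.6) introduces an auxiliary partition function $\tilde Z$ with an extra column and an extra \emph{fused} row of width $n$, then computes $\tilde Z$ in two ways: (i) by commuting the fused row to the bottom via \eqref{Cfused-com} and showing $\tilde Z$ is a known polynomial in the fused spectral parameter $u$, freezing it at $u=s^{-2}q^{-n+1}$ to recover $Z(\vec Q^{[m]};i^{[m]})$; (ii) by repeatedly applying the ``peeling'' relation \eqref{row-relation2} and the commutation \eqref{important-relation} to strip the fused row into unfused pieces, producing the expansion over $j^{[m+1]}$ with coefficients $\Psi$. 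Equating the two evaluations gives \eqref{Z-exp}, which rearranges to \eqref{discrete-sum-to1-intro}. The matching of this $\Psi$ with the $\xi$-statistic (equivalently, with the $\Upsilon$ and inv-factors from the asymptotics) is a separate combinatorial identity, equation~\eqref{statistic-match}, proved by induction on $n$ via local moves. Your proposal does not anticipate the need for the fused-row machinery or for this separate statistic-matching step.
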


The sum-to-unity property in Theorem \ref{thm:discr-distr} is not immediate and plays a substantial role in the proof of Theorem \ref{thm:main-intro} and Corollary \ref{cor:main-intro}. In particular, as briefly mentioned in the beginning of Section \ref{intro:asymptotics}, we only compute the asymptotics of the Markov kernel \eqref{skew-cauchy3-intro} under a certain ansatz for the behaviour of the coordinates $\ell^{[m]},\ell^{[m+1]}$ and colour sequences $c^{[m]},c^{[m+1]}$. To show that this ansatz asymptotically exhausts the full measure induced by the kernel \eqref{skew-cauchy3-intro} requires the above sum-to-unity property, whose proof hinges upon a rather unusual expansion property of the partition functions in question; see, in particular, Theorem \ref{thm:expand}. The main tool behind this proof are commutation relations between the row operators used to build our partition functions, which in turn are a consequence of the underlying Yang--Baxter integrability.  

More generally, one may consider collections of colour sequences $\emptyset \prec c^{[1]} \prec \cdots \prec c^{[N]}$ such that for all $1 \leq k \leq N$, $c^{[k]} \in \{1,\dots,n\}^{nk}$ and each colour $\{1,\dots,n\}$ is represented exactly $k$ times in $c^{[k]}$. We refer to such a collection of positive integers as an {\it interlacing triangular array} of {\it rank} $n$ and {\it height} $N$, and let $\mathcal{T}_N(n)$ denote the set of all such objects; see Definition \ref{def:interlace} for a more precise formulation.

As a direct consequence of Theorem \ref{thm:discr-distr} we obtain the following result:
\begin{cor}[Corollary \ref{cor:joint-col} below]
Let $\emptyset \prec c^{[1]} \prec \cdots \prec c^{[N]}$ be an interlacing triangular array generated by $N$ successive applications of the Markov kernel \eqref{col-markov-intro} on the empty sequence $\emptyset$. This array has joint distribution
\begin{align}
\label{joint-distr-col-intro}
\mathbb{P}_{\rm col}\left(c^{[1]} \prec \cdots \prec c^{[N]}\right)
=
\bm{1}_{c^{[1]} \prec \cdots \prec c^{[N]}}
(-1)^{nN} q^{\binom{nN+1}{2}}
\frac{(1-q)^{n \binom{N}{2}}}{(q;q)_{n}^{N^2}}
g^{c^{[N]}}_{\Delta}\left(N^n;\vec{Q}^{[N]}\right)
\prod_{i=1}^{N}
q^{-\xi \left( c^{[i-1]} ; c^{[i]} \right)}.
\end{align}
\end{cor}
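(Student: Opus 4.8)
The plan is to derive \eqref{joint-distr-col-intro} from Theorem \ref{thm:discr-distr} by telescoping the one-step transition probabilities \eqref{col-markov-intro} along the chain $\emptyset \prec c^{[1]} \prec \cdots \prec c^{[N]}$. By definition of the joint distribution generated by successive applications of the Markov kernel on the empty sequence,
\begin{align*}
\mathbb{P}_{\rm col}\left(c^{[1]} \prec \cdots \prec c^{[N]}\right)
=
\prod_{m=0}^{N-1}
\mathbb{P}_{\rm col}\left(c^{[m]} \rightarrow c^{[m+1]}\right),
\end{align*}
where $c^{[0]} = \emptyset$, and the interlacing indicator $\bm{1}_{c^{[1]} \prec \cdots \prec c^{[N]}}$ emerges as the product of the indicators $\bm{1}_{c^{[m]} \prec c^{[m+1]}}$. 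So the task reduces to the bookkeeping of multiplying the explicit right-hand sides of \eqref{col-markov-intro} for $m = 0, 1, \dots, N-1$.

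First I would handle the ratio of partition functions. The factor in \eqref{col-markov-intro} carries $g^{c^{[m+1]}}_{\Delta}\big((m+1)^n;\vec{Q}^{[m+1]}\big)$ in the numerator and $g^{c^{[m]}}_{\Delta}\big(m^n;\vec{Q}^{[m]}\big)$ in the denominator, so the product over $m$ telescopes: all intermediate factors cancel, leaving $g^{c^{[N]}}_{\Delta}\big(N^n;\vec{Q}^{[N]}\big)$ in the numerator and $g^{c^{[0]}}_{\Delta}\big(0^n;\vec{Q}^{[0]}\big)$ in the denominator. The latter is the partition function associated to the empty coloured composition, which equals $1$ (an empty lattice has weight one), so it drops out. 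This produces exactly the single factor $g^{c^{[N]}}_{\Delta}\big(N^n;\vec{Q}^{[N]}\big)$ appearing in \eqref{joint-distr-col-intro}.

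Next I would collect the scalar prefactors. The sign contributes $\prod_{m=0}^{N-1}(-1)^n = (-1)^{nN}$. The factor $q^{-\xi(c^{[m]};c^{[m+1]})}$ assembles directly into $\prod_{i=1}^{N} q^{-\xi(c^{[i-1]};c^{[i]})}$, matching \eqref{joint-distr-col-intro}. The $(1-q)$ powers give $\prod_{m=0}^{N-1}(1-q)^{nm} = (1-q)^{n\binom{N}{2}}$, and the $q$-Pochhammer denominators give $\prod_{m=0}^{N-1}(q;q)_n^{2m+1} = (q;q)_n^{N^2}$ since $\sum_{m=0}^{N-1}(2m+1) = N^2$. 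Finally the remaining power of $q$ telescopes: $\prod_{m=0}^{N-1} q^{\binom{nm+n+1}{2}-\binom{nm+1}{2}}$ is a telescoping product of consecutive binomial coefficients along the arithmetic progression $0, n, 2n, \dots, nN$, collapsing to $q^{\binom{nN+1}{2}-\binom{1}{2}} = q^{\binom{nN+1}{2}}$. Assembling these pieces reproduces \eqref{joint-distr-col-intro} verbatim.

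The routine but slightly delicate step is verifying the telescoping of the $q$-exponent, where one must be careful that the upper index of the outgoing composition at step $m$, namely $nm+n = n(m+1)$, coincides with the lower index $n(m+1)$ at step $m+1$; this is guaranteed by the index shift $m \mapsto nm$ in the definition of $g^{c^{[m]}}_\Delta$. There is no serious obstacle here: everything is a direct consequence of Theorem \ref{thm:discr-distr} together with the elementary observation that the chain of kernels composes multiplicatively and that the empty partition function equals $1$. The only care needed is to confirm that the interlacing indicators compose correctly, i.e.\ that $c^{[m]} \prec c^{[m+1]}$ for all $m$ is equivalent to $c^{[1]} \prec \cdots \prec c^{[N]}$, which is immediate from the definition of $\mathcal{T}_N(n)$.
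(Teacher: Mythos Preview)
Your proof is correct and follows exactly the approach implicit in the paper: the corollary is stated there as a direct consequence of Theorem~\ref{thm:discr-distr}, with no separate proof given, and the intended derivation is precisely the telescoping product of the one-step kernels \eqref{col-markov-intro} that you carry out. All of your bookkeeping checks out, including the telescoping of $\sum_{m=0}^{N-1}\big[\binom{n(m+1)+1}{2}-\binom{nm+1}{2}\big]=\binom{nN+1}{2}$ and the vanishing of the $m=0$ denominator $g^{\emptyset}_{\Delta}(0^n;\vec{Q}^{[0]})=1$.
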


\subsection{Positivity and enumeration conjectures}

A number of interesting observations arise concerning the measure \eqref{joint-distr-col-intro}, as well as the set $\mathcal{T}_N(n)$ of interlacing triangular arrays on which it is supported. The first is a positivity property that we noticed from explicit implementation of the Markov kernel \eqref{col-markov-intro} on a computer: 
\begin{conj}[Conjecture \ref{conj:pos} below]
\label{conj:pos-intro}
Fix integers $m,n \geq 1$ and a colour sequence $c^{[m]} \in \{1,\dots,n\}^{nm}$. Let $\mathbb{P}_{\rm col}(c^{[m]})$ denote the probability of arriving at the colour sequence $c^{[m]}$ after $m$ applications of the Markov kernel \eqref{col-markov-intro} to the trivial sequence $c^{[0]}=\emptyset$. Then one has that
\begin{align}
\label{pos-conj-intro}
\mathbb{P}_{\rm col}\left(c^{[m]}\right)
=
\mathcal{P}\left(c^{[m]}\right)
\cdot
\left( \prod_{i=1}^{n} \frac{1-q}{1-q^i} \right)^{m^2}
\quad
\text{where}\ \ 
\mathcal{P}\left(c^{[m]}\right) \in \mathbb{N}[q].
\end{align}
\end{conj}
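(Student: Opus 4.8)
The plan is to first convert $\mathbb{P}_{\rm col}(c^{[m]})$ into a single explicit rational function of $q$, and then to split the statement \eqref{pos-conj-intro} into two assertions of very different character: that $\mathcal{P}(c^{[m]})$ is an honest polynomial in $q$, and that its coefficients are nonnegative. For the reduction, sum the joint law \eqref{joint-distr-col-intro} (with $N=m$) over all interlacing triangular arrays with fixed top row $c^{[m]}$. Using $\prod_{i=1}^{n}\tfrac{1-q}{1-q^{i}}=1/[n]_q!$ with $[n]_q!=(q;q)_n/(1-q)^n$, this yields
\begin{equation*}
\mathcal{P}(c^{[m]})
=(-1)^{nm}\,q^{\binom{nm+1}{2}}\,(1-q)^{-n\binom{m+1}{2}}\,
g^{c^{[m]}}_{\Delta}\!\left(m^n;\vec{Q}^{[m]}\right)\,\Sigma(c^{[m]}),
\qquad
\Sigma(c^{[m]}):=\sum_{\emptyset\prec c^{[1]}\prec\cdots\prec c^{[m]}}\ \prod_{i=1}^{m}q^{-\xi(c^{[i-1]};c^{[i]})}.
\end{equation*}
Since $\Sigma(c^{[m]})$ is a finite sum of monomials $q^{-k}$ with integer $k\geq 0$, it lies in $\mathbb{N}[q^{-1}]$, so the whole problem concerns the rational function $(1-q)^{-n\binom{m+1}{2}}g^{c^{[m]}}_{\Delta}(m^{n};\vec{Q}^{[m]})$, whose spectral parameters $\vec{Q}^{[m]}$ take values in $\{1,q,\dots,q^{n-1}\}$. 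For the positivity step it is more convenient to avoid the telescoping: by the non-skew marginal (Proposition~\ref{prop:non-skew}) together with Theorem~\ref{thm:main-intro}, in the regime \eqref{coord-scal-intro} the quantity $\mathcal{P}(c^{[m]})=([n]_q!)^{m^{2}}\,\mathbb{P}_{\rm col}(c^{[m]})$ equals, up to an explicit monomial in $q$ depending only on $c^{[m]}$ (absorbing the normalization, the scaling powers, and the limit of $q^{-2\psi(\mu)}$), the $t\to\infty$ stable value of the fused vertex-model partition function $\mathbb{G}_{\mu/\Delta}(1^m)$ of \eqref{G-pf-fused-intro}, with $\mu=\lambda^{[m]}$ carrying the colour sequence $c^{[m]}$, times a colour-dependent residual $D(c^{[m]};q)$ produced by the steepest-descent analysis of the Plancherel factor $\mathbb{G}_{\mu}({\rm Pl}_t)$ via \eqref{LLTint-intro}.

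For the polynomiality of $\mathcal{P}(c^{[m]})$: a priori it is a Laurent polynomial with possible poles only at roots of unity, including $q=1$. At a root of unity $\zeta\neq1$ one must show that $([n]_q!)^{m^{2}}$ supplies exactly the denominator needed to cancel the $(q;q)_n$-factors carried by $g^{c^{[m]}}_{\Delta}(m^{n};\vec{Q}^{[m]})$ in \eqref{joint-distr-col-intro}. I would extract this from the Yang--Baxter commutation relations between the row operators that build $g^{c}_{\Delta}$ --- the same relations behind the expansion identity of Theorem~\ref{thm:expand} --- which give recursions in $n$ and $m$ controlling the order of the pole at $\zeta$ after the specialization $x_i\in\{1,q,\dots,q^{n-1}\}$. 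The point $q=1$ needs separate care, since the regime \eqref{coord-scal-intro} degenerates there (all $n$ bundles collapse to the common scale $t$): one must show that the $nm$-fold coalescence of spectral parameters in $g^{c^{[m]}}_{\Delta}(m^{n};\vec{Q}^{[m]})$ forces a zero of order exactly $n\binom{m+1}{2}$, cancelling $(1-q)^{-n\binom{m+1}{2}}$; this should follow from a leading-order expansion of the vertex weights near $q=1$ together with the antisymmetrization identities for $f$ and $g$. Finally, a bookkeeping estimate on the extremal $q$-degrees of $g^{c^{[m]}}_{\Delta}(m^{n};\vec{Q}^{[m]})$ confirms that the monomial $q^{\binom{nm+1}{2}}$ and the Laurent factor $\Sigma(c^{[m]})$ keep the Newton polygon of $\mathcal{P}(c^{[m]})$ in the nonnegative range, so that no negative powers of $q$ survive.

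The nonnegativity of the coefficients is the main obstacle, and the reason the statement is still a conjecture. The natural strategy is to exhibit a manifestly positive combinatorial formula $\mathcal{P}(c^{[m]})=\sum_{X\in\mathcal{X}(c^{[m]})}q^{\mathrm{stat}(X)}$ over an explicit finite set $\mathcal{X}(c^{[m]})$. The vertex-model description above is suggestive but incomplete: the fused weights \eqref{LLT-weights-intro} are nonnegative monomials in $q$, so $\mathbb{G}_{\mu/\Delta}(1^m)\in\mathbb{N}[q]$ at every finite $t$, and --- because those weights are \emph{local} while the scaled coordinates \eqref{coord-scal-intro} separate into $n$ bundles whose internal and mutual gaps grow like powers of $t^{1/2}$ --- this polynomial stabilizes, for $t$ large, to a bounded-degree element of $\mathbb{N}[q]$; the monomial $q^{-2\psi(\mu)}$ likewise converges to a single $q$-power and is harmless. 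The difficulty is the leftover factor $D(c^{[m]};q)\cdot([n]_q!)^{m^{2}}$: the residual $D$ comes from residue computations in \eqref{LLTint-intro} involving the alternating prefactor $(-1)^{n}$, the Vandermonde-type kernel $\prod_{i<j}(y_j-y_i)/(y_j-q y_i)$, and the antisymmetrization identities for $f$ and $g$, so it is presented as an \emph{alternating} sum whose coefficients are not visibly nonnegative, and I do not see how to force positivity after the $q$-factorial normalization by manipulating the present formulas. What appears to be needed is the ``fermionic lattice model of rank $n$'' alluded to in the introduction --- a vertex model whose weights are genuine nonnegative $q$-monomials and whose partition function reproduces $\mathcal{P}(c^{[m]})$ --- and I expect that constructing it, which should simultaneously explain the enumerative coincidences for $n\leq4$ (e.g.\ Conjecture~\ref{conj:a5}), is the crux of any proof.
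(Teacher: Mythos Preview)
The statement you were asked to address is a \emph{conjecture} in the paper, not a theorem: the authors explicitly state that they ``were only able to observe this phenomenon on the few examples we tested on a computer,'' and offer no proof of either the polynomiality or the nonnegativity of $\mathcal{P}(c^{[m]})$. There is therefore no ``paper's own proof'' to compare your attempt against.

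You correctly recognize this, and your write-up is an honest exploration rather than a claimed proof. A few remarks on the content. Your reduction to the formula for $\mathcal{P}(c^{[m]})$ via \eqref{joint-distr-col-intro} is correct and is the natural starting point. However, your sketch for polynomiality is not yet an argument: the appeals to ``Yang--Baxter commutation relations'' for the pole orders at roots of unity, and to ``leading-order expansion of the vertex weights near $q=1$'' for the zero of order $n\binom{m+1}{2}$, are wishes rather than mechanisms --- neither Theorem~\ref{thm:expand} nor the antisymmetrization identities in the paper directly control the vanishing order of $g^{c^{[m]}}_{\Delta}(m^n;\vec{Q}^{[m]})$ at these special points, and you would need a genuinely new computation. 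For the nonnegativity part, your diagnosis is accurate: the factor coming from the steepest-descent residual is presented as an alternating sum, and no manifestly positive vertex-model formula for $\mathcal{P}(c^{[m]})$ is known. Your suggestion that such a model ``should simultaneously explain the enumerative coincidences for $n\leq 4$'' is speculative; the paper makes no such claim, and the bijection of Gaetz--Gao for $n=3$ does not proceed via a positivity statement.
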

In fact, one sees that \eqref{pos-conj-intro} expresses $\mathbb{P}_{\rm col}(c^{[m]})$ as a ratio of two positive polynomials in $q$; the denominator is nothing but the Poincar\'e polynomial associated to $\mathfrak{S}_n$ raised to the power $m^2$. An explicit illustration of this conjecture, for $n=2$, is given in Figure \ref{fig:n=2}. At this stage we do not know of any combinatorial interpretation of 
$\mathcal{P}\left(c^{[m]}\right)$, although it would be very interesting to find one.

There is also the purely combinatorial problem of enumerating the number of elements in the set $\mathcal{T}_N(n)$. It a trivial fact that $|\mathcal{T}_N(1)| = 1$,\footnote{In the case $n=1$, LLT measures degenerate to their Schur counterparts. In that situation, the asymptotic analysis carried through in this text leads to a single GUE corners process, which has a trivial interlacing $1$-colouring.} and one can easily show that $|\mathcal{T}_N(2)|=2^N$; see Proposition \ref{prop:n=2}. While for $n \geq 3$ we have no direct enumeration of $|\mathcal{T}_N(n)|$, we do present two conjectures relating to $(n+1)$-colourings of certain graphs:
\begin{conj}[Conjecture \ref{conj:triangle} below]
\label{conj:triangle-intro}
Let $G^{\triangle}_N$ denote the triangular graph
\begin{align*}
\tikz{1.3}{
\node at (0,0) {$\bullet$};
\node at (0.5,0) {$\bullet$};
\node at (1,0) {$\bullet$};
\node at (1.5,0) {$\bullet$};
\node at (0.25,0.5) {$\bullet$};
\node at (0.75,0.5) {$\bullet$};
\node at (1.25,0.5) {$\bullet$};
\node at (0.5,1) {$\bullet$};
\node at (1,1) {$\bullet$};
\node at (0.75,1.5) {$\bullet$};
\draw (0,0) -- (1.5,0);
\draw (0.25,0.5) -- (1.25,0.5);
\draw (0.5,1) -- (1,1);
\draw (0.25,0.5) -- (0.5,0);
\draw (0.5,1) -- (1,0);
\draw (0.75,1.5) -- (1.5,0);
\draw (0,0) -- (0.75,1.5);
\draw (0.5,0) -- (1,1);
\draw (1,0) -- (1.25,0.5);
}
\end{align*}
where the number of vertices along one side of the triangle is equal to $N+1$. Let $\mathfrak{g}^{\triangle}_N(4)$ denote the number of $4$-colourings of $G^{\triangle}_N$ (adjacent vertices must have different colours). We conjecture that
\begin{align*}
4\cdot|\mathcal{T}_N(3)| = \mathfrak{g}^{\triangle}_N(4), \qquad \forall\ N \geq 1.
\end{align*} 
\end{conj}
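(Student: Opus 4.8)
The plan is to prove Conjecture~\ref{conj:triangle-intro} bijectively, first replacing proper $4$-colourings of $G^{\triangle}_N$ by an edge-combinatorial object that makes the factor $4$ transparent. Colour the vertices of $G^{\triangle}_N$ by the Klein four-group $K=(\mathbb{Z}/2\mathbb{Z})^{2}$; for a proper colouring, the difference of the endpoint colours along each edge is a nonzero element of $K$, and since the three nonzero elements of $K$ sum to $0$, the three differences around any unit triangle must be the three \emph{distinct} nonzero elements (were two of them equal, the third would vanish, contradicting properness). Conversely, an assignment of nonzero elements of $K$ to the edges that is ``rainbow'' on every unit face is a $K$-valued $1$-cocycle on the triangulated disc $G^{\triangle}_N$, hence --- since $H^{1}$ of a disc vanishes --- the coboundary of a vertex colouring, which is automatically proper and unique up to a global translation by $K$. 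Identifying the three nonzero elements of $K$ with $\{1,2,3\}$, this gives $\mathfrak{g}^{\triangle}_N(4)=4E_N$, where $E_N$ counts labellings of the edges of $G^{\triangle}_N$ by $\{1,2,3\}$ that restrict to a bijection on the three-element edge set of every unit triangle; it therefore suffices to biject $\mathcal{T}_N(3)$ with this family of labellings.

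The bijection I would use is a horizontal slicing. Index the $N+1$ vertex rows of $G^{\triangle}_N$ by $0,\dots,N$ from the apex, row $k$ carrying $k+1$ vertices, and let ``level $k$'' (for $1\leq k\leq N$) consist of the $k$ horizontal edges inside vertex-row $k$ together with the $k$ left-leaning and $k$ right-leaning edges joining rows $k-1$ and $k$: a set of $3k$ edges, split into three bundles of size $k$ by edge direction, which matches the three consecutive length-$k$ blocks of the word $c^{[k]}\in\{1,2,3\}^{3k}$. The $k$ upward unit triangles of the strip between rows $k-1$ and $k$ partition the level-$k$ edges into $k$ rainbow triples, so in any valid labelling each colour occurs exactly $k$ times at level $k$, precisely as required of $c^{[k]}$; the $k-1$ downward unit triangles of the strip each pair one horizontal edge of level $k-1$ with one left-leaning and one right-leaning edge of level $k$. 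One then shows that an edge-labelling is rainbow on every unit face if and only if, for each $k$, the level-$k$ labels read off in this way form a legitimate colour sequence $c^{[k]}$ with $c^{[k-1]}\prec c^{[k]}$ in the sense of the admissible diagram \eqref{admiss-diag}, the relation $\emptyset\prec c^{[1]}$ corresponding to the single apex triangle. The map sending an edge-labelling to $(c^{[1]},\dots,c^{[N]})$ is then the desired bijection onto $\mathcal{T}_N(3)$, and $4\cdot|\mathcal{T}_N(3)|=\mathfrak{g}^{\triangle}_N(4)$ follows.

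The crux --- and the step I expect to be the main obstacle --- is the last equivalence: the relation $\prec$ is defined \emph{operationally}, through non-colliding coloured paths in \eqref{admiss-diag}, whereas the strip constraints are stated \emph{locally}, as rainbow conditions on unit faces. Reconciling the two requires analysing how the $k$ upward and $k-1$ downward rainbow triples of a strip chain together horizontally, and proving by a short induction along the strip (bundle by bundle) that the rainbow extensions of a fixed level-$(k-1)$ labelling are in bijection with the sequences $c^{[k]}$ satisfying $c^{[k-1]}\prec c^{[k]}$ --- in particular that $c^{[k-1]}\prec c^{[k]}$ already forces the within-level (upward-triangle) rainbow property of $c^{[k]}$, so that no configuration is over- or under-counted. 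A secondary, purely bookkeeping, matter is matching the boundary data of \eqref{admiss-diag} --- a full palette $[1,n]$ entering from the right and nothing exiting on the left --- with the slanted edges along the two sides of $G^{\triangle}_N$. Once these local facts are in place the global bijection is immediate; the cases $N=1,2$, where $\mathfrak{g}^{\triangle}_N(4)=24,192$ and $|\mathcal{T}_N(3)|=6,48$, already exhibit the mechanism, and an equivalent formulation --- that the transfer matrix assembling $\mathfrak{g}^{\triangle}_N(4)$ strip by strip is conjugate to the one assembling $|\mathcal{T}_N(3)|$ level by level --- reduces to the same local computation.
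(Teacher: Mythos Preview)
The paper does not prove this statement: it is stated as a conjecture, and the authors explicitly cite \cite{GaetzGao} for a subsequent bijective proof. So there is no in-paper argument to compare against; I can only assess your proposal on its own terms and against what the paper says about the Gaetz--Gao proof (namely, that it is bijective).

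Your first reduction is correct and classical: colouring vertices of a planar triangulation by the Klein four-group $K$ and passing to edge-differences identifies proper $4$-colourings, up to a global $K$-translate, with edge-labellings by the three nonzero elements of $K$ that are rainbow on every unit face. This cleanly explains the factor of $4$ and reduces the conjecture to $|\mathcal{T}_N(3)|=E_N$.

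Where the proposal is not yet a proof is exactly where you flag it. You map the three edge-directions at level $k$ to the three length-$k$ blocks of $c^{[k]}$, and note that the upward triangles force each colour to appear $k$ times. That is fine. But the claim that the downward-triangle rainbow constraints between levels $k-1$ and $k$ are \emph{equivalent} to $c^{[k-1]}\prec c^{[k]}$ in the sense of the admissible diagram \eqref{admiss-diag} is asserted, not established. The relation $\prec$ is not a local three-term constraint: it is a global non-collision condition on coloured paths along the whole row, with the extra feature that a fresh copy of $[1,n]$ enters at the right and nothing exits at the left. In your picture the downward triangles each touch one horizontal edge of level $k-1$ and two slanted edges of level $k$, so they only couple one block of $c^{[k-1]}$ to two blocks of $c^{[k]}$ locally; it is not obvious, and you do not show, that chaining these local rainbow constraints along a strip reproduces precisely the path-avoidance condition of \eqref{admiss-diag}, including the boundary injection of $[1,n]$. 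You also need to fix, and justify, a reading order within each direction-bundle so that the resulting word really lands in the support of $\prec$; different orderings give different words, and only one will do. Until that equivalence is proved (in both directions, so that neither side over- or under-counts), the argument remains an outline rather than a proof --- which is consistent with the paper's own status for this statement.
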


\begin{conj}[Conjecture \ref{conj:a5} below]
\label{conj:a5-intro}
Let $G^{\varhexstar}_N$ denote the graph
\begin{align*}
\tikz{1.3}{
\node at (0,0) {$\bullet$};
\node at (0.5,0) {$\bullet$};
\node at (1,0) {$\bullet$};
\node at (1.5,0) {$\bullet$};
\node at (0,0.5) {$\bullet$};
\node at (0.5,0.5) {$\bullet$};
\node at (1,0.5) {$\bullet$};
\node at (1.5,0.5) {$\bullet$};
\node at (0,1) {$\bullet$};
\node at (0.5,1) {$\bullet$};
\node at (1,1) {$\bullet$};
\node at (1.5,1) {$\bullet$};
\node at (0,1.5) {$\bullet$};
\node at (0.5,1.5) {$\bullet$};
\node at (1,1.5) {$\bullet$};
\node at (1.5,1.5) {$\bullet$};
\draw (0.5,0) -- (0,0.5);
\draw (1,0) -- (0,1);
\draw (1.5,0) -- (0,1.5);
\draw (1.5,0.5) -- (0.5,1.5);
\draw (1.5,1) -- (1,1.5);
\draw (1,0) -- (1.5,0.5);
\draw (0.5,0) -- (1.5,1);
\draw (0,0) -- (1.5,1.5);
\draw (0,0.5) -- (1,1.5);
\draw (0,1) -- (0.5,1.5);
\draw (0,0) -- (0,1.5);
\draw (0.5,0) -- (0.5,1.5);
\draw (1,0) -- (1,1.5);
\draw (1.5,0) -- (1.5,1.5);
\draw (0,0) -- (1.5,0);
\draw (0,0.5) -- (1.5,0.5);
\draw (0,1) -- (1.5,1);
\draw (0,1.5) -- (1.5,1.5);
}
\end{align*}
where two vertices share an edge if they are connected via a king move on the chessboard (that is, they a connected via a unit horizontal, vertical, or diagonal step), and the number of vertices along one side of the square is equal to $N+1$. Let $\mathfrak{g}^{\varhexstar}_N(5)$ denote the number of $5$-colourings of $G^{\varhexstar}_N$ (adjacent vertices must have different colours). We conjecture that
\begin{align*}
5 \cdot |\mathcal{T}_N(4)| = \mathfrak{g}^{\varhexstar}_N(5),
\qquad
\forall\ N \geq 1.
\end{align*}
\end{conj}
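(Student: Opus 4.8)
\emph{Towards a proof.}
The plan is to adapt the bijective argument of Gaetz--Gao \cite{GaetzGao} that settles the rank-$3$ analogue (Conjecture~\ref{conj:triangle-intro}). It is convenient first to absorb the factor of $5$: by the $\mathfrak{S}_5$ symmetry of colour relabelling, the number of proper $5$-colourings of $G^{\varhexstar}_N$ in which a fixed corner vertex carries a fixed colour equals $\tfrac{1}{5}\,\mathfrak{g}^{\varhexstar}_N(5)$, so the conjecture is equivalent to exhibiting a bijection between $\mathcal{T}_N(4)$ and the set of such corner-anchored proper $5$-colourings of the king graph. This is the statement I would attack.

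Both sides have a natural row-by-row (respectively level-by-level) description, and I would construct the bijection by induction on $N$. On the colouring side, every $2\times 2$ block of $G^{\varhexstar}_N$ is a $4$-clique, so a proper $5$-colouring restricts on each of the $N+1$ rows to a proper colouring of a path and forces any two consecutive rows to be \emph{king-compatible} (the colour at a vertex differs from those at the three vertices directly and diagonally below it); conversely these two conditions characterise proper colourings. On the array side, one must first make the interlacing relation $\prec$ at rank $n=4$ completely explicit: scanning the thick horizontal line of the admissible diagram \eqref{admiss-diag} from right to left and recording which colours it carries, the ``no colour twice'' requirement becomes a finite list of forbidden local patterns among the $j$- and $i$-arrows of a bundle and of its neighbours, which should package into a nearest-neighbour growth rule carrying $c^{[k]}$ to $c^{[k+1]}$. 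The crux is then to match this growth rule with the king-compatibility transition, so that a height-$N$ bijection propagates to height $N+1$. The base case $N=1$ is immediate: $\mathcal{T}_1(4)$ consists of the $24$ orderings of $\{1,2,3,4\}$ (all admissible, by inspection of \eqref{admiss-diag}), $G^{\varhexstar}_1 = K_4$ has $120$ proper $5$-colourings, and $120 = 5\cdot 24$.

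The main obstacle---exactly as in the rank-$3$ case---is the identification of the graph-theoretic content of rank-$4$ admissibility: with four interleaved bundles the colour bookkeeping along the horizontal line is considerably more intricate than at ranks $2$ and $3$, and it is not transparent a priori why the constraint should be equivalent to king-move properness on a square region rather than to colourings of some other graph. Establishing this equivalence, and then checking that the local growth rule is ``self-similar'' across all levels so that the induction closes, is where the genuine difficulty lies. A less direct route that avoids producing an explicit bijection would be to extract a closed-form enumeration of $|\mathcal{T}_N(4)|$ from the vertex-model structure underlying \eqref{col-markov-intro} and match it against the known chromatic data of the king graph; this still rests on the structural understanding of rank-$4$ admissibility just described, but trades the bijection for an identity of generating functions.
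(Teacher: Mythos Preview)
The paper does not prove this statement: it is stated as Conjecture~\ref{conj:a5} and the authors write explicitly that it ``remains open'' (only the rank-$3$ analogue, Conjecture~\ref{conj:triangle}, has been settled, by Gaetz--Gao). So there is no proof in the paper for your proposal to be compared against.

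Your write-up is accordingly not a proof but a programme, and you are upfront about this (``Towards a proof'', ``where the genuine difficulty lies''). The outline is reasonable: absorbing the factor of $5$ via colour relabelling is the natural normalisation, the base case $N=1$ is correctly verified ($|\mathcal{T}_1(4)|=24$ since every permutation of $\{1,2,3,4\}$ is admissible as $c^{[1]}$, and $G^{\varhexstar}_1=K_4$ has $5!=120$ proper $5$-colourings), and modelling the attack on the Gaetz--Gao bijection is the obvious thing to try. You have also correctly located the obstruction: at rank $4$ the horizontal line in the admissibility diagram carries up to four colours simultaneously, and it is not at all clear that the resulting local constraints repackage as king-move properness on a square grid rather than as colourings of some other graph. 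Until that identification is made precise, the inductive step cannot even be formulated, so what you have is a plausible plan with its hard part still entirely ahead of you --- which is exactly the status the paper assigns to the conjecture.
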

Conjecture \ref{conj:triangle-intro} has recently been proved bijectively \cite{GaetzGao}, but Conjecture \ref{conj:a5-intro} remains open. Based on these conjectures, it is tempting to speculate about the possibility of assigning meaningful probability measures to graph colourings, but this lies outside the scope of the current work.

\subsection*{Acknowledgments}
	
Amol Aggarwal was partially supported by a Clay Research Fellowship, a Packard Fellowship, and the IAS School of Mathematics. Alexei Borodin was partially supported by the NSF grants DMS-1664619, DMS-1853981, and the Simons Investigator program. Michael Wheeler was supported by an Australian Research Council Future Fellowship, grant FT200100981.

\section{Fermionic vertex models}

In this section we review the basic vertex models that will be used throughout the text; these are {\it fermionic vertex models}, as introduced in \cite{ABW21}. We give the explicit form of our vertex weights in Sections \ref{ssec:L}--\ref{ssec:M}, as well as the Yang--Baxter equations that they satisfy, in Section \ref{ssec:YB}. We conclude by introducing {\it row operators} and studying algebraic relations between them, in Sections \ref{ssec:row}--\ref{ssec:cr}; these results will be needed in the subsequent material on partition functions in Section \ref{sec:pf}.

\subsection{Notation}
\label{ssec:not}

For all pairs of positive integers $i,j$ such that $i \leq j$ let $[i,j] \subset \mathbb{N}$ denote the interval $\{i,i+1,\dots,j\}$. Similarly, we define $(i,j] = [i+1,j]$ when $i<j$, and $(i,j] = \emptyset$ when $i=j$. For all $1\leq i \leq n$, let $\bm{e}_i \in \mathbb{R}^n$ denote the $i$-th Euclidean unit vector.  Let $\bm{e}_0 \in \mathbb{R}^n$ denote the zero vector. Define $\bm{e}_{[i,j]} = \sum_{i \leq k \leq j} \bm{e}_k$; more generally, for any non-empty set $I \subset \mathbb{N}$ we write $\bm{e}_I = \sum_{i \in I} \bm{e}_i$. For any vector $\A = (A_1,\dots,A_n) \in (\mathbb{Z}_{\geq 0})^n$ and indices $i,j \in \{1,\dots,n\}$ we define 
\begin{align*}
\A^{+}_{i}
=
\A + \bm{e}_i,
\quad
\A^{-}_{i}
=
\A - \bm{e}_i,
\quad
\A^{+-}_{ij}
=
\A + \bm{e}_i - \bm{e}_j,
\quad
\As{i}{j}
=
\sum_{k=i}^{j} A_k,
\quad
|\A|
=
\As{1}{n}
=
\sum_{k=1}^{n} A_k,
\end{align*}
where in the second last case it is assumed that $i \leq j$. By agreement, we choose $\As{i}{j} = 0$ for $i>j$.

Let $\mathfrak{S}_m$ denote the symmetric group of degree $m$. For any set $I \subset \mathbb{N}$ we define $\mathfrak{S}_I$ to be the set of all permutations of the elements in $I$; in particular, we then have $\mathfrak{S}_{[1,m]}\equiv \mathfrak{S}_m$.

\subsection{$L$-weights}
\label{ssec:L}

Our partition functions will be expressed in terms of two families of vertex weights. The first of these were introduced in \cite[Example 8.1.2 and Figure 8.2]{ABW21} and we call them {\it $L$-weights}; they are denoted by\footnote{We use a tilde when writing our weights for consistency with the work of \cite{BorodinW}. In that earlier work, which dealt with models based on $U_q(\widehat{\mathfrak{sl}}(n+1))$ rather than $U_q(\widehat{\mathfrak{sl}}(1|n))$ of the current text, the notation $\tilde{L}^{(s)}_{z,q}(\A,b;\C,d)$ was reserved for vertex weights in the {\it stochastic gauge} (that is, with a sum-to-unity property). While the weights \eqref{fund-weights} no longer satisfy a sum-to-unity property, it is easily seen that they have a completely analogous structure to their tilde analogues in \cite[Chapter 2]{BorodinW}.}
\begin{align}
\label{generic-L}
\tilde{L}^{(s)}_{z,q}(\A,b;\C,d)
\equiv
\tilde{L}_z(\A,b;\C,d)
=
\tikz{0.7}{
\node[left] at (-1.5,0) {$z \rightarrow$};
\draw[lgray,line width=1pt,->] (-1,0) -- (1,0);
\draw[lgray,line width=4pt,->] (0,-1) -- (0,1);
\node[left] at (-1,0) {\tiny $b$};\node[right] at (1,0) {\tiny $d$};
\node[below] at (0,-1) {\tiny $\A$};\node[above] at (0,1) {\tiny $\C$};
\node[below] at (0,-1.4) {$(s)$};
}
\quad\quad
b,d \in \{0,1,\dots,n\},
\quad
\A,\C \in \{0,1\}^n.
\end{align}
Labels assigned to the left and right horizontal edges take values in $\{0,1,\dots,n\}$, while labels assigned to the bottom and top vertical edges are $n$-dimensional binary strings. We define
\begin{align}
\label{conserve-L}
\tilde{L}^{(s)}_{z,q}(\A,b;\C,d)
=
0,
\qquad
\text{unless}
\qquad
\A + \bm{e}_b = \C + \bm{e}_d.
\end{align}
The property \eqref{conserve-L} expresses conservation of particles as one traverses through the vertex in the SW $\rightarrow$ NE direction. For the cases where the constraint $\A + \bm{e}_b = \C + \bm{e}_d$ is obeyed, we have the following table of weights:
\begin{align}
\label{fund-weights}
\begin{tabular}{|c|c|c|}
\hline
\quad
\tikz{0.7}{
\draw[lgray,line width=1pt,->] (-1,0) -- (1,0);
\draw[lgray,line width=4pt,->] (0,-1) -- (0,1);
\node[left] at (-1,0) {\tiny $0$};\node[right] at (1,0) {\tiny $0$};
\node[below] at (0,-1) {\tiny $\A$};\node[above] at (0,1) {\tiny $\A$};
}
\quad
&
\quad
\tikz{0.7}{
\draw[lgray,line width=1pt,->] (-1,0) -- (1,0);
\draw[lgray,line width=4pt,->] (0,-1) -- (0,1);
\node[left] at (-1,0) {\tiny $i$};\node[right] at (1,0) {\tiny $i$};
\node[below] at (0,-1) {\tiny $\A$};\node[above] at (0,1) {\tiny $\A$};
}
\quad
&
\quad
\tikz{0.7}{
\draw[lgray,line width=1pt,->] (-1,0) -- (1,0);
\draw[lgray,line width=4pt,->] (0,-1) -- (0,1);
\node[left] at (-1,0) {\tiny $0$};\node[right] at (1,0) {\tiny $i$};
\node[below] at (0,-1) {\tiny $\A$};\node[above] at (0,1) {\tiny $\A^{-}_i$};
}
\quad
\\[1.3cm]
\quad
$\dfrac{1-q^{\As{1}{n}}sz}{1-s z}$
\quad
& 
\quad
$\dfrac{(-1)^{A_i}(s-q^{A_i}z)q^{\Ap{i}{n}}s}{1-s z}$
\quad
& 
\quad
$\dfrac{(q^{A_i}-1) q^{\Ap{i}{n}}sz}{1-s z}$
\quad
\\[0.7cm]
\hline
\quad
\tikz{0.7}{
\draw[lgray,line width=1pt,->] (-1,0) -- (1,0);
\draw[lgray,line width=4pt,->] (0,-1) -- (0,1);
\node[left] at (-1,0) {\tiny $i$};\node[right] at (1,0) {\tiny $0$};
\node[below] at (0,-1) {\tiny $\A$};\node[above] at (0,1) {\tiny $\A^{+}_i$};
}
\quad
&
\quad
\tikz{0.7}{
\draw[lgray,line width=1pt,->] (-1,0) -- (1,0);
\draw[lgray,line width=4pt,->] (0,-1) -- (0,1);
\node[left] at (-1,0) {\tiny $i$};\node[right] at (1,0) {\tiny $j$};
\node[below] at (0,-1) {\tiny $\A$};\node[above] at (0,1) 
{\tiny $\A^{+-}_{ij}$};
}
\quad
&
\quad
\tikz{0.7}{
\draw[lgray,line width=1pt,->] (-1,0) -- (1,0);
\draw[lgray,line width=4pt,->] (0,-1) -- (0,1);
\node[left] at (-1,0) {\tiny $j$};\node[right] at (1,0) {\tiny $i$};
\node[below] at (0,-1) {\tiny $\A$};\node[above] at (0,1) {\tiny $\A^{+-}_{ji}$};
}
\quad
\\[1.3cm] 
\quad
$\dfrac{1-s^2 q^{\As{1}{n}}}{1-s z}$
\quad
& 
\quad
$\dfrac{(q^{A_j}-1) q^{\Ap{j}{n}}sz}{1-s z}$
\quad
&
\quad
$\dfrac{(q^{A_i}-1) q^{\Ap{i}{n}}s^2}{1-s z}$
\quad
\\[0.7cm]
\hline
\end{tabular} 
\end{align}
where it is assumed that $1 \leq i < j \leq n$.

The weights \eqref{fund-weights} take a very similar form to the weights $\tilde{L}_z(\A,b;\C,d)$ defined in \cite[Sections 2.2 and 2.5]{BorodinW}; in fact, the two sets of weights differ only with respect to two details. The first is that the weights \eqref{generic-L} are defined only for $\A,\C \in \{0,1\}^n$ (that is, for {\it fermionic} states), whereas in \cite[Section 2.2]{BorodinW} one has $\A,\C \in (\mathbb{Z}_{\geq 0})^n$ ({\it bosonic} states). The second is that the specific weight $\tilde{L}_z(\A,i;\A,i)$ is different across the two works\footnote{Indeed, in \cite[Sections 2.2 and 2.5]{BorodinW}, one has $$\tilde{L}_z(\A,i;\A,i)=\dfrac{(sq^{A_i}-z)q^{\Ap{i}{n}}s}{1-s z}$$}, when $i \in [1,n]$ and $A_i > 0$. 

In certain partition functions that we subsequently define, the boundary conditions inject into the lattice exactly one particle of each colour $\{1,\dots,n\}$. In such partition functions, each colour $\{1,\dots,n\}$ flows at most once through a vertex of the lattice; in this setting, both of the differences between the weights \eqref{generic-L} and those of \cite[Sections 2.2 and 2.5]{BorodinW}, pointed out above, are no longer apparent. This fact will allow us to deduce matchings between certain functions that we define in the present work and those of \cite{BorodinW}, in spite of the fact that the model used in the current text is {\it a priori} different.

\subsection{$M$-weights}
\label{ssec:M}

The second family of vertex weights we call {\it $M$-weights}; they are denoted by
\begin{align}
\label{generic-M}
\tilde{M}^{(s)}_{z,q}(\A,b;\C,d)
\equiv
\tilde{M}_z(\A,b;\C,d)
=
\tikz{0.7}{
\node[right] at (1.5,0) {$\leftarrow z$};
\draw[lgray,line width=1.5pt,<-] (-1,0) -- (1,0);
\draw[lgray,line width=4pt,->] (0,-1) -- (0,1);
\node[left] at (-1,0) {\tiny $d$};\node[right] at (1,0) {\tiny $b$};
\node[below] at (0,-1) {\tiny $\A$};\node[above] at (0,1) {\tiny $\C$};
\node[below] at (0,-1.4) {$(s)$};
}
\quad\quad
b,d \in \{0,1,\dots,n\},
\quad
\A,\C \in \{0,1\}^n.
\end{align}
As in the case of $L$-weights, labels assigned to the left and right horizontal edges take values in $\{0,1,\dots,n\}$, while labels assigned to the bottom and top vertical edges are $n$-dimensional binary strings. In contrast to $L$-weights, particle conservation for $M$-type vertices happens in the SE $\rightarrow$ NW direction, namely:
\begin{align}
\label{conserve-M}
\tilde{M}^{(s)}_{z,q}(\A,b;\C,d)
=
0,
\qquad
\text{unless}
\qquad
\A + \bm{e}_b = \C + \bm{e}_d.
\end{align}
For all $\A,\C \in \{0,1\}^n$ and $b,d \in \{0,1,\dots,n\}$, we define 
\begin{align}
\label{LM-sym}
\tilde{M}^{(s)}_{z,q}(\A,b;\C,d)
=
\tilde{L}^{(1/s)}_{1/z,1/q}(\A,b;\C,d),
\end{align}
expressing every $M$-weight in terms of a corresponding $L$-weight, under reflection about the thick vertical line of the vertex, and reciprocation of the parameters $z$, $q$, $s$.

\subsection{Yang--Baxter equations}
\label{ssec:YB}

We introduce one further set of vertex weights which arise from the fundamental $R$-matrix for the quantum affine superalgebra $U_q(\widehat{\mathfrak{sl}}(1|n))$ \cite{BazhanovShadrikov}; these we call {\it fundamental weights}. They are denoted by the crossing of two thin lines:
\begin{align}
\label{R-vert}
R_{z,q}(a,b;c,d)
\equiv
R_z(a,b;c,d)
=
\tikz{0.7}{
\draw[lgray,line width=1.5pt,->] (-1,0) -- (1,0);
\draw[lgray,line width=1.5pt,->] (0,-1) -- (0,1);
\node[left] at (-1,0) {\tiny $b$};\node[right] at (1,0) {\tiny $d$};
\node[below] at (0,-1) {\tiny $a$};\node[above] at (0,1) {\tiny $c$};
},
\quad
a,b,c,d \in \{0,1,\dots,n\}.
\end{align}
These vertices have the conservation property
\begin{align*}
R_{z,q}(a,b;c,d)
=
0,
\qquad
\text{unless}
\qquad
\bm{e}_a + \bm{e}_b = \bm{e}_c + \bm{e}_d.
\end{align*}
For the cases where the constraint $\bm{e}_a + \bm{e}_b = \bm{e}_c + \bm{e}_d$ is obeyed, we have the following table of weights:
\begin{align}
\label{fund-vert}
\begin{tabular}{|c|c|c|}
\hline
\quad
\tikz{0.6}{
	\draw[lgray,line width=1.5pt,->] (-1,0) -- (1,0);
	\draw[lgray,line width=1.5pt,->] (0,-1) -- (0,1);
	\node[left] at (-1,0) {\tiny $0$};\node[right] at (1,0) {\tiny $0$};
	\node[below] at (0,-1) {\tiny $0$};\node[above] at (0,1) {\tiny $0$};
}
\quad
&
\quad
\tikz{0.6}{
	\draw[lgray,line width=1.5pt,->] (-1,0) -- (1,0);
	\draw[lgray,line width=1.5pt,->] (0,-1) -- (0,1);
	\node[left] at (-1,0) {\tiny $a$};\node[right] at (1,0) {\tiny $a$};
	\node[below] at (0,-1) {\tiny $b$};\node[above] at (0,1) {\tiny $b$};
}
\quad
&
\quad
\tikz{0.6}{
	\draw[lgray,line width=1.5pt,->] (-1,0) -- (1,0);
	\draw[lgray,line width=1.5pt,->] (0,-1) -- (0,1);
	\node[left] at (-1,0) {\tiny $a$};\node[right] at (1,0) {\tiny $b$};
	\node[below] at (0,-1) {\tiny $b$};\node[above] at (0,1) {\tiny $a$};
}
\quad
\\[1.3cm]
\quad
$1$
\quad
& 
\quad
$\dfrac{q(1-z)}{1-qz}$
\quad
& 
\quad
$\dfrac{1-q}{1-qz}$
\quad
\\[0.7cm]
\hline
\quad
\tikz{0.6}{
	\draw[lgray,line width=1.5pt,->] (-1,0) -- (1,0);
	\draw[lgray,line width=1.5pt,->] (0,-1) -- (0,1);
	\node[left] at (-1,0) {\tiny $b$};\node[right] at (1,0) {\tiny $b$};
	\node[below] at (0,-1) {\tiny $b$};\node[above] at (0,1) {\tiny $b$};
}
\quad
&
\quad
\tikz{0.6}{
	\draw[lgray,line width=1.5pt,->] (-1,0) -- (1,0);
	\draw[lgray,line width=1.5pt,->] (0,-1) -- (0,1);
	\node[left] at (-1,0) {\tiny $b$};\node[right] at (1,0) {\tiny $b$};
	\node[below] at (0,-1) {\tiny $a$};\node[above] at (0,1) {\tiny $a$};
}
\quad
&
\quad
\tikz{0.6}{
	\draw[lgray,line width=1.5pt,->] (-1,0) -- (1,0);
	\draw[lgray,line width=1.5pt,->] (0,-1) -- (0,1);
	\node[left] at (-1,0) {\tiny $b$};\node[right] at (1,0) {\tiny $a$};
	\node[below] at (0,-1) {\tiny $a$};\node[above] at (0,1) {\tiny $b$};
}
\quad
\\[1.3cm]
\quad
$\dfrac{z-q}{1-qz}$
\quad
& 
\quad
$\dfrac{1-z}{1-qz}$
\quad
&
\quad
$\dfrac{(1-q)z}{1-qz}$
\quad 
\\[0.7cm]
\hline
\end{tabular}
\end{align}
where we assume that $0 \leq a < b \leq n$.

The $L$-weights, $M$-weights and fundamental weights satisfy a collection of Yang--Baxter equations, that we record as a single theorem below. These Yang--Baxter equations underpin the algebraic relations between the row operators that we define in Section \ref{ssec:row}.

\begin{thm}
For any fixed integers $a_1,a_2,a_3,b_1,b_2,b_3 \in \{0,1,\dots,n\}$ and vectors 
$\A,\B \in \{0,1\}^n$, the vertex weights \eqref{generic-L}, \eqref{generic-M}, \eqref{R-vert} satisfy the relations
\begin{multline}
\label{RLLa}
\sum_{0 \leq c_1,c_2 \leq n}
\
\sum_{\C \in \{0,1\}^n}
R_{y/x}(a_2,a_1;c_2,c_1)
\tilde{L}_x(\A,c_1;\C,b_1)
\tilde{L}_y(\C,c_2;\B,b_2)
\\
=
\sum_{0 \leq c_1,c_2 \leq n}
\
\sum_{\C \in \{0,1\}^n}
\tilde{L}_y(\A,a_2;\C,c_2)
\tilde{L}_x(\C,a_1;\B,c_1)
R_{y/x}(c_2,c_1;b_2,b_1),
\end{multline}

\begin{multline}
\label{RLLb}
\sum_{0 \leq c_1,c_3 \leq n}
\
\sum_{\C \in \{0,1\}^n}
\tilde{L}_x(\A,a_1;\C,c_1)
R_{1/(qxz)}(a_3,c_1;c_3,b_1)
\tilde{M}_z(\C,c_3;\B,b_3)
\\
=
\sum_{0 \leq c_1,c_3 \leq n}
\
\sum_{\C \in \{0,1\}^n}
\tilde{M}_z(\A,a_3;\C,c_3)
R_{1/(qxz)}(c_3,a_1;b_3,c_1)
\tilde{L}_x(\C,c_1;\B,b_1),
\end{multline}

\begin{multline}
\label{RLLc}
\sum_{0 \leq c_2,c_3 \leq n}
\
\sum_{\C \in \{0,1\}^n}
\tilde{M}_y(\A,a_2;\C,c_2)
\tilde{M}_z(\C,a_3;\B,c_3)
R_{y/z}(c_3,c_2;b_3,b_2)
\\
=
\sum_{0 \leq c_2,c_3 \leq n}
\
\sum_{\C \in \{0,1\}^n}
R_{y/z}(a_3,a_2;c_3,c_2)
\tilde{M}_z(\A,c_3;\C,b_3)
\tilde{M}_y(\C,c_2;\B,b_2).
\end{multline}
\end{thm}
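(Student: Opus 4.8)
The plan is to derive \eqref{RLLa}--\eqref{RLLc} from the Yang--Baxter equation for the fundamental $R$-matrix $R_{z,q}$ of $U_q(\widehat{\mathfrak{sl}}(1|n))$. Recall that $R_{z,q}$ satisfies the graded hexagon identity $R_{12}R_{13}R_{23}=R_{23}R_{13}R_{12}$ on a triple of vector representations (spectral parameters attached to the three factors); this is classical \cite{BazhanovShadrikov}, and since only three distinct colours can occur on the three lines it may in any case be reduced to a finite identity in rank $\le 3$. The key structural input is that the $L$-weights \eqref{fund-weights} are, up to the overall gauge mentioned in the footnote to Section \ref{ssec:L}, the matrix elements of the $R$-matrix intertwining the vector representation $V_z$ on the horizontal (thin) line with a higher-spin fermionic module $W^{(s)}$ --- occupations in $\{0,1\}^n$ --- on the vertical (thick) line, obtained from $R_{z,q}$ by the standard fusion procedure in the spin parameter $s$; likewise the $M$-weights \eqref{generic-M} arise by replacing $V_z$ with its right dual $V_z^{\vee}$, which is the algebraic source both of the reflected horizontal arrow and of the identity \eqref{LM-sym}. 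Granting this identification, \eqref{RLLa} is exactly the hexagon for the triple $(V_x,V_y,W^{(s)})$, read off by interpreting each sum over an intermediate state as a matrix product.

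The first substantive step is therefore to make the fusion identification precise: one checks that \eqref{fund-weights} solves the intertwining equations for $R_{z,q}$, equivalently that it obeys the $RLL=LLR$ recursion in $s$ that characterises fused weights, with base case \eqref{fund-vert}. With this in place, \eqref{RLLc} follows from \eqref{RLLa} by elementary manipulations: using \eqref{LM-sym} to rewrite each $\tilde M_w$ as $\tilde L^{(1/s)}_{1/w,1/q}$, invoking the crossing symmetry $R_{z,q}(a,b;c,d)=R_{1/z,1/q}(b,a;d,c)$ of the fundamental weights (an immediate check from \eqref{fund-vert}), and relabelling the summation colours, one lands exactly on \eqref{RLLa} with $(x,y,q,s)\mapsto(1/z,1/y,1/q,1/s)$ and the outer horizontal colours transposed. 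Equation \eqref{RLLb} is the genuinely mixed relation: converting its $M$-factor via \eqref{LM-sym} leaves the two $L$-type factors at reciprocal gauges $s$ and $1/s$, so it does not reduce to \eqref{RLLa}; instead it is the hexagon for $(V_x,V_z^{\vee},W^{(s)})$, with the crossing symmetry used to recognise the middle factor as the $R$-matrix between a module at $x$ and a dual module at $z$, whose spectral parameter is forced to be $1/(qxz)$ by the (super) dual-Coxeter shift of $\widehat{\mathfrak{sl}}(1|n)$.

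A more hands-on alternative, and probably the safest way to actually certify the identities, is direct verification: fixing the external labels $a_i,b_i\in\{0,\dots,n\}$ and $\A,\B\in\{0,1\}^n$ reduces each of \eqref{RLLa}--\eqref{RLLc} to a finite identity between sums of products of rational functions in $x,y,z,q,s$, which can be checked once the rank is bounded. The main obstacle --- on either route --- is the bookkeeping forced by the fermionic features of \eqref{fund-weights}: the signs $(-1)^{A_i}$, the $s^2$-type weights, and especially the dependence on the occupation sums $\As{1}{n}$ and $\Ap{i}{n}$ all have to be tracked through every term of the sums. The rank reduction in particular needs care, since colours cannot be merged in the fermionic setting; to remove a colour $k$ that occupies only vertical edges one instead relabels the active colours downward and verifies that this leaves unchanged every sum $\As{i}{n}$, $\Ap{i}{n}$ that enters a surviving weight. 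It is this step, rather than any single algebraic manipulation, that I expect to be the most delicate part of a fully detailed proof.
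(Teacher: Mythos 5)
Your strategy is sound and would work, but it is organised differently from the paper's. The paper derives all three relations \eqref{RLLa}--\eqref{RLLc} in one stroke as degenerations of the \emph{master} Yang--Baxter equation \eqref{master} for the fully fused weights (proved in \cite[Proposition 5.1.4]{ABW21}), obtained by setting two of the three spin parameters $r,s,t$ to $q^{-1/2}$; the fusion work is thus already packaged upstream, and the paper simply cites it. You instead build up from the fundamental hexagon for $R_{z,q}$: fuse one direction to get \eqref{RLLa}, then deduce \eqref{RLLc} from \eqref{RLLa} purely formally via \eqref{LM-sym} together with the crossing symmetry $R_{z,q}(a,b;c,d)=R_{1/z,1/q}(b,a;d,c)$ (which I checked does hold for \eqref{fund-vert}, and which does map \eqref{RLLa} onto \eqref{RLLc} after relabelling), and treat \eqref{RLLb} as a separate mixed hexagon for $(V_x,V_z^{\vee},W^{(s)})$. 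Your derivation of \eqref{RLLc} is a genuinely different and cleaner route than re-specialising \eqref{master}; what the paper's route buys is that \eqref{RLLb}, the only case your sketch leaves essentially unverified (the spectral parameter $1/(qxz)$ is asserted rather than derived), comes for free from the same master identity. One caution on your first step: as phrased, ``checking that \eqref{fund-weights} obeys the $RLL=LLR$ recursion in $s$'' risks circularity, since that recursion \emph{is} \eqref{RLLa}; the non-circular version is to verify that \eqref{fund-weights} agrees with the weights produced by the fusion construction (or with the analytic continuation in the number of fused rows), after which the YBE is automatic by the standard train argument --- this is exactly the content the paper outsources to \eqref{master} and to \cite[Appendix C]{BorodinW}.
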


\begin{proof}
All three equations may be recovered from the master Yang--Baxter equation \eqref{master}; we will comment briefly on this in Section \ref{ssec:master-yb}. The equations \eqref{RLLa}--\eqref{RLLc} are the fermionic cousins of equations (2.3.1)--(2.3.3) in \cite[Section 2.3]{BorodinW}; the latter being valid for the bosonic counterparts of the models \eqref{generic-L} and \eqref{generic-M}.
\end{proof}

\subsection{Row operators}
\label{ssec:row}

Let $V$ be the vector space obtained by taking the formal linear span of all $n$-dimensional binary strings:  
\begin{align*}
V= \bigoplus_{\A \in \{0,1\}^n} \mathbb{C} \ket{\A},
\end{align*}
and for any $N \geq 0$ consider the $(N+1)$-fold tensor product of this space:
\begin{align*}
\mathbb{V}(N) = \underbrace{V \otimes \cdots \otimes V}_{N+1\ {\rm times}}.
\end{align*}
For each $0 \leq i,j \leq n$ we introduce a linear operator 
$T^{\rightarrow}_{i,j}(x;N) \in {\rm End}(\mathbb{V}(N))$ with the action
\begin{align}
\label{C-row}
T^{\rightarrow}_{i,j}(x;N)
:
\bigotimes_{k=0}^{N}
\ket{\B(k)}
\mapsto
\sum_{\A(0),\ldots,\A(N) \in \{0,1\}^n}
\left(
\tikz{1.2}{
\draw[lgray,line width=1pt,->] (0.5,0) -- (5.5,0);
\foreach\x in {1,...,5}{
\draw[lgray,line width=4pt,->] (\x,-0.5) -- (\x,0.5);
}
\node at (-0.2,0) {$x \rightarrow$};
\node[left] at (0.5,0) {\fs $i$};\node[right] at (5.5,0) {\fs $j$};
\node[below] at (5,-0.5) {\fs $\A(N)$};\node[above] at (5,0.5) {\fs $\B(N)$};
\node[below] at (4,-0.5) {\fs $\cdots$};\node[above] at (4,0.5) {\fs $\cdots$};
\node[below] at (3,-0.5) {\fs $\cdots$};\node[above] at (3,0.5) {\fs $\cdots$};
\node[below] at (2,-0.5) {\fs $\cdots$};\node[above] at (2,0.5) {\fs $\cdots$};
\node[below] at (1,-0.5) {\fs $\A(0)$};\node[above] at (1,0.5) {\fs $\B(0)$};
}
\right)
\bigotimes_{k=0}^{N}
\ket{\A(k)}.
\end{align}
The quantity
\begin{align*}
\tikz{1.2}{
\draw[lgray,line width=1pt,->] (0.5,0) -- (5.5,0);
\foreach\x in {1,...,5}{
\draw[lgray,line width=4pt,->] (\x,-0.5) -- (\x,0.5);
}
\node at (-0.2,0) {$x \rightarrow$};
\node[left] at (0.5,0) {\fs $i$};\node[right] at (5.5,0) {\fs $j$};
\node[below] at (5,-0.5) {\fs $\A(N)$};\node[above] at (5,0.5) {\fs $\B(N)$};
\node[below] at (4,-0.5) {\fs $\cdots$};\node[above] at (4,0.5) {\fs $\cdots$};
\node[below] at (3,-0.5) {\fs $\cdots$};\node[above] at (3,0.5) {\fs $\cdots$};
\node[below] at (2,-0.5) {\fs $\cdots$};\node[above] at (2,0.5) {\fs $\cdots$};
\node[below] at (1,-0.5) {\fs $\A(0)$};\node[above] at (1,0.5) {\fs $\B(0)$};
}
\end{align*}
is a one-row partition function in the model \eqref{generic-L}, and can be calculated by multiplying the weights of each vertex from left to right, noting that the integer values prescribed to all internal vertical edges are fixed by the local conservation property \eqref{conserve-L}.

In a similar vein, for each $0 \leq i,j \leq n$ we introduce a linear operator 
$T^{\leftarrow}_{i,j}(x;N) \in {\rm End}(\mathbb{V}(N))$ with the action
\begin{align}
\label{B-row}
T^{\leftarrow}_{i,j}(x;N)
:
\bigotimes_{k=0}^{N}
\ket{\B(k)}
\mapsto
\sum_{\A(0),\ldots,\A(N) \in \{0,1\}^n}
\left(
\tikz{1.2}{
\draw[lgray,line width=1pt,<-] (0.5,0) -- (5.5,0);
\foreach\x in {1,...,5}{
\draw[lgray,line width=4pt,->] (\x,-0.5) -- (\x,0.5);
}
\node at (6.2,0) {$\leftarrow x$};
\node[left] at (0.5,0) {\fs $i$};\node[right] at (5.5,0) {\fs $j$};
\node[below] at (5,-0.5) {\fs $\A(N)$};\node[above] at (5,0.5) {\fs $\B(N)$};
\node[below] at (4,-0.5) {\fs $\cdots$};\node[above] at (4,0.5) {\fs $\cdots$};
\node[below] at (3,-0.5) {\fs $\cdots$};\node[above] at (3,0.5) {\fs $\cdots$};
\node[below] at (2,-0.5) {\fs $\cdots$};\node[above] at (2,0.5) {\fs $\cdots$};
\node[below] at (1,-0.5) {\fs $\A(0)$};\node[above] at (1,0.5) {\fs $\B(0)$};
}
\right)
\bigotimes_{k=0}^{N}
\ket{\A(k)},
\end{align}
where the quantity
\begin{align*}
\tikz{1.2}{
\draw[lgray,line width=1pt,->] (0.5,0) -- (5.5,0);
\foreach\x in {1,...,5}{
\draw[lgray,line width=4pt,->] (\x,-0.5) -- (\x,0.5);
}
\node at (6.2,0) {$\leftarrow x$};
\node[left] at (0.5,0) {\fs $i$};\node[right] at (5.5,0) {\fs $j$};
\node[below] at (5,-0.5) {\fs $\A(N)$};\node[above] at (5,0.5) {\fs $\B(N)$};
\node[below] at (4,-0.5) {\fs $\cdots$};\node[above] at (4,0.5) {\fs $\cdots$};
\node[below] at (3,-0.5) {\fs $\cdots$};\node[above] at (3,0.5) {\fs $\cdots$};
\node[below] at (2,-0.5) {\fs $\cdots$};\node[above] at (2,0.5) {\fs $\cdots$};
\node[below] at (1,-0.5) {\fs $\A(0)$};\node[above] at (1,0.5) {\fs $\B(0)$};
}
\end{align*}
is a one-row partition function in the model \eqref{generic-M}.

\subsection{Commutation relations}
\label{ssec:cr}

We introduce a lift of $\mathbb{V}(N)$ to an infinite tensor product:
\begin{align*}
\mathbb{V}(\infty)
=
{\rm Span}_{\mathbb{C}}
\left\{
\bigotimes_{k=0}^{\infty}
\ket{\A(k)}
\right\}
\end{align*}
where the binary strings $\A(k) \in \{0,1\}^n$, $k \geq 0$ have the stability property
\begin{align*}
\exists\ M \in \mathbb{N} \ : \ \A(k) = \bm{e}_0,\ \forall\ k \geq M.
\end{align*}
Let $T^{\rightarrow}_{i,0}(x;\infty) = \mathcal{C}_i(x)$ and $T^{\leftarrow}_{i,0}(x;\infty) = \mathcal{B}_i(x)$ denote the corresponding lifts of the operators \eqref{C-row} and \eqref{B-row}, in the case where the right index $j$ is set to $0$. We shall only ever consider the case where $\mathcal{C}_i(x)$ and $\mathcal{B}_i(x)$ act on stable states in the infinite tensor product, \textit{i.e.}, on the elements of $\mathbb{V}(\infty)$.

\begin{thm}
Fix two nonnegative integers $i,j$ such that $1 \leq i<j \leq n$, and two arbitrary complex parameters $x,y$. The following exchange relations hold:
\begin{align}
\label{CC<}
\frac{x-qy}{x-y} \mathcal{C}_i(y) \mathcal{C}_j(x) 
&=
\frac{(1-q)y}{x-y} \mathcal{C}_i(x) \mathcal{C}_j(y)
+
\mathcal{C}_j(x) \mathcal{C}_i(y),
\\
\label{BB<}
\frac{y-qx}{q(y-x)} \mathcal{B}_j(y) \mathcal{B}_i(x) 
&=
\frac{(1-q)x}{q(y-x)} \mathcal{B}_j(x) \mathcal{B}_i(y)
+
\mathcal{B}_i(x) \mathcal{B}_j(y).
\end{align}
\end{thm}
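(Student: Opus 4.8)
The plan is to derive the exchange relations \eqref{CC<} and \eqref{BB<} directly from the Yang--Baxter equations \eqref{RLLa} and \eqref{RLLc}, respectively, by the standard ``railway track'' argument for row operators. I will focus on \eqref{CC<}; the relation \eqref{BB<} is entirely parallel, using \eqref{RLLc} in place of \eqref{RLLa} (or, alternatively, it follows from \eqref{CC<} via the symmetry \eqref{LM-sym} together with \eqref{LM-sym-intro}, after tracking the reciprocation of parameters — this is likely the cleanest route and I would present it as such to avoid repeating the computation).

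First I would attach an $R$-vertex $R_{y/x}(a_2,a_1;b_2,b_1)$ at the left end of two stacked horizontal lines carrying spectral parameters $x$ (bottom) and $y$ (top), both acting on the same infinite tensor product of vertical spaces, and then propagate this $R$-vertex through the whole doubled row using \eqref{RLLa} vertex by vertex. Because all vertical edges are eventually in the stable state $\bm{e}_0$, the $R$-vertex that emerges on the far right acts on labels that are forced: on the right one reads off $R_{y/x}(\text{stuff};0,0)$ or similar, and the only surviving term is the one where the far-right $R$-vertex is trivial. Setting the incoming left labels to the pair $(j,i)$ with $i<j$ on the bottom and top lines respectively (or rather $i$ on the bottom line with spectral parameter $y$ and $j$ on top with $x$, matching the index placement in \eqref{C-row}) and reading the three nonzero entries of the $R$-matrix from \eqref{fund-vert} — namely the ``non-crossing'' weight $\tfrac{q(1-z)}{1-qz}$, the ``crossing'' weight $\tfrac{1-q}{1-qz}$, and the weight $\tfrac{z-q}{1-qz}$ for the other ordering — with $z = y/x$, gives exactly a three-term identity relating $\mathcal{C}_i(y)\mathcal{C}_j(x)$, $\mathcal{C}_i(x)\mathcal{C}_j(y)$, and $\mathcal{C}_j(x)\mathcal{C}_i(y)$. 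Clearing the denominator $1-q(y/x)$ and rearranging reproduces \eqref{CC<}; I would double-check the precise coefficients by matching against the stated relation, since the placement of $x$ versus $y$ and the normalization of $R$ only affects bookkeeping.

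The key technical point that makes this rigorous on $\mathbb{V}(\infty)$ rather than merely formally is the \emph{stabilization}: one must argue that, when acting on a stable state, only finitely many columns are nontrivial, so the propagation of the $R$-vertex terminates after finitely many applications of \eqref{RLLa}, and the boundary $R$-vertex at infinity is the trivial vertex $R(0,0;0,0)=1$. This requires checking that $\tilde L_x(\bm{e}_0,0;\C,d)$ vanishes unless $\C=\bm{e}_0$ and $d=0$ — which is immediate from the conservation law \eqref{conserve-L} and the table \eqref{fund-weights}, since with no particle entering from below or the left, none can leave — so that the row operators $\mathcal{C}_i(x)$ indeed preserve $\mathbb{V}(\infty)$ and the far-right boundary is controlled.

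The main obstacle I anticipate is purely one of bookkeeping: getting the orientation conventions, the argument of the $R$-matrix ($y/x$ versus $x/y$), and the left/right index assignments in \eqref{C-row} to line up so that the emergent three-term relation has precisely the coefficients $\tfrac{x-qy}{x-y}$, $\tfrac{(1-q)y}{x-y}$, and $1$ displayed in \eqref{CC<}. There is also a minor subtlety in that the operators $\mathcal{C}_i(x) = T^{\rightarrow}_{i,0}(x;\infty)$ have the right horizontal index pinned to $0$, so when propagating the $R$-vertex one is really working inside a larger family $T^{\rightarrow}_{i,j}$ and must verify that the right boundary condition $j=0$ on both rows, combined with conservation, kills all terms except the desired ones — but this is exactly the content of the stabilization argument above and involves no genuinely new idea. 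No deeper difficulty is expected, since the identity is a textbook consequence of Yang--Baxter integrability once the fermionic weights and their $R$-matrix are in hand; the analogous bosonic statement is \cite[Section 2]{BorodinW}, and the fermionic modification affects none of the structural steps.
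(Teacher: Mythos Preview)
Your proposal is correct and follows essentially the same approach as the paper: the paper's proof simply states that \eqref{CC<} follows from applying the Yang--Baxter equation \eqref{RLLa} successively to the two-row partition function formed by $\mathcal{C}_i(y)$ and $\mathcal{C}_j(x)$, and \eqref{BB<} from \eqref{RLLc} applied to $\mathcal{B}_j(y)$ and $\mathcal{B}_i(x)$, referring to \cite[Section 3.2, Theorems 3.2.1 and 3.2.5]{BorodinW} for details. Your railway-track argument with the stabilization check is exactly this, spelled out; the only minor difference is that the paper derives \eqref{BB<} directly from \eqref{RLLc} rather than via the symmetry \eqref{LM-sym}, but both routes are standard and equivalent.
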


\begin{proof}
The proof of \eqref{CC<} makes use of the first Yang--Baxter equation \eqref{RLLa}, applied successively to the two-row partition function that arises by joining the operators $\mathcal{C}_i(y)$ and $\mathcal{C}_j(x)$; the proof of \eqref{BB<} employs the third Yang--Baxter equation \eqref{RLLc}, applied to the two-row partition function that arises by joining operators $\mathcal{B}_j(y)$ and $\mathcal{B}_i(x)$. For full details, we refer the reader to \cite[Section 3.2, Theorems 3.2.1 and 3.2.5]{BorodinW}.
\end{proof}

\begin{thm}
Fix two nonnegative integers $i,j$ such that $0 \leq i<j \leq n$, and complex parameters $x,y$ such that
\begin{align}
\label{weight-condition}
\left|
\frac{x-s}{1-sx}
\cdot
\frac{y-s}{1-sy}
\right|
<
1.
\end{align}
The row operators $\mathcal{C}_i(x)$ and $\mathcal{B}_j(y)$ obey the following commutation relation:
\begin{align}
\label{CB}
\mathcal{C}_i(x) \mathcal{B}_j(y) &= 
\frac{1-qxy}{1-xy}
\mathcal{B}_j(y)\mathcal{C}_i(x).
\end{align}
\end{thm}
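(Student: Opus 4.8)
The plan is to prove the commutation relation \eqref{CB} by the standard ``railway switch'' argument: we construct a two-row partition function whose rows carry the operators $\mathcal{C}_i(x)$ (rightward-travelling, built from $L$-weights) and $\mathcal{B}_j(y)$ (leftward-travelling, built from $M$-weights), and we drag an $R$-vertex from one end of the two rows to the other. The relevant Yang--Baxter equation here is the mixed one, \eqref{RLLb}, which intertwines an $L$-row and an $M$-row through an $R$-vertex with spectral parameter $R_{1/(qxz)}$; since here the $M$-row has parameter $y$, the intertwining vertex carries argument $1/(qxy)$. Applying \eqref{RLLb} at each column in succession moves this vertex from the far (right) boundary, where the two rows are attached to fixed boundary labels, through to the near (left) boundary. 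The effect of passing the $R$-vertex through the boundary columns is to produce scalar factors coming from the explicit entries of the $R$-matrix in \eqref{fund-vert}, and this will yield exactly the prefactor $\tfrac{1-qxy}{1-xy}$ relating the two orderings of the operators.

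More concretely, first I would set up the boundary data: since $\mathcal{C}_i(x) = T^{\rightarrow}_{i,0}(x;\infty)$ and $\mathcal{B}_j(y) = T^{\leftarrow}_{j,0}(y;\infty)$, in the product $\mathcal{C}_i(x)\mathcal{B}_j(y)$ a path of colour $i$ enters the $L$-row from the left, while a path of colour $j$ enters the $M$-row from the right; both rows have right/left index $0$ respectively on their ``exit'' sides. I would attach an $R$-vertex $R_{1/(qxy)}$ at the right end, where the $L$-row carries label $0$ on its right edge and the $M$-row carries label $j$ entering from the right; one checks (using the explicit table \eqref{fund-vert} with $a=0 < b=j$) that this initial $R$-vertex contributes a specific scalar and leaves the boundary labels consistent. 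Then I would iterate \eqref{RLLb} column by column across the (effectively finite, by the stability property defining $\mathbb{V}(\infty)$) lattice, commuting the $R$-vertex leftward past each pair of vertical edges $\B(k)$. At the left boundary the $R$-vertex meets the incoming colour $i$ in the $L$-row and colour $0$ in the $M$-row; evaluating it there (again via \eqref{fund-vert}) produces the final scalar, and what remains is precisely the partition function for $\mathcal{B}_j(y)\mathcal{C}_i(x)$ acting on the same state. Comparing the two sides and collecting the scalar factors from the two boundary $R$-vertices gives \eqref{CB}.

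The condition \eqref{weight-condition} is what makes the infinite product legitimate: it guarantees convergence of the one-row partition functions defining $\mathcal{C}_i(x)$ and $\mathcal{B}_j(y)$ on stable states (the per-column weights decay geometrically in the appropriate sense), so that the operators are well defined on $\mathbb{V}(\infty)$ and the column-by-column YBE manipulation can be carried out in a region far enough to the right where all vertical edges are $\bm{e}_0$ and the $R$-vertex can be harmlessly introduced. I would remark that this is entirely parallel to the argument in \cite[Section 3.2]{BorodinW} for the bosonic model, and that the only points needing a separate check are (i) that the fermionic $L$- and $M$-weights of \eqref{fund-weights}, \eqref{LM-sym} satisfy the mixed Yang--Baxter equation \eqref{RLLb} — which is the content of the Yang--Baxter theorem already stated above — and (ii) that the two differences between the fermionic weights \eqref{generic-L} and the bosonic ones of \cite{BorodinW} (the restriction to $\{0,1\}^n$ states, and the modified weight $\tilde L_z(\A,i;\A,i)$) do not affect the computation, which holds because in these partition functions each colour traverses each vertex at most once.

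The main obstacle I anticipate is bookkeeping at the two boundaries: one must verify that the scalars extracted from the initial and final $R$-vertices are exactly $1$ (or cancel) except for the net factor $\tfrac{1-qxy}{1-xy}$, and this requires being careful about which entry of \eqref{fund-vert} is hit, about the direction conventions for the $M$-row (labels $d,b$ read right-to-left), and about the argument $1/(qxy)$ fed into $R$ in \eqref{RLLb}. A secondary subtlety is making the ``drag across an infinite row'' rigorous: one should first do the manipulation with the finite operators $T^{\rightarrow}_{i,0}(x;N)$, $T^{\leftarrow}_{j,0}(y;N)$ on $\mathbb{V}(N)$ — where \eqref{RLLb} applies verbatim and the argument is purely algebraic — and then take $N\to\infty$, using \eqref{weight-condition} to control the tail; but since the states are stable this limit is essentially trivial, so I do not expect it to cause real difficulty.
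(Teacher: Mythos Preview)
Your proposal is correct and follows essentially the same approach as the paper: the paper's proof likewise invokes the mixed Yang--Baxter equation \eqref{RLLb}, applied successively to the two-row partition function formed by joining $\mathcal{C}_i(x)$ and $\mathcal{B}_j(y)$, and refers to \cite[Section 3.2, Theorem 3.2.3]{BorodinW} for the full details of the boundary bookkeeping you outline.
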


\begin{proof}
The proof makes use of the second Yang--Baxter equation \eqref{RLLb}, applied successively to the two-row partition function that arises by joining the operators $\mathcal{C}_i(x)$ and $\mathcal{B}_j(y)$. For the full details, we refer the reader to \cite[Section 3.2, Theorem 3.2.3]{BorodinW}.
\end{proof}

\section{Partition functions}
\label{sec:pf}

This section brings together a number of partition function definitions, as well as fundamental results related to them, for use throughout the remainder of the text. Most of the facts summarized here were first obtained in \cite[Chapters 3--5 and Chapter 8]{BorodinW}, and where a theorem is directly transcribed from there, we refer the reader to that earlier text for a full proof. We begin by defining {\it coloured compositions} in Section \ref{ssec:cc}; these are used to index many of the quantities that we subsequently define. Sections \ref{ssec:G}--\ref{ssec:perm} introduce the partition functions required; we then state a number of properties of these partition functions in Sections \ref{ssec:hecke}--\ref{ssec:int}.

\subsection{Coloured compositions}
\label{ssec:cc}

\begin{defn}
\label{def:cc}
Let $\lambda = (\lambda_1,\dots,\lambda_n)$ be a composition of length $n$ such that 
$|\lambda| = \sum_{i=1}^{n} \lambda_i = m$; $m$ is called its {\it weight}. We introduce the set $\mathcal{S}_{\lambda}$ of (strict, nonnegative) $\lambda$-coloured compositions as follows:
\begin{align}
\label{lambda-col}
\mathcal{S}_{\lambda}
=
\Big\{ 
\mu 
= 
\Big(
0 \leq \mu^{(1)}_1 < \cdots < \mu^{(1)}_{\lambda_1} \Big|
0 \leq \mu^{(2)}_1 < \cdots < \mu^{(2)}_{\lambda_2} \Big|
\cdots \Big|
0 \leq \mu^{(n)}_1 < \cdots < \mu^{(n)}_{\lambda_n}\Big)
\Big\}.
\end{align}
The elements of $\mathcal{S}_{\lambda}$ are vectors of length $n$ whose $i$-th component $\mu^{(i)}$ is a strict\footnote{That is, with strict inequalities in \eqref{lambda-col}; this corresponds to the fermionicity of our model.}, nonnegative signature of length $\lambda_i$, for all $1 \leq i \leq n$. These components, or blocks, demarcate the colouring of $\mu$; the colour of each block is indicated by the superscript attached to it. We refer to $\lambda$ as the {\it colour profile} of $\mu$.
\end{defn}

\begin{defn}
\label{defn:padding}
With the same assumptions as in Definition \ref{def:cc}, we also define the set $\mathcal{S}^{+}_{\lambda} \subset \mathcal{S}_{\lambda}$ as follows:
\begin{align}
\mathcal{S}^{+}_{\lambda}
=
\{\mu \in \mathcal{S}_{\lambda} : \mu_1^{(j)} \geq 1,\ \forall\ 1 \leq j \leq n \}.
\end{align}
This is the restriction to coloured compositions that have positive parts only. For any coloured composition $\mu \in \mathcal{S}^{+}_{\lambda}$ we define its {\it padding} $0\cup\mu \in \mathcal{S}_{\lambda+1^n}$ by prepending a part of size $0$ in each of the $n$ blocks of $\mu$.
\end{defn}

Let $\mu \in \mathcal{S}_{\lambda}$ be a $\lambda$-coloured composition. We associate to $\mu$ a vector $\ket{\mu}_{\lambda} \in \mathbb{V}(\infty)$, defined as follows:
\begin{align}
\label{A(k)}
\ket{\mu}_{\lambda}
=
\bigotimes_{k=0}^{\infty}
\ket{\bm{A}(k)},
\qquad
\bm{A}(k) = \sum_{j=1}^{n} A_j(k) \bm{e}_j,
\qquad
A_j(k)
=
\left\{
\begin{array}{ll}
1,
&
\quad
k \in \mu^{(j)},
\\ \\
0,
&
\quad
{\rm otherwise}.
\end{array}
\right.
\end{align}
In other words, the component $A_j(k)$ is equal to $1$ if the integer $k$ is present in the strict signature $\mu^{(j)}$, and equal to $0$ if not. We shall also make use of dual vectors 
$\bra{\mu}_{\lambda} \in \mathbb{V}(\infty)^{*}$, defined to act linearly on elements of the form \eqref{A(k)} via the relation $\bra{\mu}_{\lambda} \cdot \ket{\nu}_{\lambda}  = \delta_{\mu,\nu}$ for all 
$\mu,\nu \in \mathcal{S}_{\lambda}$.

\begin{defn}[Rainbow compositions]
The elements of $\mathcal{S}_{1^n}$ are called {\it rainbow compositions}; we have
\begin{align*}
\mathcal{S}_{1^n}
=
\Big\{ 
\mu 
= 
(\mu_1 | \mu_2 | \cdots | \mu_n)
\Big\}.
\end{align*}
That is, a rainbow composition consists of $n$ blocks, each of unit length; no constraint is imposed on the relative ordering of the parts.
\end{defn}

\subsection{Functions $G_{\mu/\nu}$}
\label{ssec:G}

Fix a $\lambda$-coloured composition $\nu \in \mathcal{S}_{\lambda}$ with component signatures $\nu^{(i)}$, $1 \leq i \leq n$, and define, similarly to \eqref{A(k)}, a vector 
$\ket{\nu}_{\lambda} \in \mathbb{V}(\infty)$:
\begin{align}
\label{B(k)}
\ket{\nu}_{\lambda}
=
\bigotimes_{k=0}^{\infty}
\ket{\bm{B}(k)},
\qquad
\bm{B}(k) = \sum_{j=1}^{n} B_j(k) \bm{e}_j,
\qquad
B_j(k)
=
\left\{
\begin{array}{ll}
1,
&
\quad
k \in \nu^{(j)},
\\ \\
0,
&
\quad
{\rm otherwise}.
\end{array}
\right.
\end{align}

\begin{defn}
Let $\lambda = (\lambda_1,\dots,\lambda_n)$ be a composition, and fix two $\lambda$-coloured compositions $\mu \in \mathcal{S}_{\lambda}$ and $\nu \in \mathcal{S}_{\lambda}$. Let the corresponding vectors in $\mathbb{V}(\infty)$, $\ket{\mu}_{\lambda}$ and $\ket{\nu}_{\lambda}$, be given by \eqref{A(k)} and \eqref{B(k)} respectively. For any integer $p \geq 1$ we define the following family of symmetric rational functions:
\begin{align}
\label{G-def}
(-s)^{|\mu|-|\nu|}
\cdot
G_{\mu/\nu}(\lambda;x_1,\dots,x_p)
=
\bra{\nu}_{\lambda}
\prod_{i=1}^{p}
\mathcal{C}_0(x_i)
\ket{\mu}_{\lambda}.
\end{align}
In the case $\lambda = (1,\dots,1) = 1^n$, we drop the notational dependence on $\lambda$, and write
\begin{align*}
G_{\mu/\nu}(1^n;x_1,\dots,x_p)
\equiv
G_{\mu/\nu}(x_1,\dots,x_p).
\end{align*}
The symmetry in $(x_1,\dots,x_p)$ follows from the commutativity of the $\mathcal{C}_0(x_i)$ operators; for a proof of the latter fact, see \cite[Theorem 3.2.1]{BorodinW}.
\end{defn}

Translating the row operators in \eqref{G-def} into their graphical form, we obtain the following partition function representation of $G_{\mu/\nu}$:
\begin{align}
\label{G-pf}
(-s)^{|\mu|-|\nu|}
\cdot
G_{\mu/\nu}(\lambda;x_1,\dots,x_p)
&=
\tikz{0.75}{
\foreach\y in {1,...,5}{
\draw[lgray,line width=1.5pt,->] (1,\y) -- (8,\y);
}
\foreach\x in {2,...,7}{
\draw[lgray,line width=4pt,->] (\x,0) -- (\x,6);
}
\node[left] at (0.5,1) {$x_1 \rightarrow$};
\node[left] at (0.5,2) {$x_2 \rightarrow$};
\node[left] at (0.5,3) {$\vdots$};
\node[left] at (0.5,4) {$\vdots$};
\node[left] at (0.5,5) {$x_p \rightarrow$};
\node[above] at (7,6) {$\cdots$};
\node[above] at (6,6) {$\cdots$};
\node[above] at (5,6) {$\cdots$};
\node[above] at (4,6) {\footnotesize$\bm{A}(2)$};
\node[above] at (3,6) {\footnotesize$\bm{A}(1)$};
\node[above] at (2,6) {\footnotesize$\bm{A}(0)$};
\node[below] at (7,0) {$\cdots$};
\node[below] at (6,0) {$\cdots$};
\node[below] at (5,0) {$\cdots$};
\node[below] at (4,0) {\footnotesize$\bm{B}(2)$};
\node[below] at (3,0) {\footnotesize$\bm{B}(1)$};
\node[below] at (2,0) {\footnotesize$\bm{B}(0)$};
\node[right] at (8,1) {$0$};
\node[right] at (8,2) {$0$};
\node[right] at (8,3) {$\vdots$};
\node[right] at (8,4) {$\vdots$};
\node[right] at (8,5) {$0$};
\node[left] at (1,1) {$0$};
\node[left] at (1,2) {$0$};
\node[left] at (1,3) {$\vdots$};
\node[left] at (1,4) {$\vdots$};
\node[left] at (1,5) {$0$};
}
\end{align}
The factor of $(-s)^{|\mu|-|\nu|}$ incorporated into the definition \eqref{G-def} is due to the fact that, in the partition function representation \eqref{G-pf}, each right edge of the lattice that is occupied by a nonzero colour produces a residual factor of $-s$; this can be seen by analysing the final two columns of the table \eqref{fund-weights}. Since we ultimately intend to take the limit $s \rightarrow 0$, it is important to excise this overall factor from our partition function; we do this by multiplying the left hand side of \eqref{G-def} by $(-s)^{|\mu|-|\nu|}$, since the total number of occupied right edges in the lattice \eqref{G-pf} is equal to $|\mu|-|\nu|$.

\subsection{Functions $f_{\mu}$ and $g_{\mu}$}
\label{ssec:fg}

\begin{defn}
\label{defn:gen-sector}
Let $\lambda = (\lambda_1,\dots,\lambda_n)$ be a composition of weight $m$, and fix a $\lambda$-coloured composition $\mu \in \mathcal{S}_{\lambda}$. Write $\ell_k = \sum_{i=1}^{k} \lambda_i$ for the $k$-th partial sum of $\lambda$. We define the following family of non-symmetric rational functions:
\begin{align}
\label{generic-f}
(-s)^{|\mu|}
\cdot
f_{\mu}(\lambda; x_1,\dots,x_m)
=
\bra{\emptyset}
\prod_{j\in [1,\ell_1]}
\mathcal{C}_1(x_j)
\prod_{j\in (\ell_1,\ell_2]}
\mathcal{C}_2(x_j)
\cdots
\prod_{j\in(\ell_{n-1},\ell_n]}
\mathcal{C}_n(x_j)
\ket{\mu}_{\lambda},
\end{align}
where $\ket{\mu}_{\lambda} \in \mathbb{V}(\infty)$ is given by \eqref{A(k)} and $\bra{\emptyset} \in \mathbb{V}(\infty)^{*}$ denotes the (dual) vacuum state
\begin{align}
\label{dual-vac}
\bra{\emptyset}
=
\bigotimes_{k=0}^{\infty}
\bra{\bm{e}_0},
\end{align}
which is completely devoid of particles.
\end{defn}

Translating the row operators in \eqref{generic-f} into their graphical form, we obtain the following partition function representation of $f_{\mu}$:
\begin{align}
\label{f-def}
(-s)^{|\mu|}
\cdot
f_{\mu}(\lambda;x_1,\dots,x_m)
=
\tikz{0.75}{
\foreach\y in {0,...,5}{
\draw[lgray,line width=1.5pt,->] (1,\y) -- (8,\y);
}
\foreach\x in {2,...,7}{
\draw[lgray,line width=4pt,->] (\x,-1) -- (\x,6);
}
\node[left] at (-0.5,0) {$x_1 \rightarrow$};
\node[left] at (-0.5,2) {$\vdots$};
\node[left] at (-0.5,3) {$\vdots$};
\node[left] at (-0.5,5) {$x_m \rightarrow$};
\node[below] at (7,-1) {$\cdots$};
\node[below] at (6,-1) {$\cdots$};
\node[below] at (5,-1) {$\cdots$};
\node[below] at (4,-1) {\footnotesize$\bm{e}_0$};
\node[below] at (3,-1) {\footnotesize$\bm{e}_0$};
\node[below] at (2,-1) {\footnotesize$\bm{e}_0$};
\node[above] at (7,6) {$\cdots$};
\node[above] at (6,6) {$\cdots$};
\node[above] at (5,6) {$\cdots$};
\node[above] at (4,6) {\footnotesize$\bm{A}(2)$};
\node[above] at (3,6) {\footnotesize$\bm{A}(1)$};
\node[above] at (2,6) {\footnotesize$\bm{A}(0)$};
\node[right] at (8,0) {$0$};
\node[right] at (8,1) {$0$};
\node[right] at (8,2) {$0$};
\node[right] at (8,3) {$0$};
\node[right] at (8,4) {$0$};
\node[right] at (8,5) {$0$};
\node[left] at (1,0) {$1$};
\node[left] at (1.5,0.6) {$\vdots$};
\node[left] at (1,1) {$1$};
\node[left] at (1,2) {$\vdots$};
\node[left] at (1,3) {$\vdots$};
\node[left] at (1,4) {$n$};
\node[left] at (1.5,4.6) {$\vdots$};
\node[left] at (1,5) {$n$};
}
\end{align}
The factor of $(-s)^{|\mu|}$ introduced into the definition \eqref{generic-f} has analogous origins to the factor $(-s)^{|\mu|-|\nu|}$ in \eqref{G-pf}; see the explanation in the paragraph immediately following \eqref{G-pf}.

\begin{rmk}
\label{rainbow-rmk}
In the case $\lambda = (1,\dots,1) = 1^n$, we drop the notational dependence on $\lambda$, and write
\begin{align*}
f_{\mu}(1^n; x_1,\dots,x_n)
\equiv
f_{\mu}(x_1,\dots,x_n).
\end{align*}
The function $f_{\mu}(x_1,\dots,x_n)$ then matches identically with the family of {\it non-symmetric spin Hall--Littlewood functions} defined in \cite[Section 3.4]{BorodinW}; see Definition 3.4.3 therein. The reason for the match is the fact that when $\lambda = 1^n$, each colour $\{1,\dots,n\}$ enters the partition function \eqref{f-def} exactly once, which is precisely the regime when the weights \eqref{fund-weights} and those of \cite[Sections 2.2 and 2.5]{BorodinW} agree (see the discussion below equation \eqref{fund-weights}).
\end{rmk}

\begin{rmk}
\label{factorize-rmk}
In the case $\lambda = (1,\dots,1) = 1^n$, and assuming a {\it weakly increasing}\/ rainbow composition $\mu = (\mu_1 \leq \cdots \leq \mu_n)$, one has the factorization
\begin{align}
\label{factorize}
f_{\mu}(1^n;x_1,\dots,x_n)
=
\frac{\prod_{j \geq 0} (s^2;q)_{\#_j(\mu)}}{\prod_{i=1}^{n}(1-s x_i)}
\prod_{i=1}^{n}
\left( \frac{x_i-s}{1-sx_i} \right)^{\mu_i},
\end{align}
where $\#_j(\mu)$ denotes the number of parts in $\mu$ which are equal to $j$. This is proved by a simple freezing argument applied to the partition function \eqref{f-def}; we refer the reader to \cite[Section 5.1]{BorodinW}.
\end{rmk}

\begin{defn}
\label{defn:generic-sector-dual}
Let $\lambda = (\lambda_1,\dots,\lambda_n)$ be a composition of weight $m$, and fix a $\lambda$-coloured composition $\mu \in \mathcal{S}_{\lambda}$. Write $\ell_k = \sum_{i=1}^{k} \lambda_i$ for the $k$-th partial sum of $\lambda$. Define a further family of non-symmetric rational functions:
\begin{align}
\label{generic-g}
(-s)^{-|\mu|}
\cdot
g_{\mu}(\lambda; x_1,\dots,x_m)
=
\bra{\mu}_{\lambda}
\prod_{j\in [1,\ell_1]}
\mathcal{B}_1(x_j)
\prod_{j\in (\ell_1,\ell_2]}
\mathcal{B}_2(x_j)
\cdots
\prod_{j\in(\ell_{n-1},\ell_n]}
\mathcal{B}_n(x_j)
\ket{\emptyset},
\end{align}
where $\bra{\mu}_{\lambda} \in \mathbb{V}(\infty)^{*}$ is the dual of the vector \eqref{A(k)}, and $\ket{\emptyset} \in \mathbb{V}(\infty)$ denotes the vacuum state
\begin{align}
\label{vac}
\ket{\emptyset}
=
\bigotimes_{k=0}^{\infty}
\ket{\bm{e}_0}.
\end{align}
\end{defn}

Translating the row operators in \eqref{generic-g} into their graphical form, we obtain the following partition function representation of $g_{\mu}$:
\begin{align}
\label{g-def}
(-s)^{-|\mu|}
\cdot
g_{\mu}(\lambda;x_1,\dots,x_m)
=
\tikz{0.75}{
\foreach\y in {0,...,5}{
\draw[lgray,line width=1.5pt,<-] (1,\y) -- (8,\y);
}
\foreach\x in {2,...,7}{
\draw[lgray,line width=4pt,->] (\x,-1) -- (\x,6);
}
\node[right] at (8.5,0) {$\leftarrow x_1$};
\node[right] at (8.5,2) {$\vdots$};
\node[right] at (8.5,3) {$\vdots$};
\node[right] at (8.5,5) {$\leftarrow x_m$};
\node[above] at (7,6) {$\cdots$};
\node[above] at (6,6) {$\cdots$};
\node[above] at (5,6) {$\cdots$};
\node[above] at (4,6) {\footnotesize$\bm{e}_0$};
\node[above] at (3,6) {\footnotesize$\bm{e}_0$};
\node[above] at (2,6) {\footnotesize$\bm{e}_0$};
\node[below] at (7,-1) {$\cdots$};
\node[below] at (6,-1) {$\cdots$};
\node[below] at (5,-1) {$\cdots$};
\node[below] at (4,-1) {\footnotesize$\bm{A}(2)$};
\node[below] at (3,-1) {\footnotesize$\bm{A}(1)$};
\node[below] at (2,-1) {\footnotesize$\bm{A}(0)$};
\node[right] at (8,0) {$0$};
\node[right] at (8,1) {$0$};
\node[right] at (8,2) {$0$};
\node[right] at (8,3) {$0$};
\node[right] at (8,4) {$0$};
\node[right] at (8,5) {$0$};
\node[left] at (1,0) {$1$};
\node[left] at (1.5,0.6) {$\vdots$};
\node[left] at (1,1) {$1$};
\node[left] at (1,2) {$\vdots$};
\node[left] at (1,3) {$\vdots$};
\node[left] at (1,4) {$n$};
\node[left] at (1.5,4.6) {$\vdots$};
\node[left] at (1,5) {$n$};
}
\end{align}

\begin{rmk}
In the case $\lambda = (1,\dots,1) = 1^n$, we drop the notational dependence on $\lambda$, and write
\begin{align*}
g_{\mu}(1^n; x_1,\dots,x_n)
\equiv
g_{\mu}(x_1,\dots,x_n).
\end{align*}
The function $g_{\mu}(x_1,\dots,x_n)$ matches identically with the family of {\it dual non-symmetric spin Hall--Littlewood functions} defined in \cite[Section 3.4]{BorodinW}; see Definition 3.4.6 therein. This match may be deduced by the same reasoning as in Remark \ref{rainbow-rmk}, above. 
\end{rmk}

\subsection{Permuted boundary conditions}
\label{ssec:perm}

\begin{defn}
\label{defn:gen-sector-sigma}
Let $\lambda = (\lambda_1,\dots,\lambda_n)$ be a composition of weight $m$, and fix a $\lambda$-coloured composition $\mu \in \mathcal{S}_{\lambda}$. Fix also a vector $\sigma = (\sigma_1,\dots,\sigma_m)$ such that $|\{k : \sigma_k = i\}| = \lambda_i$ for all $1 \leq i \leq n$. We define the following families of non-symmetric rational functions:
\begin{align}
\label{generic-f-sigma}
(-s)^{|\mu|}
\cdot
f_{\mu}^{\sigma}(\lambda; x_1,\dots,x_m)
&=
\bra{\emptyset}
\prod_{j=1}^{m}
\mathcal{C}_{\sigma_j}(x_j)
\ket{\mu}_{\lambda},
\\
\label{generic-g-sigma}
(-s)^{-|\mu|}
\cdot
g_{\mu}^{\sigma}(\lambda; x_1,\dots,x_m)
&=
\bra{\mu}_{\lambda}
\prod_{j=1}^{m}
\mathcal{B}_{\sigma_j}(x_j)
\ket{\emptyset}.
\end{align}
The first family \eqref{generic-f-sigma} matches with that of Definition \ref{defn:gen-sector}, and the second family \eqref{generic-g-sigma} matches with that of Definition \ref{defn:generic-sector-dual}, when $\sigma = (1^{\lambda_1},2^{\lambda_2},\dots,n^{\lambda_n})$.
\end{defn}

\subsection{Hecke generators and recursion relations}
\label{ssec:hecke}

Recall the definition of the Hecke algebra of type $A_{n-1}$. It is the algebra generated by a family $T_1,\dots,T_{n-1}$, modulo the relations
\begin{align}
\label{hecke1}
(T_i - q)(T_i + 1) = 0,
\quad 
1 \leq i \leq n-1,
\qquad
T_i T_{i+1} T_i = T_{i+1} T_i T_{i+1},
\quad
1 \leq i \leq n-2,
\end{align}
as well as the commutativity property
\begin{align}
\label{hecke2}
[T_i,T_j] = 0,
\quad \forall\ i,j\ \ \text{such that}\ \ |i-j| > 1.
\end{align}
Introduce the simple transpositions $\mathfrak{s}_i$, acting on arbitrary functions $h$ of the alphabet $(x_1,\dots,x_n)$:
\begin{align*}
\mathfrak{s}_i \cdot h(x_1,\dots,x_n)
=
h(x_1,\dots,x_{i+1},x_i,\dots,x_n),
\quad
1 \leq i \leq n-1.
\end{align*}
Making use of these, we define the {\it Demazure--Lusztig operators}
\begin{align}
\label{hecke-poly}
T_i = q - \frac{x_i-q x_{i+1}}{x_i-x_{i+1}} (1-\mathfrak{s}_i),
\quad
1 \leq i \leq n-1,
\end{align}
which provide a faithful representation of the Hecke algebra on the field of rational functions 
$\mathbb{Q}(x_1,\dots,x_n)$.\footnote{Normally, one takes the operators $T_i$ to act on polynomials in the alphabet $(x_1,\dots,x_n)$, since they preserve polynomiality. In this work our partition functions are {\it a priori}\/ rational, which poses no problem, since the action \eqref{hecke-poly} is still faithful on $\mathbb{Q}(x_1,\dots,x_n)$.} From the quadratic identity $(T_i-q)(T_i+1) = 0$, multiplied by $T_i^{-1}$, one gets an explicit formula for inverse Hecke generators:
\begin{align*}
T_i^{-1} = q^{-1}(T_i-q+1) = 
q^{-1}\left(1 - \frac{x_i-q x_{i+1}}{x_i-x_{i+1}} (1-\mathfrak{s}_i) \right),
\quad
1 \leq i \leq n-1.
\end{align*}
In what follows, we will need another version of the Demazure--Lusztig operators \eqref{hecke-poly} in which the variables $(x_i,x_{i+1})$ get reciprocated. We reserve a special notation for this:
\begin{align}
\label{hecke-tilde}
\tilde{T}_i = q - \frac{x_{i+1}-q x_i}{x_{i+1}-x_i} (1-\mathfrak{s}_i),
\quad
\tilde{T}^{-1}_i = q^{-1} \left(1 - \frac{x_{i+1}-q x_i}{x_{i+1}-x_i} (1-\mathfrak{s}_i)\right),
\quad
1 \leq i \leq n-1,
\end{align}
Clearly, the generators $\tilde{T}_i$ also satisfy the basic relations \eqref{hecke1}--\eqref{hecke2} of the Hecke algebra.

\begin{thm}
Fix an integer $1 \leq i \leq n$ and a composition $\mu = (\mu_1|\mu_2|\dots|\mu_n) \in \mathcal{S}_{1^n}$ such that $\mu_i < \mu_{i+1}$. The functions $f_{\mu}(1^n;x_1,\dots,x_n) \equiv f_{\mu}(x_1,\dots,x_n)$ and $g_{\mu}(1^n;x_1,\dots,x_n) \equiv g_{\mu}(x_1,\dots,x_n)$ transform under the action of \eqref{hecke-poly}--\eqref{hecke-tilde} in the following way:
\begin{align}
\label{T-f}
T_i \cdot f_{\mu}(x_1,\dots,x_n) &= f_{\mathfrak{s}_i \cdot \mu}(x_1,\dots,x_n),
\\
\label{T-g}
\tilde{T}_i \cdot g_{\mu}(x_1,\dots,x_n) &= q \cdot g_{\mathfrak{s}_i \cdot \mu}(x_1,\dots,x_n),
\end{align}
where $\mathfrak{s}_i \cdot \mu$ denotes the composition obtained by switching $\mu_i $ and $\mu_{i+1}$.
\end{thm}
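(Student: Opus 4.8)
The plan is to prove the two identities \eqref{T-f} and \eqref{T-g} by translating the action of the Demazure--Lusztig operators into a local manipulation at the level of the underlying partition functions, via the Yang--Baxter equation \eqref{RLLa}. The key observation is that the rational prefactors appearing in $T_i$ and $\tilde T_i$ are precisely the fundamental $R$-matrix weights \eqref{fund-vert}, so attaching an $R$-vertex to the boundary of the partition function for $f_\mu$ or $g_\mu$ implements the simple transposition $\mathfrak{s}_i$ on the level of row operators.

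First I would record the needed commutation relations between neighbouring colour-carrying row operators. Using \eqref{CC<} (with $i,j$ replaced by $\sigma_i=i$, $\sigma_{i+1}=i+1$, which requires $i<i+1\leq n$, so the statement should be read for $1\le i\le n-1$) one has, for the operators $\mathcal{C}_i(x_i)\mathcal{C}_{i+1}(x_{i+1})$ entering the definition \eqref{generic-f} of $f_\mu$,
\begin{align*}
\mathcal{C}_i(x_{i+1})\mathcal{C}_{i+1}(x_i)
=
\frac{x_i-q x_{i+1}}{x_i-x_{i+1}}\,\mathcal{C}_i(x_i)\mathcal{C}_{i+1}(x_{i+1})
-
\frac{(1-q)x_{i+1}}{x_i-x_{i+1}}\,\mathcal{C}_{i+1}(x_i)\mathcal{C}_i(x_{i+1}).
\end{align*}
Now $\mathcal{C}_{i+1}(x_i)\mathcal{C}_i(x_{i+1})$ is, up to the relabelling of spectral parameters, the operator string that builds $\mathfrak{s}_i\cdot f_\mu$ after we also swap $x_i\leftrightarrow x_{i+1}$; more precisely, pairing both sides with $\bra{\emptyset}$ on the left and $\ket{\mu}_\lambda$ on the right, and using that the hypothesis $\mu_i<\mu_{i+1}$ guarantees the expansion \eqref{factorize}-type freezing behaviour keeps the identity within $\mathcal{S}_{1^n}$, yields a linear relation among $f_\mu$, $\mathfrak{s}_i\cdot f_\mu$ and their images under $\mathfrak{s}_i$ acting on $(x_1,\dots,x_n)$. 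Solving that relation for $f_{\mathfrak{s}_i\cdot\mu}$ and comparing with the explicit form \eqref{hecke-poly} of $T_i$ gives \eqref{T-f}. The factor $q$ in \eqref{T-g} arises because $\tilde T_i$ is built from the reciprocated weights and because the $\mathcal{B}$-exchange relation \eqref{BB<} carries an extra factor of $q$ relative to \eqref{CC<}; running the same argument with $g_\mu$, $\bra{\mu}_\lambda$ on the left and $\ket{\emptyset}$ on the right, and the operators $\mathcal{B}_{i+1}(y)\mathcal{B}_i(x)$, produces \eqref{T-g} with the stated normalization.

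An alternative, perhaps cleaner, route is purely graphical: attach a single $R_{x_{i+1}/x_i}$ vertex at the left boundary between the rows carrying spectral parameters $x_i$ and $x_{i+1}$ of the partition function \eqref{f-def}, then repeatedly apply the Yang--Baxter equation \eqref{RLLa} to drag this $R$-vertex rightward through the entire lattice until it exits at the far right boundary; since the right boundary edges are all labelled $0$, the $R$-vertex evaluates trivially there and can be removed. The net effect is that the left boundary colours $i$ and $i+1$ get interchanged, i.e.\ one passes from $f^\sigma_\mu$ to $f^{\mathfrak{s}_i\sigma}_\mu$, while the $R$-weights picked up at the boundary reassemble into exactly the operator \eqref{hecke-poly}. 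One then invokes Definition \ref{defn:gen-sector-sigma}, specifically the fact that $f^\sigma_\mu$ with $\sigma$ a permutation of $(1^{\lambda_1},\dots,n^{\lambda_n})$ interpolates between ordered-boundary functions, together with the observation that for $\lambda=1^n$ permuting the injected colours is equivalent to permuting the blocks of $\mu$.

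The main obstacle, in either approach, is bookkeeping the boundary contribution of the $R$-vertex correctly: one must check that when the two incoming left-boundary labels are $i$ and $i+1$ (rather than, say, $0$), the Yang--Baxter move \eqref{RLLa} genuinely produces the three-term combination matching $q-\frac{x_i-qx_{i+1}}{x_i-x_{i+1}}(1-\mathfrak{s}_i)$, and that no spurious terms survive because of the hypothesis $\mu_i<\mu_{i+1}$ (which, on the graphical side, controls the relative order in which the two coloured paths exit the top boundary). Keeping the $(-s)^{|\mu|}$ normalizing prefactors consistent on both sides, and tracking the single extra power of $q$ that distinguishes \eqref{T-g} from \eqref{T-f} — traceable to the asymmetry between \eqref{CC<} and \eqref{BB<}, equivalently to the reciprocation $q\mapsto 1/q$ built into the $M$-weights via \eqref{LM-sym} — is the only genuinely delicate point; everything else is a routine transcription of the corresponding argument in \cite[Section 3.4]{BorodinW}.
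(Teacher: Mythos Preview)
The paper does not actually prove this statement; its entire proof is the sentence ``Both statements \eqref{T-f} and \eqref{T-g} are proved in \cite{BorodinW}; see equations (5.3.1) and (8.2.24) therein, respectively.'' So there is no in-paper argument to compare against, and your attempt to supply one is more than the paper does. That said, your sketch has a genuine gap.

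What your manipulation of \eqref{CC<} (resp.\ \eqref{BB<}) actually produces is a three-term relation among the operator products $\mathcal{C}_i(x_i)\mathcal{C}_{i+1}(x_{i+1})$, $\mathcal{C}_i(x_{i+1})\mathcal{C}_{i+1}(x_i)$ and $\mathcal{C}_{i+1}(\cdot)\mathcal{C}_i(\cdot)$. Sandwiched between $\bra{\emptyset}$ and $\ket{\mu}_{1^n}$ these become, respectively, $f_\mu$, the variable-swap $\mathfrak{s}_i\!\cdot\! f_\mu$, and the \emph{permuted-boundary} function $f_\mu^{\mathfrak{s}_i\cdot\sigma}$ of Definition~\ref{defn:gen-sector-sigma}. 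That is precisely the mechanism the paper uses to prove the \emph{next} theorem (equations \eqref{invT-f}--\eqref{invT-g}); it does not yield \eqref{T-f}. The object $f_\mu^{\mathfrak{s}_i\cdot\sigma}$, in which colours $i$ and $i{+}1$ enter through swapped rows at the \emph{left} boundary, is a different partition function from $f_{\mathfrak{s}_i\cdot\mu}$, in which colours $i$ and $i{+}1$ exit at swapped columns along the \emph{top} boundary. Your claim that ``for $\lambda=1^n$ permuting the injected colours is equivalent to permuting the blocks of $\mu$'' is exactly the missing bridge, and it is not obvious: the $L$-weights \eqref{fund-weights} depend on the linear order of the colour labels through factors such as $q^{\Ap{i}{n}}$, so a global relabelling $i\leftrightarrow i+1$ is not weight-preserving vertex-by-vertex and there is no a~priori bijection of configurations identifying the two partition functions. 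Your graphical version has the same issue, since attaching the $R$-vertex at the left boundary and dragging it to the right (where both edges are $0$) again exchanges the left-boundary colours, not the top-boundary exit positions.

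As minor points: your displayed rearrangement of \eqref{CC<} has the wrong coefficients (solve \eqref{CC<} for $\mathcal{C}_i(x_{i+1})\mathcal{C}_{i+1}(x_i)$ and compare), and the invocation of ``\eqref{factorize}-type freezing'' under the hypothesis $\mu_i<\mu_{i+1}$ is misplaced---that hypothesis selects the correct branch of the Hecke recursion on the index $\mu$, not any freezing of the lattice.
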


\begin{proof}
Both statements \eqref{T-f} and \eqref{T-g} are proved in \cite{BorodinW}; see equations (5.3.1) and (8.2.24) therein, respectively.
\end{proof}

\begin{thm}
Fix a coloured composition $\nu \in \mathcal{S}_{\lambda}$, where $\lambda = (\lambda_1,\dots,\lambda_n)$ is a composition such that $|\lambda| = m$, as well as a vector $\sigma = (\sigma_1,\dots,\sigma_m)$ such that $|\{k:\sigma_k = i\}| = \lambda_i$ for all $1 \leq i \leq n$. Assuming that $\sigma_j < \sigma_{j+1}$ for some $1 \leq j \leq m-1$, there holds
\begin{align}
\label{invT-f}
T_j \cdot f_{\nu}^{\mathfrak{s}_j \cdot \sigma}(\lambda;x_1,\dots,x_m)
&=
q \cdot f_{\nu}^{\sigma}(\lambda;x_1,\dots,x_m),
\\
\label{invT-g}
\tilde{T}_j \cdot
g_{\nu}^{\sigma}(\lambda;x_1,\dots,x_m)
&=
g_{\nu}^{\mathfrak{s}_j \cdot \sigma}(\lambda;x_1,\dots,x_m),
\end{align}
where $\mathfrak{s}_j \cdot \sigma$ denotes the vector obtained by switching $\sigma_j$ and $\sigma_{j+1}$.
\end{thm}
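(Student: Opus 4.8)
The plan is to derive both identities from the exchange relations \eqref{CC<} and \eqref{BB<}, applied to the two adjacent row operators occupying slots $j,j+1$ of the products defining \eqref{generic-f-sigma}, \eqref{generic-g-sigma}, together with one elementary identity for the operators $T_j,\tilde T_j$. Since the prefactors $(-s)^{\pm|\nu|}$ do not depend on $(x_1,\dots,x_m)$ and $T_j,\tilde T_j$ are linear over the field of $\mathfrak{s}_j$-invariant rational functions, it suffices to work with the bare matrix elements $\bra{\emptyset}\prod_{k=1}^{m}\mathcal{C}_{\sigma_k}(x_k)\ket{\nu}_{\lambda}$ and $\bra{\nu}_{\lambda}\prod_{k=1}^{m}\mathcal{B}_{\sigma_k}(x_k)\ket{\emptyset}$; I will suppress the argument $(\lambda;x_1,\dots,x_m)$ below. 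Write $c=\frac{x_j-qx_{j+1}}{x_j-x_{j+1}}$ and $\tilde c=\frac{x_{j+1}-qx_j}{x_{j+1}-x_j}$, so that $T_j=q-c(1-\mathfrak{s}_j)$ and $\tilde T_j=q-\tilde c(1-\mathfrak{s}_j)$; from $c+\tilde c=1+q$ one gets $\mathfrak{s}_j c=1+q-c=\tilde c$ and $\mathfrak{s}_j\tilde c=c$. The elementary identity, proved by a one-line computation using $\mathfrak{s}_j c'=1+q-c'$, is that for either $c'\in\{c,\tilde c\}$ and any rational $G(x_1,\dots,x_m)$ the operator $D=q-c'(1-\mathfrak{s}_j)$ satisfies $D\big(c'\,\mathfrak{s}_j G+(1-c')G\big)=q\,G$ (the coefficient of $\mathfrak{s}_j G$ cancels and that of $G$ collapses to $q$).

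For \eqref{invT-f}: the operator words defining $f_{\nu}^{\sigma}$ and $f_{\nu}^{\mathfrak{s}_j\cdot\sigma}$ agree outside slots $j,j+1$, where they carry $\mathcal{C}_{\sigma_j}(x_j)\mathcal{C}_{\sigma_{j+1}}(x_{j+1})$ and $\mathcal{C}_{\sigma_{j+1}}(x_j)\mathcal{C}_{\sigma_j}(x_{j+1})$ respectively; since $\sigma_j<\sigma_{j+1}$ these involve two distinct colours, so \eqref{CC<} applies. Taking $(x,y)=(x_j,x_{j+1})$ in \eqref{CC<} and reading off the term $\mathcal{C}_{\sigma_{j+1}}(x_j)\mathcal{C}_{\sigma_j}(x_{j+1})$ (which occurs there with unit coefficient, so no division is incurred), using $\frac{(1-q)x_{j+1}}{x_j-x_{j+1}}=c-1$ and noting that $\mathcal{C}_{\sigma_j}(x_{j+1})\mathcal{C}_{\sigma_{j+1}}(x_j)$ produces the matrix element $f_{\nu}^{\sigma}$ with $x_j,x_{j+1}$ interchanged, i.e.\ $\mathfrak{s}_j f_{\nu}^{\sigma}$, one obtains $f_{\nu}^{\mathfrak{s}_j\cdot\sigma}=c\,\mathfrak{s}_j f_{\nu}^{\sigma}+(1-c)f_{\nu}^{\sigma}$. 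Applying $T_j$ and the elementary identity with $c'=c$, $G=f_{\nu}^{\sigma}$ gives $T_j\cdot f_{\nu}^{\mathfrak{s}_j\cdot\sigma}=q\cdot f_{\nu}^{\sigma}$.

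For \eqref{invT-g} the argument is parallel but run in the reverse direction, expressing $g_{\nu}^{\sigma}$ through $g_{\nu}^{\mathfrak{s}_j\cdot\sigma}$. Taking $(x,y)=(x_j,x_{j+1})$ in \eqref{BB<} with colours $\sigma_j<\sigma_{j+1}$ and isolating the unit-coefficient term $\mathcal{B}_{\sigma_j}(x_j)\mathcal{B}_{\sigma_{j+1}}(x_{j+1})$, and using $\frac{(1-q)x_j}{x_{j+1}-x_j}=\tilde c-1$, one finds $q\,g_{\nu}^{\sigma}=\tilde c\,\mathfrak{s}_j g_{\nu}^{\mathfrak{s}_j\cdot\sigma}+(1-\tilde c)g_{\nu}^{\mathfrak{s}_j\cdot\sigma}$. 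Applying $\tilde T_j$ to both sides, using linearity on the left and the elementary identity with $c'=\tilde c$, $G=g_{\nu}^{\mathfrak{s}_j\cdot\sigma}$ on the right, gives $q\,\tilde T_j\cdot g_{\nu}^{\sigma}=q\,g_{\nu}^{\mathfrak{s}_j\cdot\sigma}$, hence \eqref{invT-g}. The factor $q$ lands on opposite sides of \eqref{invT-f} and \eqref{invT-g} precisely because \eqref{BB<} carries the extra factors of $q^{-1}$ relative to \eqref{CC<}.

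The only real care needed — and this is the main obstacle, such as it is — is bookkeeping: one must consistently distinguish the permutation $\mathfrak{s}_j$ acting on the spectral parameters from $\mathfrak{s}_j\cdot\sigma$ acting on the colour word (the exchange relations are exactly what convert between these two operations), and in each of the $f$- and $g$-cases pick the orientation of the exchange relation that isolates the wanted operator word with unit coefficient, so that no spurious $x_j-qx_{j+1}$ appears in a denominator. There is no analytic subtlety here: \eqref{CC<} and \eqref{BB<} hold as identities of operators on $\mathbb{V}(\infty)$, on which the states $\ket{\nu}_{\lambda}$ and $\ket{\emptyset}$ are stable, and the constraint \eqref{weight-condition} — which is needed only for the $\mathcal{C}$--$\mathcal{B}$ commutation \eqref{CB} — plays no role in this argument.
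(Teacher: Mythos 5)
Your proof is correct and follows essentially the same route as the paper: both hinge on applying the exchange relations \eqref{CC<}, \eqref{BB<} to the adjacent pair of row operators in slots $j,j+1$ of the words defining \eqref{generic-f-sigma}, \eqref{generic-g-sigma}. The only (cosmetic) difference is that the paper phrases the Hecke manipulation through the explicit form of $T_j^{-1}$ and $\tilde T_j^{-1}$ acting on the operator pair, whereas you expand one function as $c'\,\mathfrak{s}_jG+(1-c')G$ and verify directly that $T_j$ (resp. $\tilde T_j$) sends this to $qG$ --- the two computations are equivalent.
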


\begin{proof}
The proof of \eqref{invT-f} is by isolating the action of $T_j^{-1}$ on the pair of operators 
$\mathcal{C}_{\sigma_j}(x_j) \mathcal{C}_{\sigma_{j+1}}(x_{j+1})$, which is the only place that $f_{\nu}^{\sigma}(\lambda;x_1,\dots,x_m)$ depends on $(x_j,x_{j+1})$. Using the explicit form of $T_j^{-1}$, we have
\begin{align}
\label{invT-pf1}
q\cdot T_j^{-1}
\cdot
\mathcal{C}_{\sigma_j}(x_j) \mathcal{C}_{\sigma_{j+1}}(x_{j+1})
=
\frac{(q-1) x_{j+1}}{x_j-x_{j+1}}
\mathcal{C}_{\sigma_j}(x_j) \mathcal{C}_{\sigma_{j+1}}(x_{j+1})
+
\frac{x_j-qx_{j+1}}{x_j-x_{j+1}}
\mathcal{C}_{\sigma_j}(x_{j+1}) \mathcal{C}_{\sigma_{j+1}}(x_j).
\end{align}
In view of the fact that $\sigma_j < \sigma_{j+1}$, we may use the commutation relation \eqref{CC<} to combine the right hand side of \eqref{invT-pf1} into a single term:
\begin{align*}
q\cdot T_j^{-1}
\cdot
\mathcal{C}_{\sigma_j}(x_j) \mathcal{C}_{\sigma_{j+1}}(x_{j+1})
=
\mathcal{C}_{\sigma_{j+1}}(x_j) \mathcal{C}_{\sigma_j}(x_{j+1}).
\end{align*}
Substitution of this identity into \eqref{generic-f-sigma} immediately proves \eqref{invT-f}.

In a similar vein, one proves \eqref{invT-g} by isolating the action of $\tilde{T}_j^{-1}$ on the pair $\mathcal{B}_{\sigma_{j+1}}(x_j) \mathcal{B}_{\sigma_j}(x_{j+1})$, which is the only place that $g_{\nu}^{\mathfrak{s}_j \cdot \sigma}(\lambda;x_1,\dots,x_m)$ depends on $(x_j,x_{j+1})$. Using the explicit form of $\tilde{T}_j^{-1}$, we have
\begin{align}
\label{invT-pf2}
\tilde{T}_j^{-1}
\cdot
\mathcal{B}_{\sigma_{j+1}}(x_j) \mathcal{B}_{\sigma_j}(x_{j+1})
=
\frac{(q-1) x_j}{q(x_{j+1}-x_j)}
\mathcal{B}_{\sigma_{j+1}}(x_j) \mathcal{B}_{\sigma_j}(x_{j+1})
+
\frac{x_{j+1}-qx_j}{q(x_{j+1}-x_j)}
\mathcal{B}_{\sigma_{j+1}}(x_{j+1}) \mathcal{B}_{\sigma_j}(x_j).
\end{align}
Since $\sigma_j < \sigma_{j+1}$, we use the commutation relation \eqref{BB<} to combine the right hand side of \eqref{invT-pf2} into a single term:
\begin{align*}
\tilde{T}_j^{-1}
\cdot
\mathcal{B}_{\sigma_{j+1}}(x_j) \mathcal{B}_{\sigma_j}(x_{j+1})
=
\mathcal{B}_{\sigma_j}(x_j) \mathcal{B}_{\sigma_{j+1}}(x_{j+1}).
\end{align*}
Substitution of this identity into \eqref{generic-g-sigma} proves \eqref{invT-g}.

\end{proof}

\subsection{Antisymmetrization}
\label{ssec:anti}

A key property of the vertex models \eqref{generic-L} and \eqref{generic-M} is that of {\it colour-merging}; this is the combinatorial statement that partition functions in the models \eqref{generic-L} and \eqref{generic-M}, with $n$ colours, become equal to partition functions with $m < n$ colours under a certain antisymmetrization procedure applied to the boundary conditions. The most general colour-merging statement is given and proved as \cite[Theorem 5.2.2]{ABW21}; here we will reproduce this statement only at the level that we need, namely, for two of the families of rational functions that we have defined. 

To state our antisymmetrization results, we require some definitions. 

\begin{defn}[Rainbow recolouring]
\label{def:rainbow-rec}
Let  $\lambda = (\lambda_1,\dots,\lambda_n)$ be a composition such that $|\lambda|=m$, and fix a coloured composition $\mu \in \mathcal{S}_{\lambda}$. Denoting 
\begin{align*}
\mu
=
\Big(\mu^{(1)}_1 < \cdots < \mu^{(1)}_{\lambda_1} \Big|
\mu^{(2)}_1 < \cdots < \mu^{(2)}_{\lambda_2} \Big|
\cdots \Big|
\mu^{(n)}_1 < \cdots < \mu^{(n)}_{\lambda_n}\Big),
\end{align*}
we associate to this a rainbow composition $\breve\mu = \left( \breve{\mu}_1| \breve{\mu}_2 | \cdots | \breve{\mu}_m \right) \in \mathcal{S}_{1^m}$ such that for each $1 \leq i \leq m$ we have
\begin{align*}
\breve{\mu}_i = \mu^{(k)}_j,
\end{align*} 
where $1 \leq k \leq n$, $1 \leq j \leq \lambda_k$ are the unique integers such that
\begin{align*}
i = j+\sum_{a=1}^{k-1} \lambda_a.
\end{align*}
In simpler terms, $\breve{\mu}$ is the composition obtained from recolouring the parts of $\mu$ sequentially from $1$ to $m$ into pairwise distinct colours, while keeping the magnitude of all parts fixed.
\end{defn}

\begin{defn}
Fix a positive integer $m$ and let $\lambda = (\lambda_1,\dots,\lambda_n)$ be a composition such that $|\lambda| = m$, with partial sums $\ell_k = \sum_{i=1}^{k} \lambda_i$. We say that $\sigma \in \mathfrak{S}_{\lambda} \subset \mathfrak{S}_m$ provided that $\sigma$ fixes $(\ell_{k-1},\ell_k]$ for each integer $1 \leq k \leq n$, that is,
\begin{align*}
\sigma \in \mathfrak{S}_{[1,\ell_1]} \times \mathfrak{S}_{(\ell_1,\ell_2]} \times \cdots \times \mathfrak{S}_{(\ell_{n-1},\ell_n]}.
\end{align*}
\end{defn}

\begin{prop}
\label{prop:f-ant}
Fix a coloured composition $\nu \in \mathcal{S}_{\lambda}$ and let $\breve{\nu}$ denote its rainbow recolouring, as in Definition \ref{def:rainbow-rec}. We then have the following result, relating the functions \eqref{generic-g} for rainbow colour profiles with those of non-rainbow type:
\begin{align*}
\sum_{\sigma \in \mathfrak{S}_{\lambda}}
(-1)^{{\rm inv}(\sigma)}
g_{\sigma(\breve{\nu})}(1^m;x_1,\dots,x_m)
=
g_{\nu}(\lambda;x_1,\dots,x_m),
\end{align*}
where the sum is taken over all elements in $\mathfrak{S}_{\lambda}$. Here we have defined ${\rm inv}(\sigma) = {\rm card}\{ (i,j) : i<j,\ \sigma_i > \sigma_j\}$ and 
$\sigma(\breve{\nu}) = 
\left(\breve{\nu}_{\sigma(1)} | \breve{\nu}_{\sigma(2)}| \cdots | \breve{\nu}_{\sigma(m)}\right)$.
\end{prop}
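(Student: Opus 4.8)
The plan is to prove the antisymmetrization identity by induction on the number of ``colour collisions'' in $\nu$, using the Hecke-type recursion \eqref{invT-g} to reduce the general (non-rainbow) case to the fully rainbow case. The base case is $\lambda = 1^m$, where $\breve\nu = \nu$, the group $\mathfrak{S}_\lambda$ is trivial, and the identity reads $g_\nu(1^m;x_1,\dots,x_m) = g_\nu(1^m;x_1,\dots,x_m)$ — nothing to prove. For the inductive step, I would fix a colour profile $\lambda$ with some $\lambda_k \geq 2$ and compare $g_\nu(\lambda;\cdot)$ with $g_{\nu'}(\lambda';\cdot)$, where $\lambda'$ is obtained from $\lambda$ by splitting one unit off the $k$-th block into a fresh colour; then $|\mathfrak{S}_{\lambda'}| < |\mathfrak{S}_\lambda|$ and the inductive hypothesis applies to $g_{\nu'}(\lambda';\cdot)$.

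The mechanism linking the two levels is the operator identity on the $\mathcal{B}$-row operators coming from \eqref{generic-g-sigma}: the $s$-deformed Demazure--Lusztig operator $\tilde T_j$ acts, via \eqref{invT-g}, by transposing $\sigma_j \leftrightarrow \sigma_{j+1}$ on the permuted-boundary functions $g_\nu^\sigma(\lambda;\cdot)$ precisely when $\sigma_j < \sigma_{j+1}$. Concretely, I would write $g_\nu(\lambda;\cdot)$ as $g_\nu^{\sigma_0}(\lambda;\cdot)$ for the sorted word $\sigma_0 = (1^{\lambda_1},\dots,n^{\lambda_n})$, and note that each $g_{\sigma(\breve\nu)}(1^m;\cdot)$ appearing in the sum is a permuted-boundary function at the rainbow level with its two adjacent ``merged'' colours made distinct. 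The antisymmetrized sum over $\mathfrak{S}_\lambda$ then factors, block by block, as a product of alternating sums of the form $\sum_{\tau} (-1)^{{\rm inv}(\tau)} (\text{word obtained by permuting a block of distinct colours})$; applying the relation \eqref{invT-g} repeatedly (equivalently, the commutation relation \eqref{BB<} between $\mathcal{B}$-operators with strictly increasing colour indices) collapses each such alternating sum of rainbow functions into the single non-rainbow function in which that block of colours is merged. This is exactly the content of \cite[Theorem 5.2.2]{ABW21} specialized to $g$; in fact the cleanest route is to prove the block-merging statement for one pair of colours and iterate.

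In more detail, for a single merge the key computation is: given a word $\sigma$ with $\sigma_j < \sigma_{j+1}$, the relation \eqref{invT-g} says $g_\nu^{\mathfrak{s}_j\sigma}(\lambda;\cdot) = \tilde T_j \cdot g_\nu^\sigma(\lambda;\cdot)$, and since $\tilde T_j = q - \tfrac{x_{j+1}-qx_j}{x_{j+1}-x_j}(1-\mathfrak{s}_j)$ one obtains
\begin{align}
\label{merge-step-plan}
g_\nu^\sigma(\lambda;\cdot) - g_\nu^{\mathfrak{s}_j\sigma}(\lambda;\cdot)
=
\frac{x_{j+1}-qx_j}{x_{j+1}-x_j}\,(1-\mathfrak{s}_j)\cdot g_\nu^\sigma(\lambda;\cdot)
- (q-1)\, g_\nu^\sigma(\lambda;\cdot),
\end{align}
and summing the signed version of such relations telescopes the alternating sum over a symmetric group $\mathfrak{S}_I$ acting within one block down to a single term, because the ``extra'' rational-function factors assemble into the Weyl denominator which cancels against the Vandermonde-type symmetry of the rainbow functions. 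I would set up this telescoping carefully as an identity in the group algebra of $\mathfrak{S}_m$ acting on rational functions, invoking the faithfulness of the action \eqref{hecke-poly} (footnoted in Section \ref{ssec:hecke}) to move freely between operator identities and function identities.

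\textbf{Main obstacle.} The genuinely delicate point is bookkeeping the signs and the order of the merges: $\mathfrak{S}_\lambda$ is a product of symmetric groups on disjoint intervals, so one must check that the iterated application of \eqref{invT-g} across different blocks is consistent (the relevant transpositions commute, by \eqref{hecke2}, which helps), and that the sign $(-1)^{{\rm inv}(\sigma)}$ on $\mathfrak{S}_\lambda$ matches the product of the local signs produced by each $\tilde T_j^{-1}$-reduction. A secondary subtlety is that \eqref{invT-g} requires the colours being swapped to be \emph{strictly} increasing in the current word; one must order the reductions so that this hypothesis always holds, which is possible precisely because within each block $(\ell_{k-1},\ell_k]$ the colour is constant, so distinctifying it introduces an increasing run that can be re-sorted monotonically. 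Once the combinatorics of this reduction order is pinned down, the analytic content is entirely supplied by the already-established relations \eqref{BB<} and \eqref{invT-g}.
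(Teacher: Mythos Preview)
Your plan has a genuine gap: the Hecke relation \eqref{invT-g} acts on the \emph{boundary word} $\sigma$ of the permuted-boundary functions $g_{\nu}^{\sigma}(\lambda;\cdot)$, with the colour profile $\lambda$ and the state $\nu$ held fixed. But the objects on the left side of the proposition are $g_{\sigma(\breve\nu)}(1^m;\cdot)$ --- rainbow functions with the \emph{composition index} permuted and the boundary word equal to the identity $(1,2,\dots,m)$. These are different families; your displayed computation \eqref{merge-step-plan} manipulates $g_\nu^{\sigma}(\lambda;\cdot)-g_\nu^{\mathfrak{s}_j\sigma}(\lambda;\cdot)$ inside the \emph{non-rainbow} model and never produces any rainbow function at all. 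Relation \eqref{T-g} does permute the composition index in the rainbow case, but it introduces factors of $q$ rather than signs, and in any event neither \eqref{T-g} nor \eqref{invT-g} changes the number of colours in the underlying vertex model. The proposition is precisely a statement relating an $m$-colour model to an $n$-colour one with $n<m$, so some input beyond these fixed-profile exchange relations is required. The vague appeal to ``Vandermonde-type symmetry'' does not supply it.

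The paper does not give a self-contained argument here: it simply invokes \cite[Theorem 5.2.2]{ABW21} after using the symmetry \eqref{LM-sym} to rewrite the $M$-weight partition function $g_\nu$ as an $L$-weight one. That theorem is a \emph{colour-merging} statement proved at the level of individual vertex weights: one checks that the signed sum over ways to split a merged colour into two distinct colours reproduces, vertex by vertex, the original weight. This local identity then propagates to the whole lattice. Your inductive framework (merge one pair of colours at a time, iterate over blocks) is the right skeleton, and indeed matches how \cite{ABW21} organizes the proof; what is missing is the correct local mechanism --- a direct check on the weights \eqref{fund-weights}, not an operator identity in the Hecke algebra.
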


\begin{prop}
\label{prop:G-ant}
Fix two coloured compositions $\mu,\nu \in \mathcal{S}_{\lambda}$ and let $\breve{\mu},\breve{\nu}$ denote their respective rainbow recolouring, as in Definition \ref{def:rainbow-rec}. The functions \eqref{G-def} have the following sum property:
\begin{align*}
\sum_{\sigma \in \mathfrak{S}_{\lambda}}
(-1)^{{\rm inv}(\sigma)}
G_{\breve{\mu}/\sigma(\breve{\nu})}(1^m;x_1,\dots,x_p)
=
G_{\mu/\nu}(\lambda;x_1,\dots,x_p)
\end{align*}
where the sum is taken over all elements in $\mathfrak{S}_{\lambda}$.
\end{prop}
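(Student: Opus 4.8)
\emph{Proof idea.} The plan is to obtain Proposition \ref{prop:G-ant} as an instance of the colour-merging principle for fermionic vertex models (\cite[Theorem 5.2.2]{ABW21}), in exactly the spirit of the proof of Proposition \ref{prop:f-ant}. First I would pass to the un-normalized form of \eqref{G-def}: since $\breve\mu$ and $\breve\nu$ consist of the same parts as $\mu$ and $\nu$, we have $|\breve\mu|=|\mu|$ and $|\sigma(\breve\nu)|=|\nu|$ for every $\sigma\in\mathfrak{S}_\lambda$, so the factor $(-s)^{|\mu|-|\nu|}$ in \eqref{G-def} is common to all terms and it suffices to prove $\sum_{\sigma\in\mathfrak{S}_\lambda}(-1)^{{\rm inv}(\sigma)}\bra{\sigma(\breve\nu)}_{1^m}\prod_{i=1}^{p}\mathcal{C}_0(x_i)\ket{\breve\mu}_{1^m}=\bra{\nu}_\lambda\prod_{i=1}^{p}\mathcal{C}_0(x_i)\ket{\mu}_\lambda$. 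Read via \eqref{G-pf}, the right-hand side is a rank-$n$ ensemble of non-intersecting coloured paths entering the lattice through the bottom boundary encoded by $\nu$ and exiting through the top boundary encoded by $\mu$, with colour $0$ on every horizontal edge; the left-hand side is the corresponding rank-$m$ rainbow partition function with the bottom boundary ranging over all admissible refinements $\sigma(\breve\nu)$.

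Next I would introduce the merging surjection $\phi\colon[1,m]\to[1,n]$ sending $i$ to the block it lies in (so $\phi(i)=k$ whenever $\ell_{k-1}<i\leq\ell_k$), and note that, by construction (Definition \ref{def:rainbow-rec}), $\mu$ is obtained from $\breve\mu$ by recolouring every path $i$ by $\phi(i)$, while conversely $\breve\mu$ is the canonical $\phi$-refinement of $\mu$ in which, within each merge-class, the refined colours occupy the columns of $\mu^{(k)}$ in increasing order; likewise $\{\sigma(\breve\nu):\sigma\in\mathfrak{S}_\lambda\}$ is exactly the set of $\phi$-refinements of $\nu$. The content of colour-merging is that collapsing path colours through $\phi$ inside a partition function built from the $L$-weights \eqref{fund-weights} is an exact operation once one antisymmetrizes, with signs $(-1)^{{\rm inv}(\sigma)}$, over all $\phi$-refinements of one boundary while holding the other boundary in canonical form; the operators $\mathcal{C}_0(x_i)$ pose no obstruction because they carry no colour through the horizontal edges, so they pass through the merging unchanged. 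Applying the statement with the top boundary fixed at $\breve\mu$ and the bottom boundary summed over $\sigma(\breve\nu)$ yields the displayed identity. An equivalent self-contained packaging is to build the merge intertwiner $\Theta$ between the rank-$m$ and rank-$n$ path spaces (collapsing colours column by column, with the fermionic sign needed to reorder coincident colours into class order), to check $\Theta\ket{\breve\mu}_{1^m}=\ket{\mu}_\lambda$ and $\sum_\sigma(-1)^{{\rm inv}(\sigma)}\bra{\sigma(\breve\nu)}_{1^m}=\bra{\nu}_\lambda\circ\Theta$, and then to verify that $\Theta$ intertwines the row operators $\mathcal{C}_0$.

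The step I expect to be the main obstacle is this last intertwining property together with its sign bookkeeping: after reduction to a single row, one must show that collapsing colours along one row of $L$-weights is exact after the prescribed signed sum, which needs both a local computation with the explicit weights \eqref{fund-weights} and the combinatorial identity that the product of the column-reordering signs collected by $\Theta$ equals $(-1)^{{\rm inv}(\sigma)}$ for the unique $\sigma\in\mathfrak{S}_\lambda$ producing the given refinement. A secondary point is the asymmetry of the final identity — that no summation over refinements of $\breve\mu$ is needed — which holds because same-colour paths in the fermionic $L$-model do not cross: once the refined colours are fixed along the entry boundary, the only non-vanishing configurations exit with the refined colours in class-sorted order, so any non-canonical refinement of $\breve\mu$ contributes zero. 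Since the one-row statement is already supplied by \cite[Theorem 5.2.2]{ABW21} and its precursors in \cite{BorodinW}, in the write-up I would simply cite that result, reduce to it via the boundary identification above, and match sign conventions exactly as in the proof of Proposition \ref{prop:f-ant}.
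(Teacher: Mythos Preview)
Your proposal is correct and follows essentially the same approach as the paper: the paper does not give an independent proof but simply observes (in the remark immediately following the proposition) that both Propositions \ref{prop:f-ant} and \ref{prop:G-ant} are direct corollaries of the general colour-merging statement \cite[Theorem 5.2.2]{ABW21}, which is exactly the reduction you outline. Your write-up is more explicit than the paper's about the merging surjection $\phi$, the intertwiner $\Theta$, and the sign bookkeeping, but the substance is identical; the only detail the paper adds that you omit is the hint that one may need to invoke the $L$--$M$ symmetry \eqref{LM-sym} to line up conventions with \cite{ABW21}.
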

A similar antisymmetrization result can be stated for the functions \eqref{generic-f}, but we omit it from this section since we shall not require it in what follows.

\begin{rmk}
Propositions \ref{prop:f-ant} and \ref{prop:G-ant} are both statements about partition functions constructed from $M$-weights, as defined in Section \ref{ssec:M}. In order to recover them as corollaries of \cite[Theorem 5.2.2]{ABW21} one should first apply the symmetry \eqref{LM-sym}, which converts them to statements about partition functions built from $L$-weights, and the matching with \cite{ABW21} then goes through in a straightforward way.
\end{rmk}

\subsection{Orthogonality}
\label{ssec:orthog}

In this section we directly transcribe an orthogonality result for non-symmetric spin Hall--Littlewood functions, from \cite[Chapter 8]{BorodinW}. Throughout, we denote the imaginary unit by ${\tt i} = \sqrt{-1}$. Let $\{C_1,\dots,C_n\}$ be a collection of contours in the complex plane, and fix two complex parameters $q,s \in \mathbb{C}$. We say that the set $\{C_1,\dots,C_n\}$ is admissible with respect to $(q,s)$ if the following conditions are met:
\begin{itemize}
\item The contours $\{C_1,\dots,C_n\}$ are closed, positively oriented and pairwise non-intersecting;
\item The contours $C_i$ and $q \cdot C_i$ are both contained within contour $C_{i+1}$ for all $1 \leq i \leq n-1$, where $q \cdot C_i$ denotes the image of $C_i$ under multiplication by $q$;
\item All contours surround the point $s$.
\end{itemize}

\begin{thm}
Fix two rainbow compositions $\mu,\nu \in \mathcal{S}_{1^n}$, and let $\{C_1,\dots,C_n\}$ be contours admissible with respect to $(q,s)$. We then have
\begin{align}
\label{f-g-orthog}
\left( \frac{1}{2\pi{\tt i}} \right)^n
\oint_{C_1}
\frac{dy_1}{y_1}
\cdots 
\oint_{C_n}
\frac{dy_n}{y_n}
\prod_{1 \leq i<j \leq n}
\left(
\frac{y_j-y_i}{y_j-q y_i}
\right)
f_{\mu}(y_1^{-1},\dots,y_n^{-1})
g_{\nu}(y_1,\dots,y_n)
=
\frac{{\bm 1}_{\mu = \nu}\cdot(q-1)^n}{q^{n(n+1)/2}}.
\end{align}
\end{thm}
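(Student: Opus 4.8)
The plan is to prove \eqref{f-g-orthog} by reducing the contour integral on the left to the diagonal case $\mu=\nu$ with $\mu$ weakly increasing, and then evaluating the resulting integral explicitly by residues. Write $B(\mu,\nu)$ for the left-hand side of \eqref{f-g-orthog}, viewed as a pairing of $f_\mu$ against $g_\nu$ through the torus weight $\prod_{i<j}\tfrac{y_j-y_i}{y_j-qy_i}\prod_k\tfrac{dy_k}{y_k}$.

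The first and central step is an \emph{exchange symmetry} for $B$. Substituting $x_i\mapsto y_i^{-1}$ turns the Demazure--Lusztig operator $T_i$ of \eqref{hecke-poly} into the reciprocated operator $\tilde T_i$ of \eqref{hecke-tilde} acting in the $y$-variables, so that by \eqref{T-f} the factor $f_\mu(y_1^{-1},\dots,y_n^{-1})$ is obtained from $f_{\mu^+}(y_1^{-1},\dots,y_n^{-1})$ --- with $\mu^+$ the weakly increasing rearrangement of $\mu$ --- by applying a word in the $\tilde T_i$. The key lemma is that each $\tilde T_i$ is self-adjoint with respect to the torus functional: the cross-factor $\prod_{i<j}\tfrac{y_j-y_i}{y_j-qy_i}$ is precisely the weight that renders the transposition $\mathfrak s_i$, hence $\tilde T_i$, formally self-adjoint, and the residue corrections that a priori appear when one restores the integration contours after a swap --- at $y_{i+1}=y_i$, annihilated by the operator $1-\mathfrak s_i$, and at $y_{i+1}=qy_i$, controlled by the nesting $q\cdot C_i\subset C_{i+1}$ --- do not contribute. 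Moving these operators across $B$ onto the $g$-factor, then using \eqref{T-g} (which yields explicit powers of $q$) together with the exchange relations \eqref{CC<}--\eqref{BB<}, one finds $B(\mu,\nu)$ to be a power of $q$ times $B(\mu^+,w\nu^+)$ for a suitable $w\in\mathfrak S_n$. Thus $B$ is determined by its values on weakly increasing pairs, and vanishes unless $\mu$ and $\nu$ have the same multiset of parts.

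Two things then remain: that $B(\mu,\nu)=0$ for $\mu\neq\nu$, and the evaluation of the common diagonal value. The off-diagonal vanishing follows once one knows that $f_\mu(y_1^{-1},\dots,y_n^{-1})$ and $g_\nu(y_1,\dots,y_n)$ are joint eigenfunctions of the commuting family of Cherednik-type operators built from the $\tilde T_i$ and the cyclic shift --- a family under which the torus functional is invariant and whose eigenvalue tuples separate all rainbow compositions --- or, more in the spirit of the tools assembled above, from the exchange symmetry of the previous step combined with a triangularity argument in the monomial basis. For the diagonal value, which turns out to be independent of $\mu$, one inserts the factorization \eqref{factorize} for $f_\mu(y_1^{-1},\dots,y_n^{-1})$; with the factorized $f$-factors supplying the relevant poles, $B(\mu,\mu)$ reduces to an iterated residue computation which, carried out from the innermost contour $C_1$ outward, produces exactly $(q-1)^n/q^{n(n+1)/2}$.

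The step I expect to be the main obstacle is the self-adjointness of the $\tilde T_i$: one must check carefully that replacing $\tilde T_i$ by its adjoint across the torus functional leaves no residual boundary term, which hinges on the precise admissibility conditions imposed on $\{C_1,\dots,C_n\}$ --- the $q$-nesting and the fact that all contours enclose the spin parameter $s$. Once this is secured, the off-diagonal vanishing and the diagonal evaluation are standard triangularity and residue bookkeeping; a complete treatment for the non-symmetric spin Hall--Littlewood functions is given in \cite[Chapter 8]{BorodinW}.
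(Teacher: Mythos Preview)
Your proposal is correct and aligned with the paper's treatment: the paper's own ``proof'' is nothing more than a citation to \cite[Theorem~8.2.1]{BorodinW}, and your sketch accurately outlines the strategy carried out there --- the self-adjointness of the Hecke generators with respect to the torus pairing (stated in the paper as \eqref{adjoint}), reduction via \eqref{T-f}--\eqref{T-g} to the weakly increasing case, and evaluation of the diagonal by residues using the factorization \eqref{factorize}. Your closing citation to \cite[Chapter~8]{BorodinW} matches the paper exactly.

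One small clarification: what you call ``self-adjointness of $\tilde T_i$'' is precisely the content of \eqref{adjoint}, once one observes that $T_k$ in the $x$-variables becomes $\tilde T_k$ in the $y$-variables under $x_i \mapsto y_i^{-1}$; so the lemma you flag as the main obstacle is already isolated in the paper as Proposition~\eqref{adjoint} (itself proved in \cite[Proposition~8.1.3]{BorodinW}). Your alternative route to off-diagonal vanishing via Cherednik-type eigenvalue separation is a valid variant, though the argument in \cite{BorodinW} proceeds more directly through the exchange relations and explicit residues, as you also indicate.
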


\begin{proof}
This is Theorem 8.2.1 of \cite[Chapter 8]{BorodinW}.
\end{proof}

Closely related to the orthogonality statement \eqref{f-g-orthog}, and in fact instrumental in its proof, is the following property of the Hecke generators \eqref{hecke-poly}, \eqref{hecke-tilde} with respect to such integrals:

\begin{prop}
Fix an integer $1 \leq k \leq n-1$, and three functions $a(y_1,\dots,y_n)$, $b(y_1,\dots,y_n)$ and $c(y_1,\dots,y_n)$, the last of which is symmetric in its alphabet $(y_1,\dots,y_n)$. We have the following equality of integrals:
\begin{multline}
\label{adjoint}
\oint_{C_1}
\frac{dy_1}{y_1}
\cdots 
\oint_{C_n}
\frac{dy_n}{y_n}
\prod_{1 \leq i<j \leq n}
\left(
\frac{y_j-y_i}{y_j-q y_i}
\right)
(T_k \cdot a)(y_1^{-1},\dots,y_n^{-1})
b(y_1,\dots,y_n)
c(y_1,\dots,y_n)
\\
=
\oint_{C_1}
\frac{dy_1}{y_1}
\cdots 
\oint_{C_n}
\frac{dy_n}{y_n}
\prod_{1 \leq i<j \leq n}
\left(
\frac{y_j-y_i}{y_j-q y_i}
\right)
a(y_1^{-1},\dots,y_n^{-1})
(\tilde{T}_k \cdot b)(y_1,\dots,y_n)
c(y_1,\dots,y_n),
\end{multline}
with $T_k$, $\tilde{T}_k$ given by \eqref{hecke-poly}, \eqref{hecke-tilde}, respectively.
\end{prop}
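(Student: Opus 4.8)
The plan is to exploit the fact that both $T_k$ and $\tilde{T}_k$ touch only the variables $(y_k,y_{k+1})$, so that \eqref{adjoint} reduces to a local, two-variable identity expressing that $\tilde{T}_k$ is self-adjoint with respect to the pairing defined by the integral. The first observation is that conjugating $T_k$ by the reciprocal substitution produces $\tilde{T}_k$: writing $\hat{a}(y_1,\dots,y_n):=a(y_1^{-1},\dots,y_n^{-1})$, the swap $\mathfrak{s}_k$ acting on the $x$-alphabet becomes the transposition $y_k\leftrightarrow y_{k+1}$ after setting $x_i=y_i^{-1}$, and the substitution $x_i=y_i^{-1}$ turns $\tfrac{x_k-qx_{k+1}}{x_k-x_{k+1}}$ into $\tfrac{y_{k+1}-qy_k}{y_{k+1}-y_k}$, so
\begin{align*}
(T_k\cdot a)(y_1^{-1},\dots,y_n^{-1})=(\tilde{T}_k\cdot\hat{a})(y_1,\dots,y_n).
\end{align*}
Thus the left-hand side of \eqref{adjoint} is the integral of $\prod_{1\leq i<j\leq n}\tfrac{y_j-y_i}{y_j-qy_i}\,(\tilde{T}_k\hat{a})\,b\,c$ against $\prod_i \tfrac{dy_i}{y_i}$, and the task is to move $\tilde{T}_k$ off $\hat{a}$ and onto $b$.

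Next I would carry out the algebraic reduction. Factor $\prod_{1\leq i<j\leq n}\tfrac{y_j-y_i}{y_j-qy_i}=\tfrac{y_{k+1}-y_k}{y_{k+1}-qy_k}\cdot W$, where $W$ is the product of all factors except the $(k,k+1)$ one. Since $W$ contains no factor depending on both $y_k$ and $y_{k+1}$, while the factors depending on $y_k$ only are interchanged with those depending on $y_{k+1}$ only under $\mathfrak{s}_k$, the function $W$ is $\mathfrak{s}_k$-invariant; so are $c$ and $\prod_i\tfrac{dy_i}{y_i}$. The factor $\tfrac{y_{k+1}-y_k}{y_{k+1}-qy_k}$ cancels the pole of $\tilde{T}_k$ at $y_k=y_{k+1}$:
\begin{align*}
\frac{y_{k+1}-y_k}{y_{k+1}-qy_k}\,(\tilde{T}_k\hat{a})=q\,\frac{y_{k+1}-y_k}{y_{k+1}-qy_k}\,\hat{a}-\hat{a}+\mathfrak{s}_k\hat{a},
\end{align*}
and similarly with $\hat a$ replaced by $b$. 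Substituting these two expansions into the two sides of \eqref{adjoint}, the terms proportional to $q\,\tfrac{y_{k+1}-y_k}{y_{k+1}-qy_k}\hat{a}\,b$ cancel, as do the terms $-\hat{a}\,b$, and the difference of the two sides collapses to
\begin{align*}
\oint_{C_1}\frac{dy_1}{y_1}\cdots\oint_{C_n}\frac{dy_n}{y_n}\;W\,c\,\big[(\mathfrak{s}_k\hat{a})\,b-\hat{a}\,(\mathfrak{s}_k b)\big].
\end{align*}

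Finally I would show that this last integral vanishes by a contour deformation. The integrand is now a genuine rational function --- the spurious poles $\tfrac{1}{y_k-y_{k+1}}$ produced by the Demazure--Lusztig operators have cancelled. Deform $C_k$ outward and $C_{k+1}$ inward until both coincide with a single contour $C$ lying in the open annulus between them. This is legal because the $q$-nesting of the admissible contours keeps every relevant pole out of that annulus: the poles of $W$ at $y_k=qy_i$ or $y_{k+1}=qy_i$ with $i<k$ lie inside $C_k$, those at $y_j=qy_k$ or $y_j=qy_{k+1}$ with $j>k+1$ lie outside $C_{k+1}$, the poles at $y_k=0$ and $y_{k+1}=0$ lie inside, and the diagonal pole $y_k=y_{k+1}$ no longer appears. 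On $C\times C$ the bracket $(\mathfrak{s}_k\hat{a})\,b-\hat{a}\,(\mathfrak{s}_k b)$ is antisymmetric under $y_k\leftrightarrow y_{k+1}$ while $W$, $c$ and $\prod_i\tfrac{dy_i}{y_i}$ are symmetric, so the whole integrand is odd under that transposition and the integral is zero, which gives \eqref{adjoint}.

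The step I expect to be the main obstacle is the contour deformation: one must verify that, while collapsing $C_k$ and $C_{k+1}$ onto a common contour, no pole of $\hat{a}$, $\mathfrak{s}_k\hat{a}$, $b$, $\mathfrak{s}_k b$, $c$ or $W$ is swept across. This is exactly where the admissibility ($q$-nesting) of the contours is used, together with the analytic structure of the functions involved --- in the applications $a$ and $b$ are the (non-symmetric spin) Hall--Littlewood functions, whose finite poles, after reciprocation, all sit inside every contour (cf.\ the product formula \eqref{factorize} and the Hecke recursions \eqref{T-f}--\eqref{T-g}) --- so that the annulus between $C_k$ and $C_{k+1}$ is pole-free and the deformation is justified.
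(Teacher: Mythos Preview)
Your argument is correct and follows essentially the same approach as the paper. The paper's own proof simply cites \cite[Proposition 8.1.3]{BorodinW} for the case $c\equiv 1$ and then observes that the extension to symmetric $c$ is immediate because multiplication by an $\mathfrak{s}_k$-invariant function commutes with $T_k$ and $\tilde{T}_k$; your write-up effectively unpacks the content of that cited result --- the reciprocation identity $(T_k a)(y^{-1})=(\tilde{T}_k\hat a)(y)$, the reduction to an $\mathfrak{s}_k$-antisymmetric integrand, and the contour-deformation step --- while absorbing the symmetric factor $c$ into the $\mathfrak{s}_k$-invariant weight $W$ from the outset. Your caveat about the analyticity of $a$, $b$, $c$ needed to justify the deformation is exactly the hidden hypothesis here; in the paper's applications these are always (permuted) $f_\mu$ and $g_\nu$, whose pole structure is compatible with the admissible contours.
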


\begin{proof}
The proof of this result, for $c(y_1,\dots,y_n) \equiv 1$, is given in Proposition 8.1.3 in \cite[Chapter 8]{BorodinW}. The extension of the result to generic symmetric functions $c(y_1,\dots,y_n)$ follows immediately, in view of the fact that acting with Hecke generators $T_k$, $\tilde{T}_k$ commutes with multiplication by functions which are symmetric in $(y_k,y_{k+1})$.
\end{proof}

\subsection{Cauchy identity}
\label{ssec:cauchy}

It is possible to derive a number of summation identities of Cauchy-type for the non-symmetric spin Hall--Littlewood functions; see \cite[Chapter 4]{BorodinW}. In this section we state a Cauchy identity that did not previously appear in that text, although it is similar in flavour to \cite[Proposition 4.5.1]{BorodinW}, and proved in precisely the same fashion.

\begin{thm}
Let $\lambda = (\lambda_1,\dots,\lambda_n)$ be a composition such that $|\lambda| = m$. Fix a coloured composition $\nu \in \mathcal{S}_{\lambda}$ and two alphabets 
$(x_1,\dots,x_p)$, $(y_1,\dots,y_m)$ of complex parameters satisfying the constraint
\begin{align}
\label{converge}
\left| 
\frac{x_i-s}{1-s x_i}
\cdot
\frac{y_j-s}{1-s y_j}
\right|
<
1,
\qquad
\forall\ 1 \leq i \leq p,\ 1 \leq j \leq m.
\end{align}
The following summation identity holds:
\begin{align}
\label{Gg-cauchy}
\sum_{\kappa \in \mathcal{S}_{\lambda}}
G_{\kappa/\nu}(\lambda;x_1,\dots,x_p)
g_{\kappa}(\lambda;y_1,\dots,y_m)
=
\prod_{i=1}^{p}
\prod_{j=1}^{m}
\frac{1-q x_i y_j}{1-x_i y_j}
\cdot
g_{\nu}(\lambda;y_1,\dots,y_m).
\end{align}

\end{thm}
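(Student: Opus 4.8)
The plan is to prove the Cauchy identity \eqref{Gg-cauchy} by the standard row-operator argument, using the commutation relation \eqref{CB} between the $\mathcal{C}$ and $\mathcal{B}$ operators together with the completeness of the states $\ket{\kappa}_\lambda$. First I would express both sides in terms of row operators. By the definition \eqref{G-def}, $G_{\kappa/\nu}(\lambda;x_1,\dots,x_p)$ is, up to the sign factor $(-s)^{|\kappa|-|\nu|}$, the matrix element $\bra{\nu}_\lambda \prod_{i=1}^p \mathcal{C}_0(x_i) \ket{\kappa}_\lambda$; and by the definition \eqref{generic-g}, $g_{\kappa}(\lambda;y_1,\dots,y_m)$ is, up to $(-s)^{-|\kappa|}$, the matrix element $\bra{\kappa}_\lambda \prod_{j} \mathcal{B}_{\sigma_j}(y_j)\ket{\emptyset}$ for the ordered choice $\sigma = (1^{\lambda_1},\dots,n^{\lambda_n})$. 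The two $(-s)$ powers multiply to $(-s)^{-|\nu|}$, which is exactly the prefactor attached to $g_\nu(\lambda;y_1,\dots,y_m)$ on the right-hand side, so the sign bookkeeping is consistent. Hence the left-hand side of \eqref{Gg-cauchy} equals
\begin{align*}
(-s)^{-|\nu|}
\sum_{\kappa \in \mathcal{S}_\lambda}
\bra{\nu}_\lambda \Big(\prod_{i=1}^p \mathcal{C}_0(x_i)\Big) \ket{\kappa}_\lambda
\bra{\kappa}_\lambda \Big(\prod_{j} \mathcal{B}_{\sigma_j}(y_j)\Big) \ket{\emptyset}.
\end{align*}

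Next I would insert a resolution of the identity. The vectors $\{\ket{\kappa}_\lambda : \kappa \in \mathcal{S}_\lambda\}$, with their duals $\bra{\kappa}_\lambda$, span the subspace of $\mathbb{V}(\infty)$ consisting of states whose colour content is exactly $\lambda$; since the operators $\prod_j \mathcal{B}_{\sigma_j}(y_j)$ applied to $\ket{\emptyset}$ land in precisely that subspace, we may replace $\sum_{\kappa}\ket{\kappa}_\lambda\bra{\kappa}_\lambda$ by the identity when it acts between the two products above. This collapses the sum to
\begin{align*}
(-s)^{-|\nu|}\,
\bra{\nu}_\lambda \Big(\prod_{i=1}^p \mathcal{C}_0(x_i)\Big)\Big(\prod_{j} \mathcal{B}_{\sigma_j}(y_j)\Big) \ket{\emptyset}.
\end{align*}
Now I would use the commutation relation \eqref{CB}, namely $\mathcal{C}_0(x)\mathcal{B}_j(y) = \frac{1-qxy}{1-xy}\mathcal{B}_j(y)\mathcal{C}_0(x)$, to move each $\mathcal{C}_0(x_i)$ (these carry colour $0$, so $i=0<j$ for every colour $j$ appearing in the $\mathcal{B}$'s, and \eqref{CB} indeed applies with left index $0$) past all of the $\mathcal{B}_{\sigma_j}(y_j)$; each such move across a $\mathcal{B}$-operator with spectral parameter $y_j$ produces exactly the scalar $\frac{1-qx_iy_j}{1-x_iy_j}$. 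Doing this for all $p$ operators $\mathcal{C}_0(x_i)$ and all $m$ operators $\mathcal{B}_{\sigma_j}(y_j)$ generates precisely the product $\prod_{i=1}^p\prod_{j=1}^m \frac{1-qx_iy_j}{1-x_iy_j}$ out front. Finally $\prod_i \mathcal{C}_0(x_i)$ acts on $\ket{\emptyset}$: since $\mathcal{C}_0$ injects no new particles and the vacuum has none to move, $\prod_i \mathcal{C}_0(x_i)\ket{\emptyset} = \ket{\emptyset}$ (this is immediate from the weight table \eqref{fund-weights}: the only configuration with empty bottom row, entering colour $0$ and exiting colour $0$ everywhere has weight $1$). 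What remains is $(-s)^{-|\nu|}\bra{\nu}_\lambda \prod_j \mathcal{B}_{\sigma_j}(y_j)\ket{\emptyset} = g_\nu(\lambda;y_1,\dots,y_m)$ by \eqref{generic-g}, which completes the identity.

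The convergence hypothesis \eqref{converge} is what makes the infinite manipulation rigorous: the commutation relation \eqref{CB} was only stated under the weight condition \eqref{weight-condition}, and \eqref{converge} guarantees this holds for every pair $(x_i,y_j)$, so that the geometric-type series implicit in acting with $\mathcal{C}_0$ on the semi-infinite lattice converge absolutely and the reordering of operators is legitimate. I expect the main obstacle to be exactly this analytic point — verifying that the $\mathcal{C}$- and $\mathcal{B}$-row operators on the infinite tensor product $\mathbb{V}(\infty)$ are well-defined on the relevant states, that the sum over $\kappa\in\mathcal{S}_\lambda$ really does furnish a partition of unity on the colour-$\lambda$ sector against which the operators act (there is no finiteness of the state space, so one needs the stability property of $\mathbb{V}(\infty)$ plus the finitely-many-particles structure), and that \eqref{CB} may be iterated across all $m$ of the $\mathcal{B}$-operators without convergence loss. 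Since the analogous identity \cite[Proposition 4.5.1]{BorodinW} is proved in exactly this way, I would organize the argument to parallel that proof verbatim, citing it for the analytic justification and only spelling out the (identical) algebraic steps of moving $\mathcal{C}_0$ past the $\mathcal{B}$'s and evaluating on the vacuum.
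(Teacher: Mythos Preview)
Your approach is correct and essentially identical to the paper's own proof: express the left-hand side as the single matrix element $\bra{\nu}_{\lambda}\prod_i \mathcal{C}_0(x_i)\prod_j \mathcal{B}_{\sigma_j}(y_j)\ket{\emptyset}$ (via insertion of $\sum_\kappa \ket{\kappa}_\lambda\bra{\kappa}_\lambda$), commute the $\mathcal{C}_0$'s past the $\mathcal{B}$'s using \eqref{CB}, and use $\prod_i \mathcal{C}_0(x_i)\ket{\emptyset}=\ket{\emptyset}$. One minor bookkeeping slip: from \eqref{G-def} and \eqref{generic-g} the combined prefactor is $(-s)^{|\nu|}$, not $(-s)^{-|\nu|}$; this is harmless since the same factor reappears on the right via $(-s)^{-|\nu|}g_\nu=\bra{\nu}_\lambda\prod_j\mathcal{B}_{\sigma_j}(y_j)\ket{\emptyset}$ and cancels as you intended.
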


\begin{proof}
The left hand side of \eqref{Gg-cauchy} may be represented algebraically as
\begin{multline*}
\sum_{\kappa \in \mathcal{S}_{\lambda}}
G_{\kappa/\nu}(\lambda;x_1,\dots,x_p)
g_{\kappa}(\lambda;y_1,\dots,y_m)
\\
=
\bra{\nu}_{\lambda}
\prod_{i=1}^{p}
\mathcal{C}_0(x_i)
\prod_{j\in [1,\ell_1]}
\mathcal{B}_1(x_j)
\prod_{j\in (\ell_1,\ell_2]}
\mathcal{B}_2(x_j)
\cdots
\prod_{j\in(\ell_{n-1},\ell_n]}
\mathcal{B}_n(x_j)
\ket{\emptyset}.
\end{multline*}
We use the commutation relation \eqref{CB} in the case $i=0$, $j \geq 1$ to transfer all $\mathcal{B}$-operators to the left of the product; this results in the equation
\begin{multline}
\label{gG-proof1}
\sum_{\kappa \in \mathcal{S}_{\lambda}}
G_{\kappa/\nu}(\lambda;x_1,\dots,x_p)
g_{\kappa}(\lambda;y_1,\dots,y_m)
\\
=
\prod_{i=1}^{p}
\prod_{j=1}^{m}
\frac{1-q x_i y_j}{1-x_i y_j}
\cdot
\bra{\nu}_{\lambda}
\prod_{j\in [1,\ell_1]}
\mathcal{B}_1(x_j)
\prod_{j\in (\ell_1,\ell_2]}
\mathcal{B}_2(x_j)
\cdots
\prod_{j\in(\ell_{n-1},\ell_n]}
\mathcal{B}_n(x_j)
\ket{\emptyset},
\end{multline}
where we have used the fact that
\begin{align*}
\prod_{i=1}^{p}
\mathcal{C}_0(x_i)
\ket{\emptyset}
=
\ket{\emptyset}.
\end{align*}
The expression obtained, \eqref{gG-proof1}, matches with the right hand side of 
\eqref{Gg-cauchy}.

\end{proof}

\subsection{Integral formula for $G_{\mu/\nu}$}
\label{ssec:int}

Combining the results of Sections \ref{ssec:orthog}--\ref{ssec:cauchy}, we now obtain an integral formula\footnote{A more general version of this integral formula appears in \cite[Proposition 11.3.1]{ABW21}.} for the rational symmetric functions \eqref{G-pf}: 
\begin{thm}
We have the following integral formula for the function $G_{\mu/\nu}(\lambda;x_1,\dots,x_p)$:
\begin{multline}
\label{G-int}
G_{\mu/\nu}(\lambda;x_1,\dots,x_p)
=
\frac{q^{m(m+1)/2}}{(q-1)^m}
\cdot
\left( \frac{1}{2\pi{\tt i}} \right)^m
\oint_{C_1}
\frac{dy_1}{y_1}
\cdots 
\oint_{C_m}
\frac{dy_m}{y_m}
\\
\times
\prod_{1 \leq i<j \leq m}
\left(
\frac{y_j-y_i}{y_j-q y_i}
\right)
f_{\breve\mu}(1^m;y_1^{-1},\dots,y_m^{-1})
g_{\nu}(\lambda;y_1,\dots,y_m)
\prod_{i=1}^{p}
\prod_{j=1}^{m}
\frac{1-q x_i y_j}{1-x_i y_j},
\end{multline}
where $\{C_1,\dots,C_m\}$ are contours admissible with respect to $(q,s)$.
\end{thm}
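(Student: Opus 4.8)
The plan is to establish \eqref{G-int} by reading its right-hand side from right to left, assembling the three ingredients prepared in Sections~\ref{ssec:anti}--\ref{ssec:cauchy}: the Cauchy identity \eqref{Gg-cauchy}, the antisymmetrization relation of Proposition~\ref{prop:f-ant}, and the orthogonality relation \eqref{f-g-orthog}. First I would observe that, in the integrand on the right of \eqref{G-int}, the integration variables $(y_1,\dots,y_m)$ play exactly the role of the alphabet $(y_1,\dots,y_m)$ in \eqref{Gg-cauchy}, and the combination $\prod_{i=1}^{p}\prod_{j=1}^{m}\frac{1-qx_iy_j}{1-x_iy_j}\cdot g_{\nu}(\lambda;y_1,\dots,y_m)$ is precisely the right-hand side of \eqref{Gg-cauchy}. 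Substituting for it the left-hand side $\sum_{\kappa\in\mathcal{S}_{\lambda}}G_{\kappa/\nu}(\lambda;x_1,\dots,x_p)\,g_{\kappa}(\lambda;y_1,\dots,y_m)$ turns the right-hand side of \eqref{G-int} into $\frac{q^{m(m+1)/2}}{(q-1)^m}$ times the contour integral of $\prod_{1\le i<j\le m}\frac{y_j-y_i}{y_j-qy_i}\,f_{\breve\mu}(y_1^{-1},\dots,y_m^{-1})\,\sum_{\kappa}G_{\kappa/\nu}(\lambda;\vec x)\,g_{\kappa}(\lambda;\vec y)$.

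Next I would reduce each $g_{\kappa}(\lambda;\vec y)$ to rainbow functions using Proposition~\ref{prop:f-ant}, writing $g_{\kappa}(\lambda;\vec y)=\sum_{\sigma\in\mathfrak{S}_{\lambda}}(-1)^{\operatorname{inv}(\sigma)}g_{\sigma(\breve\kappa)}(1^m;\vec y)$ with $\breve\kappa$ the rainbow recolouring of Definition~\ref{def:rainbow-rec}. After interchanging the finite sum over $\sigma$ and the sum over $\kappa$ with the contour integrals, the $m$-fold $\vec y$-integral of $\prod_{i<j}\frac{y_j-y_i}{y_j-qy_i}\,f_{\breve\mu}(\vec y^{-1})\,g_{\sigma(\breve\kappa)}(1^m;\vec y)$ is exactly the left-hand side of \eqref{f-g-orthog} with $n=m$ and the two rainbow compositions $\breve\mu,\sigma(\breve\kappa)\in\mathcal{S}_{1^m}$, hence equals $\mathbf 1_{\breve\mu=\sigma(\breve\kappa)}(q-1)^m/q^{m(m+1)/2}$. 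This cancels the prefactor and leaves $\sum_{\kappa\in\mathcal{S}_{\lambda}}G_{\kappa/\nu}(\lambda;\vec x)\sum_{\sigma\in\mathfrak{S}_{\lambda}}(-1)^{\operatorname{inv}(\sigma)}\mathbf 1_{\breve\mu=\sigma(\breve\kappa)}$. Since every $\sigma\in\mathfrak{S}_{\lambda}$ permutes parts only within blocks, and within each block both $\breve\kappa$ and $\breve\mu$ list \emph{strictly increasing} parts (because $\kappa,\mu\in\mathcal{S}_{\lambda}$), the equality $\sigma(\breve\kappa)=\breve\mu$ forces $\sigma$ to act as the identity on each block, so $\sigma=\mathrm{id}$ and $\breve\kappa=\breve\mu$, i.e. $\kappa=\mu$; as $\operatorname{inv}(\mathrm{id})=0$ the inner sum is $\mathbf 1_{\kappa=\mu}$, and the whole expression collapses to $G_{\mu/\nu}(\lambda;x_1,\dots,x_p)$, which is \eqref{G-int}.

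The one genuinely analytic point — the main obstacle — is the interchange in the second step of the infinite sum $\sum_{\kappa\in\mathcal{S}_{\lambda}}$ with the contour integrals, together with the fact that \eqref{Gg-cauchy} is valid under the convergence condition \eqref{converge} while \eqref{f-g-orthog} requires the contours $\{C_1,\dots,C_m\}$ to be admissible with respect to $(q,s)$. I would deal with this by first taking the $x_i$ sufficiently close to $s$, and if necessary choosing the admissible contours $C_j$ of small enough diameter, so that \eqref{converge} holds uniformly for $y_j\in C_j$; then the series $\sum_{\kappa}G_{\kappa/\nu}g_{\kappa}$ converges absolutely and uniformly on the contours, which justifies term-by-term integration by dominated convergence. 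Since both sides of \eqref{G-int} are rational functions of $(x_1,\dots,x_p)$ — the right-hand side because the integrand is rational in the $x_i$ with the only poles at $y_j=x_i^{-1}$, which stay off the contours for $x_i$ small — the identity then extends to all $(x_1,\dots,x_p)$ by analytic continuation. An essentially equivalent route, which instead leans on Proposition~\ref{prop:G-ant}, would first use $G_{\mu/\nu}(\lambda;\vec x)=\sum_{\sigma\in\mathfrak{S}_{\lambda}}(-1)^{\operatorname{inv}(\sigma)}G_{\breve\mu/\sigma(\breve\nu)}(1^m;\vec x)$ to reduce to the rainbow case $\lambda=1^m$, and then run the Cauchy/orthogonality argument with $\mathfrak{S}_{1^m}$ trivial, avoiding the combinatorial collapse at the end.
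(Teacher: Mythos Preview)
Your proposal is correct and uses the same three ingredients as the paper (the Cauchy identity \eqref{Gg-cauchy}, the orthogonality \eqref{f-g-orthog}, and antisymmetrization), with only a minor reordering: the paper first establishes the rainbow case $\lambda=1^m$ via Cauchy plus orthogonality and then antisymmetrizes both sides using Propositions~\ref{prop:f-ant} and~\ref{prop:G-ant}, whereas you apply Cauchy at general $\lambda$, antisymmetrize $g_{\kappa}$ inside the integral, and then collapse via orthogonality and a short combinatorial observation. You in fact name the paper's route explicitly as your ``essentially equivalent'' alternative at the end, so the two approaches are really the same argument traversed in different orders.
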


\begin{proof}
This result is essentially given by \cite[Corollary 11.3.2]{ABW21}, though we reproduce its proof here for the reader's convenience. 

We begin by proving \eqref{G-int} in the case where $\mu,\nu$ are rainbow compositions. Start from the Cauchy identity \eqref{Gg-cauchy} with $\lambda = 1^m$, multiply it by $f_{\mu}(y_1^{-1},\dots,y_m^{-1})$, prior to integrating as in the left hand side of \eqref{f-g-orthog}.\footnote{The summation convergence in \eqref{Gg-cauchy} is uniform on compact sets as long as the inequalities \eqref{converge} are satisfied.} In view of the orthogonality property \eqref{f-g-orthog}, this filters the $\kappa = \mu$ term from the sum and we read off the identity
\begin{multline}
\label{G-int-rainbow}
G_{\mu/\nu}(1^m;x_1,\dots,x_p)
=
\frac{q^{m(m+1)/2}}{(q-1)^m}
\cdot
\left( \frac{1}{2\pi{\tt i}} \right)^m
\oint_{C_1}
\frac{dy_1}{y_1}
\cdots 
\oint_{C_m}
\frac{dy_m}{y_m}
\\
\times
\prod_{1 \leq i<j \leq m}
\left(
\frac{y_j-y_i}{y_j-q y_i}
\right)
f_{\mu}(1^m;y_1^{-1},\dots,y_m^{-1})
g_{\nu}(1^m;y_1,\dots,y_m)
\prod_{i=1}^{p}
\prod_{j=1}^{m}
\frac{1-q x_i y_j}{1-x_i y_j}.
\end{multline}
This proves \eqref{G-int} in the case $\mu,\nu \in \mathcal{S}_{1^m}$.

The general case \eqref{G-int} then follows by antisymmetrization of \eqref{G-int-rainbow}; the left hand side antisymmetrization is obtained using Proposition \ref{prop:f-ant}, while that of the right hand side is carried out using Proposition \ref{prop:G-ant}. 
\end{proof}

\section{Fusion}

In this section we briefly recall some of the basics regarding the fusion procedure, when applied to the model \eqref{generic-L}. For full details, we refer the reader to \cite[Chapter 3]{ABW21} and \cite[Appendices B and C]{BorodinW}.

\subsection{Definition of fused vertices}

To define fused vertices we require some additional notation; introduce {\it column vertices} by taking towers of height $N$ of the $L$-weights \eqref{generic-L}. In particular, for all $\A, \C \in \{0,1\}^n$ and $b_1,\dots,b_N,d_1,\dots,d_N \in [0,n]$ we define
\begin{align}
\label{tower}
\tilde{L}^{(s)}_z
\Big(
\A,(b_1,\dots,b_N);\C,(d_1,\dots,d_N)
\Big)
=
\tikz{0.9}{
\draw[lgray,line width=1.5pt,->] (-1,5) -- (1,5);
\draw[lgray,line width=1.5pt,->] (-1,3) -- (1,3);
\draw[lgray,line width=1.5pt,->] (-1,1) -- (1,1);
\draw[lgray,line width=1.5pt,->] (-1,0) -- (1,0);
\draw[lgray,line width=4pt,->] (0,-1) -- (0,6);
\node[left] at (-1,0) {\tiny $b_1$};\node[right] at (1,0) {\tiny $d_1$};
\node[left] at (-1,1) {\tiny $b_2$};\node[right] at (1,1) {\tiny $d_2$};
\node[left] at (-1,2.5) {$\vdots$};\node[right] at (1,2.5) {$\vdots$};
\node[left] at (-1,3.5) {$\vdots$};\node[right] at (1,3.5) {$\vdots$};
\node[left] at (-1,5) {\tiny $b_N$};\node[right] at (1,5) {\tiny $d_N$};
\node[below] at (0,-1) {\tiny $\A$};\node[above] at (0,6) {\tiny $\C$};
\node[left] at (-2,0) {$z \rightarrow$};
\node[left] at (-2,1) {$q z \rightarrow$};
\node[left] at (-2,5) {$q^{N-1} z \rightarrow$};
}
\end{align}
where the spectral parameters associated to horizontal lines, read from bottom to top, form the geometric progression $(z,qz,\dots,q^{N-1}z)$.

\begin{defn}
Fix four binary strings $\A = (A_1,\dots,A_n)$, $\B = (B_1,\dots,B_n)$, $\C = (C_1,\dots,C_n)$ and $\D = (D_1,\dots,D_n)$ in $\{0,1\}^n$. Choose an integer $N \geq 1$ and introduce the notation $r=q^{-N/2}$. We define {\it fused vertex weights} as follows:
\begin{align}
\label{fused-def}
\tilde{L}^{(r,s)}_z(\A,\B; \C,\D)
=
\frac{1}{Z_q(N;\B)}
\sum_{\substack{
(b_1,\dots,b_N)
\\
(d_1,\dots,d_N)
}}
q^{{\rm inv}(b_1,\dots,b_N)}
\tilde{L}^{(s)}_z
\Big(
\A,(b_1,\dots,b_N);\C,(d_1,\dots,d_N)
\Big),
\end{align}
where the sum is taken over vectors $(b_1,\dots,b_N)$ and $(d_1,\dots,d_N)$ such that 
$\sum_{i=1}^{N} \bm{e}_{b_i} = \B$ and $\sum_{i=1}^{N} \bm{e}_{d_i} = \D$, we recall that 
${\rm inv}(b_1,\dots,b_N) = {\rm card}\{ (i,j) : i<j,\ b_i > b_j\}$, and where the normalization takes the form
\begin{align*}
Z_q(N;\B)
=
\frac{(q;q)_N}{(q;q)_{B_0} (q;q)_{B_1} \dots (q;q)_{B_n}},
\quad
B_0 = N - \sum_{i=1}^{n} B_i.
\end{align*}
We represent the fused vertices \eqref{fused-def} graphically as follows:
\begin{align*}
\tilde{L}^{(r,s)}_z(\A,\B; \C,\D)
=
\tikz{0.7}{
\node[left] at (-1.5,0) {$(z;r) \rightarrow $};
\draw[lgray,line width=4pt,->] (-1,0) -- (1,0);
\draw[lgray,line width=4pt,->] (0,-1) -- (0,1);
\node[left] at (-1,0) {\tiny $\B$};\node[right] at (1,0) {\tiny $\D$};
\node[below] at (0,-1) {\tiny $\A$};\node[above] at (0,1) {\tiny $\C$};
\node[below] at (0,-1.4) {$(s)$};
}
\qquad
\A,\B,\C,\D \in \{0,1\}^n.
\end{align*}
\end{defn}

\begin{prop}
For all integers $b,d \in [0,n]$ and binary strings $\A,\C \in \{0,1\}^n$, one has
\begin{align*}
\tilde{L}^{(r,s)}_z(\A,\bm{e}_b; \C,\bm{e}_d)\Big|_{r=q^{-1/2}}
=
\tilde{L}^{(s)}_z(\A,b;\C,d).
\end{align*}
\end{prop}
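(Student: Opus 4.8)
The plan is to unwind the definition \eqref{fused-def} of the fused vertex weights in the special case $N=1$, where the normalisation and the combinatorial sum both trivialise. First I would record that with $N=1$ one has $r = q^{-N/2} = q^{-1/2}$, so the left-hand side is exactly $\tilde{L}^{(q^{-1/2},s)}_z(\A,\bm{e}_b;\C,\bm{e}_d)$; and the column vertex \eqref{tower} of height $N=1$ is by construction a single copy of the $L$-weight \eqref{generic-L} with spectral parameter $z$, i.e. $\tilde{L}^{(s)}_z\big(\A,(b_1);\C,(d_1)\big) = \tilde{L}^{(s)}_z(\A,b_1;\C,d_1)$, since the geometric progression $(z,qz,\dots,q^{N-1}z)$ has only the entry $z$.

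Next I would observe that the sum in \eqref{fused-def} collapses to a single term. With $\B = \bm{e}_b$, the constraint $\sum_{i=1}^{1}\bm{e}_{b_i} = \B$ forces $b_1 = b$; this is valid for $b=0$ as well, since $\bm{e}_0$ is the zero vector. Likewise $\D = \bm{e}_d$ forces $d_1 = d$. Moreover ${\rm inv}(b_1) = 0$, so the prefactor $q^{{\rm inv}(b_1,\dots,b_N)}$ is $1$. It then remains to evaluate the normalisation $Z_q(1;\bm{e}_b)$: writing $\bm{e}_b = (B_1,\dots,B_n)$ and $B_0 = 1 - \sum_{i=1}^{n} B_i$, exactly one of $B_0,B_1,\dots,B_n$ equals $1$ and the others vanish (namely $B_0=1$ if $b=0$, and $B_b=1$ otherwise), so
\begin{align*}
Z_q(1;\bm{e}_b)
=
\frac{(q;q)_1}{(q;q)_{B_0}(q;q)_{B_1}\cdots(q;q)_{B_n}}
=
\frac{(q;q)_1}{(q;q)_1}
=
1,
\end{align*}
using the convention $(q;q)_0 = 1$. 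Assembling these three observations, the right-hand side of \eqref{fused-def} becomes $\tfrac{1}{1}\cdot q^0\cdot \tilde{L}^{(s)}_z(\A,b;\C,d)$, which is the claim.

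There is no genuine obstacle here; the statement is a direct consistency check that fusion at $N=1$ returns the original weights. The only points requiring a moment's care are the verification that $Z_q(1;\bm{e}_b) = 1$ uniformly in $b$, and the remark that the $b=0$ (resp. $d=0$) case is handled by the convention $\bm{e}_0 = 0$, so that the single-term collapse of the sum holds without exception.
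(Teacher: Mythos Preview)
Your proof is correct and follows exactly the same approach as the paper: you specialise $N=1$ so that the sum in \eqref{fused-def} trivialises and $Z_q(1;\bm{e}_b)=1$, from which the identity is immediate. Your write-up simply fills in the details (the inversion count, the explicit evaluation of the normalisation, and the $b=0$ case) that the paper's one-line proof leaves implicit.
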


\begin{proof}
Setting $r=q^{-1/2}$ is equivalent to taking $N=1$. For $N=1$ the sum on the right hand side of \eqref{fused-def} trivializes and $Z_q(1;\bm{e}_b)=1$ for all $b \in [0,n]$; the claimed equality is then manifest.
\end{proof}

\subsection{Fused vertex weights}

The fused vertex weights \eqref{fused-def} were explicitly evaluated as \cite[Theorem 4.3.2]{ABW21}; we recall this explicit formula here. 

For any pair of vectors $\A,\B \in \mathbb{Z}^n$, define the function 
\begin{align}
\label{phi-def}
\varphi(\A,\B) = \sum_{1 \leq i<j \leq n} A_i B_j.
\end{align}
Fix binary strings $\A = (A_1,\dots,A_n)$, $\B = (B_1,\dots,B_n)$, $\C = (C_1,\dots,C_n)$, $\D = (D_1,\dots,D_n)$. Construct another vector $\V = (V_1,\dots,V_n)$, where $V_i = {\rm min}\{A_i,B_i,C_i,D_i\}$ for $i \in [1,n]$. The fused weights \eqref{fused-def} are then given by\footnote{We state this formula for spectral parameter $sz$, rather than $z$, in order to match with \cite[Theorem 4.3.2]{ABW21}.}
\begin{multline}
\label{fused-weights}
\tilde{L}^{(r,s)}_{sz}(\A,\B; \C,\D)
= 
\bm{1}_{\A+\B=\C+\D}
\\
\times
(-1)^{|\V|}
z^{|\D|-|\B|}
r^{-2|\A|}
s^{2|\D|}
q^{-\varphi(\A,\V)-|\V|}
\frac{(r^{-2}q^{-|\V|+1}z;q)_{|\V|}}{(s^2 r^{-2} q^{-|\V|}z;q)_{|\V|}}
W^{(r,s)}_z(\A,\B-\V;\C,\D-\V)
\end{multline}
where we have defined
\begin{multline}
\label{w-weight}
W^{(r,s)}_z(\A,\B-\V;\C,\D-\V)
=
\\
\sum_{\P}
\Phi(\C-\P,\C+\D-\V-\P;s^2 r^{-2} q^{-|\V|} z,s^2 z)
\Phi(\P,\B-\V;r^2q^{|\V|}z^{-1},r^2q^{|\V|}),
\end{multline}
with the sum over all $\P = (P_1,\dots,P_n)$ such that $P_i \leq {\rm min}\{C_i,B_i-V_i\}$ for all $i \in [1,n]$. The functions appearing in the summand of \eqref{w-weight} are defined for any two vectors $\bm{S} = (S_1,\dots,S_n)$, $\bm{T} = (T_1,\dots,T_n)$ such that $S_i \leq T_i$ for all $i \in [1,n]$:
\begin{align*}
\Phi(\bm{S},\bm{T};u,v)
=
\frac{(u;q)_{|\bm{S}|} (v/u;q)_{|\bm{T}|-|\bm{S}|}}{(v;q)_{|\bm{T}|}}
(v/u)^{|\bm{S}|}
q^{\varphi(\bm{T}-\bm{S},\bm{S})}
\prod_{i=1}^{n} \binom{T_i}{S_i}_q,
\end{align*}
where we have used the standard $q$-binomial coefficient $$\binom{b}{a}_q = \dfrac{(q;q)_b}{(q;q)_a(q;q)_{b-a}}, \quad a \leq b.$$ The weights \eqref{fused-weights} provide an explicit evaluation of the right hand side of \eqref{fused-def}, under the identification $r=q^{-N/2}$; however, the formula \eqref{fused-weights} makes sense for arbitrary values of $r \in \mathbb{C}$ (that is, as a rational function in $r$), and we tacitly assume this in what follows.

\subsection{Master Yang--Baxter equation}
\label{ssec:master-yb}

The fused vertex weights satisfy a master Yang--Baxter equation, that contains the previous three Yang--Baxter relations \eqref{RLLa}--\eqref{RLLc} as special cases.

\begin{thm}
Fix a collection of binary strings $\A(1),\A(2),\A(3),\B(1),\B(2),\B(3) \in \{0,1\}^n$ and arbitrary parameters $x,y,r,s,t \in \mathbb{C}$. The weights \eqref{fused-weights} satisfy the equation
\begin{multline}
\label{master}
\sum_{\C(1),\C(2),\C(3)}
\tilde{L}^{(r,s)}_{sx/y}\Big(\A(2),\A(1);\C(2),\C(1)\Big)
\tilde{L}^{(r,t)}_{x}\Big(\A(3),\C(1);\C(3),\B(1)\Big)
\tilde{L}^{(s,t)}_{y}\Big(\C(3),\C(2);\B(3),\B(2)\Big)
\\
=
\sum_{\C(1),\C(2),\C(3)}
\tilde{L}^{(s,t)}_{y}\Big(\A(3),\A(2);\C(3),\C(2)\Big)
\tilde{L}^{(r,t)}_{x}\Big(\C(3),\A(1);\B(3),\C(1)\Big)
\tilde{L}^{(r,s)}_{sx/y}\Big(\C(2),\C(1);\B(2),\B(1)\Big),
\end{multline}
where $\C(1),\C(2),\C(3)$ are summed over all binary strings in $\{0,1\}^n$. 

\end{thm}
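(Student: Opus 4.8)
The plan is to derive \eqref{master} by the fusion procedure, bootstrapping from the Yang--Baxter equation for the fundamental $R$-matrix \eqref{fund-vert} of $U_q(\widehat{\mathfrak{sl}}(1|n))$.

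First I would settle the base case $r=s=t=q^{-1/2}$, which is fusion level one on all three lines. At this specialization each of the three weights in \eqref{master} reduces, after an explicit power-of-$q$ rescaling of its spectral parameter and up to an overall scalar gauge that can be read off from \eqref{fused-weights}, to a fundamental $R$-matrix weight \eqref{fund-vert}; this is the fully-fundamental corner of the master equation, from which \eqref{RLLa}--\eqref{RLLc} later follow by letting one of the lines carry a generic spin. The resulting identity $\sum_c R_{q^{-1/2}x/y}(a_2,a_1;c_2,c_1)R_x(\cdots)R_y(\cdots)=\sum_c R_y(\cdots)R_x(\cdots)R_{q^{-1/2}x/y}(\cdots)$ is the defining integrability property of that $R$-matrix, recorded in \cite{BazhanovShadrikov}; for fixed rank $n$ it may also be verified directly, since \eqref{fund-vert} leaves only finitely many admissible boundary data and the claim reduces to a finite list of rational-function checks. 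Matching the scalar gauges then gives \eqref{master} when $r=s=t=q^{-1/2}$.

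Next I would fuse the three lines one at a time. Keeping two lines at level one, I would stack $N$ copies of the base-case identity along the remaining line, with the spectral parameters of the stacked rows forming the geometric progression $(z,qz,\dots,q^{N-1}z)$ exactly as in \eqref{tower}. The crucial input is the fusion lemma: the normalized $q$-deformed (anti)symmetrizer $P_N$ on $V^{\otimes N}$ that is built into the normalization $Z_q(N;\cdot)$ of \eqref{fused-def} is intertwined by the $R$-matrix action, so the composite of the $N$ stacked intertwiners preserves the image of $1\otimes P_N$, and its restriction there is, by construction, the fused vertex $\tilde{L}^{(\cdot,q^{-N/2})}$. This lemma is proved by induction on $N$ using the quadratic (Hecke) relation for $R_{z,q}$ at the degeneration points $z=q^{\pm1}$, where $R$ collapses onto $P_2$, together with the base-case Yang--Baxter equation. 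Performing this on the $t$-line, then the $s$-line, then the $r$-line, yields \eqref{master} for all $r,s,t\in\{q^{-N/2}:N\geq 1\}$.

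Finally I would pass to generic $r,s,t$ by rationality. By the explicit evaluation \eqref{fused-weights}, each weight $\tilde{L}^{(r,s)}_z(\A,\B;\C,\D)$ is a rational function of $r^2,s^2,z,q$ once the Pochhammer denominators (non-vanishing on an open set) are cleared, so both sides of \eqref{master}, being finite sums of triple products of such weights at the coupled spectral data $(x,y,sx/y)$, are rational in $r^2,s^2,t^2,x,y,q$; as they agree on the Zariski-dense set of triples $(q^{-N_1/2},q^{-N_2/2},q^{-N_3/2})$ they agree identically. I expect the main obstacle to be the fusion lemma in the super setting: the projector $P_N$ is the mixed symmetrizer/antisymmetrizer dictated by treating colour $0$ bosonically and colours $1,\dots,n$ fermionically, and one must check that the degenerations of $R_{z,q}$ and the residues of \eqref{fused-weights} at its poles are compatible with this mixed structure, and that fusing the geometric progression of rapidities reproduces precisely the shifted argument $sx/y$ appearing in \eqref{master}; once this is handled, the propagation of the base case and the rationality step are routine. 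This programme is carried out in \cite[Chapters 3--4]{ABW21}; a more computational alternative is to expand both sides of \eqref{master} via \eqref{fused-weights}, carry out the sums over $\C(1),\C(2),\C(3)$, and reduce the identity to a multiple $q$-hypergeometric summation, but that route is considerably heavier.
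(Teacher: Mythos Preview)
Your proposal is correct and matches the paper's approach: the paper simply cites \cite[Proposition 5.1.4]{ABW21} for the proof, and the fusion-then-analytic-continuation strategy you outline is exactly the content of that reference (indeed you point to \cite[Chapters 3--4]{ABW21} yourself). Your sketch is thus a faithful expansion of what the paper defers to the literature, with the expected subtleties (projector compatibility in the super setting, bookkeeping of the spectral-parameter shift $sx/y$ under fusion) correctly identified.
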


\begin{proof}
See \cite[Proposition 5.1.4]{ABW21} for full details.
\end{proof}

The master Yang--Baxter equation \eqref{master} reduces to the three given earlier, namely \eqref{RLLa}--\eqref{RLLc}, by choosing any two of $r,s,t$ to be equal to $q^{-1/2}$, keeping the remaining parameter arbitrary (and up to further relabelling of the spectral parameters $x$ and $y$). Details of these reductions, for the bosonic counterpart of the models discussed in the current text, may be found in \cite[Appendix C]{BorodinW}. In what follows, we will make use of yet another reduction:
\begin{cor}
Fix two integers $a,b \in [0,n]$ and binary strings $\A(2),\A(3),\B(2),\B(3) \in \{0,1\}^n$. The weights \eqref{fund-weights} and \eqref{fused-weights} satisfy the equation
\begin{multline*}
\sum_{c,\C(2),\C(3)}
\tilde{L}^{(r)}_{rx/y}\Big(\A(2),a;\C(2),c\Big)
\tilde{L}^{(s)}_{x}\Big(\A(3),c;\C(3),b\Big)
\tilde{L}^{(r,s)}_{y}\Big(\C(3),\C(2);\B(3),\B(2)\Big)
\\
=
\sum_{c,\C(2),\C(3)}
\tilde{L}^{(r,s)}_{y}\Big(\A(3),\A(2);\C(3),\C(2)\Big)
\tilde{L}^{(s)}_{x}\Big(\C(3),a;\B(3),c\Big)
\tilde{L}^{(r)}_{rx/y}\Big(\C(2),c;\B(2),b\Big),
\end{multline*}
where $c$ is summed over all integers in $[0,n]$ and $\C(2),\C(3)$ are summed over all binary strings in $\{0,1\}^n$. This equation has the following graphical version:
\begin{align}
\label{yb-reduce1}
\sum_{c,\C(2),\C(3)}
\tikz{0.9}{
\draw[lgray,line width=1.5pt,->]
(-2,1) node[above,scale=0.6] {\color{black} $a$} -- (-1,0) node[below,scale=0.6] {\color{black} $c$} -- (1,0) node[right,scale=0.6] {\color{black} $b$};
\draw[lgray,line width=4pt,->] 
(-2,0) node[below,scale=0.6] {\color{black} $\A(2)$} -- (-1,1) node[above,scale=0.6] {\color{black} $\C(2)$} -- (1,1) node[right,scale=0.6] {\color{black} $\B(2)$};
\draw[lgray,line width=4pt,->] 
(0,-1) node[below,scale=0.6] {\color{black} $\A(3)$} -- (0,0.5) node[scale=0.6] {\color{black} $\C(3)$} -- (0,2) node[above,scale=0.6] {\color{black} $\B(3)$};
\node[left] at (-2.2,1) {$x \rightarrow$};
\node[left] at (-2.2,0) {$(y;r) \rightarrow$};
\node[below] at (0,-1.4) {$(s)$};
}
\quad
=
\quad
\sum_{c,\C(2),\C(3)}
\tikz{0.9}{
\draw[lgray,line width=1.5pt,->] 
(-1,1) node[left,scale=0.6] {\color{black} $a$} -- (1,1) node[above,scale=0.6] {\color{black} $c$} -- (2,0) node[below,scale=0.6] {\color{black} $b$};
\draw[lgray,line width=4pt,->] 
(-1,0) node[left,scale=0.6] {\color{black} $\A(2)$} -- (1,0) node[below,scale=0.6] {\color{black} $\C(2)$} -- (2,1) node[above,scale=0.6] {\color{black} $\B(2)$};
\draw[lgray,line width=4pt,->] 
(0,-1) node[below,scale=0.6] {\color{black} $\A(3)$} -- (0,0.5) node[scale=0.6] {\color{black} $\C(3)$} -- (0,2) node[above,scale=0.6] {\color{black} $\B(3)$};
\node[left] at (-1.5,1) {$x \rightarrow$};
\node[left] at (-1.5,0) {$(y;r) \rightarrow$};
\node[below] at (0,-1.4) {$(s)$};
}
\end{align}
\end{cor}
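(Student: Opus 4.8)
The plan is to derive \eqref{yb-reduce1} from the master Yang--Baxter equation \eqref{master} by collapsing one of its three fused lines to an unfused one. In \eqref{master} the line carrying the labels $\A(1),\C(1),\B(1)$ is always horizontal, and it enters every weight through the parameter occupying the \emph{first} superscript slot, namely $r$. I would therefore set $r=q^{-1/2}$, i.e. fix the fusion level of that line to $N=1$, and at the same time relabel the two remaining parameters $s\mapsto r$ and $t\mapsto s$, leaving $x,y$ untouched, so that the surviving superscripts and spectral arguments match those in the statement.

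With $r=q^{-1/2}$ the weights in which the first line participates become $\tilde{L}^{(q^{-1/2},\,\cdot)}_{\,\cdot}(\,\cdot\,,\B;\,\cdot\,,\D)$, which by the combinatorial definition \eqref{fused-def} (at $N=1$ there is a single summation index, forcing $\B=\bm{e}_{b_1}$) vanish unless $\B,\D\in\{\bm{e}_0,\bm{e}_1,\dots,\bm{e}_n\}$, and on this locus reduce, by the Proposition identifying $N=1$ fused weights with the weights \eqref{fund-weights}, to $\tilde{L}^{(\cdot)}_{\,\cdot}(\cdot,b;\cdot,d)$ with $\B=\bm{e}_b$, $\D=\bm{e}_d$. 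Consequently, on both sides of \eqref{master}, the summation variable $\C(1)$ is forced to be a unit or zero vector $\bm{e}_c$ for the summand not to vanish, so $\sum_{\C(1)\in\{0,1\}^n}$ collapses to $\sum_{c\in[0,n]}$, while the fixed labels $\A(1),\B(1)$ may be written $\bm{e}_a,\bm{e}_b$ (if $|\A(1)|\geq 2$ or $|\B(1)|\geq 2$ both sides vanish). Substituting all of this into \eqref{master} and applying the relabelling $s\mapsto r$, $t\mapsto s$ turns the left- and right-hand sides of \eqref{master} into exactly the left- and right-hand sides of the displayed identity: the crossing of line $\A(1)$ with line $\A(2)$, carrying spectral parameter $(sx/y)|_{s\mapsto r}=rx/y$, becomes $\tilde{L}^{(r)}_{rx/y}(\A(2),a;\C(2),c)$; the crossing of line $\A(1)$ with line $\A(3)$ becomes $\tilde{L}^{(s)}_x(\A(3),c;\C(3),b)$; and the untouched crossing of lines $\A(2)$ and $\A(3)$ becomes $\tilde{L}^{(r,s)}_y(\C(3),\C(2);\B(3),\B(2))$, with the analogous bookkeeping on the right. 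The graphical identity \eqref{yb-reduce1} is then nothing but the pictorial transcription of this equation, the sum over $c,\C(2),\C(3)$ indexing the internal edges of the two configurations.

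I do not expect a conceptual obstacle: this is a routine fusion degeneration, directly parallel to the bosonic reductions recorded in \cite[Appendix C]{BorodinW}. The one part that genuinely needs care is the bookkeeping --- tracking which line carries which parameter through \eqref{master}, keeping the spectral-parameter conventions consistent (in particular the $sz$-versus-$z$ normalization built into \eqref{fused-weights}, which does not enter \eqref{master} but must be respected if one unfolds the explicit weights), and confirming that the vanishing of terms with $\C(1)\notin\{\bm{e}_0,\dots,\bm{e}_n\}$ really does reduce the binary-string sum to an integer sum over $[0,n]$ with no leftover terms. Making the superscripts and spectral arguments line up verbatim with the statement is the step most likely to require a careful pass, but nothing deeper.
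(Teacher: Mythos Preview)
Your proposal is correct and follows exactly the paper's approach: the paper's proof is the single sentence ``This is the reduction $r=q^{-1/2}$, $\A(1) = \bm{e}_a$, $\B(1) = \bm{e}_b$ of equation \eqref{master}, followed by the relabelling $s \mapsto r$, $t \mapsto s$.'' You have supplied the bookkeeping details (in particular the collapse of the $\C(1)$-sum to a sum over $c\in[0,n]$ via the $N=1$ identification of fused and unfused weights) that the paper leaves implicit.
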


\begin{proof}
This is the reduction $r=q^{-1/2}$, $\A(1) = \bm{e}_a$, $\B(1) = \bm{e}_b$ of equation \eqref{master}, followed by the relabelling $s \mapsto r$, $t \mapsto s$.
\end{proof}

\subsection{Fused row operators}
\label{sec:fused-row}

For any integer $N \geq 0$ and non-empty set $I \subset [0,n]$, define the following analogue of the row operators \eqref{C-row}:
\begin{multline}
\label{Cfused-row}
\mathcal{D}_{I}(x;r)
:
\bigotimes_{k=0}^{N}
\ket{\B(k)}
\\
\mapsto
\sum_{\A(0),\ldots,\A(N) \in \{0,1\}^n}
\left(
\tikz{1.2}{
\draw[lgray,line width=4pt,->] (0.5,0) -- (5.5,0);
\foreach\x in {1,...,5}{
\draw[lgray,line width=4pt,->] (\x,-0.5) -- (\x,0.5);
}
\node at (-0.5,0) {$(x;r) \rightarrow$};
\node[left] at (0.5,0) {\fs $\bm{e}_0$};\node[right] at (5.5,0) {\fs $\bm{e}_I$};
\node[below] at (5,-0.5) {\fs $\A(N)$};\node[above] at (5,0.5) {\fs $\B(N)$};
\node[below] at (4,-0.5) {\fs $\cdots$};\node[above] at (4,0.5) {\fs $\cdots$};
\node[below] at (3,-0.5) {\fs $\cdots$};\node[above] at (3,0.5) {\fs $\cdots$};
\node[below] at (2,-0.5) {\fs $\cdots$};\node[above] at (2,0.5) {\fs $\cdots$};
\node[below] at (1,-0.5) {\fs $\A(0)$};\node[above] at (1,0.5) {\fs $\B(0)$};
}
\right)
\bigotimes_{k=0}^{N}
\ket{\A(k)},
\end{multline}
where the quantity
\begin{align*}
\tikz{1.2}{
\draw[lgray,line width=4pt,->] (0.5,0) -- (5.5,0);
\foreach\x in {1,...,5}{
\draw[lgray,line width=4pt,->] (\x,-0.5) -- (\x,0.5);
}
\node at (-0.5,0) {$(x;r) \rightarrow$};
\node[left] at (0.5,0) {\fs $\bm{e}_0$};\node[right] at (5.5,0) {\fs $\bm{e}_I$};
\node[below] at (5,-0.5) {\fs $\A(N)$};\node[above] at (5,0.5) {\fs $\B(N)$};
\node[below] at (4,-0.5) {\fs $\cdots$};\node[above] at (4,0.5) {\fs $\cdots$};
\node[below] at (3,-0.5) {\fs $\cdots$};\node[above] at (3,0.5) {\fs $\cdots$};
\node[below] at (2,-0.5) {\fs $\cdots$};\node[above] at (2,0.5) {\fs $\cdots$};
\node[below] at (1,-0.5) {\fs $\A(0)$};\node[above] at (1,0.5) {\fs $\B(0)$};
}
\end{align*}
is a one-row partition function using the vertex weights \eqref{fused-weights}.\footnote{Note that unless $I = \{0\}$, it is now essential for $N$ to be finite, unlike in the definitions of $\mathcal{C}_i(x)$ and $\mathcal{B}_i(x)$ where $N$ is taken to $\infty$.}

When $r$ is specialized to $r = q^{-p/2}$, with $p \in [1,n]$, we refer to $\mathcal{D}_I(x;r)$ as a row operator of {\it width} $p$; this is in reference to the fact that the horizontal line of the row operator can now carry at most $p$ paths. The case $r=q^{-1/2}$ has a particular significance in what follows; in this case the capacity constraint of the horizontal line imposes that $|I| = 1$, and we have
\begin{align}
\label{unfused}
\mathcal{D}_{\{i\}}(x;q^{-1/2}) \equiv \mathcal{D}_i(x) = T^{\rightarrow}_{0,i}(x;N),
\end{align}
for all $i \in [0,n]$, where the operator on the right hand side is given by \eqref{C-row}. 

\subsection{Commutation relations}

This subsection documents several types of commutation relations between the fused row operators \eqref{Cfused-row} of varying widths. The majority of these results will not be needed until Section \ref{sec:discrete-dist} of the text, where they are used to compute a certain class of partition functions that play a role in our subsequent probability distributions. The reader may prefer to skip this subsection and return to it, as needed, in Section \ref{sec:discrete-dist}.    

\begin{prop}
\label{prop:row-com1}
Fix an integer $i \in [1,n]$ and a set $J \subset [1,n]$ such that $i \in J$. We have the following exchange relation between fused row operators \eqref{Cfused-row} and their unfused counterparts \eqref{unfused}:
\begin{align}
\label{Cfused-com}
\mathcal{D}_i(x)
\mathcal{D}_{J}(y;r)
=
\tilde{L}_{rx/y}^{(r)}(\bm{e}_J,i;\bm{e}_J,i)
\cdot
\mathcal{D}_{J}(y;r)
\mathcal{D}_i(x),
\end{align}
where the coefficient appearing on the right hand side is given by the top-middle entry of the table \eqref{fund-weights}.
\end{prop}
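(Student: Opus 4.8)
The plan is to prove \eqref{Cfused-com} by the standard strategy of dragging an intertwining vertex through a two-row lattice partition function: one realizes the product $\mathcal{D}_i(x)\mathcal{D}_{J}(y;r)$ as such a partition function and transports the vertex $\tilde{L}^{(r)}_{rx/y}$ from its left edge to its right edge, invoking the reduced Yang--Baxter equation \eqref{yb-reduce1} once for each column. This is the fermionic counterpart of the row-operator computations in \cite[Section~3.2]{BorodinW}, to which I would refer for the details of the column-by-column induction.

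First I would write $\mathcal{D}_i(x)\mathcal{D}_{J}(y;r)$, applied to an arbitrary basis vector of $\mathbb{V}(N)$, as a two-row partition function: the lower row is the width-one $L$-lattice row of $\mathcal{D}_i(x) = T^{\rightarrow}_{0,i}(x;N)$, with thin horizontal line of spectral parameter $x$, left edge $0$ and right edge $i$; the upper row is the fused $L$-lattice row of $\mathcal{D}_{J}(y;r)$, with thick horizontal line labelled $(y;r)$, left edge $\bm{e}_0$ and right edge $\bm{e}_J$; the two rows share the same $N+1$ vertical columns. Since the operators act on $\mathbb{V}(N)$, the diagram is finite and there are no convergence subtleties.

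Next I would glue the intertwiner $\tilde{L}^{(r)}_{rx/y}$ from \eqref{yb-reduce1} — the vertex whose vertical leg meets the fused line and whose horizontal leg meets the thin line — onto the \emph{left} end of this diagram. There both of its incoming edges are empty, so by the particle conservation \eqref{conserve-L} the outgoing edges are forced to be empty as well, and its weight is $\tilde{L}^{(r)}_{rx/y}(\bm{e}_0,0;\bm{e}_0,0)=1$; hence gluing it in leaves the partition function unchanged. I would then apply \eqref{yb-reduce1} exactly $N+1$ times, once per column, to slide this vertex all the way to the right. Each application moves the intertwiner past one column while simultaneously reversing, inside that column, the order of the thin-row vertex and the fused-row vertex; after all $N+1$ steps every column has been flipped, so the two-row partition function now computes $\mathcal{D}_{J}(y;r)\mathcal{D}_i(x)$, with the intertwiner now sitting on its right end.

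It remains to evaluate the leftover intertwiner. Its incoming edges on the right are the right boundary data of $\mathcal{D}_{J}(y;r)\mathcal{D}_i(x)$, namely $\bm{e}_J$ on the fused (vertical) leg and $i$ on the thin (horizontal) leg. Because $i\in J$ we have $(\bm{e}_J)_i=1$, so the conservation constraint $\bm{e}_J+\bm{e}_i=\C+\bm{e}_d$ has the unique solution $\C=\bm{e}_J$, $d=i$; hence the intertwiner contributes the single scalar $\tilde{L}^{(r)}_{rx/y}(\bm{e}_J,i;\bm{e}_J,i)$, which by inspection of \eqref{fund-weights} is exactly its top-middle entry (the case $A_i=1$). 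Factoring this scalar out of the partition function yields \eqref{Cfused-com}. The only genuinely non-formal point in the argument is this last conservation computation: the hypothesis $i\in J$ is precisely what collapses the sum over intermediate horizontal states to a single term, so that one obtains a clean commutation relation rather than a linear combination of several row-operator products; everything else is the routine bookkeeping of boundary conditions already carried out in the bosonic setting of \cite{BorodinW}.
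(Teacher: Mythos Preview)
Your proposal is correct and follows essentially the same approach as the paper: both attach the intertwiner $\tilde{L}^{(r)}_{rx/y}$ at the left boundary (where it freezes to weight $1$ since $a=0$, $\A(2)=\bm{e}_0$), transport it through the lattice using \eqref{yb-reduce1}, and then observe that the hypothesis $i\in J$ forces the intertwiner on the right to freeze as well, yielding the scalar $\tilde{L}^{(r)}_{rx/y}(\bm{e}_J,i;\bm{e}_J,i)$. The paper presents the argument only for a single column ($N=0$) and remarks that the general case is identical, whereas you spell out the $N+1$ iterations explicitly; the content is the same. One small terminological slip: on the right end the fixed data $\bm{e}_J$ and $i$ are the \emph{outgoing} edges of the intertwiner, while the summed states $\C(2),c$ are its incoming edges --- but your conservation computation $\bm{e}_J+\bm{e}_i=\C+\bm{e}_d$ and its unique binary-string solution are stated correctly regardless.
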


\begin{proof}
We give the proof in the case of row operators of unit length, namely, for $N=0$; however, for generic $N$ the proof follows in exactly the same way. Starting from the relation \eqref{yb-reduce1}, we set $a=0$, $\A(2) = \bm{e}_0$, $b=i$, $\B(2) = \bm{e}_J$, keeping $\A(3)$ and $\B(3)$ arbitrary. The diagonally-oriented vertex on the left hand side freezes; it is given by $\tilde{L}^{(r)}_{rx/y}(\bm{e}_0,0;\bm{e}_0,0) = 1$. Due to the fact that $i \in J$, the diagonally-oriented vertex on the right hand side also freezes; colour $i$ is present in both of the outgoing edges of this vertex, meaning that it must be present in both of the incoming edges (since $\C(2)$ is a binary string). The weight of this frozen vertex is $\tilde{L}_{rx/y}^{(r)}(\bm{e}_J,i;\bm{e}_J,i)$, completing the proof.
\end{proof}

\begin{prop}
\label{prop:row-com2}
Fix an integer $i \in [1,n]$ and a set $J \subset [1,n]$ such that $i \not\in J$. We have the following exchange relation between fused row operators \eqref{Cfused-row} and their unfused counterparts \eqref{unfused}:
\begin{align}
\label{Cfused-com2}
\mathcal{D}_i(x)
\mathcal{D}_J(y;r)
=
\sum_{j \in \{0,i\} \cup J}
\tilde{L}_{rx/y}^{(r)}(\bm{e}_J+\bm{e}_i-\bm{e}_j,j;\bm{e}_J,i)
\cdot
\mathcal{D}_{\{i\}\cup J\backslash \{j\}}(y;r)
\mathcal{D}_j(x),
\end{align}
where the coefficients appearing in the sum are given by the bottom-middle and bottom-right entries of the table \eqref{fund-weights}.
\end{prop}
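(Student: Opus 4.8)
The plan is to mimic the proof of Proposition \ref{prop:row-com1} exactly, using the same reduced Yang--Baxter equation \eqref{yb-reduce1}, but now exploiting the fact that colour $i$ is \emph{not} present in $J$, so the diagonally-oriented vertex on the right-hand side no longer freezes completely. As before, I would work with row operators of unit length ($N=0$); the argument for general finite $N$ is identical, with the Yang--Baxter move applied column-by-column along the row.

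Concretely, I would start from \eqref{yb-reduce1} and specialize $a=0$, $\A(2)=\bm{e}_0$, $b=i$, $\B(2)=\bm{e}_J$, keeping $\A(3)$ and $\B(3)$ free. On the left-hand side the diagonal thin-over-thick vertex $\tilde{L}^{(r)}_{rx/y}(\bm{e}_0,0;\bm{e}_0,0)=1$ freezes, so the left-hand side of \eqref{yb-reduce1} becomes precisely the product of row operators $\mathcal{D}_i(x)\mathcal{D}_J(y;r)$ acting on the state encoded by $\A(3)$ (read as the pair $\big(\mathcal{D}_J(y;r)$-output$, \mathcal{D}_i(x)$-output$\big)$ after stacking). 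On the right-hand side the diagonal vertex is $\tilde{L}^{(r)}_{rx/y}(\C(2),j;\bm{e}_J,i)$; by particle conservation \eqref{conserve-L} we need $\C(2)+\bm{e}_j=\bm{e}_J+\bm{e}_i$, so $\C(2)=\bm{e}_J+\bm{e}_i-\bm{e}_j$, and for $\C(2)$ to be a legal binary string in $\{0,1\}^n$ one must have $j\in\{0,i\}\cup J$ (if $j=0$ then $\C(2)=\bm{e}_J+\bm{e}_i$, legal since $i\notin J$; if $j=i$ then $\C(2)=\bm{e}_J$; if $j\in J$ then $\C(2)=\bm{e}_J\cup\{i\}\setminus\{j\}$). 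Summing \eqref{yb-reduce1} over the internal labels then produces exactly \eqref{Cfused-com2}, with the diagonal-vertex weight $\tilde{L}^{(r)}_{rx/y}(\bm{e}_J+\bm{e}_i-\bm{e}_j,j;\bm{e}_J,i)$ pulled out as a scalar and the remaining thick/thick vertex reassembling into $\mathcal{D}_{\{i\}\cup J\setminus\{j\}}(y;r)\,\mathcal{D}_j(x)$.

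The last bookkeeping point is to identify which entries of the table \eqref{fund-weights} these coefficients are. Writing $\bm{e}_J+\bm{e}_i-\bm{e}_j$ in the notation of that table: when $j=i$, the diagonal vertex has equal horizontal labels and is of the ``$i,0\to i$'' type relative to $\bm{e}_J$, i.e.\ of the form $\tilde{L}^{(r)}_{rx/y}(\A,i;\A^{+}_i,0)$ read in reverse — matching the bottom-middle entry $\tfrac{(q^{A_j}-1)q^{\Ap{j}{n}}sz}{1-sz}$ with the appropriate substitutions $s\mapsto r$, $z\mapsto rx/y$; when $j\in J$ (so $j\neq i$), the diagonal vertex sends distinct incoming colours $i$ (horizontal-in) and the column to an outgoing pair, matching the bottom-right entry $\tfrac{(q^{A_i}-1)q^{\Ap{i}{n}}s^2}{1-sz}$; and when $j=0$ the vertex is of ``$i\to 0$'' absorbing type. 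I would phrase this by simply quoting the relevant rows of \eqref{fund-weights}, as the statement already does (``bottom-middle and bottom-right entries'').

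The only real subtlety — and hence the step to be most careful with — is getting the indices in \eqref{yb-reduce1} and the direction of the freezing argument right, in particular verifying that the \emph{left}-hand diagonal vertex always freezes to weight $1$ (this needs $a=0$ and $\A(2)=\bm{e}_0$, which forces $c=0$ and $\C(2)=\bm{e}_0$), and that the resulting pair of thick-line vertices on each side genuinely recombine into the claimed products of $\mathcal{D}$-operators once one sums over all internal binary strings $\C(3)$ (respectively $\A(3)$). This is precisely parallel to \cite[proof of Proposition 5.1.4]{ABW21} / the proof of Proposition \ref{prop:row-com1} above, so beyond this indexing check there is no further obstacle; I would not reproduce the routine weight substitutions in full.
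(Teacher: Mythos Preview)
Your approach is correct and essentially identical to the paper's: start from \eqref{yb-reduce1}, specialize $a=0$, $\A(2)=\bm{e}_0$, $b=i$, $\B(2)=\bm{e}_J$, observe that the left diagonal vertex freezes to weight $1$ while the right one does not (since $i\notin J$), and sum over the admissible values of $c=j$ determined by conservation. Your final bookkeeping paragraph identifying which table entries arise is a little tangled (e.g.\ the $j=i$ case is the top-middle entry of \eqref{fund-weights} with $A_i=0$, not the bottom-middle), but this is cosmetic and does not affect the argument.
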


\begin{proof}
Similarly to the proof of Proposition \ref{prop:row-com2}, one starts from the relation \eqref{yb-reduce1} and sets $a=0$, $\A(2) = \bm{e}_0$, $b=i$, $\B(2) = \bm{e}_J$, keeping $\A(3)$ and $\B(3)$ arbitrary. The diagonally-oriented vertex on the left hand side again freezes with weight $1$. This time, however, the diagonally-oriented vertex on the right hand side is not frozen; this is due to the fact that $i \not\in J$, meaning that colour $i$ is only present in one of the outgoing edges of this vertex. The weight of the diagonally-oriented vertex is seen to be $\tilde{L}_{rx/y}^{(r)}(\bm{e}_J+\bm{e}_i-\bm{e}_j,j;\bm{e}_J,i)$, and the result follows by summing over all possible values of $c=j$.
\end{proof}

Combining Propositions \ref{prop:row-com1} and \ref{prop:row-com2} we obtain the following important result:

\begin{prop}
Fix two integers $p,i \in [1,n]$ and a set $J \subset [1,n]$ of cardinality $|J|=p$. We then have the commutation relation
\begin{align}
\label{important-relation}
\mathcal{D}_i(x)
\mathcal{D}_{J}(x;q^{-p/2})
=
\frac{1-q}{1-q^p}
\cdot
\left\{
\begin{array}{rl}
q^{\alpha_i(J)}
\mathcal{D}_J(x;q^{-p/2})
\mathcal{D}_i(x),
&
i \in J,
\\
\\
\displaystyle{\sum_{j \in J}}
q^{\alpha_i\left(J^{+-}_{ij}\right)}
\mathcal{D}_{J^{+-}_{ij}}(x;q^{-p/2})
\mathcal{D}_j(x),
&
i \not\in J,
\end{array}
\right.
\end{align}
between row operators of width 1 and width $p$ respectively. Here we have defined $J^{+-}_{ij} = \{i\} \cup J \backslash \{j\}$ and $\alpha_i(K)$ denotes the number of elements in the set $K \subset \mathbb{N}$ which exceed $i$; namely, 
$\alpha_i(K) = |\{k \in K : k>i\}|$.
\end{prop}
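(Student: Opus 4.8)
The plan is to derive \eqref{important-relation} by specializing the two exchange relations of Propositions \ref{prop:row-com1} and \ref{prop:row-com2} at $r = q^{-p/2}$, where the set $J$ has cardinality $|J| = p$, and then evaluating the relevant entries of the table \eqref{fund-weights} explicitly. First I would treat the case $i \in J$: here Proposition \ref{prop:row-com1} already gives
\begin{align*}
\mathcal{D}_i(x)\,\mathcal{D}_J(x;q^{-p/2})
=
\tilde{L}^{(r)}_{rx/y}(\bm{e}_J,i;\bm{e}_J,i)\Big|_{y=x,\,r=q^{-p/2}}
\cdot
\mathcal{D}_J(x;q^{-p/2})\,\mathcal{D}_i(x),
\end{align*}
so the only task is to compute the top-middle weight of \eqref{fund-weights}, which is $\tfrac{(-1)^{A_i}(s - q^{A_i}z)q^{\Ap{i}{n}}s}{1-sz}$, evaluated with the input vector $\A = \bm{e}_J$ (so $A_i = 1$ since $i \in J$), spectral parameter $z = rx/y = r$ at $y = x$, and in the regime where the $L$-weight of \eqref{fund-weights} has its $s$-parameter replaced by $r = q^{-p/2}$. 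After substituting $A_i = 1$, $z = q^{-p/2}$, $s = q^{-p/2}$, and noting $\As{1}{n} = |\bm{e}_J| = p$, the factor $\tfrac{s - q^{A_i}z}{1-sz}$ becomes $\tfrac{q^{-p/2} - q\cdot q^{-p/2}}{1 - q^{-p}} = \tfrac{q^{-p/2}(1-q)}{1-q^{-p}}$; combining with the sign $(-1)^{A_i} = -1$ and the remaining powers of $q^{-p/2}$ and $q^{\Ap{i}{n}}$, one should land on exactly $\tfrac{1-q}{1-q^p}\,q^{\alpha_i(J)}$, where $\alpha_i(J) = \Ap{i}{n}$ counts the elements of $J$ exceeding $i$. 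I would carry out this arithmetic carefully, keeping track of all the stray factors of $q^{-p/2}$ from $r$ and $s$, since that is where sign/power bookkeeping is most error-prone.

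Next I would handle $i \notin J$: Proposition \ref{prop:row-com2} gives the sum over $j \in \{0, i\} \cup J$ of $\tilde{L}^{(r)}_{rx/y}(\bm{e}_J + \bm{e}_i - \bm{e}_j, j; \bm{e}_J, i)$ times $\mathcal{D}_{\{i\}\cup J \setminus\{j\}}(y;r)\,\mathcal{D}_j(x)$. The key observation is that at $r = q^{-p/2}$ the horizontal line has width exactly $p$, and since $\bm{e}_J$ already has $|J| = p$ paths occupying the horizontal edge while a further path of colour $i \notin J$ tries to enter, the $j = 0$ term must vanish — the fused $L$-weight with an incoming horizontal state carrying $p+1$ colours is zero by the capacity constraint. (Concretely, $\tilde{L}^{(r)}_{rx/y}(\bm{e}_{J\cup\{i\}}, 0; \bm{e}_J, i)$ involves $\As{1}{n} = p+1$ with $s = r = q^{-p/2}$, and the relevant weight — the bottom-left entry $\tfrac{1-s^2 q^{\As{1}{n}}}{1-sz}$ — becomes $\tfrac{1 - q^{-p}\cdot q^{p+1}}{1-q^{-p}} = \tfrac{1-q}{1-q^{-p}}$; wait, that is nonzero, so I should instead argue the vanishing from the fused-weight formula \eqref{fused-weights} directly, via the indicator that the horizontal capacity cannot exceed $p$ when $r = q^{-p/2}$. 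Indeed, it is this capacity constraint, stated right after \eqref{unfused}, that forces the $j=0$ contribution away.) So only $j \in J$ survives, with coefficient $\tilde{L}^{(r)}_{rx/y}(\bm{e}_J + \bm{e}_i - \bm{e}_j, j; \bm{e}_J, i)$; at $y = x$, $z = r = q^{-p/2}$, $s = q^{-p/2}$, this is a bottom-middle or bottom-right entry of \eqref{fund-weights} (depending on whether $i < j$ or $i > j$, i.e. on the relative order of the colour being removed and the colour being inserted), namely $\tfrac{(q^{A_j}-1)q^{\Ap{j}{n}}sz}{1-sz}$ or $\tfrac{(q^{A_i}-1)q^{\Ap{i}{n}}s^2}{1-sz}$ with the appropriate input vector $\A = \bm{e}_J + \bm{e}_i - \bm{e}_j = \bm{e}_{J^{+-}_{ij}}$. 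In both subcases $A_j = 1$ resp. $A_i = 1$ on that vector, so $q^{A_j} - 1 = q - 1$, and after the substitutions one should again extract the overall $\tfrac{1-q}{1-q^p}$ and a residual power $q^{\alpha_i(J^{+-}_{ij})}$, where the exponent counts elements of the new horizontal set $J^{+-}_{ij}$ exceeding $i$; I would verify that the cases $i<j$ and $i>j$ both collapse to this single uniform exponent, which is the main subtlety in this half.

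The main obstacle I anticipate is not conceptual but the sign-and-power bookkeeping in matching the raw entries of \eqref{fund-weights} — which carry $(-1)^{A_i}$, powers of $q^{\Ap{i}{n}}$, and factors of $s, s^2, sz$ — against the clean right-hand side $\tfrac{1-q}{1-q^p}q^{\alpha_i(\cdot)}$ after imposing $z = s = r = q^{-p/2}$ and $y = x$; one must be careful that the $(-1)$'s cancel (the sign in the $i\in J$ case should cancel against the sign hidden in $(s-q^{A_i}z)$ being negative) and that the various $q^{-p/2}$'s from $r$, $s$, $z$ conspire into integer powers. A secondary point requiring care is the precise justification that the $j=0$ term drops out when $i\notin J$: I would phrase this via the width-$p$ capacity constraint on the horizontal line of $\mathcal{D}_J(x;q^{-p/2})$ noted immediately after \eqref{unfused}, equivalently via the indicator $\bm{1}_{\A+\B=\C+\D}$ together with the fact that $\bm{e}_I$-type boundary data with $|I|>p$ is inaccessible in the fused model at $r=q^{-p/2}$. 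Once these two points are settled, combining the $i\in J$ and $i\notin J$ computations gives exactly \eqref{important-relation}.
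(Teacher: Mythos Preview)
Your approach is the same as the paper's: specialize Propositions~\ref{prop:row-com1} and~\ref{prop:row-com2} at $y=x$, $r=q^{-p/2}$, and evaluate the relevant entries of table~\eqref{fund-weights}. Your handling of the $i\in J$ case and your eventual argument for the vanishing of the $j=0$ term (via the width-$p$ capacity constraint forcing $\mathcal{D}_{\{i\}\cup J}(x;q^{-p/2})=0$ when $|\{i\}\cup J|=p+1$) both match the paper exactly.

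There is one genuine omission in the $i\notin J$ case. The sum in Proposition~\ref{prop:row-com2} runs over $j\in\{0,i\}\cup J$, and since $i\notin J$ the value $j=i$ is a separate term that you never address before concluding ``only $j\in J$ survives.'' That term has coefficient
\[
\tilde{L}^{(r)}_{rx/y}(\bm{e}_J+\bm{e}_i-\bm{e}_i,\,i;\,\bm{e}_J,\,i)
=\tilde{L}^{(r)}_{rx/y}(\bm{e}_J,\,i;\,\bm{e}_J,\,i),
\]
which is the top-middle entry of \eqref{fund-weights} with $A_i=0$ (because $i\notin J$). That entry equals $\dfrac{(s-z)\,q^{\Ap{i}{n}}\,s}{1-sz}$, and at $s=r$, $z=rx/y$ with $x=y$ one has $s-z=r-r=0$, so the coefficient vanishes. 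This is the step the paper carries out explicitly, and it is needed: neither the capacity argument nor the $j\in J$ computation covers it. Once you insert this one-line check, your proof is complete and coincides with the paper's.
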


\begin{proof}
We analyse the cases $i \in J$ and $i \not\in J$ separately. For $i \in J$ we use 
\eqref{Cfused-com} with $x=y$, $r=q^{-p/2}$; under this choice of parameters the coefficient on the right hand side reads
\begin{align*}
\tilde{L}_{rx/y}^{(r)}(\bm{e}_J,i;\bm{e}_J,i)
\Big|_{x=y}
\Big|_{r=q^{-p/2}}
=
\frac{r^2(qx/y-1)q^{\alpha_i(J)}}{1-r^2 x/y}
\Big|_{x=y}
\Big|_{r=q^{-p/2}}
=
\frac{1-q}{1-q^p}
\cdot
q^{\alpha_i(J)},
\end{align*}
and we recover the first line of \eqref{important-relation}. 

For $i \not\in J$ we use \eqref{Cfused-com2} with $x=y$, $r=q^{-p/2}$; this allows two of the terms on the right hand of \eqref{Cfused-com2} to be eliminated. First, we may eliminate the $j=0$ term from the summation; this follows from the fact that $\mathcal{D}_{\{i\}\cup J}(y;r) = 0$ at $r=q^{-p/2}$, since $|\bm{e}_J+\bm{e}_i| = p+1$. Second, we may eliminate the $j=i$ term from the summation, since for $i \not\in J$ one has
\begin{align*}
\tilde{L}_{rx/y}^{(r)}(\bm{e}_J,i;\bm{e}_J,i)
\Big|_{x=y}
\Big|_{r=q^{-p/2}}
=
\frac{r^2(1-x/y)q^{\alpha_i(J)}}{1-r^2 x/y}
\Big|_{x=y}
\Big|_{r=q^{-p/2}}
=
0.
\end{align*}
The remaining terms in the summation are those for which $j \in J$; for those we obtain
\begin{align*}
\tilde{L}_{rx/y}^{(r)}(\bm{e}_J+\bm{e}_i-\bm{e}_j,j;\bm{e}_J,i)
\Big|_{x=y}
\Big|_{r=q^{-p/2}}
=
\frac{1-q}{1-q^p}
\cdot
q^{\alpha_i\left(J^{+-}_{ij}\right)},
\end{align*}
and the second line of \eqref{important-relation} holds.

\end{proof}

\begin{prop}
Fix an integer $p \in [1,n]$ and a set $I = \{i_1,\dots,i_p\} \subset [1,n]$ of cardinality $|I|=p$. We have the following ``peeling'' property between the fused row operators \eqref{Cfused-row} and their unfused counterparts \eqref{unfused}:
\begin{align}
\label{row-relation2}
\mathcal{D}_I(x;q^{-p/2})
=
\sum_{j \in I}
\mathcal{D}_{I \backslash\{j\}}\left(qx;q^{-(p-1)/2}\right)
\mathcal{D}_j(x),
\end{align}
allowing us to extract a row operator of width 1 from a row operator of width $p$.
\end{prop}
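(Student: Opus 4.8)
Our plan is to prove \eqref{row-relation2} by \emph{peeling off the bottom horizontal line of the fusion tower} underlying $\mathcal{D}_I(x;q^{-p/2})$. By construction (see \eqref{fused-def}--\eqref{tower} and \eqref{Cfused-row}), the specialization $r=q^{-p/2}$ makes $\mathcal{D}_I(x;q^{-p/2})$ the $q^{{\rm inv}}$-weighted, $Z_q$-normalized fusion of a tower of $p$ unfused rows carrying the geometric progression of spectral parameters $x,qx,\dots,q^{p-1}x$, read from bottom to top. The bottom row carries spectral parameter $x$; since the left boundary of $\mathcal{D}_I$ is the empty string $\bm{e}_0$, this bottom line enters empty, so it is exactly an unfused width-$1$ row operator $\mathcal{D}_j(x)=T^{\rightarrow}_{0,j}(x;N)$ for whatever colour $j\in I$ it transports to its right edge (picked up along the way from the vertical lines). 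The remaining $p-1$ lines carry the spectral parameters $qx,q^2x,\dots,q^{p-1}x$, which is the geometric progression with ratio $q$ based at $qx$; they re-fuse to a width-$(p-1)$ row operator $\mathcal{D}_{I\setminus\{j\}}(qx;q^{-(p-1)/2})$. Summing over the colour $j\in I$ that ends up on the bottom line produces the right-hand side of \eqref{row-relation2}.

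Concretely, I would first reduce to a single column ($N=0$), where \eqref{row-relation2} becomes the identity $\tilde{L}^{(q^{-p/2},s)}_x(\A,\bm{e}_0;\C,\bm{e}_I)=\sum_{j\in I}\sum_{\bm{M}}\tilde{L}^{(q^{-(p-1)/2},s)}_{qx}(\A,\bm{e}_0;\bm{M},\bm{e}_{I\setminus\{j\}})\,\tilde{L}^{(s)}_x(\bm{M},0;\C,j)$ between a single width-$p$ fused vertex and a sum of products of a width-$(p-1)$ fused vertex with a fundamental one. This can be checked directly from the explicit fused-weight formula \eqref{fused-weights}; after clearing the $Z_q$ factors it amounts to a finite $q$-Pochhammer / $q$-binomial identity. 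One then upgrades to arbitrary $N$ by applying the single-column identity column-by-column along the row, the point being that the fused weights of \eqref{fused-weights} were defined precisely so that the internal thick vertical edges of the row can be summed consistently. Alternatively, one can avoid the explicit weights entirely and deduce \eqref{row-relation2} from the Yang--Baxter relation \eqref{yb-reduce1}, in the same spirit as the proofs of Propositions \ref{prop:row-com1}--\ref{prop:row-com2}: freeze the diagonal vertex that sees the empty left input $\bm{e}_0$ of $\mathcal{D}_I$, and read off the surviving terms.

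The main obstacle is the fusion normalization bookkeeping. The $q^{{\rm inv}}$ weights and the $q$-multinomial normalizations $Z_q(N;\cdot)$ entering the definition \eqref{fused-def} do \emph{not} match term-by-term between the two sides of \eqref{row-relation2} — they differ exactly at the columns where the bottom line is still empty — so the equality is recovered only after the summations over orderings (equivalently, over $j$ and the intermediate vertical state $\bm{M}$), and one must check that these are precisely absorbed by the $q$-Pochhammer identities that govern $Z_q$ and the underlying fusion projectors. This is the same type of computation carried out in \cite[Chapter 3]{ABW21} and \cite[Appendices B and C]{BorodinW}, so in practice the cleanest route is to phrase the argument in terms of the fusion projector and invoke those references for the combinatorial identities; the factor $\tfrac{1-q}{1-q^p}$ appearing in the closely related relation \eqref{important-relation} is a shadow of exactly this bookkeeping.
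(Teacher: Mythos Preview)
Your heuristic ``peel off the bottom line of the tower'' is correct, but it does \emph{not} produce \eqref{row-relation2} directly; it produces the identity with the operators in the opposite order. In the paper's conventions (see \eqref{C-row} and \eqref{G-def}--\eqref{G-pf}), the leftmost operator in a product corresponds to the \emph{bottom} row of the partition function. Hence your peeling argument yields
\[
\mathcal{D}_I(x;q^{-p/2})
=
\sum_{i\in I}\mathcal{D}_i(x)\,\mathcal{D}_{I\setminus\{i\}}\bigl(qx;q^{-(p-1)/2}\bigr),
\]
which is exactly the intermediate identity \eqref{fused-to-sigma} in the paper's proof. The target \eqref{row-relation2} has $\mathcal{D}_j(x)$ on the \emph{right}, i.e.\ as the top row, even though its spectral parameter $x$ sits at the bottom of the tower; this is not what naive peeling gives. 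The paper obtains \eqref{row-relation2} from \eqref{fused-to-sigma} by a further, genuinely nontrivial step: one applies the commutation relation \eqref{Cfused-com2} with $J=I\setminus\{i\}$, $y=qx$, $r=q^{-(p-1)/2}$ (so that the diagonal vertex carries spectral parameter $q^{-(p+1)/2}$), sums over $i\in I$, and invokes the \emph{stochasticity} identity
\[
\sum_{i\in I}\tilde{L}^{(r)}_{rq^{-1}}\bigl(\bm{e}_I-\bm{e}_j,\,j;\,\bm{e}_I-\bm{e}_i,\,i\bigr)=1
\]
to collapse the double sum. This sum-to-one property is the missing idea in your outline; neither a direct vertex check nor the freezing argument you sketch will supply it without being made explicit.

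Your worry about the $q^{\mathrm{inv}}$/$Z_q$ bookkeeping is, by contrast, a non-issue here: since the left boundary of $\mathcal{D}_I$ is $\bm{e}_0$ one has $Z_q(p;\bm{e}_0)=1$ and $q^{\mathrm{inv}(0,\dots,0)}=1$, and since $|I|=p$ the right outputs $(d_1,\dots,d_p)$ range over permutations of the distinct elements of $I$ with no multiplicities. Thus \eqref{fused-to-sigma0} and the re-fusion step to \eqref{fused-to-sigma} are clean equalities, and the factor $(1-q)/(1-q^p)$ from \eqref{important-relation} plays no role in this particular argument.
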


\begin{proof}
Extending the definition \eqref{fused-def} of fused vertex weights to row operators ({\it cf.} \cite[Appendix B]{BorodinW}), one finds that
\begin{align}
\label{fused-to-sigma0}
\mathcal{D}_I(x;q^{-p/2})
=
\sum_{\sigma \in \mathfrak{S}_p}
\mathcal{D}_{i_{\sigma(1)}}(x)
\mathcal{D}_{i_{\sigma(2)}}(qx)
\cdots
\mathcal{D}_{i_{\sigma(p)}}(q^{p-1}x),
\end{align}
where the objects appearing on the right hand side are unfused row operators \eqref{unfused}. In particular, for row operators of unit length (namely, for $N=0$), the relation \eqref{fused-to-sigma0} matches precisely with the definition \eqref{fused-def} for $r=q^{-p/2}$, $\B = \bm{e}_0$ and $\D=\bm{e}_I$. Converting the sum over $\mathfrak{S}_p$ into summation over $\mathfrak{S}_{p-1}$ subgroups, and re-fusing the final $p-1$ operators in the resulting summand, we may rewrite \eqref{fused-to-sigma0} as 
\begin{align}
\label{fused-to-sigma}
\mathcal{D}_I(x;q^{-p/2})
=
\sum_{i \in I}
\mathcal{D}_i(x)
\mathcal{D}_{I \backslash\{i\}}\left(qx;q^{-(p-1)/2}\right).
\end{align}
Now from \eqref{Cfused-com2} with $J = I \backslash \{i\}$ and $r=q^{-(p-1)/2}$, one has that
\begin{align}
\label{fused-to-sigma2}
\mathcal{D}_i(x) \mathcal{D}_{I\backslash\{i\}}\left(qx;q^{-(p-1)/2}\right)
=
\sum_{j \in I}
\left(
\left.
\tilde{L}_{rq^{-1}}^{(r)}(\bm{e}_I-\bm{e}_j,j;\bm{e}_I-\bm{e}_i,i)
\right|_{r=q^{-(p-1)/2}}
\right)
\mathcal{D}_{I\backslash\{j\}}\left(qx;q^{-(p-1)/2}\right)
\mathcal{D}_j(x);
\end{align}
the $j=0$ term was dropped from the above sum because 
$\mathcal{D}_{I}(qx;q^{-(p-1)/2})=0$, in view of the fact that this row operator has width $p-1$ while $I$ has cardinality $p$. Summing both sides of \eqref{fused-to-sigma2} over $i \in I$ yields
\begin{align}
\label{reverse-order}
\sum_{i \in I}
\mathcal{D}_i(x) \mathcal{D}_{I\backslash\{i\}}\left(qx;q^{-(p-1)/2}\right)
=
\sum_{j \in I}
\mathcal{D}_{I\backslash\{j\}}\left(qx;q^{-(p-1)/2}\right) \mathcal{D}_j(x),
\end{align}
in view of the stochasticity property
$
\sum_{i \in I}
\tilde{L}_{rq^{-1}}^{(r)}(\bm{e}_I-\bm{e}_j,j;\bm{e}_I-\bm{e}_i,i)
=
1
$ (see \cite[Proposition 2.5.1]{BorodinW}). Combining \eqref{fused-to-sigma} and \eqref{reverse-order}, we have completed the proof.
\end{proof}

\section{LLT measures and Plancherel specialization}

In this section we introduce the probability measures that will be central to this text; they are based on the Lascoux--Leclerc--Thibon polynomials \cite{LLT,ABW21} and their associated Cauchy identity \cite{Lam,ABW21}, and accordingly we refer to them as {\it LLT measures}. In analogy with the Schur and Macdonald processes \cite{OR03,BorodinC}, one may introduce a class of Markov kernels that preserve the form of the LLT measure when they act upon it. Acting consecutively with these Markov kernels, we obtain $n$-tuples of random Gelfand--Tsetlin patterns; one Gelfand--Tsetlin pattern is produced for each of the $n$ colours in our partition functions. The main result of this paper is a complete description of the behaviour of these patterns under a certain asymptotic regime of the underlying measure; this is carried out in Section \ref{sec:asymp}.

The layout of this section is as follows. In Sections \ref{ssec:LLTdef}--\ref{ssec:intLLT} we recall a partition function representation for the LLT polynomials, recently obtained in \cite{CGKM22,ABW21}, and use it to present an integral formula for the latter. In Section \ref{ssec:planch} we apply the Plancherel specialization of the ring of symmetric functions to the integral obtained in Section \ref{ssec:intLLT}, yielding an integral formula for the Plancherel-specialized LLT polynomials. In Sections \ref{ssec:cauchyLLT}--\ref{ssec:markov} we recall the (skew) Cauchy identity for LLT polynomials, and use it to define our LLT measures and associated Markov kernels.

\subsection{Functions $\mathbb{G}_{\lambda/\mu}$ and reduction to LLT polynomials}
\label{ssec:LLTdef}

In Section \ref{ssec:int} we introduced the symmetric rational functions $G_{\lambda/\mu}$ as matrix elements of products of the row operators \eqref{C-row}; we now generalize these, by replacing the row operators in the algebraic construction with their fused analogues \eqref{Cfused-row}.

\begin{defn}
Let $\lambda = (\lambda_1,\dots,\lambda_n)$ be a composition of weight $m$, and fix two $\lambda$-coloured compositions $\mu \in \mathcal{S}_{\lambda}$ and $\nu \in \mathcal{S}_{\lambda}$. Let the corresponding vectors in $\mathbb{V}(\infty)$, $\ket{\mu}_{\lambda}$ and $\ket{\nu}_{\lambda}$, be given by \eqref{A(k)} and \eqref{B(k)}, respectively. For any integer $p \geq 1$ we define the following family of symmetric rational functions:
\begin{align}
\label{G-def-fused}
(-s)^{|\mu|-|\nu|}
\cdot
\mathbb{G}_{\mu/\nu}(\lambda;x_1,\dots,x_p;r_1,\dots,r_p)
=
\bra{\nu}_{\lambda}
\prod_{i=1}^{p}
\mathcal{D}_{\{0\}}(x_i;r_i)
\ket{\mu}_{\lambda},
\end{align}
where the operators $\mathcal{D}_{\{0\}}(x_i;r_i)$ are given by \eqref{Cfused-row}.
\end{defn}

In graphical notation, the definition \eqref{G-def-fused} reads
\begin{align}
\label{G-pf-fused}
(-s)^{|\mu|-|\nu|}
\cdot
\mathbb{G}_{\mu/\nu}(\lambda;x_1,\dots,x_p;r_1,\dots,r_p)
&=
\tikz{0.8}{
\foreach\y in {1,...,5}{
\draw[lgray,line width=4pt,->] (1,\y) -- (8,\y);
}
\foreach\x in {2,...,7}{
\draw[lgray,line width=4pt,->] (\x,0) -- (\x,6);
}
\node[left] at (0.5,1) {$(x_1;r_1) \rightarrow$};
\node[left] at (0.5,2) {$(x_2;r_2) \rightarrow$};
\node[left] at (0.5,3) {$\vdots$};
\node[left] at (0.5,4) {$\vdots$};
\node[left] at (0.5,5) {$(x_p;r_p) \rightarrow$};
\node[above] at (7,6) {$\cdots$};
\node[above] at (6,6) {$\cdots$};
\node[above] at (5,6) {$\cdots$};
\node[above] at (4,6) {\footnotesize$\bm{A}(2)$};
\node[above] at (3,6) {\footnotesize$\bm{A}(1)$};
\node[above] at (2,6) {\footnotesize$\bm{A}(0)$};
\node[below] at (7,0) {$\cdots$};
\node[below] at (6,0) {$\cdots$};
\node[below] at (5,0) {$\cdots$};
\node[below] at (4,0) {\footnotesize$\bm{B}(2)$};
\node[below] at (3,0) {\footnotesize$\bm{B}(1)$};
\node[below] at (2,0) {\footnotesize$\bm{B}(0)$};
\node[right] at (8,1) {$\bm{e}_0$};
\node[right] at (8,2) {$\bm{e}_0$};
\node[right] at (8,3) {$\vdots$};
\node[right] at (8,4) {$\vdots$};
\node[right] at (8,5) {$\bm{e}_0$};
\node[left] at (1,1) {$\bm{e}_0$};
\node[left] at (1,2) {$\bm{e}_0$};
\node[left] at (1,3) {$\vdots$};
\node[left] at (1,4) {$\vdots$};
\node[left] at (1,5) {$\bm{e}_0$};
}
\end{align}
where the vectors $\bm{A}(k), \bm{B}(k)$, $k \geq 0$ are given by \eqref{A(k)}--\eqref{B(k)}.

Two reductions of \eqref{G-def-fused} are of interest. The first is obtained by setting $r_i = q^{-1/2}$ for all $1 \leq i \leq p$; in this case, each fused row operator reduces to its unfused analogue, as described in equation \eqref{unfused}, and we find that
\begin{align*}
\mathbb{G}_{\mu/\nu}(\lambda;x_1,\dots,x_p;q^{-1/2},\dots,q^{-1/2})
=
G_{\mu/\nu}(\lambda;x_1,\dots,x_p).
\end{align*}
The second we record as a theorem, below.

\begin{thm}
The function $\mathbb{G}_{\mu/\nu}(\lambda;x_1,\dots,x_p;r_1,\dots,r_p)$ has well-defined $s \rightarrow 0$ and $r_1,\dots,r_p \rightarrow \infty$ limits. Under these limits, it becomes a polynomial in $(x_1,\dots,x_p)$ with monomial coefficients living in $\mathbb{N}[q]$.
\end{thm}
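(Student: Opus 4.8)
The plan is to track what happens to the fused vertex weights \eqref{fused-weights} under the limits $s\to 0$ and $r\to\infty$, and then assemble the partition function \eqref{G-pf-fused} out of these limiting weights. First I would examine \eqref{fused-weights} directly: in the partition function \eqref{G-pf-fused} the left boundary of every row carries $\bm{e}_0$ and the right boundary carries $\bm{e}_0$, so no colour ever travels rightward along a horizontal line; consequently $|\B|=|\D|$ at every vertex and in fact only the ``no horizontal colour through'' channel contributes. In that channel the vector $\V$ has $|\V|=0$ (since $B_i-V_i$ and the relevant minima vanish), so the prefactor $\frac{(r^{-2}q^{-|\V|+1}z;q)_{|\V|}}{(s^2 r^{-2}q^{-|\V|}z;q)_{|\V|}}$ is $1$, the sign $(-1)^{|\V|}$ is $1$, and $W^{(r,s)}_z$ collapses to a single $\P=0$ term. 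What remains is, up to the universal powers of $z,r,s$, the weight in \eqref{LLT-weights-intro}: indeed the claim of the theorem should be that the $s\to 0$, $r\to\infty$ limit of \eqref{G-def-fused} (after the overall $(-s)^{|\mu|-|\nu|}$ normalization is stripped and $z=x_i$ appropriately rescaled) literally reproduces the LLT partition function \eqref{G-pf-fused-intro} with weights \eqref{LLT-weights-intro}. So the substance is: (i) show each fused weight appearing in \eqref{G-pf-fused} has a finite, nonzero limit as $s\to 0$ and $r\to\infty$; (ii) identify that limit with \eqref{LLT-weights-intro}; (iii) conclude polynomiality in $(x_1,\dots,x_p)$ and monomial coefficients in $\mathbb{N}[q]$ from the explicit form of \eqref{LLT-weights-intro}.

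The order of the argument I would carry out is as follows. Step 1: record the constraint imposed by the boundary data in \eqref{G-pf-fused} — every horizontal edge on the far left and far right is $\bm{e}_0$, and by the conservation law $\A+\B=\C+\D$ read along each row, the total horizontal content is conserved, so in fact one only ever needs the sub-block of fused weights $\tilde L^{(r,s)}_{z}(\A,\B;\C,\D)$ with $|\B|=|\D|$; since the row operator $\mathcal{D}_{\{0\}}$ starts and ends empty, at each individual vertex one also needs only $|\B|,|\D|\le$ (number of rows so far), but more importantly the relevant weights are exactly those contributing to \eqref{G-pf-fused-intro}. Step 2: in \eqref{fused-weights} substitute the relevant $\A,\B,\C,\D$, compute $\V$, observe $|\V|=0$, and simplify $W$ to its single surviving term. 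Step 3: take $s\to 0$: the factors $s^{2|\D|}$ would kill everything unless $|\D|$ behaves correctly, so here is the subtlety — the normalization $(-s)^{|\mu|-|\nu|}$ on the left of \eqref{G-def-fused} is designed precisely so that these $s$-powers cancel globally. I would verify the bookkeeping: the total number of ``$s$-producing'' horizontal edges across the whole lattice equals $|\mu|-|\nu|$, matching the prefactor, so after cancellation the $s\to 0$ limit is finite and nonzero. Step 4: take $r\to\infty$: the factor $r^{-2|\A|}$ tends to zero unless compensated; again the normalization of $\mathcal{D}_{\{0\}}(x_i;r_i)$, or rather a rescaling $z\mapsto$ appropriate power of $r$ times $x_i$ absorbed into the spectral parameter, handles this — I would track the net power of $r$ around a closed lattice and show it is zero. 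Step 5: having obtained the limiting weight $\bm{1}_{\bm C+\bm D\in\{0,1\}^n}\,x^{|\D|}q^{\varphi(\D,\C)+\varphi(\D,\D)}$ of \eqref{LLT-weights-intro}, observe it is a monomial in $x$ with coefficient a nonnegative power of $q$; a product of such over all vertices is again a monomial in $(x_1,\dots,x_p)$ with coefficient in $q^{\mathbb{Z}_{\ge 0}}$, and summing over finitely many path configurations gives a polynomial with coefficients in $\mathbb{N}[q]$. Finiteness of the configuration sum follows because the boundary data $\bm{A}(k),\bm{B}(k)$ are eventually $\bm{e}_0$ and colours cannot escape.

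The main obstacle, I expect, is Step 3 and Step 4 — the precise bookkeeping of the $s$- and $r$-powers to confirm that the combination of the explicit normalization $(-s)^{|\mu|-|\nu|}$ together with the definition of the fused row operator $\mathcal{D}_{\{0\}}(x_i;r_i)$ (and the convention, implicit in \eqref{fused-weights}, that the spectral parameter is written as $sz$ or with an $r$-dependent rescaling) conspires to make both limits finite and nonzero, rather than $0$ or $\infty$. Concretely one must show: (a) summed over all vertices in \eqref{G-pf-fused}, the exponent of $s$ coming from the $s^{2|\D|}$ factors equals $|\mu|-|\nu|$, cancelling the prefactor exactly, with no leftover $s$; and (b) the $r$-dependence, which enters \eqref{fused-weights} through $r^{-2|\A|}$, the prefactor $(r^{-2}\cdots;q)_{|\V|}$ (harmless since $|\V|=0$ here) and through the rescaled spectral parameter, telescopes around each horizontal line to a finite limit as $r\to\infty$. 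Both are finite calculations but require care about conventions; once done, Step 5 is immediate from the shape of \eqref{LLT-weights-intro}. I would also note for safety that nonnegativity of the $q$-exponents $\varphi(\D,\C)+\varphi(\D,\D)$ is automatic since $\varphi$ is a sum of products of $\{0,1\}$-entries, so no cancellation or negative powers can occur, securing the claim that the coefficients lie in $\mathbb{N}[q]$ rather than merely $\mathbb{Z}[q,q^{-1}]$.
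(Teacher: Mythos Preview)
Your overall strategy matches the paper's: establish the limit at the level of individual fused vertex weights (obtaining \eqref{llt-weights}), then assemble the partition function. The paper's version is very brief --- it distributes the prefactor as $(-s)^{-|\D|}$ per vertex (since the total number of horizontal path-steps in \eqref{G-pf-fused} is $|\mu|-|\nu|$) and then simply cites \cite[Corollary 8.3.6]{ABW21} for the per-vertex limit, after which your Step 5 is immediate.

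However, your Step 1 contains a genuine error that would derail Steps 2--4 if carried through. You claim that because the left and right boundaries of each row are $\bm e_0$, ``no colour ever travels rightward along a horizontal line'' and hence $|\B|=|\D|$ at every vertex. This is false: paths entering at the bottom in column $\nu^{(i)}_j$ and exiting at the top in column $\mu^{(i)}_j$ must make $\mu^{(i)}_j-\nu^{(i)}_j$ rightward horizontal steps somewhere in the lattice, so internal horizontal edges generically carry colours and $|\B|,|\D|$ vary from vertex to vertex. Consequently your claim that $|\V|=0$ and that $W^{(r,s)}_z$ collapses to a single term is not justified a priori. What is true is that in the \emph{limiting} weight \eqref{llt-weights} the indicator $\bm 1_{\C+\D\in\{0,1\}^n}$ forces $V_i=\min(A_i,B_i,C_i,D_i)=0$ in every surviving configuration, but to prove the limit exists you must handle all $\A,\B,\C,\D\in\{0,1\}^n$, including those whose contribution vanishes in the limit.

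Your later Steps 3--5 are on the right track and in fact recover from the error implicitly, since there you correctly allow $|\D|>0$ and track the global $s$-exponent as $|\mu|-|\nu|$. If you drop the mistaken restriction in Step 1 and instead compute the limit of $(-s)^{-|\D|}\tilde L^{(r,s)}_x(\A,\B;\C,\D)$ directly from \eqref{fused-weights}--\eqref{w-weight} for arbitrary binary strings (this is the content of the cited corollary, and no $r$-rescaling of the spectral parameter is needed), the rest of your argument goes through and coincides with the paper's.
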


\begin{proof}
To compute the limit $s \rightarrow 0$, we divide both sides of \eqref{G-def-fused} by $(-s)^{|\mu|-|\nu|}$; on the right hand side, we may distribute the resulting $(-s)^{|\nu|-|\nu|}$ factor within the partition function by assigning a factor of $(-s)^{-1}$ to each horizontal unit step by a path. By \cite[Corollary 8.3.6]{ABW21},
\begin{align}
\label{llt-weights}
\lim_{r \rightarrow\infty}
\lim_{s \rightarrow 0}
(-s)^{-|\D|}
\tilde{L}^{(r,s)}_x(\A,\B;\C,\D)
=
\bm{1}_{\bm{C}+\bm{D} \in \{0,1\}^n}
\cdot
x^{|\D|}
q^{\varphi(\D,\C)+\varphi(\D,\D)},
\qquad
\forall\
\A,\B,\C,\D \in \{0,1\}^n.
\end{align}
Since this limit exists at the level of the individual vertices, it follows that the $s \rightarrow 0$ and $r_1,\dots,r_p \rightarrow \infty$ limits exist when applied to the whole partition function. The fact that the resulting function is a polynomial in $(x_1,\dots,x_p)$, with nonnegative polynomial coefficients in $q$, is manifest from the right hand side of \eqref{llt-weights}.
\end{proof}

Throughout the remainder of the text, we refer to
\begin{align}
\label{LLTdef}
\mathbb{G}_{\mu/\nu}(\lambda;x_1,\dots,x_p;\infty,\dots,\infty)\Big|_{s \rightarrow 0} \equiv \mathbb{G}_{\mu/\nu}(x_1,\dots,x_p)
\end{align}
as a Lascoux--Leclerc--Thibon (LLT) polynomial\footnote{The LLT polynomials have two combinatorial definitions; either in terms of ribbon tilings of a Young diagram, or in terms of $n$-tuples of semi-standard Young tableaux. For both of these definitions, we refer to \cite[Sections 9.1 and 9.2]{ABW21}; for the matching of $\mathbb{G}_{\mu/\nu}(\lambda;x_1,\dots,x_p;\infty,\dots,\infty)\Big|_{s \rightarrow 0}$ with the resulting polynomials we refer to \cite[Theorem 9.3.2 (1)]{ABW21}.}, and tacitly assume that the $s \rightarrow 0$ and $r_1,\dots,r_p \rightarrow \infty$ limits have been taken, unless it is specifically stated otherwise.

\subsection{Padding and shifting LLT polynomials}

To this point, LLT polynomials were indexed by coloured compositions, as given by Definiton \ref{def:cc}. There is a natural way to extend their definition to allow indexing by {\it coloured signatures} (the extension of the set \eqref{lambda-col} that allows parts to take any integer values, including negative ones) which will be convenient when we come to stating the Cauchy identity for LLT polynomials (see Section \ref{ssec:cauchyLLT}).

One may consider the effect of appending an extra column to the left of the partition function \eqref{G-pf-fused}, with the boundary conditions at the top and bottom of this column prescribed as $\bm{A}(-1) = 1^n$ and $\bm{B}(-1) = 1^n$, respectively. One sees that the appended column freezes with weight $1$ (assuming the limit where the weights \eqref{llt-weights} are used), and therefore does not contribute to the overall evaluation of the partition function. This invariance property may be expressed as
\begin{align}
\label{pad}
\mathbb{G}_{-1\cup\mu/-1\cup\nu}(x_1,\dots,x_p)
=
\mathbb{G}_{\mu/\nu}(x_1,\dots,x_p),
\end{align}
for any $\mu,\nu \in \mathcal{S}_{\lambda}$, where $-1 \cup \mu$ and $-1 \cup \nu$ mean prepending a part of size $-1$ in each of the $n$ blocks of $\mu$ and $\nu$, respectively (similarly to Definition \ref{defn:padding}). The procedure \eqref{pad} may clearly be iterated, allowing us to prepend arbitrarily many negative parts to the coloured compositions in question. One also notes that, on the resulting coloured signatures, there holds
\begin{align}
\label{shift}
\mathbb{G}_{(\mu-1)/(\nu-1)}(x_1,\dots,x_p)
=
\mathbb{G}_{\mu/\nu}(x_1,\dots,x_p),
\end{align}
where $(\mu-1)$ and $(\nu-1)$ mean subtracting $1$ from every part of $\mu$ and $\nu$, respectively. It is then easy to see that \eqref{pad} and \eqref{shift} completely determine the value of $\mathbb{G}_{\mu/\nu}(x_1,\dots,x_p)$ for any coloured signatures $\mu$ and $\nu$ (possibly containing infinitely many negative parts\footnote{In situations where infinitely many negative parts occur, only finitely many negative integers will be omitted from our coloured signatures; for more information, see equation \eqref{inf-sig} and the sentence that follows.}).

\subsection{Integral formula for LLT polynomials}
\label{ssec:intLLT}

Applying the fusion procedure to the integral formula obtained in Section \ref{ssec:int}, one may obtain an integral formula for the LLT polynomials; we reproduce that result here, in essentially the same form as it appeared in \cite[Corollary 11.5.3]{ABW21}. 
\begin{thm}
\label{thm:LLT-int}
Fix a composition $\lambda = (\lambda_1,\dots,\lambda_n)$ such that $|\lambda| = m$, and choose two coloured compositions $\mu,\nu \in \mathcal{S}_{\lambda}$. The LLT polynomials \eqref{LLTdef} are given by the following integral expression:
\begin{multline}
\label{LLTint}
\mathbb{G}_{\mu/\nu}(x_1,\dots,x_p)
=
\frac{q^{m(m+1)/2}}{(q-1)^{m}}
\cdot
\left( \frac{1}{2\pi{\tt i}} \right)^{m}
\oint_{C_1}
\frac{dy_1}{y_1}
\cdots 
\oint_{C_m}
\frac{dy_m}{y_m}
\\
\times
\prod_{1 \leq i<j \leq m}
\left(
\frac{y_j-y_i}{y_j-q y_i}
\right)
f_{\breve\mu}(1^m;y_1^{-1},\dots,y_m^{-1})
g_{\nu}(\lambda;y_1,\dots,y_m)
\prod_{i=1}^{p}
\prod_{j=1}^{m}
\frac{1}{1-x_i y_j},
\end{multline}
where the contours $\{C_1,\dots,C_m\}$ are admissible with respect to $(q,0)$. It is implicit that $s=0$ in the functions $f_{\breve\mu}$ and $g_{\nu}$.
\end{thm}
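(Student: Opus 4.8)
The plan is to derive the fused integral formula \eqref{LLTint} from the unfused one, \eqref{G-int}, by applying the fusion procedure to the spectral parameters $(x_1,\dots,x_p)$. Recall that \eqref{G-int} expresses $G_{\mu/\nu}(\lambda;x_1,\dots,x_p)$ as an $m$-fold contour integral whose $x$-dependence is entirely contained in the factor $\prod_{i=1}^p\prod_{j=1}^m \frac{1-qx_iy_j}{1-x_iy_j}$. The key structural observation is that the LLT polynomial \eqref{LLTdef} is obtained from $\mathbb{G}_{\mu/\nu}(\lambda;x_1,\dots,x_p;r_1,\dots,r_p)$ in the limits $s\to 0$ and $r_1,\dots,r_p\to\infty$, and that the fused row operator $\mathcal{D}_{\{0\}}(x;r)$ is, by the fusion definition \eqref{fused-def} (extended to row operators as in \eqref{fused-to-sigma0}), a normalized sum of products of unfused row operators $\mathcal{C}_0(x)\mathcal{C}_0(qx)\cdots\mathcal{C}_0(q^{N-1}x)$ with geometrically-spaced spectral parameters; since the $\mathcal{C}_0$ operators commute, this product is simply $\prod_{k=0}^{N-1}\mathcal{C}_0(q^k x)$ up to the normalization $Z_q(N;\bm{e}_0)=1$.

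First I would establish the fusion identity at the level of the $G$-functions: for $r=q^{-N/2}$,
\begin{align*}
\mathbb{G}_{\mu/\nu}(\lambda;x_1,\dots,x_p;q^{-N_1/2},\dots,q^{-N_p/2})
=
G_{\mu/\nu}\big(\lambda;\{q^k x_i\}_{1\le i\le p,\,0\le k\le N_i-1}\big),
\end{align*}
which follows directly from \eqref{G-def-fused}, the factorization of $\mathcal{D}_{\{0\}}(x_i;q^{-N_i/2})=\prod_{k=0}^{N_i-1}\mathcal{C}_0(q^k x_i)$ in \eqref{fused-to-sigma0} (with all $i_\ell=0$, so $\mathfrak{S}_p$ acts trivially and $Z_q(N;\bm{e}_0)=1$), and the definition \eqref{G-def}. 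Substituting this specialization of the alphabet into the right-hand side of \eqref{G-int}, the $x$-dependent product becomes
\begin{align*}
\prod_{i=1}^{p}\prod_{k=0}^{N_i-1}\prod_{j=1}^{m}\frac{1-q^{k+1}x_iy_j}{1-q^k x_i y_j}
=
\prod_{i=1}^{p}\prod_{j=1}^{m}\frac{1-q^{N_i}x_iy_j}{1-x_iy_j},
\end{align*}
a telescoping collapse. Since the left-hand side $\mathbb{G}_{\mu/\nu}(\lambda;x_1,\dots,x_p;r_1,\dots,r_p)$ and both sides of the resulting identity are rational functions in $r_i$ (equivalently in $q^{N_i}$), and the identity holds for the infinitely many values $q^{N_i}$, $N_i\in\mathbb{N}$, it extends to all $r_i$; taking $r_i\to\infty$ sends $q^{N_i}=r_i^{-2}\to 0$, so the telescoped product tends to $\prod_{i,j}\frac{1}{1-x_iy_j}$.

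It remains to handle the $s\to 0$ limit on the remaining ingredients: the prefactor $\frac{q^{m(m+1)/2}}{(q-1)^m}$ and the Pochhammer exponent are $s$-independent; the contours admissible with respect to $(q,s)$ must degenerate to those admissible with respect to $(q,0)$, which is harmless since the integrand of \eqref{LLTint} has no pole at the origin that was previously shielded by the requirement of surrounding $s$ (and the $s=0$ functions $f_{\breve\mu}$, $g_\nu$ are the well-defined degenerations recalled in Remarks \ref{rainbow-rmk} and the one following them); and the functions $f_{\breve\mu}(1^m;y_1^{-1},\dots,y_m^{-1};0)$, $g_\nu(\lambda;y_1,\dots,y_m;0)$ are precisely the $s=0$ specializations appearing in \eqref{G-int} after dividing through by the $(-s)$-powers. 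One must check that the $s\to 0$ limit commutes with the contour integration — this is routine since the integrand converges uniformly on the (compact) contours once they are taken admissible with respect to $(q,0)$.

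The main obstacle I anticipate is not any single hard estimate but rather the careful bookkeeping of the two limits and the contour deformation: one must verify that taking $r_i\to\infty$ and $s\to 0$ in the integral representation \eqref{G-int} is legitimate, i.e.\ that these limits may be passed under the integral sign and that the admissible contours can be continuously deformed from $(q,s)$-admissible to $(q,0)$-admissible without crossing poles of the integrand (poles of $f_{\breve\mu}$, $g_\nu$ in the $y$-variables, and the factors $(y_j-qy_i)^{-1}$). Since the pole structure of the $s=0$ integrand in \eqref{LLTint} is explicit — the only $y$-poles come from $(y_j-qy_i)$ and from $(1-x_iy_j)$, the latter being pushed off to infinity relative to the origin-centred contours — this deformation argument is clean, and indeed the whole statement is, as the authors note, essentially \cite[Corollary 11.5.3]{ABW21}; the proof above simply reassembles it from the pieces already recalled in the excerpt.
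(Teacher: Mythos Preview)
Your proposal is correct and follows essentially the same approach as the paper: both arguments identify $\mathbb{G}_{\mu/\nu}(\lambda;x_i;q^{-N_i/2})$ with $G_{\mu/\nu}$ evaluated on the geometric alphabet $\{q^k x_i\}$, substitute into the unfused integral \eqref{G-int}, telescope the $x$-dependent product to $\prod_{i,j}\frac{1-q^{N_i}x_i y_j}{1-x_i y_j}$, and then analytically continue $q^{N_i}\mapsto r_i^{-2}$ before sending $r_i\to\infty$ and $s\to 0$. The paper presents the argument for $p=1$ and leaves the general case to the reader, whereas you treat all $p$ at once and are more explicit about the $s\to 0$ contour deformation, but the logic is the same.
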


\begin{proof}
We focus on the proof for $p=1$, as this captures the essence of the proof for generic $p$. Fix an integer $P \geq 1$. Using \eqref{G-def-fused}, the one-variable function $\mathbb{G}_{\mu/\nu}(\lambda; x; q^{-P/2})$ is given by
\begin{align*}
(-s)^{|\mu|-|\nu|}
\cdot
\mathbb{G}_{\mu/\nu}(\lambda;x;q^{-P/2})
&=
\bra{\nu}_{\lambda}
\mathcal{D}_{\{0\}}(x;q^{-P/2})
\ket{\mu}_{\lambda}
\\
&=
\bra{\nu}_{\lambda}
\mathcal{D}_0(x)
\mathcal{D}_0(q x)
\ldots
\mathcal{D}_0(q^{P-1} x)
\ket{\mu}_{\lambda}
=
(-s)^{|\mu|-|\nu|}
G_{\mu/\nu}(\lambda;x,qx,\dots,q^{P-1}x)
\end{align*}
where we have replaced the fused row operator $\mathcal{D}_{\{0\}}(x;q^{-P/2})$ by the bundle of unfused row operators \eqref{C-row} which comprise it. From the integral formula \eqref{G-int}, we then have that
\begin{multline*}
\mathbb{G}_{\mu/\nu}(\lambda;x;q^{-P/2})
=
G_{\mu/\nu}(\lambda;x,qx,\dots,q^{P-1}x)
=
\frac{q^{m(m+1)/2}}{(q-1)^m}
\cdot
\left( \frac{1}{2\pi{\tt i}} \right)^m
\oint_{C_1}
\frac{dy_1}{y_1}
\cdots 
\oint_{C_m}
\frac{dy_m}{y_m}
\\
\times
\prod_{1 \leq i<j \leq m}
\left(
\frac{y_j-y_i}{y_j-q y_i}
\right)
f_{\breve\mu}(1^m;y_1^{-1},\dots,y_m^{-1})
g_{\nu}(\lambda;y_1,\dots,y_m)
\prod_{j=1}^{m}
\frac{1-q^P x y_j}{1-x y_j}.
\end{multline*}
This yields an expression for $\mathbb{G}_{\mu/\nu}(\lambda;x;r)$ by performing the analytic continuation $q^P \mapsto r^{-2}$; the $p=1$ case of \eqref{LLTint} then follows by sending $r \rightarrow \infty$. The generic $p$ version of \eqref{LLTint} may be proved along similar lines; namely, we split each of the $p$ fused operators $\mathcal{D}_{\{0\}}(x_i;q^{-P/2})$ in \eqref{G-def-fused} into a bundle of $P$ unfused row operators, and carry out the analysis above on each of the bundles.
\end{proof}

\subsection{Plancherel specialization}
\label{ssec:planch}

Let $\Lambda$ denote the ring of symmetric functions in the (infinite) alphabet $x := (x_1,x_2,\dots)$. As described in \cite[Chapter I]{Macdonald}, the power sum basis of $\Lambda$ is the set of functions
\begin{align*}
p_{\lambda}(x) := \prod_{i \geq 1} p_{\lambda_i}(x),
\qquad
p_k(x) := \sum_{i \geq 1} x_i^k,
\ \
\forall\ k \geq 0,
\end{align*}
where $\lambda$ ranges over all partitions. Any function in $\Lambda$ is expressed as a unique linear combination of the functions $p_{\lambda}(x)$.

Fix an indeterminate $t \in \mathbb{C}$. The Plancherel specialization of $\Lambda$ is the map ${\rm Pl}_t : \Lambda \rightarrow \mathbb{C}$ under which the power sums transform as follows:
\begin{align*}
p_k(x) \mapsto \left\{
\begin{array}{ll}
t, & \quad k=1,
\\
\\
0, & \quad k \geq 2.
\end{array}
\right.
\end{align*}
Following standard notational practice for specializations of the ring of symmetric functions, we denote the image of a function $f \in \Lambda$ under ${\rm Pl}_t$ by $f({\rm Pl}_t)$.

The LLT polynomials \eqref{LLTdef} admit a natural lift to $\Lambda$, obtained by replacing the finite alphabet $(x_1,\dots,x_p)$ by the infinite one $x = (x_1,x_2,\dots)$. Making this replacement in \eqref{LLTint}, we have
\begin{multline}
\label{LLTint-inf}
\mathbb{G}_{\mu/\nu}(x_1,x_2,\dots)
=
\frac{q^{m(m+1)/2}}{(q-1)^{m}}
\cdot
\left( \frac{1}{2\pi{\tt i}} \right)^{m}
\oint_{C_1}
\frac{dy_1}{y_1}
\cdots 
\oint_{C_m}
\frac{dy_m}{y_m}
\\
\times
\prod_{1 \leq i<j \leq m}
\left(
\frac{y_j-y_i}{y_j-q y_i}
\right)
f_{\breve\mu}(1^m;y_1^{-1},\dots,y_m^{-1})
g_{\nu}(\lambda;y_1,\dots,y_m)
\prod_{j=1}^{m}
\exp\left(\sum_{k \geq 1} \frac{p_k(x) y_j^k}{k} \right)
\end{multline}
where we have used the fact that (as a formal power series) there holds
\begin{align*}
\prod_{i \geq 1} \frac{1}{1-x_i y}
=
\exp\left(-\sum_{i \geq 1} \log(1-x_i y) \right)
=
\exp\left( \sum_{i \geq 1} \sum_{k=1}^{\infty} \frac{x_i^k y^k}{k} \right)
=
\exp\left(\sum_{k \geq 1} \frac{p_k(x) y^k}{k} \right).
\end{align*}
We then read off the Plancherel specialization of $\mathbb{G}_{\mu/\nu}$:
\begin{multline}
\label{LLTplanch}
\mathbb{G}_{\mu/\nu}({\rm Pl}_t)
=
\frac{q^{m(m+1)/2}}{(q-1)^{m}}
\cdot
\left( \frac{1}{2\pi{\tt i}} \right)^{m}
\oint_{C_1}
\frac{dy_1}{y_1}
\cdots 
\oint_{C_m}
\frac{dy_m}{y_m}
\\
\times
\prod_{1 \leq i<j \leq m}
\left(
\frac{y_j-y_i}{y_j-q y_i}
\right)
f_{\breve\mu}(1^m;y_1^{-1},\dots,y_m^{-1})
g_{\nu}(\lambda;y_1,\dots,y_m)
\prod_{j=1}^{m}
e^{t y_j}.
\end{multline}
In what follows, we shall further restrict to $t \in \mathbb{R}_{\geq 0}$, where $t$ could be viewed as playing the role of continuous time.

\subsection{Skew Cauchy identity for LLT polynomials}
\label{ssec:cauchyLLT}

Up until now we dealt with coloured compositions of arbitrary colour profile 
$\lambda = (\lambda_1,\dots,\lambda_n)$. Throughout the rest of the paper we shall restrict our attention to the case $\lambda_i = N$ for all $1 \leq i \leq n$, where $N$ is some given positive integer; this means that each colour within a coloured composition $\mu$ is represented exactly $N$ times. We denote the corresponding set of coloured compositions as follows:
\begin{align*}
\mathcal{S}_{N^n}
=
\Big\{ 
\mu 
= 
\Big(\mu^{(1)}_1 < \cdots < \mu^{(1)}_{N} \Big|
\mu^{(2)}_1 < \cdots < \mu^{(2)}_{N} \Big|
\cdots \Big|
\mu^{(n)}_1 < \cdots < \mu^{(n)}_{N}\Big)
\Big\}.
\end{align*}
One element of $\mathcal{S}_{N^n}$ plays a special role; this is the element in which all parts of a coloured composition are as small as they can be. We assign this element the notation $\Delta$:
\begin{align}
\label{nothing}
\Delta
=
(0,1,\dots,N-1 | 0,1,\dots,N-1 | \cdots | 0,1,\dots,N-1)
\in 
\mathcal{S}_{N^n}.
\end{align}
Whenever the lower coloured composition $\nu$ in an LLT polynomial $\mathbb{G}_{\mu/\nu}$ is set equal to $\Delta$, we employ the lighter notation
\begin{align*}
\mathbb{G}_{\mu/\Delta}(x_1,\dots,x_p)
\equiv
\mathbb{G}_{\mu}(x_1,\dots,x_p).
\end{align*}

\begin{defn}
For any coloured composition 
$\mu = \Big( \mu^{(1)}_1 < \cdots < \mu^{(1)}_{N} \Big|
\cdots \Big| \mu^{(n)}_1 < \cdots < \mu^{(n)}_{N} \Big) \in \mathcal{S}_{N^n}$ we define the statistic
\begin{align*}
\psi(\mu)
=
\frac{1}{2}
\sum_{1 \leq i<j \leq n}\
\sum_{a \in \mu^{(i)}}\
\sum_{b \in \mu^{(j)}}
\bm{1}_{a>b}.
\end{align*}
\end{defn}

\begin{thm}
\label{thm:cauchy}
Fix two positive integers $p$ and $N$, and two alphabets $(x_1,\dots,x_p)$ and $(y_1,\dots,y_N)$. Let $\nu \in \mathcal{S}_{N^n}$ be a coloured composition. The LLT polynomials \eqref{LLTdef} satisfy the Cauchy summation identity
\begin{align}
\label{skew-cauchy}
\sum_{\mu \in \mathcal{S}_{N^n}}
q^{-2\psi(\mu)}
\mathbb{G}_{\mu/\nu}(x_1,\dots,x_p)
\mathbb{G}_{\mu}(y_1,\dots,y_N)
=
\prod_{i=1}^{p}
\prod_{j=1}^{N}
\frac{1}{(x_i y_j;q)_n}
\cdot
q^{-2\psi(\nu)}
\mathbb{G}_{\nu}(y_1,\dots,y_N),
\end{align}
where $\mathbb{G}_{\mu}(y_1,\dots,y_N) \equiv \mathbb{G}_{\mu/\Delta}(y_1,\dots,y_N)$ and $\mathbb{G}_{\nu}(y_1,\dots,y_N) \equiv \mathbb{G}_{\nu/\Delta}(y_1,\dots,y_N)$. This holds either as a formal power series, or as a numeric equality as long as $|q| <1$ and $|x_i y_j| <1$ for all $i,j$.
\end{thm}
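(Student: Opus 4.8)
The identity \eqref{skew-cauchy} is of Cauchy type, and the plan is to prove it by the same mechanism used for \eqref{Gg-cauchy}: realize both sides as matrix elements of row operators and commute one family past the other. First I would absorb the powers $q^{-2\psi(\cdot)}$ into a gauge. Let $D$ be the diagonal operator with $D\ket{\mu}=q^{2\psi(\mu)}\ket{\mu}$ for $\mu\in\mathcal{S}_{N^n}$, and set $\mathbb{G}^{*}_{\mu/\nu}(x_1,\dots,x_p):=q^{2\psi(\nu)-2\psi(\mu)}\,\mathbb{G}_{\mu/\nu}(x_1,\dots,x_p)$. Multiplying \eqref{skew-cauchy} by $q^{2\psi(\nu)}$ turns it into the ungauged statement
\[
\sum_{\mu\in\mathcal{S}_{N^n}}\mathbb{G}^{*}_{\mu/\nu}(x_1,\dots,x_p)\,\mathbb{G}_{\mu/\Delta}(y_1,\dots,y_N)\ =\ \prod_{i=1}^{p}\prod_{j=1}^{N}\frac{1}{(x_iy_j;q)_n}\cdot\mathbb{G}_{\nu/\Delta}(y_1,\dots,y_N).
\]
Reading the lattice \eqref{G-pf-fused} row by row gives $\mathbb{G}_{\mu/\Delta}(y_1,\dots,y_N)=\bra{\mu}\mathcal{G}(y_N)\cdots\mathcal{G}(y_1)\ket{\Delta}$, where $\mathcal{G}(y)$ is the fat-line row operator of the model \eqref{LLT-weights-intro} (with $\bm{e}_0$ on the left and right, so colours may be rearranged but none enter or leave). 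Since conjugation $D^{-1}(\,\cdot\,)D$ on these operators corresponds exactly to transposing the weights \eqref{LLT-weights-intro} about their main diagonal — the extra $q$-powers of the transposed weight accounting precisely for the cross-block inversion count $\psi$ — one has $\mathbb{G}^{*}_{\mu/\nu}(x_1,\dots,x_p)=\bra{\nu}\mathcal{G}^{*}(x_1)\cdots\mathcal{G}^{*}(x_p)\ket{\mu}$ for the transposed (leftward-travelling, $M$-type) row operator $\mathcal{G}^{*}(x)$. Inserting $\sum_{\mu}\ket{\mu}\bra{\mu}$, the ungauged identity becomes the operator statement
\[
\bra{\nu}\mathcal{G}^{*}(x_1)\cdots\mathcal{G}^{*}(x_p)\,\mathcal{G}(y_N)\cdots\mathcal{G}(y_1)\ket{\Delta}\ =\ \prod_{i=1}^{p}\prod_{j=1}^{N}\frac{1}{(x_iy_j;q)_n}\cdot\bra{\nu}\mathcal{G}(y_N)\cdots\mathcal{G}(y_1)\ket{\Delta}.
\]

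Two ingredients then finish the proof. The first is the exchange relation $\mathcal{G}^{*}(x)\,\mathcal{G}(y)=(xy;q)_n^{-1}\,\mathcal{G}(y)\,\mathcal{G}^{*}(x)$, which I would obtain by the iterated Yang--Baxter (``train'') argument applied to the two-row partition function joining $\mathcal{G}^{*}(x)$ and $\mathcal{G}(y)$. This is the fused, fat-line shadow of the unfused commutation \eqref{CB}, in the form $\mathcal{C}_0(u)\mathcal{B}_j(v)=\tfrac{1-quv}{1-uv}\mathcal{B}_j(v)\mathcal{C}_0(u)$ for $1\leq j\leq n$, and it descends from it via the fusion procedure of Section~\ref{sec:fused-row}: the $q$-Pochhammer $(xy;q)_n$ has length exactly $n$ because that is the capacity (the rank) of a fat line, while the telescoping $\prod_{a\geq0}\tfrac{1-q^{a+1}w}{1-q^{a}w}=(1-w)^{-1}$ (convergent for $|q|<1$) of the $r\to\infty$ degeneration collapses the $x$-tower and leaves the $n$-fold product $\prod_{b=0}^{n-1}(1-q^{b}xy)^{-1}=(xy;q)_n^{-1}$. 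The second ingredient is the boundary identity $\mathcal{G}^{*}(x)\ket{\Delta}=\ket{\Delta}$: since $\Delta$ is the componentwise-minimal element of $\mathcal{S}_{N^n}$, the transposed fat-line row acting on $\ket{\Delta}$ freezes (its paths cannot be pushed below $\Delta$), which is a one-line inspection of the weights. Granting these, one commutes each $\mathcal{G}^{*}(x_i)$ rightward past all of $\mathcal{G}(y_N),\dots,\mathcal{G}(y_1)$, collecting $\prod_{i,j}(x_iy_j;q)_n^{-1}$, and then strips off $\mathcal{G}^{*}(x_i)\ket{\Delta}=\ket{\Delta}$ one operator at a time; what remains is precisely $\prod_{i,j}(x_iy_j;q)_n^{-1}\,\bra{\nu}\mathcal{G}(y_N)\cdots\mathcal{G}(y_1)\ket{\Delta}$.

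The step I expect to be the main obstacle is the fusion bookkeeping hidden in the first paragraph's last sentence and the displayed exchange relation: checking that transposing the fat-line weights \eqref{LLT-weights-intro} produces exactly the factor $q^{2\psi(\nu)-2\psi(\mu)}$ (so that $\mathbb{G}^{*}_{\mu/\nu}$ really is a genuine partition function), and that the leftward and rightward fat-line row operators satisfy the exchange relation with the cutoff at $n$ rather than $\infty$; this is where one must invoke the master Yang--Baxter equation \eqref{master} of Section~\ref{ssec:master-yb} together with the precise structure of the $L$- and $M$-weights. A self-contained alternative that avoids transposition altogether is to substitute the integral formula \eqref{LLTint} for $\mathbb{G}_{\mu/\nu}(x_1,\dots,x_p)$ into the left side of \eqref{skew-cauchy}, interchange the (dominated) sum over $\mu$ with the $m=nN$ contour integrals, and evaluate the inner sum $\sum_{\mu}q^{-2\psi(\mu)}f_{\breve\mu}(1^{nN};y_1^{-1},\dots)\,\mathbb{G}_{\mu/\Delta}(y)$ by a second application of \eqref{LLTint} followed by the orthogonality \eqref{f-g-orthog}, which filters a single term and reconstitutes the Cauchy kernel. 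Either way the analytic assertion of the theorem is routine: the left-hand side is a well-defined formal power series in the monomials $x_iy_j$ (each LLT polynomial \eqref{LLTdef} is polynomial in its finite alphabet and $\mathbb{G}_{\mu/\Delta}(y)$ is a symmetric function), and term-by-term domination upgrades the formal identity to a numerical one whenever $|q|<1$ and $|x_iy_j|<1$ for all $i,j$.
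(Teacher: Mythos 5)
The paper itself does not prove Theorem \ref{thm:cauchy}; its ``proof'' is a citation to Lam and to the vertex-model arguments of \cite{ABW21} and \cite{CGKM22}. Your proposal reconstructs precisely that vertex-model argument: insert a resolution of the identity over $\mu$, absorb the $q^{-2\psi}$ prefactors into a diagonal gauge so that one of the two LLT factors becomes a matrix element of a dual (transposed, leftward-travelling) fat-line row operator, then commute the two families using a fused exchange relation together with the vacuum property $\mathcal{G}^{*}(x)\ket{\Delta}=\ket{\Delta}$. The architecture is sound and is the same route the paper points to; the vacuum property is indeed a one-line freezing argument, and identifying the $q^{-2\psi}$ factors as a gauge is the right way to see why they appear.

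The two places where the entire content of the theorem lives are, however, only asserted, and one of them is not correct as written. The flip/transposition lemma --- that transposing the weights \eqref{LLT-weights-intro} equals conjugation by $D$ with eigenvalue $q^{2\psi(\mu)}$ --- requires a vertex-by-vertex check that the ratio of the transposed weight to the $M$-type weight has the form $f(\C)/f(\A)$ with $f$ reproducing exactly the cross-colour inversion count $\psi$; this is a genuine lemma, not a formality. More seriously, your telescoping derivation of the exchange relation does not produce $(xy;q)_n$: both the $x$-row and the $y$-row of an LLT polynomial are $r\to\infty$ towers, and the transpose of an infinite tower is still an infinite tower, so the naive double product of factors $\frac{1-q^{a+b+1}xy}{1-q^{a+b}xy}$ over $a,b\geq 0$ does not truncate at $n$. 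Your heuristic implicitly assumes the dual row decomposes into exactly $n$ thin $M$-rows with geometrically shifted arguments, which is not established. The honest derivation commutes the two fully fused rows in a single step via the master Yang--Baxter equation \eqref{master}; the length-$n$ Pochhammer is the normalization of the fused crossing vertex, and the cutoff at $n$ comes from the fermionic constraint that the crossing carries at most one path of each colour (compare the bound $|\V|\leq n$ governing the Pochhammer lengths in \eqref{fused-weights}). Finally, the alternative route via \eqref{LLTint} conflates the orthogonality \eqref{f-g-orthog} with the completeness relation that the sum over $\mu$ actually requires, and would in any case need the antisymmetrization of Propositions \ref{prop:f-ant}--\ref{prop:G-ant} to pass from rainbow to general coloured compositions.
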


\begin{proof}
This was originally obtained in \cite[Theorem 35]{Lam}. For a formulation in terms of the vertex model setup of the current text, we refer to \cite[Corollary 9.4.1]{ABW21} and \cite[Proposition 6.12]{CGKM22}.
\end{proof}

We make a small but important adjustment to the Cauchy identity \eqref{skew-cauchy}. For any $N \geq 1$, introduce the set of coloured signatures
\begin{align}
\label{col-signat}
\mathcal{S}(N)
=
\left\{
\mu
=
\left(\mu^{(1)} | \mu^{(2)} | \cdots | \mu^{(n)}\right)
\right\},
\end{align}
where for all $1 \leq i \leq n$ the components
\begin{align}
\label{inf-sig}
\mu^{(i)} = \left(\cdots < \mu^{(i)}_{-1} < \mu^{(i)}_0 < \mu^{(i)}_1 < \cdots < \mu^{(i)}_N \right)
\end{align}
are left-infinite strict signatures such that $\mu^{(i)}_k \not= k-1$ for only finitely many $k \in \mathbb{Z}$. We continue to denote by $\Delta$ the unique element in $\mathcal{S}(N)$ in which all signature parts are minimal. For any fixed $\nu \in \mathcal{S}(N)$, one then has that
\begin{align}
\label{skew-cauchy-pad}
\sum_{\mu \in \mathcal{S}(N)}
q^{-2\psi(\mu,\nu)}
\mathbb{G}_{\mu/\nu}(x_1,\dots,x_p)
\mathbb{G}_{\mu}(y_1,y_2,\dots)
=
\prod_{i=1}^{p}
\prod_{j\geq 1}
\frac{1}{(x_i y_j;q)_n}
\mathbb{G}_{\nu}(y_1,y_2,\dots),
\end{align}
in which the size of the alphabet $(y_1,y_2,\dots)$ is now infinite.\footnote{In particular, this will allow us to Plancherel-specialize this alphabet.} Here $\mathbb{G}_{\mu}(y_1,y_2,\dots) \equiv \mathbb{G}_{\mu/\Delta}(y_1,y_2,\dots)$ and $\mathbb{G}_{\nu}(y_1,y_2,\dots) \equiv \mathbb{G}_{\nu/\Delta}(y_1,y_2,\dots)$ as previously, and 
\begin{align}
\label{new-psi}
\psi(\mu,\nu)
=
\frac{1}{2}
\sum_{1 \leq i<j \leq n}\
\sum_{a \in \mu^{(i)}}\
\sum_{b \in \mu^{(j)}}
\bm{1}_{a>b>m}
-
\frac{1}{2}
\sum_{1 \leq i<j \leq n}\
\sum_{a \in \nu^{(i)}}\
\sum_{b \in \nu^{(j)}}
\bm{1}_{a>b>m},
\end{align}
with $m$ chosen to be any integer such that $\mu^{(i)}_k =\nu^{(i)}_k = k-1$ for all $k \leq m$ and $1 \leq i \leq n$. Equation \eqref{skew-cauchy-pad} holds as an identity of formal power series, which converges if $|q| <1$ and $|x_i y_j| <1$ for all $i,j$.

The claim \eqref{skew-cauchy-pad} is established by taking \eqref{skew-cauchy} with $N$ becoming arbitrarily large, and applying \eqref{pad} and \eqref{shift} appropriately to convert the indices of all functions to members of the set \eqref{col-signat}. It is easily verified that the quantity $\psi(\mu)-\psi(\nu)$ is invariant under such paddings and shifts, and may be written in the form \eqref{new-psi}.

\subsection{Markov kernels}
\label{ssec:markov}

We proceed to introduce probability measures from the skew Cauchy identity \eqref{skew-cauchy}. Normalizing so that the right hand side of \eqref{skew-cauchy} is equal to $1$, we have
\begin{align}
\label{skew-cauchy2}
\sum_{\mu \in \mathcal{S}(N)}
\prod_{i=1}^{p}
\prod_{j \geq 1}
(x_i y_j;q)_n
\cdot
q^{-2\psi(\mu,\nu)}
\mathbb{G}_{\mu/\nu}(x_1,\dots,x_p)
\frac{\mathbb{G}_{\mu}(y_1,y_2,\dots)}{\mathbb{G}_{\nu}(y_1,y_2,\dots)}
=
1.
\end{align}
In view of this sum-to-unity property, the summands in \eqref{skew-cauchy2} may be viewed as probabilities of transitioning from an initial coloured signature $\nu \in \mathcal{S}(N)$ to a final one $\mu \in \mathcal{S}(N)$. Many choices of the parameters $(x_1,\dots,x_p)$ and $(y_1,y_2,\dots)$ are possible, leading to a variety of interesting distributions, but in this work we focus on one particular choice; namely, we let $(x_1,\dots,x_p) = (1,\dots,1) \equiv 1^p$ and take the ${\rm Pl}_t$ specialization of the alphabet $(y_1,y_2,\dots)$. Under this choice, \eqref{skew-cauchy2} becomes
\begin{align*}
\sum_{\mu \in \mathcal{S}(N)}
\exp\left( -\frac{p(1-q^n)}{1-q} t \right)
\mathbb{G}_{\mu/\nu}(1^p)
q^{-2\psi(\mu,\nu)}
\frac{\mathbb{G}_{\mu}({\rm Pl}_t)}{\mathbb{G}_{\nu}({\rm Pl}_t)}
=
1.
\end{align*}
From this we introduce the Markov kernel $\mathbb{P}_{t,p} : V(\mathcal{S}(N)) \rightarrow V(\mathcal{S}(N))$ with matrix elements given by
\begin{align}
\label{skew-cauchy3}
\mathbb{P}_{t,p}(\nu \rightarrow \mu)
=
q^{-2\psi(\mu,\nu)}
\exp\left( -\frac{p(1-q^n)}{1-q}t \right)
\mathbb{G}_{\mu/\nu}(1^p)
\dfrac{\mathbb{G}_{\mu}({\rm Pl}_t)}
{\mathbb{G}_{\nu}({\rm Pl}_t)},
\end{align}
where $V(\mathcal{S}(N))$ denotes the complex linear span of the elements of $\mathcal{S}(N)$. Abusing notation slightly, whenever we write $\mathbb{P}_{t,p}(\nu)$  for some $\nu \in \mathcal{S}(N)$, this means a random coloured signature $\mu \in \mathcal{S}(N)$ sampled from the distribution \eqref{skew-cauchy3}.

\begin{rmk}
Throughout the rest of the text, we shall only be concerned with evaluating the kernel \eqref{skew-cauchy3} on coloured signatures $\mu,\nu$ such that $\mu^{(i)}_k = \nu^{(i)}_k = k-1$ for all $k \leq 0$ and $1 \leq i \leq n$. When coloured signatures have such a property, by slight abuse of notation we continue to write $\mu,\nu \in \mathcal{S}_{N^n} \subset \mathcal{S}(N)$ and shall still refer to these objects as coloured compositions.
\end{rmk}

Below we collect some elementary facts about the Markov kernel \eqref{skew-cauchy3}.

\begin{prop}
\label{prop:non-skew}
For any integer $p \geq 1$, real parameter $t \in \mathbb{R}_{\geq 0}$ and coloured composition $\mu \in \mathcal{S}_{N^n}$, we have
\begin{align}
\label{mu-meas}
\mathbb{P}_{t,p}(\Delta \rightarrow \mu)
=
q^{-2\psi(\mu)+\binom{n}{2}\binom{N}{2}}
\exp\left( -\frac{p(1-q^n)}{1-q} t \right)
\mathbb{G}_{\mu}(1^p)
\mathbb{G}_{\mu}({\rm Pl}_t).
\end{align}
\end{prop}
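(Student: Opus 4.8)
The plan is to read off \eqref{mu-meas} by specializing the definition \eqref{skew-cauchy3} of $\mathbb{P}_{t,p}$ at $\nu=\Delta$ and simplifying the three $\Delta$-dependent factors one at a time. Setting $\nu=\Delta$ in \eqref{skew-cauchy3} gives
\begin{align*}
\mathbb{P}_{t,p}(\Delta\rightarrow\mu)
=
q^{-2\psi(\mu,\Delta)}\,
\exp\!\left(-\frac{p(1-q^n)}{1-q}t\right)
\mathbb{G}_{\mu/\Delta}(1^p)\,
\frac{\mathbb{G}_{\mu}({\rm Pl}_t)}{\mathbb{G}_{\Delta}({\rm Pl}_t)},
\end{align*}
and $\mathbb{G}_{\mu/\Delta}(1^p)$ is already $\mathbb{G}_{\mu}(1^p)$ by the notational convention fixed before Theorem \ref{thm:cauchy}. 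So it remains to prove two things: that $\mathbb{G}_{\Delta}({\rm Pl}_t)=1$, and that the exponent satisfies $-2\psi(\mu,\Delta)=-2\psi(\mu)+\binom{n}{2}\binom{N}{2}$.

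For the first, I would show that $\mathbb{G}_{\Delta/\Delta}(x_1,\dots,x_p)\equiv 1$ for every finite alphabet; since the Plancherel specialization is an algebra homomorphism $\Lambda\to\mathbb{C}$, it then sends this constant function to $1$, so $\mathbb{G}_{\Delta}({\rm Pl}_t)=\mathbb{G}_{\Delta/\Delta}({\rm Pl}_t)=1$. The identity $\mathbb{G}_{\Delta/\Delta}\equiv 1$ is a short freezing argument on the partition function \eqref{G-pf-fused} in the relevant $s\to 0$, $r_i\to\infty$ limit, where the vertices carry the weights \eqref{llt-weights}: with $\mu=\nu=\Delta$ the boundary conditions force all $n$ colours to both enter at the base and leave at the top of each of the columns $0,\dots,N-1$, while every left and right boundary edge is empty, so by colour conservation the unique admissible configuration is the one in which each path travels straight up and every internal horizontal edge is empty. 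Each occupied vertex then has right-edge label $\bm{D}=\bm{e}_0$ and hence weight $x^{|\bm{D}|}q^{\varphi(\bm{D},\bm{C})+\varphi(\bm{D},\bm{D})}=1$, every other vertex is empty with weight $1$, and the prefactor $(-s)^{|\Delta|-|\Delta|}$ equals $1$; the partition function is therefore $1$. (Equivalently, from \eqref{G-def-fused}, each row operator $\mathcal{D}_{\{0\}}(x_i;r_i)$ fixes the state $\ket{\Delta}_{\lambda}$ with eigenvalue $1$ in this limit.)

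For the second, I would use that $\psi(\mu,\Delta)=\psi(\mu)-\psi(\Delta)$, which holds for $\mu,\Delta\in\mathcal{S}_{N^n}$ by the discussion following \eqref{skew-cauchy-pad}: in \eqref{new-psi} one takes the auxiliary integer $m$ to be any value $\le -1$, so that $\bm{1}_{a>b>m}=\bm{1}_{a>b}$ on the nonnegative genuine parts and the double sums collapse to $\psi(\mu)$ and $\psi(\Delta)$ respectively. A direct count then gives
\begin{align*}
\psi(\Delta)
=
\frac{1}{2}\sum_{1\le i<j\le n}\#\big\{(a,b)\in\{0,\dots,N-1\}^2:\,a>b\big\}
=
\frac{1}{2}\binom{n}{2}\binom{N}{2},
\end{align*}
so $-2\psi(\mu,\Delta)=-2\psi(\mu)+\binom{n}{2}\binom{N}{2}$. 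Substituting $\mathbb{G}_{\Delta}({\rm Pl}_t)=1$, $\mathbb{G}_{\mu/\Delta}(1^p)=\mathbb{G}_{\mu}(1^p)$, and this exponent into the display above yields \eqref{mu-meas}.

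I do not expect a genuine obstacle here: the proposition is essentially a specialization of \eqref{skew-cauchy3} together with two elementary facts. The one step that deserves explicit care is the freezing argument giving $\mathbb{G}_{\Delta/\Delta}\equiv 1$; in the $\psi$-computation the only thing to watch is to choose $m$ small enough (e.g.\ $m=-1$) in \eqref{new-psi}, so that the genuine parts equal to $0$ are not inadvertently dropped.
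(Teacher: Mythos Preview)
Your proof is correct and follows exactly the paper's approach: specialize \eqref{skew-cauchy3} at $\nu=\Delta$ and use $\mathbb{G}_{\Delta}=1$ together with $\psi(\Delta)=\frac{1}{2}\binom{n}{2}\binom{N}{2}$. The paper's proof is a single sentence asserting these two facts, whereas you supply the (correct) freezing argument for $\mathbb{G}_{\Delta/\Delta}\equiv 1$ and the explicit count for $\psi(\Delta)$, including the careful choice $m\le -1$ in \eqref{new-psi}; this is additional detail rather than a different method.
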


\begin{proof}
This is just the $\nu = \Delta$ case of \eqref{skew-cauchy3}, noting that $\mathbb{G}_{\Delta} = 1$ and $\psi(\Delta)=\frac{1}{2}\binom{n}{2}\binom{N}{2}$.
\end{proof}

\begin{prop}
For any two integers $p_1,p_2 \geq 1$ and real parameter $t \in \mathbb{R}_{\geq 0}$, the maps $\mathbb{P}_{t,p_1}$ and $\mathbb{P}_{t,p_2}$ compose according to the rule
\begin{align}
\label{compose}
\mathbb{P}_{t,p_1} \circ \mathbb{P}_{t,p_2} = \mathbb{P}_{t,p_1+p_2}.
\end{align}
\end{prop}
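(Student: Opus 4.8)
The plan is to verify \eqref{compose} directly at the level of matrix entries. Expanding the composition over an intermediate coloured signature, for fixed $\nu,\mu\in\mathcal{S}(N)$ one has
\[
(\mathbb{P}_{t,p_1}\circ\mathbb{P}_{t,p_2})(\nu\to\mu)
=\sum_{\kappa\in\mathcal{S}(N)}\mathbb{P}_{t,p_2}(\nu\to\kappa)\,\mathbb{P}_{t,p_1}(\kappa\to\mu),
\]
and upon substituting the definition \eqref{skew-cauchy3} the three groups of prefactors telescope along the chain $\nu\to\kappa\to\mu$. The exponential factors multiply to $\exp\!\bigl(-\tfrac{(p_1+p_2)(1-q^n)}{1-q}t\bigr)$; the product of ratios $\tfrac{\mathbb{G}_{\kappa}({\rm Pl}_t)}{\mathbb{G}_{\nu}({\rm Pl}_t)}\cdot\tfrac{\mathbb{G}_{\mu}({\rm Pl}_t)}{\mathbb{G}_{\kappa}({\rm Pl}_t)}$ collapses to $\tfrac{\mathbb{G}_{\mu}({\rm Pl}_t)}{\mathbb{G}_{\nu}({\rm Pl}_t)}$; and the powers of $q$ combine as $q^{-2\psi(\kappa,\nu)}q^{-2\psi(\mu,\kappa)}=q^{-2\psi(\mu,\nu)}$, using that the statistic defined in \eqref{new-psi} is a difference $\Psi_m(\mu)-\Psi_m(\nu)$ of a single-argument functional (with $m$ chosen small enough — e.g. $m=0$ once $\mu,\nu,\kappa$ are all viewed in $\mathcal{S}_{N^n}\subset\mathcal{S}(N)$ — to be simultaneously admissible), hence additive. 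After pulling these common factors out of the sum, what remains to be shown is the branching identity
\[
\sum_{\kappa}\mathbb{G}_{\kappa/\nu}(1^{p_2})\,\mathbb{G}_{\mu/\kappa}(1^{p_1})=\mathbb{G}_{\mu/\nu}(1^{p_1+p_2}),
\]
which then turns the right-hand side into exactly $\mathbb{P}_{t,p_1+p_2}(\nu\to\mu)$ as written in \eqref{skew-cauchy3}.

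The branching identity — and, more generally, its form with an arbitrary alphabet split into two consecutive blocks — is immediate from the row-operator representation \eqref{G-def-fused}. Writing $(-s)^{|\mu|-|\nu|}\mathbb{G}_{\mu/\nu}(\lambda;x_1,\dots,x_{p};r_1,\dots,r_{p})=\bra{\nu}_\lambda\prod_{i=1}^{p}\mathcal{D}_{\{0\}}(x_i;r_i)\ket{\mu}_\lambda$ with $p=p_1+p_2$, split the product of row operators into two consecutive groups and insert the resolution of the identity $\sum_{\kappa}\ket{\kappa}_\lambda\bra{\kappa}_\lambda$ between them. This is legitimate because $\{\ket{\kappa}_\lambda\}$ is an orthonormal basis of the colour-$\lambda$ weight subspace of $\mathbb{V}(\infty)$, which each operator $\mathcal{D}_{\{0\}}$ preserves (no colour enters or exits through the horizontal boundaries). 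The two halves produce $(-s)^{|\kappa|-|\nu|}\mathbb{G}_{\kappa/\nu}$ and $(-s)^{|\mu|-|\kappa|}\mathbb{G}_{\mu/\kappa}$, whose $(-s)$-powers recombine to $(-s)^{|\mu|-|\nu|}$; taking the LLT limit $s\to 0$, $r_i\to\infty$ (which exists vertexwise by \eqref{llt-weights}, hence globally) and then setting all $x_i=1$ yields the displayed identity. The sum over $\kappa$ is effectively finite: since the $L$-weights \eqref{fund-weights} are conservative in the SW$\to$NE direction, within each row a path can move only upward and to the right, so the column coordinates of $\kappa$ are squeezed between those of $\nu$ and those of $\mu$; thus only finitely many $\kappa$ contribute and no convergence question arises, and the interchange of the $\kappa$-sum with the limit is harmless.

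I do not expect a genuine obstacle here: \eqref{compose} is the standard composition (``additivity in $p$'') property of Markov kernels built from a skew Cauchy identity, and the only real content is the branching identity, which the vertex-model formalism makes transparent via a single insertion of the identity operator. The one place demanding a little care is the bookkeeping of the prefactors in \eqref{skew-cauchy3} — the exponential, the Plancherel ratio, and the power of $q$ — together with the verification that $\psi(\mu,\nu)$ is additive along a chain; that last point rests on the expression \eqref{new-psi} and on the padding/shift invariances \eqref{pad}, \eqref{shift}, which put $\mu,\nu,\kappa$ on a common footing. (Alternatively, \eqref{compose} can be extracted directly from the skew Cauchy identity \eqref{skew-cauchy-pad} by summing over the intermediate signature, but the row-operator argument above is the cleaner route.)
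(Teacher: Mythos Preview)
Your proof is correct and follows essentially the same approach as the paper: both telescope the prefactors in \eqref{skew-cauchy3} and reduce to the branching rule $\sum_{\kappa}\mathbb{G}_{\kappa/\nu}(1^{p_2})\,\mathbb{G}_{\mu/\kappa}(1^{p_1})=\mathbb{G}_{\mu/\nu}(1^{p_1+p_2})$. The only difference is that the paper simply cites the branching rule from \cite[Remark~9.1.1]{ABW21}, whereas you supply a self-contained proof via insertion of the identity in the row-operator formalism; this is a welcome elaboration but not a different method.
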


\begin{proof}
For fixed $\lambda, \nu \in \mathcal{S}_{N^n}$ one computes
\begin{align*}
&
\sum_{\mu \in \mathcal{S}_{N^n}}
\mathbb{P}_{t,p_1}(\nu \rightarrow \mu)
\mathbb{P}_{t,p_2}(\mu \rightarrow \lambda)
\\
\\
&=
q^{-2(\psi(\lambda)-2\psi(\nu))}
\exp\left( -\frac{(p_1+p_2)(1-q^n)}{1-q}t \right)
\frac{\mathbb{G}_{\lambda}({\rm Pl}_t)}{\mathbb{G}_{\nu}({\rm Pl}_t)}
\sum_{\mu \in \mathcal{S}_{N^n}}
\mathbb{G}_{\mu/\nu}(1^{p_1})
\mathbb{G}_{\lambda/\mu}(1^{p_2})
\\
\\
&=
q^{-2(\psi(\lambda)-2\psi(\nu))}
\exp\left( -\frac{(p_1+p_2)(1-q^n)}{1-q}t \right)
\frac{\mathbb{G}_{\lambda}({\rm Pl}_t)}{\mathbb{G}_{\nu}({\rm Pl}_t)}
\mathbb{G}_{\lambda/\nu}(1^{p_1+p_2})
=
\mathbb{P}_{t,p_1+p_2}(\nu \rightarrow \lambda),
\end{align*}
where we have used the branching rule for LLT polynomials (see \cite[Remark 9.1.1]{ABW21}) to produce the second equality. 
\end{proof}

In view of the property \eqref{compose}, we may view the Markov kernel $\mathbb{P}_{t,p}$ as the composition of $p$ kernels $\mathbb{P}_{t,1}$. Starting from the trivial state 
$\Delta \in \mathcal{S}_{N^n}$, we may either act directly with $\mathbb{P}_{t,p}$ to obtain a random coloured composition $\lambda^{[p]} = \mathbb{P}_{t,p}(\Delta)$, distributed according to \eqref{mu-meas}, or we may act $p$ times with $\mathbb{P}_{t,1}$, producing a chain of $p$ random coloured compositions
\begin{align}
\label{chain}
\Delta
\xrightarrow{\mathbb{P}_{t,1}}
\lambda^{[1]}
\xrightarrow{\mathbb{P}_{t,1}}
\lambda^{[2]}
\xrightarrow{\mathbb{P}_{t,1}}
\cdots
\xrightarrow{\mathbb{P}_{t,1}}
\lambda^{[p]}.
\end{align}
Our goal in the following section is to study the asymptotic behaviour of the distribution of the whole sequence $(\lambda^{[1]},\dots,\lambda^{[p]})$, as $t \rightarrow \infty$, with $p$ kept finite.

\section{Asymptotics}
\label{sec:asymp}

In this section we carry out an asymptotic analysis of the Markov kernel \eqref{skew-cauchy3} with $p=1$, as $t \rightarrow \infty$; this analysis proceeds in several steps. We begin by rewriting coloured compositions in terms of a pair of vectors $\vec{\ell}$ and $\vec{c}$ in Section \ref{ssec:coord-not}; $\vec{\ell}$ encodes the coordinates of the parts in a coloured composition, while $\vec{c}$ encodes the colour sequencing of its parts. In Section \ref{ssec:scaling}, we specify a particular time-dependent scaling of the coordinates associated to $\mu$ and $\nu$ within the function $\mathbb{P}_{t,1}$. We also impose certain interlacing constraints on the coordinates of $\mu$ and $\nu$; for finite $t$, these constraints prohibit certain coloured compositions on which the measure is non-zero, but it later transpires that as $t \rightarrow \infty$ these forbidden compositions naturally occur with vanishingly small probability, allowing us to omit them from our considerations.

Having fixed our choice of scaling and our interlacing assumptions, we proceed to the analysis of the individual factors in the measure \eqref{skew-cauchy3}. Section \ref{ssec:analysis1} deals with the factor $\mathbb{G}_{\mu/\nu}(1)$, whose analysis can be accessed by direct combinatorial means, while Section \ref{ssec:analysis2} deals with the factors $\mathbb{G}_{\mu}({\rm Pl}_t)$ and $\mathbb{G}_{\nu}({\rm Pl_t})$, which are analysed by steepest descent method applied to the integral formula \eqref{LLTplanch}. Our final formula is presented in Section \ref{ssec:formula}; we show that in the $t \rightarrow \infty$ limit being studied, the measure \eqref{skew-cauchy3} degenerates into the product of transition densities for $n$ independent GUE corners processes, multiplied by a discrete measure that is valued on colour sequences.

\subsection{Coordinate and colour sequence notation}
\label{ssec:coord-not}

\begin{defn}
\label{def:coord}
To every coloured composition $\mu = \left(\mu^{(1)}_1 < \cdots < \mu^{(1)}_N | \cdots | \mu^{(n)}_1 < \cdots < \mu^{(n)}_N\right) \in \mathcal{S}_{N^n}$ we associate three vectors $\vec{\ell} = (\ell_1,\dots,\ell_{nN}) \in \mathbb{N}^{nN}$, $\vec{c} = (c_1,\dots,c_{nN}) \in [1,n]^{nN}$, $\vec{b} = (b_1,\dots,b_{nN}) \in [1,N]^{nN}$ satisfying the relation
\begin{align*}
\ell_i
=
\mu^{(c_i)}_{b_i},
\qquad
\forall\ 1 \leq i \leq nN,
\end{align*}
and satisfying the properties {\bf (a)} $\ell_i \leq \ell_{i+1}$ for all $1 \leq i < nN$; {\bf (b)} $c_i < c_{i+1}$ if $\ell_i = \ell_{i+1}$, for all $1 \leq i < nN$; {\bf (c)} $b_i \not= b_j$ if $c_i = c_j$, for all $1 \leq i < j < nN$. 

More informally, $\vec{\ell}$ is the unique vector obtained by sorting the parts of $\mu$ in increasing order; we refer to it as the {\it coordinate vector} of $\mu$. The vector $\vec{c}$ records the colours of the parts of $\mu$ once it has been sorted in increasing order, with an increasing criterion imposed on these colours in the case of ties; we refer to it as the {\it colour sequence} of $\mu$. The vector $\vec{b}$ has been introduced only for the purpose of the making our definitions unambiguous, and plays no role in the rest of the paper.
\end{defn}

\subsection{Starting assumptions and scaling}
\label{ssec:scaling}

Throughout the rest of this section we will be concerned with the analysis of the Markov kernel $\mathbb{P}_{t,1}\left(\nu \rightarrow \mu \right)$ as given by \eqref{skew-cauchy3}, with $\nu = 0\cup\lambda^{[m]}$ and $\mu = \lambda^{[m+1]}$, where we have chosen
$\lambda^{[m]} \in \mathcal{S}^{+}_{m^n}$ and $\lambda^{[m+1]} \in \mathcal{S}^{+}_{(m+1)^n}$ (we remind the reader that the meaning of these notations is given by Definition \ref{defn:padding}). Under such choices, the kernel \eqref{skew-cauchy3} becomes 
\begin{align*}
\mathbb{P}_{t,1}\left(0 \cup \lambda^{[m]} \rightarrow \lambda^{[m+1]}\right)
=
q^{-2\left(\psi\left(\lambda^{[m+1]}\right)-\psi\left(0\cup\lambda^{[m]}\right)\right)}
\exp\left( -\frac{1-q^n}{1-q}t \right)
\mathbb{G}_{\lambda^{[m+1]}/0\cup\lambda^{[m]}}(1)
\dfrac{\mathbb{G}_{\lambda^{[m+1]}}({\rm Pl}_t)}
{\mathbb{G}_{0\cup\lambda^{[m]}}({\rm Pl}_t)}.
\end{align*}
Noting that
\begin{align*}
\mathbb{G}_{0\cup\lambda^{[m]}}({\rm Pl}_t) \equiv 
\mathbb{G}_{0\cup\lambda^{[m]}/\Delta}({\rm Pl}_t) = \mathbb{G}_{\lambda^{[m]}-1}({\rm Pl}_t),
\end{align*}
where $\lambda^{[m]}-1$ means subtraction of $1$ from every part of $\lambda^{[m]}$, we have that
\begin{align}
\label{kernel}
\mathbb{P}_{t,1}\left(0\cup\lambda^{[m]} \rightarrow \lambda^{[m+1]}\right)
=
q^{-2\left(\psi\left(\lambda^{[m+1]}\right)-\psi\left(0\cup\lambda^{[m]}\right)\right)}
\exp\left( -\frac{1-q^n}{1-q}t \right)
\mathbb{G}_{\lambda^{[m+1]}/0\cup\lambda^{[m]}}(1)
\dfrac{\mathbb{G}_{\lambda^{[m+1]}}({\rm Pl}_t)}
{\mathbb{G}_{\lambda^{[m]}-1}({\rm Pl}_t)}.
\end{align}
We shall make some assumptions concerning the coloured compositions $\lambda^{[m]} \in \mathcal{S}^{+}_{m^n}$ and $\lambda^{[m+1]} \in \mathcal{S}^{+}_{(m+1)^n}$ appearing within this formula. Following Definition \ref{def:coord} we represent them in terms of their corresponding coordinate vectors and colour sequences:
\begin{align}
\label{identify}
\lambda^{[m]}
\leftrightarrow 
\left( 
\ell^{[m]}_1,\dots,\ell^{[m]}_{nm} 
\Big| 
c^{[m]}_1,\dots,c^{[m]}_{nm} 
\right),
\qquad
\lambda^{[m+1]}
\leftrightarrow 
\left( 
\ell^{[m+1]}_1,\dots,\ell^{[m+1]}_{n(m+1)} 
\Big| 
c^{[m+1]}_1,\dots,c^{[m+1]}_{n(m+1)} 
\right),
\end{align} 
and we work directly with these vectors in what follows. Our first assumption is that the coordinates $\{\ell_i^{[m]}\}_{1 \leq i \leq nm}$ and $\{\ell_j^{[m+1]}\}_{1 \leq j \leq n(m+1)}$ are strictly increasing and obey the interlacing constraints
\begin{align}
\label{interlace-disc}
\ell^{[m+1]}_{j(m+1)+i} < \ell^{[m]}_{jm+i} < \ell^{[m+1]}_{j(m+1)+i+1},
\qquad
\forall\ i \in [1,m],\quad j \in [0,n-1].
\end{align}
Informally, this means that the coordinates $\{\ell_i^{[m]}\}_{1 \leq i \leq nm}$ and $\{\ell_j^{[m+1]}\}_{1 \leq j \leq n(m+1)}$ are each grouped into $n$ bundles of equal size, and coordinates within those bundles interlace; see Figure \ref{fig:gue}.

We will subsequently see that \eqref{kernel} depends on the coordinates 
$\{\ell_i^{[m]}\}_{1 \leq i \leq nm}$ and $\{\ell_j^{[m+1]}\}_{1 \leq j \leq n(m+1)}$ analytically. Our second assumption will be that these coordinates are analytically continued to real values, by setting
\begin{align}
\label{coord-scal}
\ell^{[k]}_i
\mapsto
q^{n-\lceil i/k \rceil} t
+
(q^{n-\lceil i/k \rceil} t)^\frac{1}{2}
x^{[k]}_i,
\qquad
1 \leq i \leq nk,
\qquad
k \in \{m,m+1\},
\end{align}
with $\lceil i/k \rceil$ denoting the ceiling function, and where
\begin{align}
\label{real-seq}
\left( x^{[m]}_1 < \cdots < x^{[m]}_{nm} \right) \in \mathbb{R}^{nm},
\qquad
\left( x^{[m+1]}_1 < \cdots < x^{[m+1]}_{n(m+1)} \right) \in \mathbb{R}^{n(m+1)}
\end{align}
are sequences of reals that obey the interlacing constraints 
\begin{align}
\label{interlace-real}
x^{[m+1]}_{j(m+1)+i} < x^{[m]}_{jm+i} < x^{[m+1]}_{j(m+1)+i+1},
\qquad
\forall\ i \in [1,m],\quad j \in [0,n-1].
\end{align}
Note that \eqref{interlace-real} is simply the translation of the earlier interlacing constraint \eqref{interlace-disc} to the real variables that now parametrize our coordinates.

We note that there exist choices of the coordinates $\{\ell_j^{[m+1]}\}_{1 \leq j \leq n(m+1)}$ which violate the constraints \eqref{interlace-disc} and yet have non-zero weight in the measure \eqref{kernel}. We refer to such choices as {\it unfavourable coordinates}. Our main result will be to show that under the scaling \eqref{coord-scal}, unfavourable coordinates do not occur with probability converging to $1$ as $t \rightarrow \infty$. We do this by showing that as $t \rightarrow \infty$ the quantity \eqref{kernel} weakly converges to the product of a continuous transition density $\rho_{\rm GUE}\left(x^{[m]} \rightarrow x^{[m+1]}\right)$ valued on interlacing real sequences \eqref{real-seq} and a discrete transition probability $\mathbb{P}_{\rm col}\left(c^{[m]} \rightarrow c^{[m+1]}\right)$ valued on colour sequences \eqref{identify}. In demonstrating that the resulting quantity integrates to unity, we prove that \eqref{coord-scal} captures the correct law of large numbers of the coordinates, with 
$\rho_{\rm GUE}\left(x^{[m]} \rightarrow x^{[m+1]}\right)$ providing the fluctuations.

\begin{figure}
\begin{tikzpicture}[scale=0.8]
\draw[red,line width=0.7pt,->] (-0.05,1) -- (-0.05,2.05) -- (16,2.05) -- (16,3.05) -- (17.3,3.05) -- (17.3,4.05) -- (18.1,4.05) -- (18.1,5);
\draw[green,line width=0.7pt,->] (0,1) -- (0,2) -- (4,2) -- (4,3) -- (15.1,3) -- (15.1,4) -- (16.2,4) -- (16.2,5);
\draw[blue,line width=0.7pt,->] (0.05,1) -- (0.05,1.95) -- (8,1.95) -- (8,2.95) -- (8.9,2.95) -- (8.9,3.95) -- (14.6,3.95) -- (14.6,5);
\draw[red,line width=0.7pt,->] (-0.25,1) -- (-0.25,3.05) -- (4.3,3.05) -- (4.3,4.05) -- (9.5,4.05) -- (9.5,5);
\draw[green,line width=0.7pt,->] (-0.2,1) -- (-0.2,3) -- (3.4,3) -- (3.4,4) -- (4.8,4) -- (4.8,5);
\draw[blue,line width=0.7pt,->] (-0.15,1) -- (-0.15,2.95) -- (7.3,2.95) -- (7.3,3.95) -- (8.1,3.95) -- (8.1,5);
\draw[red,line width=0.7pt,->] (-0.45,1) -- (-0.45,4.05) -- (3.9,4.05) -- (3.9,5);
\draw[green,line width=0.7pt,->] (-0.4,1) -- (-0.4,4) -- (3,4) -- (3,5);
\draw[blue,line width=0.7pt,->] (-0.35,1) -- (-0.35,3.95) -- (7,3.95) -- (7,5);
\node at (4,2) {$\bullet$};
\node[below] at (4,2) {$\ell_1^{[1]}$};
\node at (8,2) {$\bullet$};
\node[below] at (8,2) {$\ell_2^{[1]}$};
\node at (16,2) {$\bullet$};
\node[below] at (16,2) {$\ell_3^{[1]}$};
\node at (3.4,3) {$\bullet$};
\node[below] at (3.4,3) {$\ell_1^{[2]}$};
\node at (4.3,3) {$\bullet$};
\node[below] at (4.3,3) {$\ell_2^{[2]}$};
\node at (7.3,3) {$\bullet$};
\node[below] at (7.3,3) {$\ell_3^{[2]}$};
\node at (8.9,3) {$\bullet$};
\node[below] at (8.9,3) {$\ell_4^{[2]}$};
\node at (15.1,3) {$\bullet$};
\node[below] at (15.1,3) {$\ell_5^{[2]}$};
\node at (17.3,3) {$\bullet$};
\node[below] at (17.3,3) {$\ell_6^{[2]}$};
\node at (3,4) {$\bullet$};
\node[below] at (3,4) {$\ell_1^{[3]}$};
\node at (3.9,4) {$\bullet$};
\node[below] at (3.9,4) {$\ell_2^{[3]}$};
\node at (4.8,4) {$\bullet$};
\node[below] at (4.8,4) {$\ell_3^{[3]}$};
\node at (7,4) {$\bullet$};
\node[below] at (7,4) {$\ell_4^{[3]}$};
\node at (8.1,4) {$\bullet$};
\node[below] at (8.1,4) {$\ell_5^{[3]}$};
\node at (9.5,4) {$\bullet$};
\node[below] at (9.5,4) {$\ell_6^{[3]}$};
\node at (14.6,4) {$\bullet$};
\node[below] at (14.6,4) {$\ell_7^{[3]}$};
\node at (16.2,4) {$\bullet$};
\node[below] at (16.2,4) {$\ell_8^{[3]}$};
\node at (18.1,4) {$\bullet$};
\node[below] at (18.1,4) {$\ell_9^{[3]}$};
\end{tikzpicture}
\caption{A schematic representation of successive application of the Markov kernel $\mathbb{P}^{+}_{t,1}$ to the empty state $\Delta$, in the case $n=3$; each vertical unit step corresponds to such an application. As $t \rightarrow \infty$, the paths drift into bundles located a distance $q^{n-i}t$ from the origin, for $1 \leq i \leq n$. The $i$-th bundle tends to a GUE corners process centred at $q^{n-i}t$, with fluctuations on the order of $(q^{n-i}t)^{1/2}$.}
\label{fig:gue}
\end{figure}
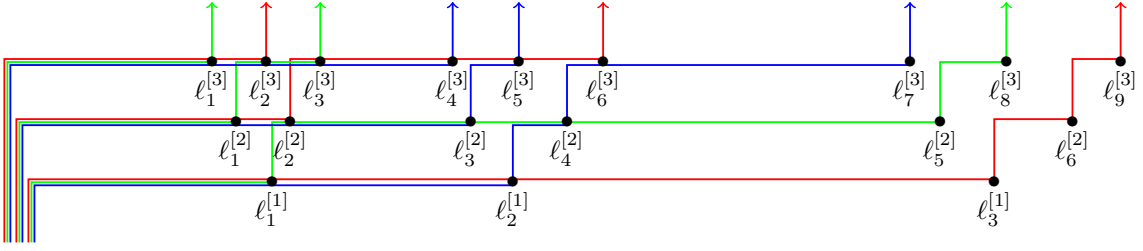

\subsection{Main result}
\label{ssec:main}

\begin{defn}[GUE corners process]
The Gaussian Unitary Ensemble (GUE) of rank $m$ is the collection of $m \times m$ Hermitian matrices $M = (M_{ij})_{i,j=1}^{m}$, where $M=(X+X^{*})/2$ and $X = (X_{ij})_{i,j=1}^{m}$ denotes an $m \times m$ matrix of i.i.d. complex Gaussian random variables $X_{ij} \sim \mathcal{N}(0,1) + {\tt i} \mathcal{N}(0,1)$. For all $1 \leq k \leq m$, write the eigenvalues of the $k \times k$ top-left sub-matrix of $M$ as $\theta_1^{[k]} \leq \cdots \leq \theta_k^{[k]}$. The joint law of the eigenvalues $\theta_i^{[j]}$, $1 \leq i \leq j$, $j \in [1,m]$ is called the {\it GUE corners process} of rank $m$.
\end{defn}

Following \cite[Theorem 20.1]{gorin-notes}, one has the following explicit formula for the density of the GUE corners process:
\begin{prop}
\label{prop:gue-dens}
The array $\theta_i^{[j]}$, $1 \leq i \leq j$, $j \in [1,m]$ has joint density
\begin{align}
\rho\left(\theta_i^{[j]} = x_i^{[j]},1 \leq i \leq j \leq m\right)
=
\bm{1}_{x^{[1]} \prec \cdots \prec x^{[m]}}
\left( \frac{1}{2\pi} \right)^{m/2}
\prod_{1 \leq i<j \leq m} (x^{[m]}_j-x^{[m]}_i)
\prod_{i=1}^{m} e^{-\frac{1}{2} \left( x^{[m]}_i \right)^2}
\end{align}
with respect to the $m(m+1)/2$-dimensional Lebesgue measure.
\end{prop}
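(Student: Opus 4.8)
The statement is classical --- it is, as the text notes, \cite[Theorem 20.1]{gorin-notes} --- so the plan is not to reprove it from scratch but to indicate how it assembles from three standard ingredients. The first is the eigenvalue density of a full GUE matrix: with the normalization $X_{ij}\sim\mathcal N(0,1)+{\tt i}\,\mathcal N(0,1)$ the matrix $M=(X+X^{*})/2$ has law proportional to $\exp(-\tfrac12\mathrm{Tr}(M^{2}))\,dM$, and diagonalizing $M=U\,\mathrm{diag}(x_1,\dots,x_m)\,U^{*}$ and integrating out $U$ over $U(m)/T^{m}$ (the Weyl integration formula) shows that the ordered eigenvalues $\theta^{[m]}_{1}\le\cdots\le\theta^{[m]}_{m}$ have density $\tfrac{m!}{Z_m}\prod_{1\le i<j\le m}(x_{j}-x_{i})^{2}\prod_{i=1}^{m}e^{-x_{i}^{2}/2}$ on the chamber $x_1<\cdots<x_m$, with $Z_m=(2\pi)^{m/2}\,1!\,2!\cdots m!$ by the Mehta integral (a short induction on $m$, or a direct citation). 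The third ingredient is the volume of the Gelfand--Tsetlin polytope $\mathrm{GT}(y)=\{(x^{[1]},\dots,x^{[m-1]}):x^{[1]}\prec\cdots\prec x^{[m-1]}\prec y\}$ with fixed increasing top row $y$: integrating out the rows from the bottom upward and proving the identity $\int_{x\prec y}\prod_{i<j}(x_j-x_i)\,dx=\tfrac{1}{(m-1)!}\prod_{i<j}(y_j-y_i)$ by induction on $m$ gives $\mathrm{vol}\,\mathrm{GT}(y)=\big(1!\,2!\cdots(m-1)!\big)^{-1}\prod_{i<j}(y_j-y_i)$.

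The second ingredient is the only substantive one: conditionally on $\theta^{[m]}=x^{[m]}$, the array $(\theta^{[1]},\dots,\theta^{[m-1]})$ is uniformly distributed on $\mathrm{GT}(x^{[m]})$. This is exactly \cite[Theorem 20.1]{gorin-notes} (equivalently the Baryshnikov / Gelfand--Naimark description of corner eigenvalues); it follows from the unitary invariance of the GUE together with the Cauchy interlacing theorem for Hermitian minors and the fact that the uniform measure on the relevant flag manifold pushes forward, under the eigenvalue map, to normalized Lebesgue measure on the corresponding cell --- one runs this level by level, stripping off the top corner at each stage. I would simply quote this result.

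It then remains to multiply the density of $\theta^{[m]}$ by the conditional uniform density $\bm{1}_{\mathrm{GT}(x^{[m]})}/\mathrm{vol}\,\mathrm{GT}(x^{[m]})$, which yields
\begin{align*}
\rho\!\left(\theta^{[j]}_i=x^{[j]}_i\right)
=\frac{m!}{Z_m}\Big(\prod_{1\le i<j\le m}(x^{[m]}_j-x^{[m]}_i)\Big)^{2}\prod_{i=1}^{m}e^{-(x^{[m]}_i)^{2}/2}\cdot\frac{1!\cdots(m-1)!}{\prod_{i<j}(x^{[m]}_j-x^{[m]}_i)}\,\bm{1}_{x^{[1]}\prec\cdots\prec x^{[m]}};
\end{align*}
one Vandermonde factor cancels, and the remaining constant simplifies via $Z_m=(2\pi)^{m/2}\,1!\cdots m!$ to $m!\cdot 1!\cdots(m-1)!/Z_m=(2\pi)^{-m/2}$, which is precisely the asserted formula. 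The only genuinely non-trivial point is the corner-uniformity theorem; everything else is bookkeeping of Gaussian and factorial constants, which I would keep terse and defer to \cite{gorin-notes,Mehta}.
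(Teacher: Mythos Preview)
Your proposal is correct, and in fact goes further than the paper does: the paper gives no proof at all for this proposition, simply stating it as a quotation of \cite[Theorem~20.1]{gorin-notes}. Your sketch of the standard argument (GUE eigenvalue density, conditional uniformity of corners on the Gelfand--Tsetlin polytope, and the polytope volume computation) is the right way to justify the cited result, but for the purposes of matching the paper a bare citation would have sufficed.
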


Proposition \ref{prop:gue-dens} implies (see also \cite[Equation (20.2)]{gorin-notes}) the conditional probability density for the eigenvalues $\theta_i^{[m+1]}$, $1 \leq i \leq m+1$ of the $(m+1) \times (m+1)$ top-left sub-matrix, given those of the $m \times m$ one:
\begin{multline*}
\rho\left(
\theta_i^{[m+1]} = x_i^{[m+1]}, 1 \leq i \leq m+1 
\Big| 
\theta_i^{[m]} = x_i^{[m]}, 1 \leq i \leq m
\right)
\\
=
\bm{1}_{x^{[m+1]} \succ x^{[m]}}
\frac{1}{(2\pi)^{1/2}}
\frac{\prod_{1 \leq i<j \leq m+1} \left(x^{[m+1]}_j-x^{[m+1]}_i\right) 
\cdot
\prod_{i=1}^{m+1} e^{-\frac{1}{2} \left( x^{[m+1]}_i \right)^2}}
{\prod_{1 \leq i<j \leq m} \left(x^{[m]}_j-x^{[m]}_i\right) 
\cdot 
\prod_{i=1}^{m} e^{-\frac{1}{2} \left( x^{[m]}_i \right)^2}},
\end{multline*}
and for notational compactness, we shall write
\begin{align*}
\rho_{\rm GUE}\left(
x^{[1]}\prec \cdots \prec x^{[m]} \right)
&
:=
\rho\left(\theta_i^{[j]} = x_i^{[j]},1 \leq i \leq j \leq m\right),
\\
\rho_{\rm GUE}\left(
x^{[m]}
\rightarrow
x^{[m+1]}
\right)
&
:=
\rho\left(
\theta_i^{[m+1]} = x_i^{[m+1]}, 1 \leq i \leq m+1 
\Big| 
\theta_i^{[m]} = x_i^{[m]}, 1 \leq i \leq m
\right).
\end{align*}

\begin{thm}
\label{thm:main}
In the asymptotic regime described by \eqref{coord-scal}, the Markov kernel \eqref{kernel} weakly converges to a product of $n$ independent probability measures with densities in the GUE corners process, multiplied by a factor that depends only on the colour sequences \eqref{identify}:
\begin{multline}
\label{main-result}
\mathbb{P}_{t,1}\left(0\cup\lambda^{[m]} \rightarrow \lambda^{[m+1]}\right)
\\
\rightarrow
\prod_{i=1}^{n}
\rho_{\rm GUE}\left(
x_{(i-1)m+1}^{[m]},\dots,x_{im}^{[m]}
\rightarrow
x_{(i-1)(m+1)+1}^{[m+1]},\dots,x_{i(m+1)}^{[m+1]}
\right)
dx^{[m+1]}
\cdot
\mathbb{P}_{\rm col}\left(c^{[m]} \rightarrow c^{[m+1]}\right)
\end{multline}
as $t \rightarrow \infty$, where $dx^{[m+1]}$ denotes the $n(m+1)$-dimensional Lebesgue measure. The final multiplicative factor in \eqref{main-result} is given explicitly by equation \eqref{col-markov} below, and defines a discrete transition probability in a process on colour sequences:
\begin{align}
\label{discrete-sum-to1}
\sum_{c^{[m+1]}}
\mathbb{P}_{\rm col}\left(c^{[m]} \rightarrow c^{[m+1]}\right)
=
1,
\end{align}
where the sum is taken over all 
$c^{[m+1]} = \left(c^{[m+1]}_1,\dots,c^{[m+1]}_{n(m+1)}\right) \in [1,n]^{n(m+1)}$.
\end{thm}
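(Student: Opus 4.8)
The plan is to treat the kernel \eqref{kernel} as a product of four explicit pieces and to determine the $t\to\infty$ asymptotics of each under the scaling \eqref{coord-scal} and the interlacing hypothesis \eqref{interlace-disc}: the prefactor $q^{-2(\psi(\lambda^{[m+1]})-\psi(0\cup\lambda^{[m]}))}$, the exponential $\exp(-\tfrac{1-q^n}{1-q}t)$, the one-variable skew LLT polynomial $\mathbb{G}_{\lambda^{[m+1]}/0\cup\lambda^{[m]}}(1)$, and the Plancherel ratio $\mathbb{G}_{\lambda^{[m+1]}}({\rm Pl}_t)/\mathbb{G}_{\lambda^{[m]}-1}({\rm Pl}_t)$. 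The first two are already explicit, so I would compute the asymptotics of the last two, multiply everything, check that all powers of $t$ cancel, and read off the right-hand side of \eqref{main-result}. This is carried out in Sections \ref{ssec:analysis1}--\ref{ssec:formula}.

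For $\mathbb{G}_{\lambda^{[m+1]}/0\cup\lambda^{[m]}}(1)$ (Section \ref{ssec:analysis1}) the analysis is purely combinatorial, using the vertex-model representation \eqref{G-pf-fused} with a single row and $x_1=1$. When the coordinate vectors of the two coloured compositions interlace bundle-by-bundle as in \eqref{interlace-disc}, the coloured lattice paths are rigid — each path of colour $c$ is forced from a prescribed bottom column to a prescribed top column — so the partition function collapses to a single monomial in $q$ whose exponent is an affine function of the statistic $\xi(c^{[m]};c^{[m+1]})$ of Theorem \ref{thm:discr-distr} plus the colour-blind constant $\binom{nm+n+1}{2}-\binom{nm+1}{2}$. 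Here one also sees the indicator $\bm{1}_{c^{[m]}\prec c^{[m+1]}}$ appear, since otherwise no admissible path configuration exists.

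For the Plancherel ratio (Section \ref{ssec:analysis2}) the plan is to insert the integral formula \eqref{LLTplanch} into numerator and denominator. In both integrands $f_{\breve\mu}$ depends on $\mu$ through the magnitudes of its parts and, via the block-reading order defining $\breve\mu$, also through the colouring $c^{[\bullet]}$; I would decompose it using the factorization \eqref{factorize} at $s=0$ for weakly increasing rainbow data together with its Hecke recursion \eqref{T-f}, and expand $g_{\Delta}$ through the antisymmetrization of Proposition \ref{prop:f-ant}. A multidimensional steepest-descent analysis then applies: the product $\prod_j e^{ty_j}$ weighed against the monomials $\prod_j y_j^{-\ell_j}$ produced by $f_{\breve\mu}$, combined with the law-of-large-numbers scaling $\ell^{[k]}_i\approx q^{n-\lceil i/k\rceil}t$, forces the $k$ integration variables in the $i$-th bundle to concentrate at the saddle $y=q^{n-i}$. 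Taylor-expanding to second order in the rescaled variables $x^{[k]}$, the Gaussian from the Hessian, the within-bundle Vandermonde coming from $\prod_{i<j}\tfrac{y_j-y_i}{y_j-qy_i}$ and from the $f$- and $g$-factors, and the residual weights assemble — after dividing numerator by denominator — into the product $\prod_{i=1}^n\rho_{\rm GUE}(\cdot\to\cdot)$ of Proposition \ref{prop:gue-dens}, up to an explicit power of $q$ and of $t$, and up to a remaining product of cross-bundle factors (from variables at distinct saddles) bundled with the colour-sensitive subleading Hecke corrections from $f_{\breve\mu}$.

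The main obstacle is to reorganize this last remainder into the ratio of partition functions $g^{c^{[m+1]}}_{\Delta}((m+1)^n;\vec{Q}^{[m+1]})/g^{c^{[m]}}_{\Delta}(m^n;\vec{Q}^{[m]})$ at the geometric specializations $\vec{Q}^{[k]}$ — that is, to extract the factorization of the steepest-descent output into a purely coordinate-dependent part and a purely colour-sequence-dependent part. Together with the combinatorial $q$-powers from Section \ref{ssec:analysis1} and the elementary prefactors this yields formula \eqref{col-markov} for $\mathbb{P}_{\rm col}(c^{[m]}\to c^{[m+1]})$ and forces the cancellation of all powers of $t$, which is Section \ref{ssec:formula}. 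This establishes pointwise convergence of the rescaled kernel on favourable coordinates. To finish — ruling out escape of mass onto unfavourable coordinates — I would invoke that the limiting object is genuinely a probability measure: the GUE corners conditional densities integrate to one, and $\sum_{c^{[m+1]}}\mathbb{P}_{\rm col}(c^{[m]}\to c^{[m+1]})=1$ by Theorem \ref{thm:discr-distr}, whose proof in Section \ref{sec:discrete-dist} rests on the expansion property Theorem \ref{thm:expand} and the fused row-operator commutation relations \eqref{important-relation}. Since the prelimit kernel is a sub-probability on the favourable part with total mass converging to that of the limit, a Scheff\'e-type argument upgrades pointwise convergence to weak convergence and forbids leakage, giving both \eqref{main-result} and \eqref{discrete-sum-to1}.
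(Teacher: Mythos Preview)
Your overall architecture matches the paper's: split the kernel into the four pieces, do combinatorics on the one-row factor, do steepest descent on the Plancherel integrals, and close with the sum-to-unity from Section~\ref{sec:discrete-dist}. The one-row analysis and the tightness/Scheff\'e wrap-up are fine.

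The gap is in your treatment of the Plancherel ratio. You propose to write $f_{\breve\mu}=T_\sigma f_{\ell}$ via \eqref{T-f} and \eqref{factorize} (correct), but then to ``expand $g_\Delta$ through the antisymmetrization of Proposition~\ref{prop:f-ant}'' and do steepest descent, leaving the recombination into $g^{c^{[m]}}_\Delta(m^n;\vec{Q}^{[m]})$ as ``the main obstacle.'' That obstacle is real under your route: antisymmetrizing $g_\Delta$ produces a large signed sum of rainbow $g$'s, and after steepest descent each term sees the critical points differently; there is no mechanism in your outline to collapse this sum back into a single permuted-boundary function evaluated at $\vec{Q}^{[m]}$.

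The paper avoids this entirely by using the adjoint property \eqref{adjoint} \emph{before} any asymptotics. Inside the integral one moves the Hecke word $T_\sigma$ off of $f_{\ell^{[m]}-1}$ and onto $g_\Delta$, where by \eqref{invT-g} it becomes exactly $g^{c^{[m]}}_\Delta(m^n;y_1,\dots,y_{nm})$; see \eqref{LLTplanch3}--\eqref{LLTplanch5}. At that point the integrand is already a product of a pure-coordinate piece $\prod_i y_i^{-\ell_i+1}e^{ty_i}$ and a pure-colour piece $g^{c^{[m]}}_\Delta$, so steepest descent is routine and the colour factor simply specializes to $g^{c^{[m]}}_\Delta(m^n;\vec{Q}^{[m]})$ at the saddle. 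This adjoint transfer is the key technical idea you are missing; without it, the ``reorganization'' step you flag has no clear resolution. A minor related point: the constant $\binom{nm+n+1}{2}-\binom{nm+1}{2}$ does not come from the one-row partition function as you suggest, but from the ratio of the prefactors $q^{\binom{nk+1}{2}}$ produced by the steepest-descent normalizations at $k=m$ and $k=m+1$.
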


\begin{cor}
\label{cor:main}
Let $\mathbb{P}_{t,N} (\Delta \rightarrow \lambda^{[1]} \rightarrow \cdots \rightarrow \lambda^{[N]})$ denote the joint distribution of coloured compositions $\lambda^{[1]},\dots, \lambda^{[N]}$ generated by $N$ applications of the kernel \eqref{kernel} to the trivial state $\Delta$. In the asymptotic regime described by \eqref{coord-scal}, we have the following weak convergence of measures:
\begin{multline*}
\mathbb{P}_{t,N}
\left(\Delta \rightarrow \lambda^{[1]} \rightarrow \cdots \rightarrow \lambda^{[N]}\right)
\\
\rightarrow
\prod_{i=1}^{n}
\rho_{\rm GUE}
\left( (x^{[1]})_i \prec (x^{[2]})_i \prec \cdots \prec (x^{[N]})_i  \right)
dx^{[1,N]}
\cdot
\mathbb{P}_{\rm col}\left(c^{[1]} \prec c^{[2]} \prec \cdots \prec c^{[N]}\right)
\end{multline*}
as $t \rightarrow \infty$, with $dx^{[1,N]} = \prod_{i=1}^{N} dx^{[i]}$ denoting the $nN(N+1)/2$-dimensional Lebesgue measure. Here we have introduced the shorthand
\begin{align*}
\left(x^{[k]}\right)_i = \left(x^{[k]}_{(i-1)k+1},\dots,x^{[k]}_{ik}\right),
\qquad
\forall\ 1 \leq i \leq n,\ \ 1 \leq k \leq N,
\end{align*}
and $\mathbb{P}_{\rm col}(c^{[1]} \prec c^{[2]} \prec \cdots \prec c^{[N]})$ is a joint distribution on colour sequences given explicitly by \eqref{joint-distr-col} below.
\end{cor}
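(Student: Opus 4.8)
The plan is to deduce Corollary~\ref{cor:main} from the single-step convergence of Theorem~\ref{thm:main} by iterating along the chain \eqref{chain}, using the Markov property of the kernel \eqref{kernel} together with the coherence of the scaling \eqref{coord-scal} across consecutive steps. By the composition rule \eqref{compose} the joint law factorizes as
\begin{align*}
\mathbb{P}_{t,N}(\Delta \to \lambda^{[1]} \to \cdots \to \lambda^{[N]})
=
\prod_{m=0}^{N-1}
\mathbb{P}_{t,1}(0\cup\lambda^{[m]} \to \lambda^{[m+1]}),
\end{align*}
where $\lambda^{[0]}$ is the empty coloured composition, so that $0\cup\lambda^{[0]}=\Delta$, and every factor is exactly of the form analysed in Theorem~\ref{thm:main}. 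First I would check that the scaling \eqref{coord-scal} is self-consistent along the chain: the rescaled coordinates of $\lambda^{[m]}$ serve as the output variables of the step $m-1\to m$ and, after the padding of Definition~\ref{defn:padding}, as the input variables of the step $m\to m+1$, and in both roles \eqref{coord-scal} assigns them the same centering $q^{n-\lceil i/m\rceil}t$ and the same fluctuation scale $(q^{n-\lceil i/m\rceil}t)^{1/2}$; the interlacing constraints \eqref{interlace-disc}, \eqref{interlace-real} match up in the same way. Since each bundle drifts to $q^{n-j}t\to\infty$, the event that $\lambda^{[m]}$ has only positive parts (so that $0\cup\lambda^{[m]}$ is well defined) has probability tending to $1$.

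Next I would pass to the limit factor by factor. By Theorem~\ref{thm:main}, the $m$-th factor converges weakly, in the regime \eqref{coord-scal}, to $\prod_{i=1}^{n}\rho_{\rm GUE}((x^{[m]})_i \to (x^{[m+1]})_i)\,dx^{[m+1]}\cdot\mathbb{P}_{\rm col}(c^{[m]} \to c^{[m+1]})$, with the $m=0$ factor reducing to $\prod_{i=1}^{n}\rho_{\rm GUE}((x^{[1]})_i)\,dx^{[1]}\cdot\mathbb{P}_{\rm col}(\emptyset\to c^{[1]})$. Since the continuous part factorizes over the $n$ bundles at every step, regrouping the product over $m$ yields, for each fixed bundle index $i$,
\begin{align*}
\rho_{\rm GUE}((x^{[1]})_i)
\prod_{m=1}^{N-1}
\rho_{\rm GUE}((x^{[m]})_i \to (x^{[m+1]})_i)
=
\rho_{\rm GUE}((x^{[1]})_i \prec \cdots \prec (x^{[N]})_i).
\end{align*}
This identity is immediate from Proposition~\ref{prop:gue-dens} and the conditional-density formula displayed after it: the Vandermonde and Gaussian factors attached to $x^{[m]}$ cancel between consecutive terms, leaving precisely the rank-$N$ corners density --- equivalently, it is just the chain rule for conditional densities of the GUE corners process. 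The discrete factors multiply to $\prod_{m=0}^{N-1}\mathbb{P}_{\rm col}(c^{[m]}\to c^{[m+1]})$, which is by definition the joint distribution $\mathbb{P}_{\rm col}(c^{[1]}\prec\cdots\prec c^{[N]})$ recorded in \eqref{joint-distr-col}. Assembling the two parts gives the stated limiting measure, which is automatically a probability measure as a limit of probability measures with no escape of mass.

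The step that requires genuine care --- and which I expect to be the main obstacle --- is justifying this iterated passage to the limit, because Theorem~\ref{thm:main} provides the one-step asymptotics only under the interlacing ansatz \eqref{interlace-disc} imposed on \emph{both} $\lambda^{[m]}$ and $\lambda^{[m+1]}$. The mechanism that makes the composition legitimate is the sum-to-unity property \eqref{discrete-sum-to1} (together with $\rho_{\rm GUE}(\,\cdot \to \cdot\,)\,dx$ integrating to $1$): it forces each limiting one-step kernel to be a genuine Markov kernel, so no probability is lost in the limit and the ``unfavourable coordinates'' violating \eqref{interlace-disc} carry probability tending to $0$. One then runs an induction on $m$: assuming $\lambda^{[m]}$ lies, after rescaling, in an interlacing configuration outside an event of vanishing probability, Theorem~\ref{thm:main} shows the conditional law of $\lambda^{[m+1]}$ concentrates on configurations again satisfying the interlacing, so the ansatz is self-propagating. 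Combined with continuity of the limiting one-step kernels in their input arguments and tightness of the intermediate marginals (each converging to a GUE-corners-times-discrete probability measure), standard arguments then upgrade the factorwise convergence to weak convergence of the full $N$-step law. The genuine difficulty is concentrated in this mass-control and self-propagation argument; the algebraic bookkeeping and the GUE chain rule are routine.
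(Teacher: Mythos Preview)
Your proposal is correct and follows the same route the paper (implicitly) takes: the paper states Corollary~\ref{cor:main} as a direct consequence of Theorem~\ref{thm:main} without giving a separate proof, relying on the Markov factorization \eqref{compose} of the chain \eqref{chain}, the single-step convergence \eqref{main-result}, and the telescoping of the GUE conditional densities into the full corners density of Proposition~\ref{prop:gue-dens}; the discrete factors multiply to \eqref{joint-distr-col} via Corollary~\ref{cor:joint-col}. Your discussion of the mass-control step --- using \eqref{discrete-sum-to1} to show the limiting one-step kernel is honest and hence the interlacing ansatz self-propagates --- is more explicit than anything the paper writes down for the corollary itself, but it is exactly the mechanism the paper invokes (see the paragraph following Theorem~\ref{thm:main} and the discussion at the end of Section~\ref{ssec:scaling}).
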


The remainder of the paper is devoted to the proof of this theorem. Throughout the rest of Section \ref{sec:asymp}, we exhibit the splitting of the Markov kernel \eqref{kernel} as shown on the right hand side of \eqref{main-result}; the proof of the sum-to-unity property \eqref{discrete-sum-to1} is deferred to Section \ref{sec:discrete-dist}.

\subsection{Functions $\psi(\lambda^{[m+1]})$ and $\psi(0\cup\lambda^{[m]})$}

We begin by studying the exponents $\psi(\lambda^{[m+1]})$ and $\psi(0\cup\lambda^{[m]})$ that appear within \eqref{kernel}. Under the set of assumptions \eqref{coord-scal}--\eqref{real-seq}, the coordinates $\{\ell_i^{[m]}\}_{1 \leq i \leq nm}$ and $\{\ell_j^{[m+1]}\}_{1 \leq j \leq n(m+1)}$ are strictly increasing. This makes the computation of $\psi(\lambda^{[m+1]})$ and $\psi(0\cup\lambda^{[m]})$ particularly simple; one easily sees that
\begin{align}
\label{psi-inv}
2\psi\left(\lambda^{[m+1]}\right)
=
{\rm inv}\left(c^{[m+1]}\right),
\qquad
2\psi\left(0\cup\lambda^{[m]}\right)
=
{\rm inv}\left((1,...,n)\cup c^{[m]}\right)
=
{\rm inv}\left(c^{[m]}\right)+m \binom{n}{2},
\end{align}
where $(1,...,n)\cup c^{[m]}$ means concatenation of the two participating vectors.

\subsection{Factor $\mathbb{G}_{\lambda^{[m+1]}/0\cup\lambda^{[m]}}(1)$}
\label{ssec:analysis1}

Next, we analyse the quantity $\mathbb{G}_{\lambda^{[m+1]}/0\cup\lambda^{[m]}}(1)$ within \eqref{kernel}. It is given by the one-row partition function
\begin{align}
\label{G-one-row}
\mathbb{G}_{\lambda^{[m+1]}/0\cup\lambda^{[m]}}(1)
=
\tikz{1.2}{
\draw[lgray,line width=4pt,->] (0.5,0) -- (5.5,0);
\foreach\x in {1,...,5}{
\draw[lgray,line width=4pt,->] (\x,-0.5) -- (\x,0.5);
}
\node[left] at (0.5,0) {\fs $\bm{e}_0$};\node[right] at (5.5,0) {\fs $\bm{e}_0$};
\node[below] at (1,-0.5) {\fs $\bm{e}_{[1,n]}$};\node[above] at (1,0.5) {\fs $\bm{e}_0$};
\node at (3.5, -0.7) {$\underbrace{\hspace{4cm}}$};
\node[below] at (3.5,-0.7) {$\lambda^{[m]}$};
\node at (3.5, 0.7) {$\overbrace{\hspace{4cm}}$};
\node[above] at (3.5,0.7) {$\lambda^{[m+1]}$};
}
\end{align}
where within the area marked $\lambda^{[k]}$, $k \in \{m,m+1\}$, the vector 
$\bm{e}_{c^{[k]}_i}$ is present at coordinate $\ell^{[k]}_i$ (in other words, a path of colour $c^{[k]}_i$ is present at position $\ell^{[k]}_i$), for all $1 \leq i \leq kn$. Following \eqref{llt-weights}, we have assumed the vertex weights
\begin{align}
\label{llt-wts}
\tikz{1.2}{
\draw[lgray,line width=4pt,->] (-0.5,0) -- (0.5,0);
\draw[lgray,line width=4pt,->] (0,-0.5) -- (0,0.5);
\node[left] at (-0.5,0) {\fs $\bm{B}$};
\node[right] at (0.5,0) {\fs $\bm{D}$};
\node[below] at (0,-0.5) {\fs $\bm{A}$};
\node[above] at (0,0.5) {\fs $\bm{C}$};
}
=
\bm{1}_{\bm{C}+\bm{D} \in \{0,1\}^n}
\cdot
q^{\varphi(\D,\D)+\varphi(\D,\C)},
\qquad
\A,\B,\C,\D \in \{0,1\}^n,
\end{align}
where the function $\varphi$ is as defined in \eqref{phi-def}. Let us study each of the factors appearing in \eqref{llt-wts} individually. First, we note that the indicator function $\bm{1}_{\bm{C}+\bm{D} \in \{0,1\}^n}$ prevents two paths of the same colour from traversing a vertex. Second, the factor $q^{\varphi(\D,\D)}$ assigns one power of $q$ for every pair of colours which pass through edge $\D$ of a vertex; there are $\binom{|\D|}{2}$ such pairs. Finally, the factor $q^{\varphi(\D,\C)}$ assigns one power of $q$ to each pair of colours $(i,j)$ passing through edges $(\D,\C)$, respectively, with $i<j$.

Now we examine the contribution of each of the factors in the weights \eqref{llt-wts}, when they are multiplied together to form the one-row partition function \eqref{G-one-row}. Multiplying all indicator functions $\bm{1}_{\bm{C}+\bm{D} \in \{0,1\}^n}$ yields the the property that paths of the same colour do not intersect; at the level of the coloured compositions $\lambda^{[m]}$ and 
$\lambda^{[m+1]}$, this translates into the condition that
\begin{align*}
\lambda^{[m+1](i)}_j < \lambda^{[m](i)}_j < \lambda^{[m+1](i)}_{j+1},
\qquad\forall\
1 \leq i \leq n,
\quad
1 \leq j \leq m,
\end{align*}
which we denote simply by writing $c^{[m]} \prec c^{[m+1]}$.

Multiplying all factors $q^{\varphi(\D,\D)}$ requires us to compute the total number of paths $d_i$ going through the $i$-th horizontal edge of the partition function \eqref{G-one-row}, for all $i \geq 1$. The total contribution from these factors is then 
\begin{align*}
\prod_{i \geq 1} 
q^{\binom{d_i}{2}}
=
\prod_{j=1}^{n}
q^{\binom{j}{2} p_j},
\end{align*}
where $p_j$ counts the number of horizontal edges in \eqref{G-one-row} that are occupied by $j$ paths. It is clear that the set $\{p_j\}_{1 \leq j \leq n}$ depends only on the values of the coordinates $\{\ell^{[m]}_i\}_{1 \leq i \leq nm}$ and $\{\ell^{[m+1]}_i\}_{1 \leq i \leq n(m+1)}$, and not on the colour sequences $\{c^{[m]}_i\}_{1 \leq i \leq nm}$ and $\{c^{[m+1]}_i\}_{1 \leq i \leq n(m+1)}$. In view of the interlacing \eqref{interlace-disc} of the coordinates $\{\ell^{[m]}_i\}_{1 \leq i \leq nm}$, $\{\ell^{[m+1]}_i\}_{1 \leq i \leq n(m+1)}$, we may routinely compute $p_j$ for all $1 \leq j \leq n$. We find that, {\it cf.} Figure \ref{fig:gue},
\begin{align*}
p_n
&=
\sum_{k=1}^{m+1}
\ell^{[m+1]}_k
-
\sum_{k=1}^{m}
\ell^{[m]}_k,
\\
p_{n-j}
&=
\sum_{k=1}^{m+1}
\left(
\ell^{[m+1]}_{j(m+1)+k}
-
\ell^{[m+1]}_{(j-1)(m+1)+k}
\right)
-
\sum_{k=1}^{m}
\left(
\ell^{[m]}_{jm+k}
-
\ell^{[m]}_{(j-1)m+k}
\right),
\qquad\forall\ 
j \in [1,n-1].
\end{align*}
We then have
\begin{align*}
\sum_{j=1}^{n}
\binom{j}{2}
p_j
&=
\sum_{j=1}^{n}
\left[
\binom{n-j+1}{2}
-
\binom{n-j}{2}
\right]
\sum_{k=1}^{m+1}
\ell^{[m+1]}_{(j-1)(m+1)+k}
-
\sum_{j=1}^{n}
\left[
\binom{n-j+1}{2}
-
\binom{n-j}{2}
\right]
\sum_{k=1}^{m}
\ell^{[m]}_{(j-1)m+k}
\\
&=
\sum_{j=1}^{n}
(n-j)
\sum_{k=1}^{m+1}
\ell^{[m+1]}_{(j-1)(m+1)+k}
-
\sum_{j=1}^{n}
(n-j)
\sum_{k=1}^{m}
\ell^{[m]}_{(j-1)m+k}
\end{align*}
as the total exponent of $q$ coming from factors of the form $q^{\varphi(\D,\D)}$.

Finally we need to examine the contribution from all factors $q^{\varphi(\D,\C)}$, when multiplying the weights of all vertices in the row \eqref{G-one-row}. In direct contrast to the factors $q^{\varphi(\D,\D)}$, this contribution only depends on the colour sequences 
$\{c^{[m]}_i\}_{1 \leq i \leq nm}$ and $\{c^{[m+1]}_i\}_{1 \leq i \leq n(m+1)}$, and not on the coordinates $\{\ell^{[m]}_i\}_{1 \leq i \leq nm}$ and $\{\ell^{[m+1]}_i\}_{1 \leq i \leq n(m+1)}$. Rather than attempting to write down an explicit formula for this contribution, we express it in terms of the following diagram\footnote{This diagram is not a partition function in the traditional sense, however it turns out to be quite expedient for our subsequent needs.}:
\begin{multline}
\label{ups}
\Upsilon\left(c^{[m]};c^{[m+1]}\right)
=
\\
\tikz{1.3}{
\draw[lgray,line width=4pt,->] (0.5,0) -- (12,0);
\foreach\x in {1,2,3,5,6,7,9,10,11}{
\draw[lgray,line width=1.5pt,->] (\x,0) -- (\x,0.5);
}
\foreach\x in {1.5,2.5,5.5,6.5,9.5,10.5}{
\draw[lgray,line width=1.5pt,->] (\x,-0.5) -- (\x,0);
}
\node[left] at (0.5,0) {$[1,n]$}; \node[right] at (12,0) {$\emptyset$};
\node[below] at (1.5,-0.5) {$c^{[m]}_1$}; \node[above] at (1,0.5) {$c^{[m+1]}_1$};
\node[below] at (2.5,-0.5) {$c^{[m]}_m$}; \node[above] at (3,0.5) {$c^{[m+1]}_{m+1}$};
\node[below] at (2,-0.7) {$\cdots$}; \node[above] at (2,0.5) {$\cdots$};
\node[below] at (6,-0.7) {$\cdots$}; \node[above] at (6,0.5) {$\cdots$};
\node[below] at (9.5,-0.5) {$c^{[m]}_{(n-1)m+1}$}; \node[above] at (9,0.5) {$c^{[m+1]}_{(n-1)(m+1)+1}$};
\node[below] at (10.5,-0.5) {$c^{[m]}_{nm}$}; \node[above] at (11,0.5) {$c^{[m+1]}_{n(m+1)}$};
\node[below] at (10,-0.6) {$\cdots$}; \node[above] at (10.2,0.5) {$\cdots$};
}
\end{multline}
In this diagram, one of each of the colours $\{1,\dots,n\}$ enters via the leftmost horizontal edge; there is also a single colour $c^{[m]}_i$ entering via the $i$-th vertical edge along the base. No colours exit via the rightmost horizontal edge; a single colour $c^{[m+1]}_i$ exits via the $i$-th vertical edge along the top. Note that the diagram is also arranged to denote $n$ individual bundles of interlacing colours, with flow of colours possible along the horizontal line that connects bundles.

The function $\Upsilon\left(c^{[m]};c^{[m+1]}\right)$ is a pure power of $q$. We compute it by tracing all colours in the diagram \eqref{ups} from their starting to finishing location, and assigning a power of $q$ to each of the following events:
\begin{align*}
\tikz{1.3}{
\draw[lgray,line width=4pt,->] (0.5,0) -- (1.5,0);
\draw[lgray,line width=1.5pt,->] (1,0) -- (1,0.5);
\node[above] at (1,0.5) {$c$};
\node[right] at (1.5,0) {$i$};
}
\end{align*}
which denotes a path of colour $i$ passing underneath a colour $c$, with $i<c$.
 
Bringing all contributions together, we have
\begin{align}
\label{one-row-formula}
\mathbb{G}_{\lambda^{[m+1]}/0\cup\lambda^{[m]}}(1)
=
\bm{1}_{c^{[m]} \prec c^{[m+1]}}
\cdot
\Upsilon\left(c^{[m]};c^{[m+1]}\right)
\cdot
\prod_{j=1}^{n}
\left(
\dfrac{\displaystyle{\prod_{k=1}^{m+1}} q^{\ell^{[m+1]}_{(j-1)(m+1)+k}}}
{\displaystyle{\prod_{k=1}^{m}} q^{\ell^{[m]}_{(j-1)m+k}}}
\right)^{n-j}
\end{align}
We draw attention to the fact that dependence on the colour sequences $\{c^{[m]}_i\}_{1 \leq i \leq nm}$, $\{c^{[m+1]}_i\}_{1 \leq i \leq n(m+1)}$ and coordinates $\{\ell^{[m]}_i\}_{1 \leq i \leq nm}$, $\{\ell^{[m+1]}_i\}_{1 \leq i \leq n(m+1)}$ is completely separated in \eqref{one-row-formula}; further, $\mathbb{G}_{\lambda^{[m+1]}/0\cup\lambda^{[m]}}(1)$ is an analytic function of the coordinates.

\subsection{Factors $\mathbb{G}_{\lambda^{[m]}-1}({\rm Pl}_t)$ and 
$\mathbb{G}_{\lambda^{[m+1]}}({\rm Pl}_t)$}
\label{ssec:analysis2}

We now proceed to the analysis of the factors $\mathbb{G}_{\lambda^{[m]}-1}({\rm Pl}_t)$ and 
$\mathbb{G}_{\lambda^{[m+1]}}({\rm Pl}_t)$, in the denominator and numerator of \eqref{kernel}, under the replacements \eqref{coord-scal}. This computation is essentially the same for each of these factors, up to some straightforward relabelling of parameters, so we will focus on the analysis of $\mathbb{G}_{\lambda^{[m]}-1}({\rm Pl}_t)$ first.

\subsubsection{Recasting the integrand}

Our starting point is \eqref{LLTplanch} with $\nu = \Delta \in \mathcal{S}_{m^n}$ and 
$\mu = \lambda^{[m]}-1 \in \mathcal{S}_{m^n}$:
\begin{multline}
\label{LLTplanch2}
\mathbb{G}_{\lambda^{[m]}-1}({\rm Pl}_t)
=
\frac{q^{nm(nm+1)/2}}{(q-1)^{nm}}
\cdot
\left( \frac{1}{2\pi{\tt i}} \right)^{nm}
\oint_{C_1}
\frac{dy_1}{y_1}
\cdots 
\oint_{C_{nm}}
\frac{dy_{nm}}{y_{nm}}
\\
\times
\prod_{1 \leq i<j \leq nm}
\left(
\frac{y_j-y_i}{y_j-q y_i}
\right)
f_{\breve\lambda^{[m]}-1}(1^{nm};y_1^{-1},\dots,y_{nm}^{-1})
g_{\Delta}(m^n;y_1,\dots,y_{nm})
\prod_{j=1}^{nm}
e^{t y_j}.
\end{multline}
In the first step, we use the action \eqref{T-f} of the Hecke generators to express the function $f_{\breve\lambda^{[m]}-1}$ in terms of $f_{\ell^{[m]}-1}$, noting that $\ell^{[m]}$ is just obtained by sorting the parts of $\breve\lambda^{[m]}$ in increasing order. We have
\begin{multline}
\label{LLTplanch3}
\mathbb{G}_{\lambda^{[m]}-1}({\rm Pl}_t)
=
\frac{q^{nm(nm+1)/2}}{(q-1)^{nm}}
\cdot
\left( \frac{1}{2\pi{\tt i}} \right)^{nm}
\oint_{C_1}
\frac{dy_1}{y_1}
\cdots 
\oint_{C_{nm}}
\frac{dy_{nm}}{y_{nm}}
\\
\times
\prod_{1 \leq i<j \leq nm}
\left(
\frac{y_j-y_i}{y_j-q y_i}
\right)
(T_{\sigma} \cdot f_{\ell^{[m]}-1})(1^{nm};y_1^{-1},\dots,y_{nm}^{-1})
g_{\Delta}(m^n;y_1,\dots,y_{nm})
\prod_{j=1}^{nm}
e^{t y_j},
\end{multline}
where we have denoted $T_{\sigma} = T_{a_1} \dots T_{a_p}$ with $T_a$ given by \eqref{hecke-poly}, and where $(a_1,\dots,a_p) \in [1,nm)^p$ is a minimal-length word such that
%
\begin{align}
\label{reorder}
\mathfrak{s}_{a_1} \cdots \mathfrak{s}_{a_p} \cdot \ell^{[m]} = \breve\lambda^{[m]}.
\end{align}
Using the property \eqref{adjoint} of Hecke generators, and the fact that the product 
$\prod_{j=1}^{nm} e^{t y_j}$ is symmetric with respect to $(y_1,\dots,y_{nm})$, we may recast this as
\begin{multline}
\label{LLTplanch4}
\mathbb{G}_{\lambda^{[m]}-1}({\rm Pl}_t)
=
\frac{q^{nm(nm+1)/2}}{(q-1)^{nm}}
\cdot
\left( \frac{1}{2\pi{\tt i}} \right)^{nm}
\oint_{C_1}
\frac{dy_1}{y_1}
\cdots 
\oint_{C_{nm}}
\frac{dy_{nm}}{y_{nm}}
\\
\times
\prod_{1 \leq i<j \leq nm}
\left(
\frac{y_j-y_i}{y_j-q y_i}
\right)
f_{\ell^{[m]}-1}(1^{nm};y_1^{-1},\dots,y_{nm}^{-1})
(\tilde{T}_{\sigma} \cdot g_{\Delta})(m^n;y_1,\dots,y_{nm})
\prod_{j=1}^{nm}
e^{t y_j},
\end{multline}
where we have denoted $\tilde{T}_{\sigma} = \tilde{T}_{a_p} \dots \tilde{T}_{a_1}$ with $\tilde{T}_a$ given by \eqref{hecke-tilde}, and where the word $(a_1,\dots,a_p)$ is specified as in \eqref{reorder}. 

Finally, we note that if the coordinates $\ell^{[m]}$ may be reordered to yield $\breve{\lambda}^{[m]}$ as in \eqref{reorder}, it also follows that the corresponding colour sequence $c^{[m]}$ reorders according to the rule
\begin{align*}
\mathfrak{s}_{a_1} \cdots \mathfrak{s}_{a_p} \cdot c^{[m]}
=
(1^m,2^m,\dots,n^m),
\end{align*}
or equivalently, 
$\mathfrak{s}_{a_p} \cdots \mathfrak{s}_{a_1} \cdot (1^m,2^m,\dots,n^m) = c^{[m]}$. Using this relation in \eqref{LLTplanch4}, together with the action \eqref{invT-g} of Hecke generators on the function $g_{\Delta}(m^n;y_1,\dots,y_{nm})$, we recover the formula
\begin{multline}
\label{LLTplanch5}
\mathbb{G}_{\lambda^{[m]}-1}({\rm Pl}_t)
=
\frac{q^{nm(nm+1)/2}}{(q-1)^{nm}}
\cdot
\left( \frac{1}{2\pi{\tt i}} \right)^{nm}
\oint_{C_1}
\frac{dy_1}{y_1}
\cdots 
\oint_{C_{nm}}
\frac{dy_{nm}}{y_{nm}}
\\
\times
\prod_{1 \leq i<j \leq nm}
\left(
\frac{y_j-y_i}{y_j-q y_i}
\right)
\prod_{i=1}^{nm}
y_i^{-\ell^{[m]}_i+1}
g_{\Delta}^{c^{[m]}}(m^n;y_1,\dots,y_{nm})
\prod_{j=1}^{nm} e^{ty_j},
\end{multline}
where $g_{\Delta}^{c^{[m]}}(m^n;y_1,\dots,y_{nm})$ denotes a permuted-boundary function of the form \eqref{generic-g-sigma}, and where we have also used the fact that $f_{\ell^{[m]}-1}(1^{nm};y_1^{-1},\dots,y_{nm}^{-1})$ factorizes as in \eqref{factorize} with $s=0$, since $\ell^{[m]}$ is increasing. The formula \eqref{LLTplanch5} explicitly separates the coordinates $\ell^{[m]}$ and the colour sequence $c^{[m]}$; in view of the analytic dependence on the former, we now use it to carry out steepest descent asymptotics.

\subsubsection{$t \rightarrow \infty$ asymptotics via steepest descent}

In this section we compute the $t \rightarrow \infty$ asymptotics of the quantities
\begin{align}
\label{H-funct}
\tilde{H}_{\lambda^{[m]}}(t)
:=
\prod_{j=1}^{n}
\prod_{k=1}^{m}
q^{(n-j)\ell^{[m]}_{(j-1)m+k}}
\cdot
\mathbb{G}_{\lambda^{[m]}-1}({\rm Pl}_t),
\\
\label{H-funct2}
H_{\lambda^{[m+1]}}(t)
:=
\prod_{j=1}^{n}
\prod_{k=1}^{m+1}
q^{(n-j)\ell^{[m+1]}_{(j-1)(m+1)+k}}
\cdot
\mathbb{G}_{\lambda^{[m+1]}}({\rm Pl}_t),
\end{align}
under the assumption that the coordinates $\{\ell^{[m]}_i\}_{1 \leq i \leq nm}$ and $\{\ell^{[m+1]}_i\}_{1 \leq i \leq n(m+1)}$ scale as \eqref{coord-scal}. Note that, by virtue of \eqref{psi-inv} and the expression \eqref{one-row-formula}, the Markov kernel \eqref{kernel} may be expressed as
\begin{multline}
\label{kernel-reexpress}
\mathbb{P}_{t,1}\left(0\cup\lambda^{[m]} \rightarrow \lambda^{[m+1]}\right)
\\
=
\bm{1}_{c^{[m]} \prec c^{[m+1]}}
\cdot
q^{{\rm inv}(c^{[m]})-{\rm inv}(c^{[m+1]})+m \binom{n}{2}}
\Upsilon\left( c^{[m]}; c^{[m+1]} \right)
\dfrac{H_{\lambda^{[m+1]}}(t)}{\tilde{H}_{\lambda^{[m]}}(t)}
\exp\left( -\frac{1-q^n}{1-q}t \right)
\end{multline}
where the colour sequences $c^{[m]}$, $c^{[m+1]}$ are independent of $t$; this means that the $t \rightarrow \infty$ asymptotics of our Markov kernel is indeed recovered by analysis of \eqref{H-funct} and \eqref{H-funct2}.

We lighten our notation by writing $\ell^{[m]}_i \equiv \ell_i$ and $x^{[m]}_i \equiv x_i$ for all $1 \leq i \leq nm$. Distributing the $q$-dependent prefactor in \eqref{H-funct} within the integral \eqref{LLTplanch5}, we have
\begin{multline}
\label{LLTplanch6}
\tilde{H}_{\lambda^{[m]}}(t)
=
\frac{q^{nm(nm+1)/2}}{(q-1)^{nm}}
\cdot
\left( \frac{1}{2\pi{\tt i}} \right)^{nm}
\oint_{C_1}
dy_1
\cdots 
\oint_{C_{nm}}
dy_{nm}
\\
\times
\prod_{i=1}^{nm}
\left(\frac{Q_i}{y_i} \right)^{\ell_i}
e^{ty_i}
\prod_{1 \leq i<j \leq nm}
\left(
\frac{y_j-y_i}{y_j-q y_i}
\right)
g_{\Delta}^{c^{[m]}}(m^n;y_1,\dots,y_{nm}),
\end{multline}
where we have defined the vector $\vec{Q} \in \mathbb{C}^{nm}$ by
\begin{align}
\label{vecQ}
\vec{Q}
=
(Q_1,\dots,Q_{nm})
=
\underbrace{(q^{n-1},\dots,q^{n-1})}_{m\ {\rm times}}
\cup
\cdots
\cup
\underbrace{(q,\dots,q)}_{m\ {\rm times}}
\cup
\underbrace{(1,\dots,1)}_{m\ {\rm times}}.
\end{align}
Using the formula \eqref{coord-scal} and the notation \eqref{vecQ}, the coordinates $\ell_i$ are written as $\ell_i = Q_i t + (Q_i t)^{\frac{1}{2}} x_i$ for all $1 \leq i \leq nm$. Making use of this, the univariate factors in the integrand of \eqref{LLTplanch6} read
\begin{align}
\label{univar}
\left( \frac{Q_i}{y_i} \right)^{\ell_i} e^{ty_i}
=
\exp\left[t y_i-\ell_i \log y_i + \ell_i \log Q_i \right]
=
\exp\left[t(y_i-Q_i \log y_i + Q_i \log Q_i) + O(t^{1/2}) \right],
\quad
\text{as}\ \
t \rightarrow \infty.
\end{align}
The $t \rightarrow \infty$ behaviour of \eqref{LLTplanch6} may now be recovered from steepest descent analysis applied to each of the $nm$ integrals. Neglecting for the moment the $O(t^{1/2})$ term above (which gives a sub-leading contribution to the $t \rightarrow \infty$ behaviour), we evaluate the critical point\footnote{The critical point is the value where the first derivative with respect to $y_i$ vanishes.} of the function $y_i-Q_i \log y_i + Q_i \log Q_i$, which is found to be $y_i = Q_i$. Computing the corresponding Taylor series about this point, we have that
\begin{align*}
\left( \frac{Q_i}{y_i} \right)^{\ell_i} e^{ty_i}
=
\exp\left[t \left( Q_i + \frac{(y_i-Q_i)^2}{2Q_i} + O(y_i-Q_i)^3 \right) + O(t^{1/2}) \right],
\quad
\text{as}\ \
t \rightarrow \infty,
\end{align*}
in a neighbourhood of the point $y_i=Q_i$. Following standard steepest descent analysis, the dominant contribution to the $t \rightarrow \infty$ asymptotics of the integral \eqref{LLTplanch6} is obtained by deforming each contour $C_i$ to pass through $Q_i$\footnote{We require that each integration contour $C_i$ may be freely deformed to a contour passing through the corresponding critical point $Q_i$, along which the real part of the exponent decreases as one travels away from $Q_i$. One choice that meets this requirement is to take the $C_i$ to be concentric circles of radii $Q_i$, for all $1 \leq i \leq nm$.}, and reducing the resulting contour integrals to line integrals over small segments $D_i \subset C_i$ travelling through $Q_i$ and traversed in the direction where the function $\frac{(y_i-Q_i)^2}{2Q_i}$ has zero imaginary part. Accordingly\footnote{We will be slightly informal here, omitting the proofs of the tail estimates justifying the below approximations. The latter are fairly standard, and have already been applied numerous times in the literature.}, we may write
\begin{multline}
\label{LLTplanch7}
\tilde{H}_{\lambda^{[m]}}(t)
\sim
\frac{q^{nm(nm+1)/2}}{(q-1)^{nm}}
\cdot
\left( \frac{1}{2\pi{\tt i}} \right)^{nm}
\int_{Q_1-{\tt i}\epsilon}^{Q_1+{\tt i}\epsilon}
dy_1
\cdots 
\int_{Q_{nm}-{\tt i}\epsilon}^{Q_{nm}+{\tt i}\epsilon}
dy_{nm}
\\
\prod_{i=1}^{nm}
\left(\frac{Q_i}{y_i} \right)^{\ell_i}
e^{ty_i}
\prod_{1 \leq i<j \leq nm}
\left(
\frac{y_j-y_i}{y_j-q y_i}
\right)
g_{\Delta}^{c^{[m]}}(m^n;y_1,\dots,y_{nm}),
\quad
\text{as}\ \ t \rightarrow \infty,
\end{multline}
where $\epsilon$ is a small positive real number. Now switching to the local variables $y_i = Q_i+z_i t^{-\frac{1}{2}}$, the univariate factors in \eqref{LLTplanch7} become
\begin{align*}
\left.
\left( \frac{Q_i}{y_i} \right)^{\ell_i} e^{ty_i}
\right|_{y_i = Q_i+z_i t^{-\frac{1}{2}}}
=
\exp\left[
t(Q_i+z_i t^{-\frac{1}{2}})
-
(Q_i t + (Q_i t)^{\frac{1}{2}} x_i)
\log(Q_i+z_i t^{-\frac{1}{2}})
+
(Q_i t + (Q_i t)^{\frac{1}{2}} x_i) \log Q_i
\right],
\end{align*}
and using the fact that
\begin{align*}
\log(Q+\epsilon)
=
\log(Q) + \frac{\epsilon}{Q} - \frac{1}{2}\frac{\epsilon^2}{Q^2} + O(\epsilon^3),
\quad
\text{as}
\ \ 
\epsilon \rightarrow 0,
\end{align*}
we have
\begin{align*}
&
\left.
\left( \frac{Q_i}{y_i} \right)^{\ell_i} e^{ty_i}
\right|_{y_i = Q_i+z_i t^{-\frac{1}{2}}}
\\
&=
\exp\left[
t(Q_i+z_i t^{-\frac{1}{2}})
-
(Q_i t + (Q_i t)^{\frac{1}{2}} x_i)
\left(\log Q_i
+
\frac{z_i t^{-\frac{1}{2}}}{Q_i}
-
\frac{1}{2} \frac{z_i^2 t^{-1}}{Q_i^2} 
+ 
O(t^{-\frac{3}{2}})\right)
+
(Q_i t + (Q_i t)^{\frac{1}{2}} x_i) \log Q_i
\right]
\\
&=
\exp\left[
Q_i t - \frac{x_i z_i}{Q_i^{\frac{1}{2}}} + \frac{z_i^2}{2Q_i} + O(t^{-\frac{1}{2}})
\right],
\quad
\text{as}
\ \
t \rightarrow \infty.
\end{align*}
Turning to other terms in the integral \eqref{LLTplanch7}, we have
\begin{align}
\label{q-vand}
\left.
\frac{y_j-y_i}{y_j-q y_i}
\right|_{\vec{y} = \vec{Q}+\vec{z} t^{-\frac{1}{2}}}
=
\frac{Q_j-Q_i+z_j t^{-1/2}-z_i t^{-1/2}}{Q_j-qQ_i+z_j t^{-1/2} -qz_i t^{-1/2}},
\qquad
1 \leq i< j \leq nm.
\end{align} 
Using the fact that $Q_i = q^{n-\lceil i/m \rceil}$, we see that $Q_j - q Q_i$ in the denominator of \eqref{q-vand} is always nonzero; on the other hand, $Q_j - Q_i$ in the numerator vanishes whenever $\lceil i/m \rceil = \lceil j/m \rceil$ (and is nonzero otherwise). The $t \rightarrow \infty$ behaviour of \eqref{q-vand} then splits into two cases:
\begin{align*}
\left.
\frac{y_j-y_i}{y_j-q y_i}
\right|_{\vec{y} = \vec{Q}+\vec{z} t^{-\frac{1}{2}}}
\sim
\frac{1}{q^{n-\lceil j/m \rceil} - q^{n-\lceil i/m \rceil +1}}
\times
\left\{
\begin{array}{ll}
t^{-1/2} (z_j-z_i),
\qquad
&
\lceil i/m \rceil = \lceil j/m \rceil,
\\
\\
q^{n-\lceil j/m \rceil} - q^{n-\lceil i/m \rceil},
\qquad
&
\lceil i/m \rceil \not= \lceil j/m \rceil,
\end{array}
\right.
\end{align*}
and multiplying these factors over all indices $1 \leq i<j \leq nm$, we have
\begin{multline*}
\left.
\prod_{1 \leq i<j \leq nm}
\left( \frac{y_j-y_i}{y_j-q y_i} \right)
\right|_{\vec{y} = \vec{Q}+\vec{z} t^{-\frac{1}{2}}}
\sim
t^{-\frac{n}{2} \binom{m}{2}}
(1-q)^{-n \binom{m}{2}}
\prod_{j=1}^{n} q^{(j-n) \binom{m}{2}}
\\
\times
\prod_{i=0}^{n-1}
\prod_{1 \leq j<k \leq m}
(z_{im+k}-z_{im+j})
\prod_{1 \leq i<j \leq n}
\left( \frac{q^{n-j}-q^{n-i}}{q^{n-j}-q^{n-i+1}} \right)^{m^2}.
\end{multline*}
After accounting for telescopic cancellations in the final product of this expression, it may be written as 
\begin{align*}
\prod_{1 \leq i<j \leq n}
\left( \frac{q^{n-j}-q^{n-i}}{q^{n-j}-q^{n-i+1}} \right)^{m^2}
=
\left[ \frac{(1-q)^{n}}{(q;q)_n} \right]^{m^2}
\end{align*}
and accordingly we have that
\begin{multline*}
\left.
\prod_{1 \leq i<j \leq nm}
\left( \frac{y_j-y_i}{y_j-q y_i} \right)
\right|_{\vec{y} = \vec{Q}+\vec{z} t^{-\frac{1}{2}}}
\sim
t^{-\frac{n}{2} \binom{m}{2}}
(1-q)^{-n \binom{m}{2}}
q^{-\binom{n}{2} \binom{m}{2}}
\left[ \frac{(1-q)^{n}}{(q;q)_n} \right]^{m^2}
\prod_{i=0}^{n-1}
\prod_{1 \leq j<k \leq m}
(z_{im+k}-z_{im+j}),
\end{multline*}
as $t \rightarrow \infty$. Under the change to local variables, the remaining piece of the integral \eqref{LLTplanch7} (which is polynomial in the variables $y_1,\dots,y_{nm}$) becomes
\begin{align*}
\left. 
g_{\Delta}^{c^{[m]}}(m^n;y_1,\dots,y_{nm})
\right|_{\vec{y} = \vec{Q}+\vec{z} t^{-\frac{1}{2}}}
\sim
g_{\Delta}^{c^{[m]}}(m^n;\vec{Q}),
\qquad
\text{as}\ \ t\rightarrow \infty.
\end{align*}
Combining everything (including a factor of $t^{-nm/2}$ for the change of integration variables), we read off the $t \rightarrow \infty$ asymptotic behaviour of $\tilde{H}_{\lambda^{[m]}}(t)$:
\begin{multline}
\label{LLTplanch8}
\tilde{H}_{\lambda^{[m]}}(t)
\sim
t^{-nm/2}
\exp\left[ \frac{1-q^n}{1-q} mt \right]
t^{-\frac{n}{2} \binom{m}{2}}
(1-q)^{-n \binom{m}{2}}
q^{-\binom{n}{2} \binom{m}{2}}
\left[ \frac{(1-q)^{n}}{(q;q)_n} \right]^{m^2}
g_{\Delta}^{c^{[m]}}(m^n;\vec{Q})
\\
\times
\frac{q^{nm(nm+1)/2}}{(q-1)^{nm}}
\cdot
\left( \frac{1}{2\pi{\tt i}} \right)^{nm}
\int_{-{\tt i}\infty}^{{\tt i}\infty}
dz_1
\cdots 
\int_{-{\tt i}\infty}^{{\tt i}\infty}
dz_{nm}
\prod_{i=1}^{nm}
\exp\left[- \frac{x_i z_i}{Q_i^{1/2}} + \frac{z_i^2}{2 Q_i} \right]
\prod_{i=0}^{n-1}
\prod_{1 \leq j<k \leq m}
(z_{im+k}-z_{im+j}).
\end{multline}
This simplifies to yield
\begin{multline}
\label{LLTplanch9}
\tilde{H}_{\lambda^{[m]}}(t)
\sim
(-1)^{nm}
t^{-\frac{n}{2}\binom{m+1}{2}}
q^{\binom{nm+1}{2}-\frac{1}{2}\binom{m+1}{2} \binom{n}{2}+m\binom{n}{2}}
\exp\left[ \frac{1-q^n}{1-q} mt \right]
\frac{(1-q)^{n\binom{m}{2}}}{(q;q)_n^{m^2}}
g_{\Delta}^{c^{[m]}}(m^n;\vec{Q})
\\
\times
\left( \frac{1}{2\pi{\tt i}} \right)^{nm}
\int_{-{\tt i}\infty}^{{\tt i}\infty}
dz_1
\cdots 
\int_{-{\tt i}\infty}^{{\tt i}\infty}
dz_{nm}
\prod_{i=1}^{nm}
\exp\left[-x_i z_i + \frac{z_i^2}{2} \right]
\prod_{i=0}^{n-1}
\prod_{1 \leq j<k \leq m}
(z_{im+k}-z_{im+j}),
\end{multline}
where we have rescaled the integration variables $z_i \mapsto Q_i^{1/2} z_i$ to obtain the final formula.

Repeating these steps, one easily finds that
\begin{align}
\label{m-to-m+1}
H_{\lambda^{[m+1]}}(t)
=
\left(
q^{-m \binom{n}{2}} 
\tilde{H}_{\lambda^{[m]}}(t)
\right)_{m \mapsto m+1},
\end{align}
which is to be interpreted as taking the right hand side of \eqref{LLTplanch9}, modulo division by $q^{m \binom{n}{2}}$, and replacing all instances of $m$ by $m+1$ in the obvious way. 

The removal of the factor $q^{m \binom{n}{2}}$ requires the following justification. From \eqref{H-funct} and \eqref{H-funct2}, we see that $\tilde{H}_{\lambda^{[m]}}(t)$ depends on the function $\mathbb{G}_{\lambda^{[m]}-1}({\rm Pl}_t)$ rather than $\mathbb{G}_{\lambda^{[m]}}({\rm Pl}_t)$, the latter being the desired quantity that leads to $H_{\lambda^{[m+1]}}(t)$ after the $m \mapsto m+1$ relabelling. Consulting the integral formula \eqref{LLTplanch5}, we see that the only difference between $\mathbb{G}_{\lambda^{[m]}-1}({\rm Pl}_t)$ and $\mathbb{G}_{\lambda^{[m]}}({\rm Pl}_t)$ is that the integrand used for the former contains an extra factor of $\prod_{i=1}^{nm} y_i$ compared with that of the latter. Carrying through steepest descent analysis of $\mathbb{G}_{\lambda^{[m]}}({\rm Pl}_t)$ therefore results in an overall factor $\prod_{i=1}^{nm} Q_i = q^{m \binom{n}{2}}$ less compared with the calculations above, which is the reason that we divide out this factor in \eqref{m-to-m+1}.

\subsubsection{Factorization into GUE corners}

Up to the multiplicative terms in the first line, equation \eqref{LLTplanch9} reveals the factorization of our starting integral \eqref{LLTplanch6} into $n$ identical $m$-dimensional integrals of the form
\begin{align}
\label{I-int}
I(x_1,\dots,x_m)
&=
\left(\frac{1}{2\pi{\tt i}}\right)^{m}
\int_{{\tt i} \cdot \mathbb{R}}
dz_1
\cdots
\int_{{\tt i} \cdot \mathbb{R}}
dz_m
\prod_{1 \leq i < j \leq m}
(z_j-z_i)
\prod_{i=1}^{m}
e^{-x_i z_i + \frac{1}{2} z_i^2},
\\
\nonumber
&=
\prod_{i=1}^{m}
e^{-\frac{1}{2} x_i^2}
\left(\frac{1}{2\pi{\tt i}}\right)^{m}
\int_{{\tt i} \cdot \mathbb{R}}
dz_1
\cdots
\int_{{\tt i} \cdot \mathbb{R}}
dz_m
\prod_{1 \leq i < j \leq m}
(z_j-z_i)
\prod_{i=1}^{m}
e^{\frac{1}{2}(z_i-x_i)^2}.
\end{align}
It is possible to explicitly evaluate the integral \eqref{I-int}, as we now show. Replacing the Vandermonde factor in \eqref{I-int} by its determinant form and using the multilinearity of the determinant, we recover
\begin{align}
\label{I-int2}
I(x_1,\dots,x_m)
&=
\prod_{i=1}^{m}
e^{-\frac{1}{2} x_i^2}
\left(\frac{1}{2\pi {\tt i}}\right)^{m}
\int_{{\tt i} \cdot \mathbb{R}}
dz_1
\cdots
\int_{{\tt i} \cdot \mathbb{R}}
dz_m
\det_{1 \leq i,j \leq m}(z_i^{j-1})
\prod_{i=1}^{m}
e^{\frac{1}{2}(z_i-x_i)^2},
\\
\nonumber
&=
\prod_{i=1}^{m}
e^{-\frac{1}{2} x_i^2}
\det_{1 \leq i,j \leq m}
\left(
\frac{1}{2\pi {\tt i}}
\int_{{\tt i} \cdot \mathbb{R}}
z^{j-1}
e^{\frac{1}{2}(z-x_i)^2}
dz
\right).
\end{align}
Making the change of integration variables ${\tt i}u = z - x_i$ within the second line of \eqref{I-int2}, we have that
\begin{align*}
I(x_1,\dots,x_m)
&=
\prod_{i=1}^{m}
e^{-\frac{1}{2} x_i^2}
\det_{1 \leq i,j \leq m}
\left(
\frac{1}{2\pi}
\int_{-\infty+{\tt i}x_i}^{\infty+{\tt i}x_i}
({\tt i}u+x_i)^{j-1}
e^{-\frac{1}{2}u^2}
du
\right).
\end{align*}
Expanding the factor $({\tt i}u+x_i)^{j-1}$ as a polynomial in $x_i$, this becomes
\begin{align*}
I(x_1,\dots,x_m)
&=
\prod_{i=1}^{m}
e^{-\frac{1}{2} x_i^2}
\det_{1 \leq i,j \leq m}
\left(
\frac{x_i^{j-1}}{2\pi}
\int_{-\infty+{\tt i}x_i}^{\infty+{\tt i}x_i}
e^{-\frac{1}{2}u^2}
du
+
O(x_i^{j-2})
\right),
\end{align*}
and the polynomial term of the form $O(x_i^{j-2})$ can be removed by elementary column transformations. The final result is thus
\begin{align*}
I(x_1,\dots,x_m)
&=
\left(\frac{1}{2\pi}\right)^{\frac{m}{2}}
\prod_{i=1}^{m}
e^{-\frac{1}{2} x_i^2}
\prod_{1 \leq i <j \leq m}
(x_j-x_i).
\end{align*}

\subsection{Final formula}
\label{ssec:formula}

We are now in a position to write the full asymptotic behaviour of the Markov kernel 
$\mathbb{P}_{t,1}(0\cup\lambda^{[m]} \rightarrow \lambda^{[m+1]})$ as $t\rightarrow\infty$. Using \eqref{kernel-reexpress} with $H_{\lambda^{[m]}-1}(t)$ given by
\begin{multline*}
H_{\lambda^{[m]}-1}(t)
\sim
(-1)^{nm}
t^{-\frac{n}{2}\binom{m+1}{2}}
q^{\binom{nm+1}{2}-\frac{1}{2}\binom{m+1}{2} \binom{n}{2}+m\binom{n}{2}}
\\
\times
\exp\left[ \frac{1-q^n}{1-q} mt \right]
\frac{(1-q)^{n\binom{m}{2}}}{(q;q)_n^{m^2}}
g_{\Delta}^{c^{[m]}}(m^n;\vec{Q}^{[m]})
\prod_{i=0}^{n-1}
I\left(x^{[m]}_{im+1},\dots,x^{[m]}_{i(m+1)}\right)
\end{multline*}
and $H_{\lambda^{[m+1]}}(t)$ given by \eqref{m-to-m+1}, we obtain
\begin{multline}
\label{final-formula}
\mathbb{P}_{t,1}(0\cup\lambda^{[m]} \rightarrow \lambda^{[m+1]})
\sim
\bm{1}_{c^{[m]} \prec c^{[m+1]}}
\Upsilon\left(c^{[m]};c^{[m+1]}\right)
q^{{\rm inv}(c^{[m]})-{\rm inv}(c^{[m+1]})}
\cdot
(-1)^{n}
t^{-\frac{n}{2}(m+1)}
\times
\\
q^{\binom{nm+n+1}{2}-\binom{nm+1}{2}-\frac{1}{2}(m+1)\binom{n}{2}}
\frac{(1-q)^{nm}}{(q;q)_n^{2m+1}}
\cdot
\frac{g_{\Delta}^{c^{[m+1]}}\left((m+1)^n;\vec{Q}^{[m+1]}\right)}
{g_{\Delta}^{c^{[m]}}\left(m^n;\vec{Q}^{[m]}\right)}
\prod_{i=1}^{n}
\frac{I\left(x^{[m+1]}_{(i-1)(m+1)+1},\dots,x^{[m+1]}_{i(m+1)}\right)}
{I\left(x^{[m]}_{(i-1)m+1},\dots,x^{[m]}_{im}\right)},
\end{multline}
as $t \rightarrow \infty$. Recall from \eqref{coord-scal} that there is a factor of $(q^{n-\lceil i/(m+1) \rceil} t)^{1/2}$ present in the change of variables from $\ell^{[m+1]}_i \in \mathbb{Z}$ to $x^{[m+1]}_i \in \mathbb{R}$, for all $1 \leq i \leq n(m+1)$. In order to obtain transition densities valid on the scale of the $x^{[m+1]}_i$ variables, we must multiply the above formula by the product of all such factors; namely, by
\begin{align*}
t^{\frac{n}{2} (m+1)} \prod_{i=1}^{n(m+1)} (q^{n-\lceil i/(m+1) \rceil})^{1/2} = 
t^{\frac{n}{2} (m+1)} q^{\frac{1}{2}(m+1)\binom{n}{2}}.
\end{align*}
We then read off the result
\begin{multline}
\label{final-formula2}
\mathbb{P}_{t,1}(0\cup\lambda^{[m]} \rightarrow \lambda^{[m+1]})
\rightarrow
\prod_{i=1}^{n}
\rho_{\rm GUE}
\left(x^{[m]}_{(i-1)m+1},\dots,x^{[m]}_{im} 
\rightarrow
x^{[m+1]}_{(i-1)(m+1)+1},\dots,x^{[m+1]}_{i(m+1)} \right)
dx^{[m+1]}
\\
\times
\bm{1}_{c^{[m]} \prec c^{[m+1]}}
(-1)^{n}
\Upsilon\left(c^{[m]};c^{[m+1]}\right)
q^{{\rm inv}(c^{[m]})-{\rm inv}(c^{[m+1]})}
q^{\binom{nm+n+1}{2}-\binom{nm+1}{2}}
\frac{(1-q)^{nm}}{(q;q)_n^{2m+1}}
\frac{g_{\Delta}^{c^{[m+1]}}\left((m+1)^n;\vec{Q}^{[m+1]}\right)}
{g_{\Delta}^{c^{[m]}}\left(m^n;\vec{Q}^{[m]}\right)}
\end{multline}
as $t \rightarrow \infty$. The convergence in \eqref{final-formula2} is uniform provided that the $x^{[m]}$ and $x^{[m+1]}$ parameters are chosen to vary over compact subsets of $\mathbb{R}$. This completes the first part of the proof of Theorem \ref{thm:main}; it remains to show that the factors present in the second line of \eqref{final-formula2} constitute a valid probability distribution on colour sequences.

\section{Distribution on colour sequences}
\label{sec:discrete-dist}

In the previous section we showed (see \eqref{final-formula2} above) that the Plancherel-specialized LLT Markov kernel \eqref{kernel} splits, under the $t \rightarrow \infty$ asymptotic regime studied, into a product of $n$ independent GUE corners processes multiplied by a further factor valued on colour sequences. Our aim in this section is to show that this extra factor constitutes a discrete probability measure on colour sequences; in showing this, we demonstrate that the right hand side of \eqref{final-formula2} integrates to unity, validating the fact that the set of coloured compositions to which we have restricted our attention captures the full asymptotic behaviour as $t \rightarrow \infty$.

Our primary task will be to better understand the ratio $g_{\Delta}^{c^{[m+1]}} \Big{/}g_{\Delta}^{c^{[m]}}$ appearing in \eqref{final-formula2}. To that end, we begin by defining a family of partition functions that are related to the functions $g_{\Delta}^{c^{[m]}}$ via an explicit symmetry.

\subsection{Partition function $Z$}

Fix two integers $n,m \geq 1$ and a vector $i^{[m]} = (i_1,\dots,i_{nm}) \in [1,n]^{nm}$ such that for all $1 \leq k \leq n$ we have $|\{a: i_a = k\}| = m$. We define the following partition function in the model \eqref{fund-weights}:
\begin{align}
\label{Z-def}
Z\left(x_1,\dots,x_{nm} ; i^{[m]}\right)
=
\tikz{1.2}{
\foreach\y in {1,...,4}{
\draw[lgray,line width=1pt,->] (1.5,\y) -- (4.5,\y);
}
\foreach\x in {1,...,3}{
\draw[lgray,line width=4pt,->] (\x+1,0.5) -- (\x+1,4.5);
}
\node[left] at (1,1) {$x_1 \rightarrow$};
\node[left] at (1,2) {$x_2 \rightarrow$};
\node[left] at (1,3) {$\vdots$};
\node[left] at (1,4) {$x_{nm} \rightarrow $};
\node[below] at (2,0.5) {$\bm{e}_{[1,n]}$};
\node[below] at (3,0.5) {$\cdots$};
\node[below] at (4,0.5) {$\bm{e}_{[1,n]}$};
\node[above] at (2,4.5) {$\bm{e}_0$};
\node[above] at (3,4.5) {$\cdots$};
\node[above] at (4,4.5) {$\bm{e}_0$};
\node[right] at (4.5,1) {$i_1$};
\node[right] at (4.5,2) {$i_2$};
\node[right] at (4.5,3) {$\vdots$};
\node[right] at (4.5,4) {$i_{nm}$};
\node[left] at (1.5,1) {$0$};
\node[left] at (1.5,2) {$0$};
\node[left] at (1.5,3) {$\vdots$};
\node[left] at (1.5,4) {$0$};
}
\end{align}
where each vertex in the $a$-th row of the lattice is assigned rapidity parameter $z = x_a$, for $1 \leq a \leq nm$. We may represent the partition function \eqref{Z-def} algebraically, as follows:
\begin{align*}
Z\left(x_1,\dots,x_{nm} ; i^{[m]}\right)
=
\bra{\bm{e}_{[1,n]}}^{\otimes m}
\DD_{i_1}(x_1) \dots \DD_{i_{nm}}(x_{nm}) 
\ket{\bm{e}_0}^{\otimes m},
\end{align*}
where we recall the row operator definition $\DD_i(x) = T_{0,i}^{\rightarrow}(x;m-1)$ from Section \ref{sec:fused-row}. The partition functions thus defined may be related to those of \eqref{generic-g-sigma}, via the following symmetry:
\begin{prop}
Recall the definition \eqref{nothing} of the trivial element $\Delta \in \mathcal{S}_{m^n}$. For all vectors $i^{[m]} \in [1,n]^{nm}$ we have that
\begin{align}
\label{gZ-sym}
g^{i^{[m]}}_{\Delta}(m^n;x_1,\dots,x_{nm};s)
=
(-s)^{n\binom{m}{2}}
\cdot
Z\left(x_1^{-1},\dots,x_{nm}^{-1} ; i^{[m]}\right)
\Big|_{q \mapsto q^{-1},s \mapsto s^{-1}}
\end{align}
where the variables $q$, $s$ are replaced by their reciprocals in the final partition function.
\end{prop}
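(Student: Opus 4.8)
The plan is to prove \eqref{gZ-sym} by recognizing both sides as partition functions related by a reflection of the underlying lattice about its vertical axis, the parameter reciprocations being exactly what the identity \eqref{LM-sym} produces. First I would unfold the definition: by \eqref{generic-g-sigma}, together with $\bra{\Delta}_{m^n} = \bra{\bm{e}_{[1,n]}}^{\otimes m} \otimes \bra{\bm{e}_0}^{\otimes \infty}$, $\ket{\emptyset} = \ket{\bm{e}_0}^{\otimes\infty}$, and the fact that $|\Delta| = n\binom{m}{2}$, the left-hand side of \eqref{gZ-sym} equals $(-s)^{n\binom{m}{2}}$ times the $M$-model partition function $\bra{\bm{e}_{[1,n]}}^{\otimes m}\, \mathcal{B}_{i_1}(x_1) \cdots \mathcal{B}_{i_{nm}}(x_{nm})\, \ket{\bm{e}_0}^{\otimes m}$, where $\mathcal{B}_i(x) = T^{\leftarrow}_{i,0}(x;\infty)$. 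Because particle conservation for $M$-weights runs SE $\to$ NW (see \eqref{conserve-M}), no path travels rightward, and since the only particles enter through the bottom of the first $m$ columns, every column of index $\geq m$ freezes with weight $1$ and may be discarded; this leaves a genuinely finite ($m$-column) partition function.

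Next I would apply \eqref{LM-sym} vertex by vertex. An $M$-vertex with bottom $\A$, top $\C$, left edge $d$ and right edge $b$ has weight $\tilde M^{(s)}_{x,q}(\A,b;\C,d) = \tilde L^{(1/s)}_{1/x,1/q}(\A,b;\C,d)$, which is the weight of an $L$-vertex with the same bottom and top but with left edge $b$ and right edge $d$ — i.e. the two horizontal labels are interchanged. Redrawing a whole row with rightward-pointing arrows then yields an $L$-model row traversed in reverse column order, with far-left label $0$ and far-right label $i$; in operator language this is precisely $\DD_i(x^{-1})$ in the $L$-model with $(q,s)$ replaced by $(q^{-1},s^{-1})$. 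Carrying this out in every row turns the whole $M$-model partition function into an $L$-model one obtained by reflecting the lattice about its vertical axis: the column order reverses, the left and right horizontal boundaries swap (so the colours $i_a$ migrate from the left edges to the right edges, the $0$'s going the other way), the vertical direction — in particular the pairing of colour $i_a$ and rapidity $x_a$ with row $a$ — is untouched, and $(x_a,q,s) \mapsto (x_a^{-1},q^{-1},s^{-1})$. Crucially, the bottom boundary $\bm{e}_{[1,n]}^{\otimes m}$ and the top boundary $\bm{e}_0^{\otimes m}$ are invariant under reversing the column order, so the reflection leaves them intact. Comparing with \eqref{Z-def}, the reflected partition function is exactly $Z(x_1^{-1},\dots,x_{nm}^{-1};i^{[m]})$ evaluated with $q \mapsto q^{-1}$, $s \mapsto s^{-1}$, and reinstating the prefactor $(-s)^{n\binom{m}{2}}$ gives \eqref{gZ-sym}.

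The main obstacle — and essentially the only place requiring care — is the bookkeeping of this reflection: one must check that interchanging the left/right labels at each vertex is globally consistent with reversing the column order (so that the internal horizontal edges still match), that the rows, and hence the assignment of colour $i_a$ and rapidity $x_a$ to row $a$, are genuinely unaffected, and that the truncation of the infinitely many columns is legitimate. I expect it is cleanest to phrase the argument at the level of row operators: deduce from \eqref{LM-sym} that $T^{\leftarrow}_{i,0}(x;m-1)$ in the $M$-model equals $\rho\, T^{\rightarrow}_{0,i}(x^{-1};m-1)\, \rho$ in the reciprocal-parameter $L$-model, where $\rho$ is the map reversing the $m$ tensor factors, so that in a product $\mathcal{B}_{i_1}(x_1)\cdots\mathcal{B}_{i_{nm}}(x_{nm})$ the interior copies of $\rho$ cancel in pairs; then use the $\rho$-invariance of $\bra{\bm{e}_{[1,n]}}^{\otimes m}$ and $\ket{\bm{e}_0}^{\otimes m}$ to remove the two outermost copies, which yields \eqref{gZ-sym} directly.
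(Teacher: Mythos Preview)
Your proposal is correct and follows the same approach as the paper, which simply invokes the symmetry \eqref{LM-sym} as an immediate consequence. Your write-up spells out in detail the reflection argument (truncation to $m$ columns, the vertex-by-vertex application of \eqref{LM-sym}, and the row-operator formulation with the column-reversal map $\rho$), which the paper leaves entirely implicit; the content is the same.
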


\begin{proof}
This is an immediate consequence of the symmetry \eqref{LM-sym} between the vertex weights used to define \eqref{generic-g-sigma} and \eqref{Z-def}.
\end{proof}

\begin{cor}
The $s=0$ and
\begin{align}
\label{x-spec}
(x_1,\dots,x_{nm})
=
\underbrace{(q^{n-1},\dots,q^{n-1})}_{m\ {\rm times}}
\cup
\cdots
\cup
\underbrace{(q,\dots,q)}_{m\ {\rm times}}
\cup
\underbrace{(1,\dots,1)}_{m\ {\rm times}}
\equiv
\vec{Q}^{[m]}
\end{align}
specializations of \eqref{gZ-sym} are given by
\begin{align}
\label{gZ-sym2}
g^{i^{[m]}}_{\Delta}\left(m^n;\vec{Q}^{[m]}\right)
=
\lim_{s \rightarrow \infty}
(-s)^{-n\binom{m}{2}}
\cdot
Z\left(\vec{Q}^{[m]};i^{[m]}\right)
\Big|_{q \mapsto q^{-1}}.
\end{align}
\end{cor}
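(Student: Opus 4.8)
The plan is to deduce \eqref{gZ-sym2} directly from the general symmetry \eqref{gZ-sym} by specializing the parameters in the order $s\to 0$ first and then the rapidities to $\vec{Q}^{[m]}$, being careful about how the reciprocation $q\mapsto q^{-1},s\mapsto s^{-1}$ interacts with these limits. First I would start from \eqref{gZ-sym} written with an explicit spin parameter,
\[
g^{i^{[m]}}_{\Delta}(m^n;x_1,\dots,x_{nm};s)
=
(-s)^{n\binom{m}{2}}
\cdot
Z\left(x_1^{-1},\dots,x_{nm}^{-1};i^{[m]}\right)\Big|_{q\mapsto q^{-1},\,s\mapsto s^{-1}},
\]
and observe that on the left-hand side the limit $s\to 0$ is well defined (this is exactly the $s=0$ degeneration of $g_\mu$ discussed in Section \ref{ssec:fg}, and it is the object appearing in \eqref{final-formula2}). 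On the right-hand side the substitution $s\mapsto s^{-1}$ turns $s\to 0$ into $s\to\infty$, and the prefactor $(-s)^{n\binom m2}$ becomes $(-s^{-1})^{n\binom m2}$ after the same substitution; equivalently, solving for $Z$ gives
\[
Z\left(x_1^{-1},\dots,x_{nm}^{-1};i^{[m]}\right)\Big|_{q\mapsto q^{-1},\,s\mapsto s^{-1}}
=
(-s)^{-n\binom m2}\, g^{i^{[m]}}_{\Delta}(m^n;x_1,\dots,x_{nm};s).
\]
Renaming the reciprocated spin variable back to $s$ (i.e. replacing the formal variable in $Z|_{q\mapsto q^{-1}}$) converts the $s\mapsto s^{-1}$ substitution into an honest limit $s\to\infty$, which yields precisely
\[
\lim_{s\to\infty}(-s)^{-n\binom m2}\, Z\left(x^{-1};i^{[m]}\right)\Big|_{q\mapsto q^{-1}}
=
\lim_{s\to 0} g^{i^{[m]}}_{\Delta}(m^n;x;s)
=
g^{i^{[m]}}_{\Delta}(m^n;x),
\]
where in the last equality we use the convention, fixed earlier, that $g^{i^{[m]}}_{\Delta}$ without an explicit spin argument means its $s=0$ value.

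Next I would handle the rapidity specialization. The point is that the map $x_a\mapsto x_a^{-1}$ appearing inside $Z$ converts the specialization $(x_1,\dots,x_{nm})=\vec{Q}^{[m]}=(q^{n-1},\dots,q^{n-1}\mid\cdots\mid 1,\dots,1)$ into $(x_a^{-1})=(q^{-(n-1)},\dots\mid\cdots\mid 1,\dots,1)$; but after the simultaneous substitution $q\mapsto q^{-1}$ this becomes $(q^{n-1},\dots\mid\cdots\mid 1,\dots,1)=\vec{Q}^{[m]}$ again. Hence evaluating $Z(x^{-1};i^{[m]})|_{q\mapsto q^{-1}}$ at $x=\vec{Q}^{[m]}$ is the same as evaluating $Z$ itself at $\vec{Q}^{[m]}$ with $q\mapsto q^{-1}$, i.e. $Z(\vec{Q}^{[m]};i^{[m]})|_{q\mapsto q^{-1}}$, which is exactly the right-hand side of \eqref{gZ-sym2}. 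Specializing the identity from the previous paragraph at $x=\vec{Q}^{[m]}$ then gives \eqref{gZ-sym2}. One small routine check is that the two limits commute with the rapidity specialization — that is, that $s\to\infty$ of $(-s)^{-n\binom m2}Z$ can be taken after setting $x=\vec{Q}^{[m]}$ rather than before — which follows because $Z$ is, as a function of $s$, a polynomial divided by powers of $(1-sz)$ for the various rapidities $z$, so the only possible obstruction would be a rapidity hitting $s^{-1}$ in a denominator of \eqref{fund-weights}; at $x=\vec{Q}^{[m]}$ the rapidities are fixed powers of $q$ and stay away from $s^{-1}$ as $s\to\infty$.

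The main obstacle, and the only nontrivial bookkeeping, is keeping the direction of each reciprocation straight: there are three separate inversions in play ($x\mapsto x^{-1}$, $q\mapsto q^{-1}$, $s\mapsto s^{-1}$), they partially cancel on the rapidity slots, and the prefactor $(-s)^{\pm n\binom m2}$ flips sign of its exponent under $s\mapsto s^{-1}$. I would therefore state the argument cleanly as: apply \eqref{gZ-sym} verbatim; solve for the $Z$-factor; relabel the reciprocated spin parameter so that $s\to 0$ on the $g$-side corresponds to $s\to\infty$ on the $Z$-side; invoke the $s=0$ convention for $g^{i^{[m]}}_{\Delta}$; and finally specialize $x\mapsto\vec{Q}^{[m]}$, using that $\vec{Q}^{[m]}$ is invariant under the composite $x\mapsto x^{-1}$, $q\mapsto q^{-1}$. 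No steepest-descent or combinatorial input is needed — it is purely a substitution argument on top of Proposition stating \eqref{gZ-sym}, itself an immediate corollary of \eqref{LM-sym}.
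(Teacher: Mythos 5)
Your proposal is correct and is exactly the intended argument: the paper states this corollary without proof, treating it as the immediate specialization of \eqref{gZ-sym} that you spell out (solve for $Z$, relabel $s^{-1}$ as the spin variable so that $s\to0$ on the $g$-side becomes $s\to\infty$ on the $Z$-side, and use that $\vec{Q}^{[m]}$ is fixed by the composite of $x\mapsto x^{-1}$ with $q\mapsto q^{-1}$). Your sign bookkeeping $(-s^{-1})^{n\binom{m}{2}}=(-s)^{-n\binom{m}{2}}$ and the remark that existence of the $s\to\infty$ limit is forced by the well-definedness of the $s=0$ value of $g^{i^{[m]}}_{\Delta}$ are both accurate.
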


\subsection{Expansion formula}

\begin{thm}
\label{thm:expand}
Fix a vector $i^{[m]} = (i_1,\dots,i_{nm}) \in [1,n]^{nm}$ such that $|\{a: i_a = k\}| = m$ for all $1 \leq k \leq n$. Then there exist explicit rational functions in $q$, denoted $\Theta\left(i^{[m]};j^{[m+1]}\right)$, such that the following expansion formula holds:
\begin{align}
\label{Z-exp}
g^{i^{[m]}}_{\Delta}\left(m^n;\vec{Q}^{[m]}\right)
=
\sum_{j^{[m+1]}}
\Theta\left(i^{[m]};j^{[m+1]}\right)
g^{j^{[m+1]}}_{\Delta}\left((m+1)^n;\vec{Q}^{[m+1]}\right),
\end{align}
where the sum is over vectors $j^{[m+1]} = \left(j_1,\dots,j_{n(m+1)}\right) \in [1,n]^{n(m+1)}$ such that $|\{a : j_a = k\}| = m+1$ for all $1 \leq k \leq n$.
\end{thm}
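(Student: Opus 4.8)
The plan is to prove \eqref{Z-exp} by a ``column growth'' argument at the level of vertex models, using the commutation relations for fused row operators collected in Section \ref{sec:fused-row}, and then to read off the coefficients $\Theta$ as explicit partition functions of a connector diagram. The first move is to pass to the $Z$-side: by the symmetry \eqref{gZ-sym2}, each side of \eqref{Z-exp} equals, up to an explicit $(-s)^{n\binom{m}{2}}$-type prefactor and the harmless substitution $q\mapsto q^{-1}$ (under which the weights \eqref{fund-weights} stay rational), a partition function $Z(\vec{Q}^{[\bullet]};\cdot)$. So it suffices to establish the analogue of \eqref{Z-exp} for the functions $Z(\vec{Q}^{[m]};i^{[m]})=\bra{\bm{e}_{[1,n]}}^{\otimes m}\mathcal{D}_{i_1}(Q_1)\cdots\mathcal{D}_{i_{nm}}(Q_{nm})\ket{\bm{e}_0}^{\otimes m}$; matching prefactors between the two formulations is routine.

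The heart of the argument is the growth step. The partition function $Z(\vec{Q}^{[m+1]};j^{[m+1]})$ differs from $Z(\vec{Q}^{[m]};i^{[m]})$ by exactly one extra vertical line together with $n$ extra horizontal lines, one carrying each of the rapidities $q^{n-1},\dots,q^{0}$ --- this being precisely the difference between the specializations \eqref{x-spec} at levels $m$ and $m+1$. The idea is to build $Z(\vec{Q}^{[m]};i^{[m]})$ up to $Z(\vec{Q}^{[m+1]};j^{[m+1]})$ by inserting this new vertical line and these $n$ new horizontal lines, using the exchange relations \eqref{Cfused-com}, \eqref{Cfused-com2} and the peeling relation \eqref{row-relation2} to slide the new horizontal lines past all of the original ones; since the new vertical line carries the trivial boundary data $\bm{e}_{[1,n]}$ below and $\bm{e}_{0}$ above, every vertex on it is frozen once the rearrangement is complete. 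Summing the resulting identity over all colour labellings $j^{[m+1]}$ of the new structure --- which are exactly those with $i^{[m]}\prec j^{[m+1]}$, because a single extra vertical line can transmit at most one path of each colour --- yields \eqref{Z-exp}, with $\Theta(i^{[m]};j^{[m+1]})$ equal to the partition function of the connector: a finite product of weights from \eqref{fund-weights} evaluated at the specialization \eqref{x-spec}. Rationality in $q$ is then immediate, and tracing colours through the connector (an $\Upsilon$-type diagram as in \eqref{ups}, \eqref{admiss-diag}) exhibits $\Theta$ as a monomial in $q$ times a ratio of $q$-Pochhammer symbols.

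Finally one must pin down the coefficient to the precision needed in Section \ref{sec:discrete-dist}: the power of $q$ counting how often a higher colour slides over a lower one on the connector reproduces the statistic $\xi$ in \eqref{col-markov-intro}, while telescoping of the $q$-Pochhammer ratios attached to the new vertical fused vertex --- an analogue of the telescoping already carried out after \eqref{LLTplanch7} --- produces the normalization constants $\tfrac{(1-q)^{nm}}{(q;q)_n^{2m+1}}$ appearing there. At this point \eqref{Z-exp} can be fed into $\sum_{c^{[m+1]}}\mathbb{P}_{\rm col}(c^{[m]}\to c^{[m+1]})$, where the $g^{j^{[m+1]}}_{\Delta}$ in the numerator and the denominator $g^{c^{[m]}}_{\Delta}$ cancel, giving the sum-to-unity \eqref{discrete-sum-to1}.

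The delicate point is the growth step: one must arrange the slides so that the geometric progressions of spectral parameters demanded by the peeling and exchange relations line up with the block structure of $\vec{Q}^{[m]}$ and $\vec{Q}^{[m+1]}$, and then verify that after inserting and freezing the new vertical line the lattice obtained is genuinely $Z(\vec{Q}^{[m+1]};j^{[m+1]})$ rather than a permuted or otherwise distorted variant. This is exactly where Yang--Baxter integrability, in the guise of the fused-row-operator commutation relations, does the real work; everything else --- the prefactor bookkeeping, rationality, and the explicit form of $\Theta$ --- is a finite manipulation of rational functions in $q$.
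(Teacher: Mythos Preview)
Your plan has the right ingredients --- the passage to the $Z$-side via \eqref{gZ-sym2}, the use of the commutation relations \eqref{Cfused-com}--\eqref{important-relation} and the peeling relation \eqref{row-relation2}, and the identification of the expansion coefficients as a one-row connector partition function --- and the ``second half'' of your argument (sliding rows into position to produce $\sum_{j^{[m+1]}}\Psi\cdot Z(\vec{Q}^{[m+1]};j^{[m+1]})$) is essentially what the paper does in its second evaluation of the auxiliary object $\tilde{Z}$.

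The gap is in the other half. You assert that after inserting a new vertical line with boundary data $\bm{e}_{[1,n]}$ below and $\bm{e}_0$ above, ``every vertex on it is frozen once the rearrangement is complete''. This is not true without further input: a column with those boundaries in an $(m{+}1)$-column lattice does \emph{not} freeze on its own, because the paths entering from below can exit rightward at any of the rows, not only at the top. The paper resolves this by introducing the extra row as a single \emph{fused} operator $\mathcal{D}_{[1,n]}(su;r)$ with a free spectral parameter $u$, defining $\tilde{Z}(u;\vec{Q}^{[m]};i^{[m]})$ on $m{+}1$ columns. One then shows that $(s^2u;q)_n^{m+1}\tilde{Z}$ is $u^n$ times a degree-$nm$ polynomial in $u$; the commutation relation \eqref{Cfused-com} with $J=[1,n]$ locates all of its zeros at $su=qx_k$, and the overall constant is fixed by specializing $u\to s^{-2}q^{1-n}$, which is precisely the value at which the top-left fused vertex forces $\A=\bm{e}_{[1,n]}$ and hence freezes the leftmost column and top row, peeling off $Z(\vec{Q}^{[m]};i^{[m]})$ times an explicit factor. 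Only after this is established does one set $u=s^{-1}$, $r=q^{-n/2}$ and run your expansion argument. Without the free parameter $u$ and its two specializations, there is no mechanism linking the $(m{+}1)$-column object back to the $m$-column one, and your ``column growth'' step has no content.
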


The proof of this theorem is split over the subsequent three subsections. In view of the relation \eqref{gZ-sym2}, all properties of the functions $g^{i^{[m]}}_{\Delta}\left(m^n;\vec{Q}^{[m]}\right)$ and $g^{j^{[m+1]}}_{\Delta}\left((m+1)^n;\vec{Q}^{[m+1]}\right)$ may be deduced from those of $Z\left(\vec{Q}^{[m]};i^{[m]}\right) \equiv Z\left(i^{[m]}\right)$ and $Z\left(\vec{Q}^{[m+1]};j^{[m+1]}\right) \equiv Z\left(j^{[m+1]}\right)$; we adopt this approach in our proof of \eqref{Z-exp}.

\subsection{Partition function $\tilde{Z}$}

Our strategy for proving \eqref{Z-exp} is to define another type of partition function, similar to \eqref{Z-def}, and calculate it in two different ways; the two different evaluations effectively yield the left and right hand sides of \eqref{Z-exp}. To that end, for all vectors 
$i^{[m]} = (i_1,\dots,i_{nm}) \in [1,n]^{nm}$ we introduce
\begin{align}
\label{tilde-Z}
\tilde{Z}\left(u;x_1,\dots,x_{nm};i^{[m]}\right)
=
\tikz{1.2}{
\draw[lgray,line width=4pt,->] (0.5,5) -- (4.5,5);
\foreach\y in {1,...,4}{
\draw[lgray,line width=1pt,->] (0.5,\y) -- (4.5,\y);
}
\foreach\x in {0,...,3}{
\draw[lgray,line width=4pt,->] (\x+1,0.5) -- (\x+1,5.5);
}
\node[left] at (0,1) {$x_1 \rightarrow$};
\node[left] at (0,2) {$x_2 \rightarrow$};
\node[left] at (0,3) {$\vdots$};
\node[left] at (0,4) {$x_{nm} \rightarrow$};
\node[left] at (0,5) {$(su;r) \rightarrow$};
\node[below] at (1,0.5) {$\bm{e}_{[1,n]}$};
\node[below] at (2,0.5) {$\bm{e}_{[1,n]}$};
\node[below] at (3,0.5) {$\cdots$};
\node[below] at (4,0.5) {$\bm{e}_{[1,n]}$};
\node[above] at (1,5.5) {$\bm{e}_0$};
\node[above] at (2,5.5) {$\bm{e}_0$};
\node[above] at (3,5.5) {$\cdots$};
\node[above] at (4,5.5) {$\bm{e}_0$};
\node[right] at (4.5,1) {$i_1$};
\node[right] at (4.5,2) {$i_2$};
\node[right] at (4.5,3) {$\vdots$};
\node[right] at (4.5,4) {$i_{nm}$};
\node[right] at (4.5,5) {$\bm{e}_{[1,n]}$};
\node[left] at (0.5,1) {$0$};
\node[left] at (0.5,2) {$0$};
\node[left] at (0.5,3) {$\vdots$};
\node[left] at (0.5,4) {$0$};
\node[left] at (0.5,5) {$\bm{e}_0$};
}
\end{align}
which is effectively obtained by appending an extra fused row and column to the original partition function \eqref{Z-def}. We represent this partition function algebraically as
\begin{align}
\label{tilde-Z-alg}
\tilde{Z}\left(u; x_1,\dots,x_{nm}; i^{[m]}\right)
=
\bra{\bm{e}_{[1,n]}}^{\otimes m+1}
\DD_{i_1}(x_1) \dots \DD_{i_{nm}}(x_{nm}) \DD_{[1,n]}(su;r)
\ket{\bm{e}_0}^{\otimes m+1}.
\end{align}

\subsection{First evaluation of $\tilde{Z}$}

Let us begin by analysing the dependence of the partition function \eqref{tilde-Z} on the parameter $u$. To do so, we need only study the vertices in the top row of \eqref{tilde-Z}. All of these vertices have weight given by \eqref{fused-weights}--\eqref{w-weight} in which $\C = \V = \bm{e}_0$; in that special case the weights simplify as follows:
\begin{align*}
\tilde{L}^{(r,s)}_{su}(\A,\B; \bm{e}_0,\D)
= 
\bm{1}_{\A+\B=\D}
\cdot
u^{|\D|-|\B|}
r^{-2|\A|}
s^{2|\D|}
W^{(r,s)}_u(\A,\B;\bm{e}_0,\D),
\end{align*}
with
\begin{align*}
W^{(r,s)}_u(\A,\B;\bm{e}_0,\D)
&=
\Phi(\bm{e}_0,\D;s^2 r^{-2} u,s^2 u)
\Phi(\bm{e}_0,\B;r^2 u^{-1},r^2)
=
\frac{(r^2;q)_{|\D|} (u;q)_{|\B|}}{(s^2 u;q)_{|\D|} (r^2;q)_{|\B|}}.
\end{align*}
From these expressions we see that $u^{|\B|-|\D|} \cdot (s^2 u;q)_n \cdot \tilde{L}^{(r,s)}_{su}(\A,\B; \bm{e}_0,\D)$ is a polynomial in $u$ of degree $n-|\D|+|\B|$. It follows that $u^{-n} \cdot (s^2 u;q)_n^{m+1} \cdot \tilde{Z}\left(u; x_1,\dots,x_{nm}; i^{[m]}\right)$ is a polynomial in $u$ of degree $nm$; this can be seen by telescoping the degrees of the individual vertices in the top row of the partition function. 

It turns out to be possible to determine all of the zeros of this polynomial explicitly, using the commutation relation \eqref{Cfused-com} with $J=[1,n]$ and $i=i_k$ for each $1 \leq k \leq nm$; indeed, using this relation in \eqref{tilde-Z-alg}, we find that
\begin{multline*}
\tilde{Z}\left(u; x_1,\dots,x_{nm}; i^{[m]}\right)
=
r^{2nm}
\prod_{k=1}^{nm}
q^{n-i_k}
\left(\frac{q x_k -su}{su-r^2 x_k}\right)
\\
\times
\bra{\bm{e}_{[1,n]}}^{\otimes m+1}
\DD_{[1,n]}(su;r)
\DD_{i_1}(x_1) \dots \DD_{i_{nm}}(x_{nm})
\ket{\bm{e}_0}^{\otimes m+1}
\end{multline*}
which allows us to determine that
\begin{align}
\label{all-zeros}
(s^2 u;q)_n^{m+1}
\cdot
\tilde{Z}\left(u; x_1,\dots,x_{nm}; i^{[m]}\right)
&=
\alpha
\cdot
u^n
\cdot
\prod_{k=1}^{nm}
(su-q x_k)
\end{align}
where $\alpha$ is independent of $u$ but may depend on all other parameters. To determine $\alpha$, we seek an appropriate choice for the parameter $u$ in \eqref{all-zeros}. Our choice is motivated by studying the vertex in the top-left corner of the partition function \eqref{tilde-Z}; this vertex is of the form
\begin{align*}
\tikz{0.6}{
\draw[lgray,line width=4pt,->] (-1,0) -- (1,0);
\draw[lgray,line width=4pt,->] (0,-1) -- (0,1);
\node[left] at (-1,0) {\tiny $\bm{e}_0$};\node[right] at (1,0) {\tiny $\A$};
\node[below] at (0,-1) {\tiny $\A$};\node[above] at (0,1) {\tiny $\bm{e}_0$};
\node[left] at (-1.5,0) {$(su;r) \rightarrow$};
}
&=
\tilde{L}_{su}^{(r,s)}(\A,\bm{e}_0;\bm{e}_0,\A)
=
u^{|\A|} r^{-2|\A|} s^{2|\A|}
\Phi(\bm{e}_0,\A;s^2 r^{-2} u,s^2 u)
\Phi(\bm{e}_0,\bm{e}_0;r^2 u^{-1},r^2)
\\
&=
u^{|\A|} r^{-2|\A|} s^{2|\A|}
\frac{(r^2;q)_{|\A|}}{(s^2 u;q)_{|\A|}}.
\end{align*}
From the explicit form of this vertex weight we find that
\begin{align*}
\lim_{u \rightarrow s^{-2} q^{-n+1}}
(s^2 u;q)_n
\cdot
\tilde{L}_{su}^{(r,s)}(\A,\bm{e}_0;\bm{e}_0,\A)
=
\bm{1}_{\A = \bm{e}_{[1,n]}}
\cdot
q^{-(n-1)n}
r^{-2n}
(r^2;q)_n;
\end{align*}
it follows that if we set $u = s^{-2} q^{-n+1}$ in the left hand side of \eqref{all-zeros}, this produces a freezing of the top row and leftmost column in the partition function \eqref{tilde-Z}:
\begin{multline}
\label{pf-split}
\lim_{u \rightarrow s^{-2} q^{-n+1}}
(s^2 u;q)_n^{m+1}
\cdot
\tilde{Z}\left(u; x_1,\dots,x_{nm}; i^{[m]}\right)
\\
=
\lim_{u \rightarrow s^{-2} q^{-n+1}}
\left[
(s^2 u;q)_n^{m+1}
\times
\tikz{1.1}{
\draw[lgray,line width=4pt,->] (0.5,5) -- (4.5,5);
\foreach\y in {1,...,4}{
\draw[lgray,line width=1pt,->] (0.5,\y) -- (4.5,\y);
}
\foreach\x in {0,...,3}{
\draw[lgray,line width=4pt,->] (\x+1,0.5) -- (\x+1,5.5);
}
\node[left] at (0,1) {$x_1 \rightarrow$};
\node[left] at (0,2) {$x_2 \rightarrow$};
\node[left] at (0,3) {$\vdots$};
\node[left] at (0,4) {$x_{nm} \rightarrow$};
\node[left] at (0,5) {$(su;r) \rightarrow$};
\node[below] at (1,0.5) {$\bm{e}_{[1,n]}$};
\node[below] at (2,0.5) {$\bm{e}_{[1,n]}$};
\node[below] at (3,0.5) {$\cdots$};
\node[below] at (4,0.5) {$\bm{e}_{[1,n]}$};
\node[above] at (1,5.5) {$\bm{e}_0$};
\node[above] at (2,5.5) {$\bm{e}_0$};
\node[above] at (3,5.5) {$\cdots$};
\node[above] at (4,5.5) {$\bm{e}_0$};
\node[right] at (4.5,1) {$i_1$};
\node[right] at (4.5,2) {$i_2$};
\node[right] at (4.5,3) {$\vdots$};
\node[right] at (4.5,4) {$i_{nm}$};
\node[right] at (4.5,5) {$\bm{e}_{[1,n]}$};
\node[left] at (0.5,1) {$0$};
\node[left] at (0.5,2) {$0$};
\node[left] at (0.5,3) {$\vdots$};
\node[left] at (0.5,4) {$0$};
\node[left] at (0.5,5) {$\bm{e}_0$};
\draw[red,line width=0.5pt,->] (0.95,0.5) -- (0.95,5.05) -- (4.3,5.05);
\draw[green,line width=0.5pt,->] (1,0.5) -- (1,5) -- (4.3,5);
\draw[blue,line width=0.5pt,->] (1.05,0.5) -- (1.05,4.95) -- (4.3,4.95);
}
\right]
\end{multline}
where coloured lines flowing through the leftmost column and top row represent the vector $\bm{e}_{[1,n]}$. The quantity \eqref{pf-split} splits into several pieces; apart from the partition function $Z(x_1,\dots,x_{nm};i^{[m]})$ which emerges in the bottom-right corner, there is a contribution from the vertex in the top-left corner, the remaining vertices in the top row, and the remaining vertices in the leftmost column. This leads us to the expression 
\begin{multline*}
\lim_{u \rightarrow s^{-2} q^{-n+1}}
(s^2 u;q)_n^{m+1}
\cdot
\tilde{Z}\left(u; x_1,\dots,x_{nm}; i^{[m]}\right)
=
\lim_{u \rightarrow s^{-2} q^{-n+1}}
\left[ (s^2 u;q)_n \times
\tikz{0.6}{
\draw[lgray,line width=4pt,->] (-1,0) -- (1,0);
\draw[lgray,line width=4pt,->] (0,-1) -- (0,1);
\node[left] at (-1,0) {\tiny $\bm{e}_0$};\node[right] at (1,0) {\tiny $\bm{e}_{[1,n]}$};
\node[below] at (0,-1) {\tiny $\bm{e}_{[1,n]}$};\node[above] at (0,1) {\tiny $\bm{e}_0$};
\node[left] at (-1.5,0) {$(su;r) \rightarrow$};
}
\right]
\\
\times
\lim_{u \rightarrow s^{-2} q^{-n+1}}
\left[ 
(s^2 u;q)_n \times
\tikz{0.6}{
\draw[lgray,line width=4pt,->] (-1,0) -- (1,0);
\draw[lgray,line width=4pt,->] (0,-1) -- (0,1);
\node[left] at (-1,0) {\tiny $\bm{e}_{[1,n]}$};\node[right] at (1,0) {\tiny $\bm{e}_{[1,n]}$};
\node[below] at (0,-1) {\tiny $\bm{e}_0$};\node[above] at (0,1) {\tiny $\bm{e}_0$};
\node[left] at (-2.1,0) {$(su;r) \rightarrow$};
}
\right]^m
\prod_{k=1}^{nm}
\left[
\tikz{0.6}{
\draw[lgray,line width=1pt,->] (-1,0) -- (1,0);
\draw[lgray,line width=4pt,->] (0,-1) -- (0,1);
\node[left] at (-1,0) {\tiny $0$};\node[right] at (1,0) {\tiny $0$};
\node[below] at (0,-1) {\tiny $\bm{e}_{[1,n]}$};\node[above] at (0,1) {\tiny $\bm{e}_{[1,n]}$};
\node[left] at (-1.5,0) {$x_k \rightarrow$};
}
\right]
Z\left(x_1,\dots,x_{nm}; i^{[m]}\right).
\end{multline*}
Explicitly computing the weight of each bracketed expression above, we arrive at the following evaluation of the left hand side of \eqref{all-zeros}:
\begin{multline*}
\lim_{u \rightarrow s^{-2} q^{-n+1}}
(s^2 u;q)_n^{m+1}
\cdot
\tilde{Z}\left(u; x_1,\dots,x_{nm}; i^{[m]}\right)
\\
=
q^{-(n-1)n} r^{-2n} (r^2;q)_n
\Big[ 
s^{2n} (s^{-2}q^{-n+1};q)_n
\Big]^m
\prod_{k=1}^{nm} \frac{1-s q^n x_k}{1-s x_k}
\cdot
Z\left(x_1,\dots,x_{nm}; i^{[m]}\right).
\end{multline*}
Equating this with the corresponding limit of the right hand side of \eqref{all-zeros}, we determine the constant $\alpha$ to be
\begin{align*}
\alpha
=
s^{nm} \left( \frac{s}{r} \right)^{2n} (r^2;q)_n
\frac{\prod_{k=1}^{n} (s^2 q^{n-1}-q^{k-1})^m}{\prod_{k=1}^{nm}(1-sx_k)}
Z\left(x_1,\dots,x_{nm}; i^{[m]}\right).
\end{align*}
Substituting this into \eqref{all-zeros}, we obtain our first evaluation of the partition function:
\begin{multline}
\label{first-eval}
\tilde{Z}\left(u; x_1,\dots,x_{nm}; i^{[m]}\right)
\\
=
u^n
s^{nm}
\left( \frac{s}{r} \right)^{2n}
\frac{(r^2;q)_n}{(s^2 u;q)_n}
\prod_{k=1}^{nm}
\left( \frac{su-q x_k}{1-sx_k} \right)
\prod_{k=1}^{n}
\left(
\frac{s^2 q^{n-1}-q^{k-1}}{1-s^2 q^{k-1} u}
\right)^m
Z\left(x_1,\dots,x_{nm}; i^{[m]}\right).
\end{multline}
Since our ultimate aim is to prove Theorem \ref{thm:expand}, we shall be most interested in setting $u=s^{-1}$, $r=q^{-n/2}$ and choosing $(x_1,\dots,x_{nm})$ as in \eqref{x-spec}. Making these specializations in \eqref{first-eval}, we have
\begin{align}
\label{first-eval2}
\left.
\tilde{Z}\left(s^{-1}; \vec{Q}^{[m]}; i^{[m]} \right)
\right|_{r = q^{-n/2}}
=
(-1)^{nm}
s^{n(m+1)}
q^{n^2+\binom{n}{2}m}
\frac{
(q^{-n};q)_n
(q;q)_n^m
(s^2;q)_n^m
}
{(s;q)_n^{2m+1}}
Z\left(\vec{Q}^{[m]}; i^{[m]}\right).
\end{align}

\subsection{Second evaluation of $\tilde{Z}$}

For the second evaluation of $\tilde{Z}\left(u; x_1,\dots,x_{nm}; i^{[m]}\right)$, we take from the outset $u=s^{-1}$, $r=q^{-n/2}$ and specialize $(x_1,\dots,x_{nm})$ as in \eqref{x-spec}. We then compute \eqref{tilde-Z-alg} via a sequence of manipulations of the row operators; the relations that we need are \eqref{important-relation} and \eqref{row-relation2}. Commutation relation \eqref{important-relation} allows us to reverse the order of a pair of operators $\mathcal{D}_i(x) \mathcal{D}_J(x;q^{-p/2})$, where $J$ has cardinality $p$; \eqref{row-relation2} allows us to split off an unfused row operator from the row operator 
$\mathcal{D}_I(x;q^{-p/2})$ of width $p$, reducing it to a row operator of width $p-1$, and $q$-shifting its spectral parameter $x$. 

For any integer $k \in [1,n]$, let $J(k)$ be a subset of $[1,n]$ with $|J(k)| = k$; this means that, in particular, $J(n) = [1,n]$. Using \eqref{important-relation} repeatedly, we may start from a product of row operators of the form
\begin{align*}
\mathcal{D}_{i_{(k-1)m+1}}(q^{n-k}) \cdots \mathcal{D}_{i_{km}}(q^{n-k})
\mathcal{D}_{J(k)}(q^{n-k};q^{-k/2})
\end{align*}
and drag the width $k$ operator $\mathcal{D}_{J(k)}(q^{n-k};q^{-k/2})$ towards the left of the product. After that, we apply \eqref{row-relation2} and reduce the leftward-emerging row operator to one of width $k-1$ (at the expense of splitting off a single unfused row operator, to its right); this changes its argument from $q^{n-k}$ to $q^{n-k+1}$, and we denote the resulting operator by $\mathcal{D}_{J(k-1)}(q^{n-k+1};q^{-(k-1)/2})$. Repeating this process for all $k \in [1,n]$, beginning with $k=n$ and reducing $k$ by $1$ at each step, it is straightforward to derive the following expansion:
\begin{multline}
\label{psi-expand}
\left.
\tilde{Z}\left(s^{-1}; \vec{Q}^{[m]}; i^{[m]} \right)
\right|_{r = q^{-n/2}}
=
\left[ \frac{(1-q)^{n}}{(q;q)_n} \right]^m
\\
\times
\sum_{j^{[m+1]}}
\Psi\left( i^{[m]}; j^{[m+1]} \right)
\bra{\bm{e}_{[1,n]}}^{\otimes m+1}
\prod_{k=1}^{n}
\Big[
\mathcal{D}_{j_{(k-1)(m+1)+1}}(q^{n-k})
\cdots
\mathcal{D}_{j_{k(m+1)}}(q^{n-k})
\Big]
\ket{\bm{e}_0}^{\otimes m+1}
\end{multline}
where the sum is taken over all vectors $j^{[m+1]} = (j_1,\dots,j_{n(m+1)}) \in [1,n]^{m+1}$ and with the coefficients $\Psi\left( i^{[m]}; j^{[m+1]} \right)$ given by the following one-row partition function:
\begin{multline*}
\Psi\left( i^{[m]}; j^{[m+1]} \right)
=
\\
\tikz{1.3}{
\draw[lgray,line width=4pt,<-] (0.5,0) -- (12,0);
\foreach\x in {1,2,3,5,6,7,9,10,11}{
\draw[lgray,line width=1.5pt,->] (\x,0) -- (\x,0.5);
}
\node[above] at (1,0.5) {$j_1$};
\node[above] at (2,0.5) {$\cdots$};
\node[above] at (3,0.5) {$j_{m+1}$};
\node[above] at (5,0.5) {$\cdots$};
\node[above] at (6,0.5) {$\cdots$};
\node[above] at (7,0.5) {$\cdots$};
\node[above] at (9,0.5) {$j_{(n-1)(m+1)+1}$};
\node[above] at (10.2,0.5) {$\cdots$};
\node[above] at (11,0.5) {$j_{n(m+1)}$};
\foreach\x in {1.5,2.5,5.5,6.5,9.5,10.5}{
\draw[lgray,line width=1.5pt,->] (\x,-0.5) -- (\x,0);
}
\node[below] at (1.5,-0.5) {$i_1$};
\node[below] at (2,-0.5) {$\cdots$};
\node[below] at (2.5,-0.5) {$i_m$};
\node[below] at (5.5,-0.5) {$\cdots$};
\node[below] at (6.5,-0.5) {$\cdots$};
\node[below] at (9.2,-0.5) {$i_{(n-1)m+1}$};
\node[below] at (10.1,-0.5) {$\cdots$};
\node[below] at (10.7,-0.5) {$i_{nm}$};
\node[left] at (0.5,0) {$\emptyset$}; \node[right] at (12,0) {$[1,n]$};
}
\end{multline*}
where colours are conserved in the direction of arrow flow, and with a weight of $q$ assigned to each of the events
\begin{align*}
\tikz{1.3}{
\draw[lgray,line width=4pt,<-] (0.5,0) -- (1.5,0);
\draw[lgray,line width=1.5pt,->] (1,-0.5) -- (1,0);
\node[below] at (1,-0.5) {$i$};
\node[left] at (0.5,0) {$c$};
}
\end{align*}
which denotes a path of colour $c$ passing above a colour $i$, with $c>i$. Recasting \eqref{psi-expand} in terms of the family of partition functions \eqref{Z-def}, we recover the expansion
\begin{align}
\label{second-eval}
\left.
\tilde{Z}\left(s^{-1}; \vec{Q}^{[m]}; i^{[m]} \right)
\right|_{r = q^{-n/2}}
=
\left[ \frac{(1-q)^{n}}{(q;q)_n} \right]^m
\times
\sum_{j^{[m+1]}}
\bm{1}_{i^{[m]} \prec j^{[m+1]}}
\Psi\left( i^{[m]}; j^{[m+1]} \right)
Z\left( \vec{Q}^{[m+1]}; j^{[m+1]} \right).
\end{align}

\subsection{Comparing evaluations of $\tilde{Z}$}

Comparing equations \eqref{first-eval2} and \eqref{second-eval}, we have shown that
\begin{multline*}
Z\left(\vec{Q}^{[m]}; i^{[m]}\right)
=
(-1)^{n(m+1)}
s^{-n(m+1)}
q^{-\binom{n}{2}(m+1)}
\\
\times
\left( \frac{(1-q)^{n}}{(s^2;q)_n} \right)^m
\left( \frac{(s;q)_n}{(q;q)_n} \right)^{2m+1}
\sum_{j^{[m+1]}}
\bm{1}_{i^{[m]} \prec j^{[m+1]}}
\Psi\left( i^{[m]}; j^{[m+1]} \right)
Z\left( \vec{Q}^{[m+1]}; j^{[m+1]} \right).
\end{multline*}
Inverting $q$ in the previous equation and multiplying both sides by $(-s)^{-n \binom{m}{2}}$, we obtain 
\begin{multline*}
(-s)^{-n \binom{m}{2}}
Z\left(\vec{Q}^{[m]}; i^{[m]}\right)
\Big|_{q \mapsto q^{-1}}
=
q^{\binom{n}{2}(m+1)}
(-s)^{-n}
\left( \frac{(1-q^{-1})^{n}}{(s^2;q^{-1})_n} \right)^m
\left( \frac{(s;q^{-1})_n}{(q^{-1};q^{-1})_n} \right)^{2m+1}
\\
\times
\sum_{j^{[m+1]}}
\bm{1}_{i^{[m]} \prec j^{[m+1]}}
\Psi\left( i^{[m]}; j^{[m+1]} \right)
\Big|_{q \mapsto q^{-1}}
(-s)^{-n \binom{m+1}{2}}
Z\left( \vec{Q}^{[m+1]}; j^{[m+1]} \right)
\Big|_{q \mapsto q^{-1}},
\end{multline*}
and we now take the limit $s \rightarrow \infty$ using \eqref{gZ-sym2}:
\begin{align}
\label{4548}
g^{i^{[m]}}_{\Delta}\left(m^n;\vec{Q}^{[m]}\right)
=
\frac{(q^{-1}-1)^{nm}}{(q^{-1};q^{-1})_n^{2m+1}}
\sum_{j^{[m+1]}}
\bm{1}_{i^{[m]} \prec j^{[m+1]}}
\Psi\left( i^{[m]}; j^{[m+1]} \right)
\Big|_{q \mapsto q^{-1}}
g^{j^{[m+1]}}_{\Delta}\left((m+1)^n;\vec{Q}^{[m+1]}\right).
\end{align}
Finally, rearranging the factors in \eqref{4548}, one recovers
\begin{multline*}
g^{i^{[m]}}_{\Delta}\left(m^n;\vec{Q}^{[m]}\right)
=
(-1)^n
q^{\binom{nm+n+1}{2}-\binom{nm+1}{2}}
\frac{(1-q)^{nm}}{(q;q)_n^{2m+1}}
\\
\times
\sum_{j^{[m+1]}}
\bm{1}_{i^{[m]} \prec j^{[m+1]}}
\Psi\left( i^{[m]}; j^{[m+1]} \right)
\Big|_{q \mapsto q^{-1}}
g^{j^{[m+1]}}_{\Delta}\left((m+1)^n;\vec{Q}^{[m+1]}\right).
\end{multline*}
This completes the proof of Theorem \ref{thm:expand}, with the coefficients in \eqref{Z-exp} identified as
\begin{align}
\label{4792}
\Theta\left( i^{[m]}; j^{[m+1]} \right)
=
\bm{1}_{i^{[m]} \prec j^{[m+1]}}
(-1)^n
q^{\binom{nm+n+1}{2}-\binom{nm+1}{2}}
\frac{(1-q)^{nm}}{(q;q)_n^{2m+1}}
\Psi\left( i^{[m]}; j^{[m+1]} \right)
\Big|_{q \mapsto q^{-1}}.
\end{align}

\subsection{Completing the proof of Theorem \ref{thm:main}}

Rearrangement of \eqref{Z-exp}, using \eqref{4792}, yields the fact that
\begin{align}
\label{sum-to-1-seqs}
\sum_{j^{[m+1]}}
\bm{1}_{i^{[m]} \prec j^{[m+1]}}
(-1)^n
q^{\binom{nm+n+1}{2}-\binom{nm+1}{2}}
\frac{(1-q)^{nm}}{(q;q)_n^{2m+1}}
\Psi\left( i^{[m]}; j^{[m+1]} \right)
\Big|_{q \mapsto q^{-1}}
\frac{
g^{j^{[m+1]}}_{\Delta}\left((m+1)^n;\vec{Q}^{[m+1]}\right)
}
{
g^{i^{[m]}}_{\Delta}\left(m^n;\vec{Q}^{[m]}\right)
}
=1.
\end{align}
This is precisely the result that we need to complete the proof of Theorem \ref{thm:main}; all that remains is to match the summand of \eqref{sum-to-1-seqs} with the second line of equation \eqref{final-formula2}. We observe that all factors match perfectly, modulo the following proposition that takes care of the factors that are not yet manifestly equal:

\begin{prop}
Fix two colour sequences $c^{[m]} \in [1,n]^{nm}$ and $c^{[m+1]} \in [1,n]^{n(m+1)}$ which satisfy the constraints 
$|\{a: c^{[m]}_a = k\}| = m$ and $|\{a: c^{[m+1]}_a = k\}| = m+1$ for all $1 \leq k \leq n$. Assuming also that $c^{[m]} \prec c^{[m+1]}$, the following relation holds:
\begin{align}
\label{statistic-match}
\Psi\left( c^{[m]}; c^{[m+1]} \right)
\Upsilon\left(c^{[m]};c^{[m+1]}\right)
q^{{\rm inv}(c^{[m]})-{\rm inv}(c^{[m+1]})}
=
1.
\end{align}
\end{prop}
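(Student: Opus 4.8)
\emph{Proof plan.} Both $\Psi\left(c^{[m]};c^{[m+1]}\right)$ and $\Upsilon\left(c^{[m]};c^{[m+1]}\right)$ are, by construction, monomials in $q$; write $\Psi = q^{\xi}$ and $\Upsilon = q^{\eta}$, where $\xi$ (respectively $\eta$) is the number of marked crossing events in the diagram defining $\Psi$ (respectively $\Upsilon$). Since ${\rm inv}\left(c^{[m]}\right)$ and ${\rm inv}\left(c^{[m+1]}\right)$ are ordinary inversion numbers, \eqref{statistic-match} is equivalent to the additive identity
\begin{align*}
\xi + \eta + {\rm inv}\left(c^{[m]}\right) = {\rm inv}\left(c^{[m+1]}\right),
\end{align*}
and the plan is to prove this by accounting for crossings one unordered colour pair at a time.

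First I would make the planar structure of the two auxiliary diagrams explicit. In the diagram defining $\Psi$ the thick line runs leftward with the bundle $[1,n]$ entering from the right, so each colour $k$ has $m+1$ sources --- one side strand together with the $m$ bottom strands of colour $k$ --- matched in planar (non-crossing) fashion to its $m+1$ top strands; in the diagram defining $\Upsilon$ the thick line instead runs rightward with $[1,n]$ entering from the left. Using the common spatial layout of \eqref{admiss-diag}, let $u^{(k)}_1 < \cdots < u^{(k)}_{m+1}$ denote the columns carrying colour $k$ in $c^{[m+1]}$ and $v^{(k)}_1 < \cdots < v^{(k)}_{m}$ those carrying colour $k$ in $c^{[m]}$; the hypothesis $c^{[m]} \prec c^{[m+1]}$ forces the per-colour interlacing $u^{(k)}_1 < v^{(k)}_1 < u^{(k)}_2 < \cdots < v^{(k)}_m < u^{(k)}_{m+1}$. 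Following the planar matchings, in the $\Psi$-diagram the horizontal segments of colour $k$ cover exactly the intervals $\left(u^{(k)}_{m+1}, +\infty\right)$ and $\left(u^{(k)}_j, v^{(k)}_j\right)$ for $1 \leq j \leq m$, whereas in the $\Upsilon$-diagram they cover $\left(-\infty, u^{(k)}_1\right)$ and $\left(v^{(k)}_j, u^{(k)}_{j+1}\right)$ for $1 \leq j \leq m$. The crucial structural point is that these two families of intervals, together with the colour-$k$ columns themselves, tile $\mathbb{R}$: a point lies in a colour-$k$ $\Psi$-interval exactly when the nearest colour-$k$ column to its left belongs to $c^{[m+1]}$, and in a colour-$k$ $\Upsilon$-interval exactly when that nearest column belongs to $c^{[m]}$ (or no colour-$k$ column lies to its left).

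With this dictionary, each of $\xi$, $\eta$, ${\rm inv}\left(c^{[m]}\right)$, ${\rm inv}\left(c^{[m+1]}\right)$ splits as a sum over unordered colour pairs $\{a < b\}$: the pair contributes to $\xi$ once for each column of $c^{[m]}$ of colour $a$ lying in a colour-$b$ $\Psi$-interval, to $\eta$ once for each column of $c^{[m+1]}$ of colour $b$ lying in a colour-$a$ $\Upsilon$-interval, to ${\rm inv}\left(c^{[m]}\right)$ once for each pair of columns of $c^{[m]}$ realising colours $b$ then $a$ from left to right, and symmetrically for ${\rm inv}\left(c^{[m+1]}\right)$. It therefore suffices to prove, for arbitrary interleaved finite sets $u^{(a)}_{\bullet}, v^{(a)}_{\bullet}$ and $u^{(b)}_{\bullet}, v^{(b)}_{\bullet}$ as above (no relation between the $a$- and $b$-sets being needed), the elementary identity obtained by substituting the interval descriptions into the four-term relation above. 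I would check this by sweeping a point from left to right along $\mathbb{R}$ while maintaining, for each colour, the running difference between the number of $c^{[m+1]}$-columns and of $c^{[m]}$-columns of that colour already passed --- a quantity kept in $\{0,1\}$ by the interlacing --- so that each of the four contributions becomes the increment of an explicit telescoping sum and the tiling property forces the increments to cancel.

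The main obstacle is the first step: reconstructing the planar path structure of the two diagrams correctly, in particular how the side bundle $[1,n]$ --- which enters from opposite ends in the two cases --- is matched to the top strands, and verifying that $c^{[m]} \prec c^{[m+1]}$ is precisely what makes both diagrams well posed, i.e.\ no colour ever occurs twice on the thick line. Once the interval dictionary is established the surviving identity is a routine finite computation; the delicate point is keeping the two orientations and the ``which colour passes over which'' conventions exactly straight, so that $\xi$ and $\eta$ genuinely enumerate complementary families of crossings whose combination with the two inversion statistics telescopes.
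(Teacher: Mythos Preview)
Your proposal is correct and takes a genuinely different route from the paper's proof.

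The paper proceeds by induction on $n$: the base case $n=1$ is trivial, and for the inductive step one first observes that appending a rightmost bundle consisting entirely of the maximal colour $p$ to both $c^{[m]}$ and $c^{[m+1]}$ leaves $\Psi$, $\Upsilon$, and both inversion numbers unchanged, giving one solution of \eqref{statistic-match} at rank $n=p$. The paper then introduces four ``local moves'' (swapping the maximal colour $p$ with some smaller colour $a$, either across bundles or within a bundle, on either the top or bottom row) and verifies by explicit case-checking that each move preserves \eqref{statistic-match}. Since these moves generate all interlacing pairs, the induction closes.

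Your argument is instead a direct bijective/counting proof with no induction and no case analysis. The key structural insight --- that the horizontal intervals occupied by colour $k$ in the $\Psi$-diagram and in the $\Upsilon$-diagram are exactly complementary (they tile $\mathbb{R}$, and the indicator of ``$x$ lies in a $\Psi_k$-interval'' equals the $\{0,1\}$-valued running difference $N_k^{+}(x)-N_k^{-}(x)$) --- is not made explicit in the paper, and it is what allows you to decompose the identity cleanly over unordered colour pairs $\{a<b\}$. Once that decomposition is in place, the per-pair identity reduces (as you indicate) to rewriting each of the four terms as a sum of the running differences $N_k^{\pm}$ evaluated at the relevant column positions; the complementary-counting identities $\sum_j N_b^{+}(v^{(a)}_j)+\sum_j N_a^{-}(u^{(b)}_j)=m(m+1)$ and $\sum_j N_a^{+}(u^{(b)}_j)+\sum_j N_b^{+}(u^{(a)}_j)=(m+1)^2$ then collapse everything to the tautology $m(m+1)+(m+1)=(m+1)^2$.

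What each approach buys: the paper's local-move argument is closer in spirit to the Hecke-algebra exchange relations used elsewhere in the text and makes the role of the maximal colour transparent, at the cost of four separate case checks. Your approach is more economical, exposes why the two statistics $\Psi$ and $\Upsilon$ are complementary, and would generalise more readily if one wanted to weight the crossings differently. The delicate point you flag --- getting the orientations and the ``who passes over whom'' conventions exactly right so that $\xi$ and $\eta$ really count complementary crossings --- is indeed the only place where care is needed, and your interval description handles it correctly.
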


\begin{proof}
This statement is equivalent to \cite[Lemma 10.1.2]{ABW21}, but rather than making a detailed match with that result, we give a standalone proof. As in \cite{ABW21}, we shall proceed by induction on $n$.

The $n=1$ case of \eqref{statistic-match} is trivial; in that case we must have $c^{[m]} = 1^m$ and $c^{[m+1]} = 1^{m+1}$, which yields
\begin{align*}
\Psi\left( c^{[m]}; c^{[m+1]} \right)
=
\Upsilon\left(c^{[m]};c^{[m+1]}\right)
=
q^{{\rm inv}(c^{[m]})-{\rm inv}(c^{[m+1]})}
=
1.
\end{align*}
We shall take as our inductive assumption that \eqref{statistic-match} is valid for $n = p-1$, for given $p \geq 2$. Then for any $c^{[m]} \in [1,p-1]^{(p-1)m}$ and $c^{[m+1]} \in [1,p-1]^{(p-1)(m+1)}$, we also have
\begin{align}
\label{ind-step}
\Psi\left( c^{[m]}\cup p^m; c^{[m+1]} \cup p^{m+1} \right)
\Upsilon\left(c^{[m]}\cup p^m;c^{[m+1]} \cup p^{m+1}\right)
q^{{\rm inv}(c^{[m]}\cup p^m)-{\rm inv}(c^{[m+1]} \cup p^{m+1})}
=
1.
\end{align}
Indeed, one can verify that appending a bundle of (maximal) colours $p$ to both $c^{[m]}$ and $c^{[m+1]}$ does not affect either of the partition functions $\Psi$ and $\Upsilon$, neither does it affect the value of the statistic ${\rm inv}$. Equation \eqref{ind-step} then constitutes a solution of \eqref{statistic-match} at $n=p$; to prove that \eqref{statistic-match} holds generally at $n=p$, we seek ``local moves'' that can be applied to $c^{[m]}\cup p^m$ and $c^{[m+1]} \cup p^{m+1}$ to bring them to generic colour sequences in $[1,p]^{pm}$ and $[1,p]^{p(m+1)}$, respectively. These local moves will have the property that they preserve the interlacing property of the colour sequences, and applying them to a solution of \eqref{statistic-match} will yield a new solution. 

The first two local moves that one requires are jumps across bundles:
\begin{align}
\label{move1}
\tikz{0.8}{
\draw[lgray,line width=4pt] (0.5,0) -- (7.5,0);
\foreach\x in {1,2,3,5,6,7}{
\draw[lgray,line width=1.5pt,->] (\x,0) -- (\x,0.5);
}
\node[above] at (1,0.5) {$\cdots$};
\node[above] at (2,0.5) {$\cdots$};
\node[above] at (3,0.5) {$a$};
\node[above] at (5,0.5) {$p$};
\node[above] at (6,0.5) {$\cdots$};
\node[above] at (7,0.5) {$\cdots$};
\foreach\x in {1.5,2.5,5.5,6.5}{
\draw[lgray,line width=1.5pt,->] (\x,-0.5) -- (\x,0);
}
\node[below] at (1.5,-0.5) {$\cdots$};
\node[below] at (2.5,-0.5) {$\cdots$};
\node[below] at (5.5,-0.5) {$\cdots$};
\node[below] at (6.5,-0.5) {$\cdots$};
\draw[line width=2pt] (2.75,0) -- (5.25,0);
}
\mapsto
\tikz{0.8}{
\draw[lgray,line width=4pt] (0.5,0) -- (7.5,0);
\foreach\x in {1,2,3,5,6,7}{
\draw[lgray,line width=1.5pt,->] (\x,0) -- (\x,0.5);
}
\node[above] at (1,0.5) {$\cdots$};
\node[above] at (2,0.5) {$\cdots$};
\node[above] at (3,0.5) {$p$};
\node[above] at (5,0.5) {$a$};
\node[above] at (6,0.5) {$\cdots$};
\node[above] at (7,0.5) {$\cdots$};
\foreach\x in {1.5,2.5,5.5,6.5}{
\draw[lgray,line width=1.5pt,->] (\x,-0.5) -- (\x,0);
}
\node[below] at (1.5,-0.5) {$\cdots$};
\node[below] at (2.5,-0.5) {$\cdots$};
\node[below] at (5.5,-0.5) {$\cdots$};
\node[below] at (6.5,-0.5) {$\cdots$};
\draw[line width=2pt] (2.75,0) -- (5.25,0);
}
\end{align}
\begin{align}
\label{move2}
\tikz{0.8}{
\draw[lgray,line width=4pt] (0.5,0) -- (7.5,0);
\foreach\x in {1,2,3,5,6,7}{
\draw[lgray,line width=1.5pt,->] (\x,0) -- (\x,0.5);
}
\node[above] at (1,0.5) {$\cdots$};
\node[above] at (2,0.5) {$\cdots$};
\node[above] at (3,0.5) {$b$};
\node[above] at (5,0.5) {$c$};
\node[above] at (6,0.5) {$\cdots$};
\node[above] at (7,0.5) {$\cdots$};
\foreach\x in {1.5,2.5,5.5,6.5}{
\draw[lgray,line width=1.5pt,->] (\x,-0.5) -- (\x,0);
}
\node[below] at (1.5,-0.5) {$\cdots$};
\node[below] at (2.5,-0.5) {$a$};
\node[below] at (5.5,-0.5) {$p$};
\node[below] at (6.5,-0.5) {$\cdots$};
\draw[line width=2pt] (2.25,0) -- (5.75,0);
}
\mapsto
\tikz{0.8}{
\draw[lgray,line width=4pt] (0.5,0) -- (7.5,0);
\foreach\x in {1,2,3,5,6,7}{
\draw[lgray,line width=1.5pt,->] (\x,0) -- (\x,0.5);
}
\node[above] at (1,0.5) {$\cdots$};
\node[above] at (2,0.5) {$\cdots$};
\node[above] at (3,0.5) {$b$};
\node[above] at (5,0.5) {$c$};
\node[above] at (6,0.5) {$\cdots$};
\node[above] at (7,0.5) {$\cdots$};
\foreach\x in {1.5,2.5,5.5,6.5}{
\draw[lgray,line width=1.5pt,->] (\x,-0.5) -- (\x,0);
}
\node[below] at (1.5,-0.5) {$\cdots$};
\node[below] at (2.5,-0.5) {$p$};
\node[below] at (5.5,-0.5) {$a$};
\node[below] at (6.5,-0.5) {$\cdots$};
\draw[line width=2pt] (2.25,0) -- (5.75,0);
}
\end{align}
where arrows at the bottom of the diagram indicate colours in $c^{[m]}$, while those at the top indicate colours in $c^{[m+1]}$. All marked colours $a,b,c,p$ are assumed to be distinct (were they not distinct in the second case, the interlacing property of colours would be violated in at least one of the pictures shown), with $p$ being the largest. All colours remain fixed under these moves, apart from $a$ and $p$, which switch places. Let $\Psi_{{\sf L}/{\sf R}}$ and $\Upsilon_{{\sf L}/{\sf R}}$ denote the contributions to the functions $\Psi$ and $\Upsilon$ coming only from the indicated colours and marked regions of the diagrams, on the left/right hand side of both \eqref{move1} and \eqref{move2}.

In the case of \eqref{move1}, one finds that
\begin{align*}
\Psi_{\sf L}\cdot \Upsilon_{\sf L}
=
1,
\qquad
\Psi_{\sf R}\cdot \Upsilon_{\sf R}
=
q,
\end{align*}
but since ${\rm inv}(c^{[m+1]})$ also increases by $1$ under the move \eqref{move1}, we find that \eqref{statistic-match} is preserved. In a similar vein, in the case of \eqref{move2} one obtains
\begin{align*}
\Psi_{\sf L}\cdot \Upsilon_{\sf L}
=
q\cdot q^{\bm{1}_{b>c}+\bm{1}_{b>a}+\bm{1}_{c>a}},
\qquad
\Psi_{\sf R}\cdot \Upsilon_{\sf R}
=
q^{\bm{1}_{b>a}+\bm{1}_{c>a}} \cdot q^{\bm{1}_{b>c}},
\end{align*}
and since ${\rm inv}(c^{[m]})$ also increases by $1$ under the move \eqref{move2}, we again find that \eqref{statistic-match} is preserved.

The remaining two local moves needed are jumps within bundles:
\begin{align}
\label{move3}
\tikz{0.8}{
\draw[lgray,line width=4pt] (0.5,0) -- (4.5,0);
\foreach\x in {1,2,3,4}{
\draw[lgray,line width=1.5pt,->] (\x,0) -- (\x,0.5);
}
\node[above] at (1,0.5) {$\cdots$};
\node[above] at (2,0.5) {$a$};
\node[above] at (3,0.5) {$p$};
\node[above] at (4,0.5) {$\cdots$};
\foreach\x in {1.5,2.5,3.5}{
\draw[lgray,line width=1.5pt,->] (\x,-0.5) -- (\x,0);
}
\node[below] at (1.5,-0.5) {$\cdots$};
\node[below] at (2.5,-0.5) {$b$};
\node[below] at (3.5,-0.5) {$\cdots$};
\draw[line width=2pt] (1.75,0) -- (3.25,0);
}
\mapsto
\tikz{0.8}{
\draw[lgray,line width=4pt] (0.5,0) -- (4.5,0);
\foreach\x in {1,2,3,4}{
\draw[lgray,line width=1.5pt,->] (\x,0) -- (\x,0.5);
}
\node[above] at (1,0.5) {$\cdots$};
\node[above] at (2,0.5) {$p$};
\node[above] at (3,0.5) {$a$};
\node[above] at (4,0.5) {$\cdots$};
\foreach\x in {1.5,2.5,3.5}{
\draw[lgray,line width=1.5pt,->] (\x,-0.5) -- (\x,0);
}
\node[below] at (1.5,-0.5) {$\cdots$};
\node[below] at (2.5,-0.5) {$b$};
\node[below] at (3.5,-0.5) {$\cdots$};
\draw[line width=2pt] (1.75,0) -- (3.25,0);
}
\end{align}
\begin{align}
\label{move4}
\tikz{0.8}{
\draw[lgray,line width=4pt] (0.5,0) -- (5.5,0);
\foreach\x in {1,2,3,4,5}{
\draw[lgray,line width=1.5pt,->] (\x,0) -- (\x,0.5);
}
\node[above] at (1,0.5) {$\cdots$};
\node[above] at (2,0.5) {$\cdots$};
\node[above] at (3,0.5) {$b$};
\node[above] at (4,0.5) {$\cdots$};
\node[above] at (5,0.5) {$\cdots$};
\foreach\x in {1.5,2.5,3.5,4.5}{
\draw[lgray,line width=1.5pt,->] (\x,-0.5) -- (\x,0);
}
\node[below] at (1.5,-0.5) {$\cdots$};
\node[below] at (2.5,-0.5) {$a$};
\node[below] at (3.5,-0.5) {$p$};
\node[below] at (4.5,-0.5) {$\cdots$};
\draw[line width=2pt] (2.25,0) -- (3.75,0);
}
\mapsto
\tikz{0.8}{
\draw[lgray,line width=4pt] (0.5,0) -- (5.5,0);
\foreach\x in {1,2,3,4,5}{
\draw[lgray,line width=1.5pt,->] (\x,0) -- (\x,0.5);
}
\node[above] at (1,0.5) {$\cdots$};
\node[above] at (2,0.5) {$\cdots$};
\node[above] at (3,0.5) {$b$};
\node[above] at (4,0.5) {$\cdots$};
\node[above] at (5,0.5) {$\cdots$};
\foreach\x in {1.5,2.5,3.5,4.5}{
\draw[lgray,line width=1.5pt,->] (\x,-0.5) -- (\x,0);
}
\node[below] at (1.5,-0.5) {$\cdots$};
\node[below] at (2.5,-0.5) {$p$};
\node[below] at (3.5,-0.5) {$a$};
\node[below] at (4.5,-0.5) {$\cdots$};
\draw[line width=2pt] (2.25,0) -- (3.75,0);
}
\end{align}
where all marked colours $a,b,p$ are assumed to be distinct, with $p$ being the largest. As previously, all colours remain fixed under these moves apart from $a$ and $p$, which exchange their places. We again let $\Psi_{{\sf L}/{\sf R}}$ and $\Upsilon_{{\sf L}/{\sf R}}$ denote the contributions to the functions $\Psi$ and $\Upsilon$ coming only from the indicated colours and marked regions of the diagrams, on the left/right hand side of both \eqref{move3} and \eqref{move4}. 

In the case of \eqref{move3}, we get
\begin{align*}
\Psi_{\sf L}\cdot \Upsilon_{\sf L}
=
q^{\bm{1}_{a>b}} \cdot q,
\qquad
\Psi_{\sf R}\cdot \Upsilon_{\sf R}
=
q \cdot q^{1+\bm{1}_{a>b}},
\end{align*}
with the discrepancy between left and right hand sides cured by the fact that ${\rm inv}(c^{[m+1]})$ increases by $1$ under the move in question; hence \eqref{move3} preserves solutions of \eqref{statistic-match}. Finally, in the case of \eqref{move4} one finds that
\begin{align*}
\Psi_{\sf L}\cdot \Upsilon_{\sf L}
=
q \cdot q^{\bm{1}_{a<b}},
\qquad
\Psi_{\sf R}\cdot \Upsilon_{\sf R}
=
q^{\bm{1}_{a<b}} \cdot 1;
\end{align*}
left and right hand sides match after accounting for the fact that ${\rm inv}(c^{[m]})$ is increased by $1$ under this move. Therefore, \eqref{move4} also preserves solutions of \eqref{statistic-match}.

This suffices to prove \eqref{statistic-match} generally at $n=p$, since we have already exhibited one solution \eqref{ind-step}, and it is clear that successive application of the four local moves generates all possible colour sequences. The proof of \eqref{statistic-match} is completed, and with it, the proof of Theorem \ref{thm:main}.
\end{proof}

\subsection{Probability distribution on interlacing triangular arrays}

The results of this section allow us to conclude that the quantity
\begin{multline}
\label{col-markov}
\mathbb{P}_{\rm col} 
\left( c^{[m]} \rightarrow c^{[m+1]} \right)
\\
:=
\bm{1}_{c^{[m]} \prec c^{[m+1]}}
(-1)^n
q^{\binom{nm+n+1}{2}-\binom{nm+1}{2}}
\frac{(1-q)^{nm}}{(q;q)_n^{2m+1}}
\Psi\left( c^{[m]}; c^{[m+1]} \right)
\Big|_{q \mapsto q^{-1}}
\frac{
g^{c^{[m+1]}}_{\Delta}\left((m+1)^n;\vec{Q}^{[m+1]}\right)
}
{
g^{c^{[m]}}_{\Delta}\left(m^n;\vec{Q}^{[m]}\right)
}
\end{multline}
defines a transition probability on colour sequences; this allows one to construct a discrete-time Markov process living on {\it interlacing triangular arrays}, as we define below.

\begin{defn}[Interlacing triangular array]
\label{def:interlace}
Fix integers $n \geq 1$, $N \geq 1$. For all $1 \leq i \leq n$, $1 \leq j \leq k \leq N$, fix positive integers $T^{(i)}_{j,k} \in [1,n]$ subject to two constraints: 

{\bf (a)} For each $k \in [1,N]$, the collection $\{T^{(i)}_{j,k} \}_{1 \leq i \leq n,1 \leq j \leq k}$ is equal to $\{1^k\} \cup \{2^k\} \cup \cdots \cup \{n^k\}$, with the equality being at the level of sets;

{\bf (b)} Let the {\it horizontal coordinate} of the integer $T^{(i)}_{j,k}$ be defined as $c(i,j,k) = iN+j-(N+k)/2$. If $T^{(i)}_{j,k} = T^{(i')}_{j',k} = a \in [1,n]$, $c(i,j,k) < c(i',j',k)$ for some $i,j,i',j'$ and $1< k \leq N$, then we assume that there exists $i'',j''$ such that $T^{(i'')}_{j'',k-1} = a$ and $c(i,j,k) < c(i'',j'',k-1) < c(i',j',k)$; this is the {\it interlacing} property of our array. 
 
We refer to such a collection of positive integers as an {\it interlacing triangular array} of {\it rank} $n$ and {\it height} $N$. Let $\mathcal{T}_N(n)$ denote the set of all interlacing triangular arrays of rank $n$ and height $N$.
\end{defn}

\begin{rmk}
Every interlacing triangular array in $\mathcal{T}_N(n)$ is in one-to-one correspondence with a string $c^{[1]} \prec c^{[2]} \prec \cdots \prec c^{[N]}$ of interlacing colour sequences $c^{[k]} \in [1,n]^{nk}$. The colour sequence $c^{[k]}$ is obtained simply by reading off the $k$-th row of the interlacing triangular array.
\end{rmk}

\begin{ex}[$n=2$, $N=3$]
A permissible interlacing triangular array of rank $2$ and height $3$:
\begin{align*}
\tikz{1.3}{
\node at (0,2) {$i=1$};
\node at (4,2) {$i=2$};
\node at (0.5,-0.5) {\fs $j=1$};
\node at (1.5,-0.5) {\fs $j=2$};
\node at (2.5,-0.5) {\fs $j=3$};
\node at (4.5,-0.5) {\fs $j=1$};
\node at (5.5,-0.5) {\fs $j=2$};
\node at (6.5,-0.5) {\fs $j=3$};
\node at (-2.5,0) {\fs $k=1$};
\node at (-2.5,0.5) {\fs $k=2$};
\node at (-2.5,1) {\fs $k=3$};
\draw[densely dotted,->] (-2,0) -- (7,0);
\draw[densely dotted,->] (-2,0.5) -- (7,0.5);
\draw[densely dotted,->] (-2,1) -- (7,1);
\draw[densely dotted,->] (0.3,-0.3) -- (-1.5,1.5);
\draw[densely dotted,->] (1.3,-0.3) -- (-0.5,1.5);
\draw[densely dotted,->] (2.3,-0.3) -- (0.5,1.5);
\draw[densely dotted,->] (4.3,-0.3) -- (2.5,1.5);
\draw[densely dotted,->] (5.3,-0.3) -- (3.5,1.5);
\draw[densely dotted,->] (6.3,-0.3) -- (4.5,1.5);
\node at (0,0) {2};
\node at (-0.5,0.5) {2};
\node at (0.5,0.5) {1};
\node at (-1,1) {2};
\node at (0,1) {1};
\node at (1,1) {1};
\node at (4,0) {1};
\node at (3.5,0.5) {2};
\node at (4.5,0.5) {1};
\node at (3,1) {2};
\node at (4,1) {2};
\node at (5,1) {1};
}
\end{align*}
Recall that the integers in the array are written collectively in the form $T^{(i)}_{j,k}$. In this picture, the index $1 \leq i \leq 2$ increases from left to right and labels individual triangular arrays; the index $1 \leq k \leq 3$ labels the row in question, and the index $1 \leq j \leq k$ is used to label diagonals in each triangular array.

Reading the numbers in the $k$-th row we recover the elements of the set $\{1^k\} \cup \{2^k\}$, which is constraint {\bf (a)}; also, the interlacing constraint {\bf (b)} holds separately both for the coordinates of the numbers $1$ and $2$.

In this example one has $c^{[1]} = (2,1)$, $c^{[2]} = (2,1,2,1)$, $c^{[3]} = (2,1,1,2,2,1)$.
\end{ex}

\begin{ex}[$n=3$, $N=3$]
Under the translation red=1, green=2 and blue=3, Figure \ref{fig:gue} corresponds with the following interlacing triangular array:
\begin{align*}
\tikz{1}{
\node at (0,2) {$i=1$};
\node at (4,2) {$i=2$};
\node at (8,2) {$i=3$};
\node at (0.5,-0.5) {\fs $j=1$};
\node at (1.5,-0.5) {\fs $j=2$};
\node at (2.5,-0.5) {\fs $j=3$};
\node at (4.5,-0.5) {\fs $j=1$};
\node at (5.5,-0.5) {\fs $j=2$};
\node at (6.5,-0.5) {\fs $j=3$};
\node at (8.5,-0.5) {\fs $j=1$};
\node at (9.5,-0.5) {\fs $j=2$};
\node at (10.5,-0.5) {\fs $j=3$};
\node at (-2.5,0) {\fs $k=1$};
\node at (-2.5,0.5) {\fs $k=2$};
\node at (-2.5,1) {\fs $k=3$};
\draw[densely dotted,->] (-2,0) -- (11,0);
\draw[densely dotted,->] (-2,0.5) -- (11,0.5);
\draw[densely dotted,->] (-2,1) -- (11,1);
\draw[densely dotted,->] (0.3,-0.3) -- (-1.5,1.5);
\draw[densely dotted,->] (1.3,-0.3) -- (-0.5,1.5);
\draw[densely dotted,->] (2.3,-0.3) -- (0.5,1.5);
\draw[densely dotted,->] (4.3,-0.3) -- (2.5,1.5);
\draw[densely dotted,->] (5.3,-0.3) -- (3.5,1.5);
\draw[densely dotted,->] (6.3,-0.3) -- (4.5,1.5);
\draw[densely dotted,->] (8.3,-0.3) -- (6.5,1.5);
\draw[densely dotted,->] (9.3,-0.3) -- (7.5,1.5);
\draw[densely dotted,->] (10.3,-0.3) -- (8.5,1.5);
\node at (0,0) {2};
\node at (-0.5,0.5) {2};
\node at (0.5,0.5) {1};
\node at (-1,1) {2};
\node at (0,1) {1};
\node at (1,1) {2};
\node at (4,0) {3};
\node at (3.5,0.5) {3};
\node at (4.5,0.5) {3};
\node at (3,1) {3};
\node at (4,1) {3};
\node at (5,1) {1};
\node at (8,0) {1};
\node at (7.5,0.5) {2};
\node at (8.5,0.5) {1};
\node at (7,1) {3};
\node at (8,1) {2};
\node at (9,1) {1};
}
\end{align*}
In this example one has $c^{[1]} = (2,3,1)$, $c^{[2]} = (2,1,3,3,2,1)$, $c^{[3]} = (2,1,2,3,3,1,3,2,1)$.
\end{ex}

\begin{cor}
\label{cor:joint-col}
Let $T \in \mathcal{T}_n(N)$ be an interlacing triangular array generated by $N$ successive applications of the Markov kernel \eqref{col-markov}. Then the array $T^{(i)}_{j,k}$, $1 \leq i \leq n$, 
$1 \leq j \leq k \leq N$, has joint distribution
\begin{multline}
\label{joint-distr-col}
\mathbb{P}_{\rm col}
\left( T^{(i)}_{j,k} = c^{[k]}_{(i-1)k+j} ; 1 \leq i \leq n,\ 1 \leq j \leq k \leq N \right)
\\
=
\bm{1}_{c^{[1]} \prec \cdots \prec c^{[N]}}
(-1)^{nN} q^{\binom{nN+1}{2}}
\frac{(1-q)^{n \binom{N}{2}}}{(q;q)_{n}^{N^2}}
g^{c^{[N]}}_{\Delta}\left(N^n;\vec{Q}^{[N]}\right)
\prod_{i=1}^{N}
\Psi \left( c^{[i-1]} ; c^{[i]} \right)
\Big|_{q \mapsto q^{-1}}.
\end{multline}
\end{cor}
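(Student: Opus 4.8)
The plan is to read off \eqref{joint-distr-col} from the Markov property of the chain. By construction of the interlacing triangular array, the joint law of the colour sequences $c^{[1]}\prec\cdots\prec c^{[N]}$ produced by $N$ successive applications of \eqref{col-markov} to $c^{[0]}=\emptyset$ is the ordered product
\begin{align*}
\mathbb{P}_{\rm col}\!\left(c^{[1]}\prec\cdots\prec c^{[N]}\right)
=
\prod_{m=0}^{N-1}
\mathbb{P}_{\rm col}\!\left(c^{[m]}\rightarrow c^{[m+1]}\right),
\end{align*}
and, in view of the Remark following Definition \ref{def:interlace}, the left-hand side of \eqref{joint-distr-col} is precisely this quantity. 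One then substitutes the explicit expression \eqref{col-markov} for each factor and collects terms.

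First, the ratio of partition functions telescopes,
\begin{align*}
\prod_{m=0}^{N-1}
\frac{g^{c^{[m+1]}}_{\Delta}\!\left((m+1)^n;\vec{Q}^{[m+1]}\right)}
{g^{c^{[m]}}_{\Delta}\!\left(m^n;\vec{Q}^{[m]}\right)}
=
\frac{g^{c^{[N]}}_{\Delta}\!\left(N^n;\vec{Q}^{[N]}\right)}
{g^{c^{[0]}}_{\Delta}\!\left(0^n;\vec{Q}^{[0]}\right)},
\end{align*}
and the denominator equals $1$: at $m=0$ the coloured composition $\Delta\in\mathcal{S}_{0^n}$ is empty, so the defining product of $\mathcal{B}$-operators in \eqref{generic-g} is empty and $g^{c^{[0]}}_{\Delta}(0^n;\vec Q^{[0]})=\bra{\emptyset}\,\ket{\emptyset}=1$. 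Next, the product of the indicators $\bm{1}_{c^{[m]}\prec c^{[m+1]}}$ is $\bm{1}_{c^{[1]}\prec\cdots\prec c^{[N]}}$ (the constraint $\emptyset\prec c^{[1]}$ being vacuous), while the products of the $\Psi$-factors and of the signs assemble into $\prod_{i=1}^{N}\Psi(c^{[i-1]};c^{[i]})\big|_{q\mapsto q^{-1}}$ and $(-1)^{nN}$, respectively.

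It remains to track the three scalar exponents, each an elementary summation. The power of $q$ telescopes, using $\binom{1}{2}=0$:
\begin{align*}
\sum_{m=0}^{N-1}\left[\binom{nm+n+1}{2}-\binom{nm+1}{2}\right]
=
\sum_{m=0}^{N-1}\left[\binom{n(m+1)+1}{2}-\binom{nm+1}{2}\right]
=
\binom{nN+1}{2}.
\end{align*}
The power of $(1-q)$ is $\sum_{m=0}^{N-1}nm=n\binom{N}{2}$, and the power of $(q;q)_n$ in the denominator is $\sum_{m=0}^{N-1}(2m+1)=N^2$. Assembling these with the telescoped $g$-ratio, the indicator, the $\Psi$-product and the sign reproduces exactly the right-hand side of \eqref{joint-distr-col}.

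No step of this computation is genuinely difficult; the only point requiring a moment's attention is the identification $g^{c^{[0]}}_{\Delta}(0^n;\vec Q^{[0]})=1$ together with the matching of the two descriptions of interlacing (via the colour sequences $c^{[k]}$ and via the array entries $T^{(i)}_{j,k}$). The substantive input has already been supplied: the fact that \eqref{col-markov} is a legitimate transition probability is the sum-to-unity property \eqref{discrete-sum-to1}, established in the course of proving Theorem \ref{thm:main}, so the ordered product above is indeed a probability distribution on $\mathcal{T}_N(n)$.
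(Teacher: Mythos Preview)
Your proposal is correct and follows exactly the approach the paper intends: the corollary is stated without proof as a direct consequence of multiplying the transition probabilities \eqref{col-markov} along the chain, and your telescoping of the $g$-ratio together with the elementary summations of the scalar exponents is precisely that computation.
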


\subsection{Explicit calculations}

In this subsection we document some explicit calculations in the case $n=2$, based on direct application of the formula \eqref{col-markov}. All factors appearing in \eqref{col-markov} are straightforwardly computed, with the exception of the functions $g^{c^{[m]}}_{\Delta}$ and $g^{c^{[m+1]}}_{\Delta}$. To evaluate the latter, we make use of the following factorization result, which follows from \cite[Proposition 11.6.1]{ABW21}:
\begin{prop}
Choosing $c^{[m]}$ to be the increasing colour sequence $1^m \cup \cdots \cup n^m \in [1,n]^{nm}$, one has the following formula:
\begin{align}
\label{factored-g}
g^{1^m \cup \cdots \cup n^m}_{\Delta}(m^n;x_1,\dots,x_{nm})
=
q^{-m^2 \binom{n}{2}}
(1-q^{-1})^{nm}
\prod_{k=0}^{n-1}
\prod_{1 \leq i<j \leq m}
(q^{-1} x_{mk+j}-x_{mk+i}).
\end{align}
\end{prop}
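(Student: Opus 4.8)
The statement to prove is the factorization formula \eqref{factored-g} for the function $g^{1^m\cup\cdots\cup n^m}_{\Delta}(m^n;x_1,\dots,x_{nm})$ attached to the increasing colour sequence. The key observation is that the increasing colour sequence $1^m\cup\cdots\cup n^m$ is precisely $\sigma = (1^{\lambda_1},\dots,n^{\lambda_n})$ with $\lambda = m^n$, so by Definition \ref{defn:gen-sector-sigma} the permuted-boundary function $g^{1^m\cup\cdots\cup n^m}_{\Delta}(m^n;x_1,\dots,x_{nm})$ coincides with the unpermuted $g_{\Delta}(m^n;x_1,\dots,x_{nm})$ from Definition \ref{defn:generic-sector-dual}. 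Thus I only need to evaluate the standard partition function \eqref{g-def} with $\mu = \Delta$, $\lambda = m^n$, $s = 0$. The first step is therefore to reduce to the cited result: I will invoke \cite[Proposition 11.6.1]{ABW21} directly, which gives a factorized formula for exactly such functions indexed by $\Delta$, and then carry out the bookkeeping to match its parameter conventions (the $(-s)^{\pm|\mu|}$ normalization factors, the $s\to 0$ specialization, and any reciprocation of $q$) with the normalization used here in \eqref{generic-g}.

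**Main steps in order.** First I would recall from Remark after \eqref{g-def} that the $s\to 0$ limit of $g_\mu(\lambda;x_1,\dots,x_m)$ is well-defined and, in the rainbow case, reduces to a dual non-symmetric spin Hall--Littlewood function; for $\lambda = m^n$ and $\mu = \Delta$ there is an extra simplification because $\Delta$ is the minimal coloured composition, which forces all paths of a given colour to exit from the bottom $m$ columns $0,1,\dots,m-1$ in the lattice \eqref{g-def}. Second, I would exploit that $\Delta$ is frozen-like: in the partition function \eqref{g-def} with $\mu = \Delta$, the vertical-edge data is completely determined, and the only freedom is in how the $nm$ horizontal lines (carrying colours $1^m,\dots,n^m$ from bottom to top) route through the columns. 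Because colours must interlace with the minimal configuration, a freezing argument analogous to the one behind \eqref{factorize} should collapse the partition function to a product over pairs $(i,j)$ within each colour-block of single-vertex weights of the form $q^{-1}x_{mk+j}-x_{mk+i}$, up to overall prefactors $q^{-m^2\binom{n}{2}}$ and $(1-q^{-1})^{nm}$. Third, I would track the prefactors carefully: the $(1-q^{-1})^{nm}$ comes from the $nm$ occupied right edges (one per horizontal line) via the top-left/middle entries of \eqref{fund-weights} after the $s\to 0$ and reciprocation passage to $M$-weights, and the $q^{-m^2\binom{n}{2}}$ is an $M$-weight artifact of the $\varphi(\D,\D)+\varphi(\D,\C)$-type exponents accumulated by the $\binom{n}{2}$ colour-pairs crossing in each of the $m^2$ grid cells where two distinct colour-blocks overlap.

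**The main obstacle.** The genuinely delicate part is the prefactor arithmetic: matching the exponent $-m^2\binom{n}{2}$ of $q$ and the sign/normalization $(1-q^{-1})^{nm}$ against whatever convention \cite[Proposition 11.6.1]{ABW21} uses, since the present text works with $M$-weights (related to the $L$-weights of \cite{ABW21} by the reciprocation \eqref{LM-sym}) and carries the extra $(-s)^{-|\mu|}$ normalization in \eqref{generic-g}. A clean way to sidestep most of this is to apply the symmetry \eqref{gZ-sym} already proved in this section, which converts $g^{i^{[m]}}_\Delta$ into the partition function $Z$ built from $L$-weights with $q\mapsto q^{-1}$, $s\mapsto s^{-1}$; then \cite[Proposition 11.6.1]{ABW21} applies to $Z$ in its native form, and reciprocating back yields \eqref{factored-g} with the $q^{-1}$ already manifest on the right-hand side. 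I would present the proof along these lines, with the explicit match to \cite[Proposition 11.6.1]{ABW21} and the verification of the $q^{-m^2\binom{n}{2}}(1-q^{-1})^{nm}$ constant being the only computational content.
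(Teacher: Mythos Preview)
Your proposal is correct and aligns with the paper's approach: the paper does not give a proof at all but simply states that the formula ``follows from \cite[Proposition 11.6.1]{ABW21}'', and your plan is precisely to invoke that same result and carry out the parameter-matching bookkeeping (your observation that the increasing colour sequence reduces $g^\sigma_\Delta$ to the unpermuted $g_\Delta$, together with the use of \eqref{gZ-sym} to translate conventions, is a sensible way to execute this). In effect you are supplying the details that the paper omits.
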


The initial condition \eqref{factored-g}, used in conjunction with the exchange relation \eqref{invT-g}, allows $g^{c^{[m]}}_{\Delta}$ and $g^{c^{[m+1]}}_{\Delta}$ to be efficiently implemented on a computer. We are then in a position to explicitly evaluate the transition probabilities \eqref{col-markov}; see Figure \ref{fig:n=2}.

\begin{figure}
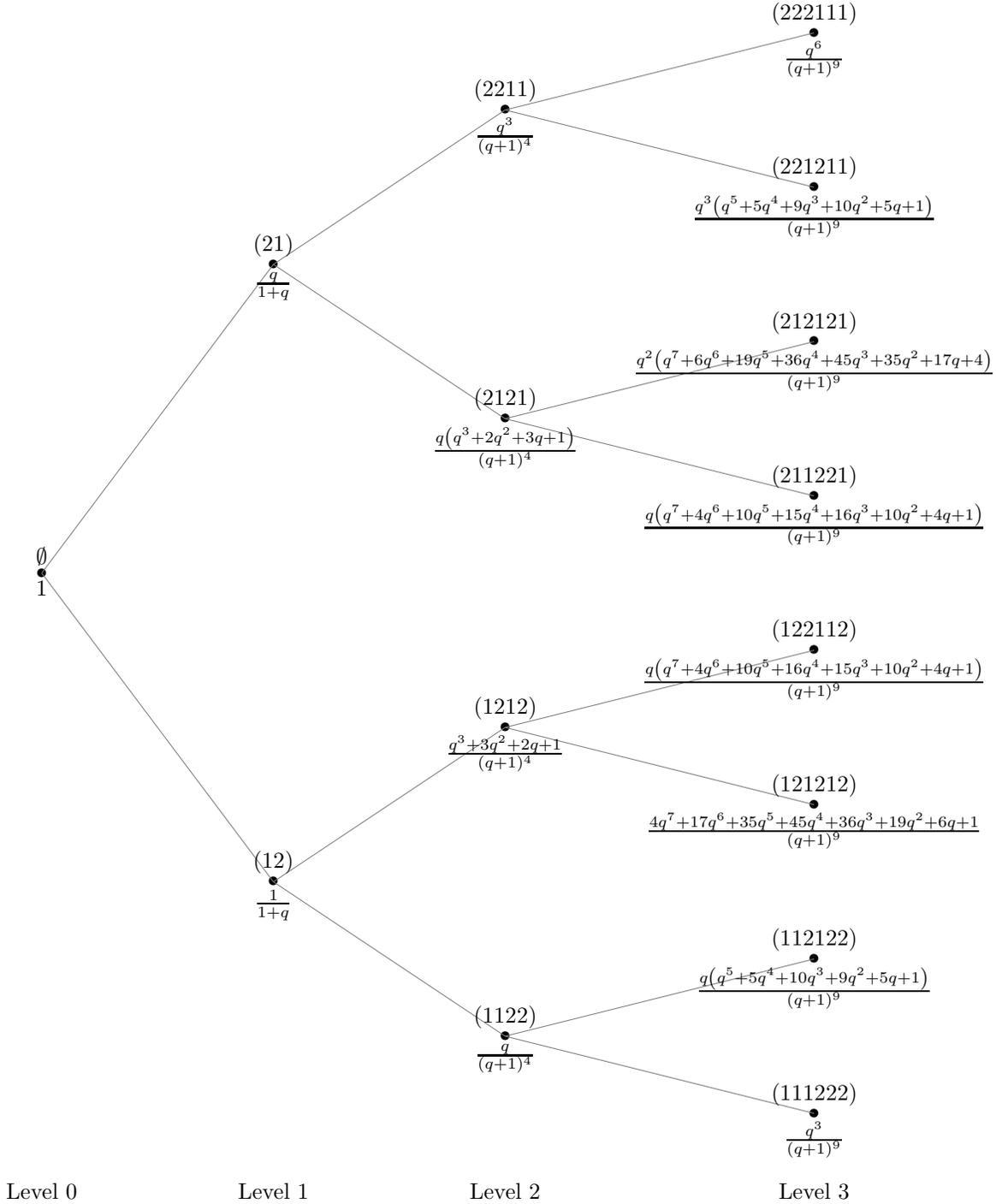

\tikz{1.2}{
\foreach\y in {0}{
\node at (2,0) {$\bullet$};
}
\foreach\y in {-4,4}{
\node at (5,\y) {$\bullet$};
}
\foreach\y in {-6,-2,2,6}{
\node at (8,\y) {$\bullet$};
}
\foreach\y in {-7,-5,-3,-1,1,3,5,7}{
\node at (12,\y) {$\bullet$};
}
\draw[gray] (2,0) -- (5,-4);
\draw[gray] (2,0) -- (5,4);
\draw[gray] (5,-4) -- (8,-6);
\draw[gray] (5,-4) -- (8,-2);
\draw[gray] (5,4) -- (8,2);
\draw[gray] (5,4) -- (8,6);
\draw[gray] (8,-6) -- (12,-7);
\draw[gray] (8,-6) -- (12,-5);
\draw[gray] (8,-2) -- (12,-3);
\draw[gray] (8,-2) -- (12,-1);
\draw[gray] (8,2) -- (12,1);
\draw[gray] (8,2) -- (12,3);
\draw[gray] (8,6) -- (12,5);
\draw[gray] (8,6) -- (12,7);
\node[above] at (2,0) {$\emptyset$};
\node[above] at (5,-4) {$(12)$};
\node[above] at (5,4) {$(21)$};
\node[above] at (8,-6) {$(1122)$};
\node[above] at (8,-2) {$(1212)$};
\node[above] at (8,2) {$(2121)$};
\node[above] at (8,6) {$(2211)$};
\node[above] at (12,-7) {$(111222)$};
\node[above] at (12,-5) {$(112122)$};
\node[above] at (12,-3) {$(121212)$};
\node[above] at (12,-1) {$(122112)$};
\node[above] at (12,1) {$(211221)$};
\node[above] at (12,3) {$(212121)$};
\node[above] at (12,5) {$(221211)$};
\node[above] at (12,7) {$(222111)$};
\node[below] at (2,0) {$1$};
\node[below] at (5,-4) {$\frac{1}{1+q}$};
\node[below] at (5,4) {$\frac{q}{1+q}$};
\node[below] at (8,-6) {$\frac{q}{(q+1)^4}$};
\node[below] at (8,-2) {$\frac{q^3+3 q^2+2 q+1}{(q+1)^4}$};
\node[below] at (8,2) {$\frac{q \left(q^3+2 q^2+3 q+1\right)}{(q+1)^4}$};
\node[below] at (8,6) {$\frac{q^3}{(q+1)^4}$};
\node[below] at (12,-7) {$\frac{q^3}{(q+1)^9}$};
\node[below] at (12,-5) {$\frac{q \left(q^5+5 q^4+10 q^3+9 q^2+5 q+1\right)}{(q+1)^9}$};
\node[below] at (12,-3) {$\frac{4 q^7+17 q^6+35 q^5+45 q^4+36 q^3+19 q^2+6 q+1}{(q+1)^9}$};
\node[below] at (12,-1) {$\frac{q \left(q^7+4 q^6+10 q^5+16 q^4+15 q^3+10 q^2+4 q+1\right)}{(q+1)^9}$};
\node[below] at (12,1) {$\frac{q \left(q^7+4 q^6+10 q^5+15 q^4+16 q^3+10 q^2+4 q+1\right)}{(q+1)^9}$};
\node[below] at (12,3) {$\frac{q^2 \left(q^7+6 q^6+19 q^5+36 q^4+45 q^3+35 q^2+17 q+4\right)}{(q+1)^9}$};
\node[below] at (12,5) {$\frac{q^3 \left(q^5+5 q^4+9 q^3+10 q^2+5 q+1\right)}{(q+1)^9}$};
\node[below] at (12,7) {$\frac{q^6}{(q+1)^9}$};
\node at (2,-8) {Level 0};
\node at (5,-8) {Level 1};
\node at (8,-8) {Level 2};
\node at (12,-8) {Level 3};
}
\caption{The result of calculations in the case $n=2$. Each node at level $m$ corresponds to a colour sequence $c^{[m]}$; the function written below it is the probability 
$\mathbb{P}_{\rm col}(c^{[m]})$ of arriving at the colour sequence $c^{[m]}$ after $m$ applications of the Markov kernel \eqref{col-markov} to the trivial sequence $c^{[0]}=\emptyset$. Connected nodes indicate colour sequences that interlace; the resulting graph is a complete binary tree, meaning that each colour sequence $c^{[m]}$ has unique ancestry back to the root $\emptyset$. This uniqueness property ceases to hold for $n \geq 3$.}
\label{fig:n=2}
\end{figure}

\subsection{A positivity conjecture}

Studying the probabilities that appear in Figure \ref{fig:n=2}, one notices that they are always positive polynomials in $q$ over a common denominator that is easily predicted. Analysis of examples for $n \geq 3$ reveals that this structure appears to hold generally. This leads us to formulate the following positivity conjecture: 
\begin{conj}\label{conj:pos}
Fix integers $m,n \geq 1$ and a colour sequence $c^{[m]} \in [1,n]^{nm}$. Let $\mathbb{P}_{\rm col}(c^{[m]})$ denote the probability of arriving at the colour sequence $c^{[m]}$ after $m$ applications of the Markov kernel \eqref{col-markov} to the trivial sequence $c^{[0]}=\emptyset$. Then one has that
\begin{align*}
\mathbb{P}_{\rm col}\left(c^{[m]}\right)
=
\mathcal{P}\left(c^{[m]}\right)
\cdot
\left( \prod_{i=1}^{n} \frac{1-q}{1-q^i} \right)^{m^2}
\quad
\text{where}\ \ 
\mathcal{P}\left(c^{[m]}\right) \in \mathbb{N}[q].
\end{align*}
\end{conj}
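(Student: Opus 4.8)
\noindent\emph{Strategy.} The plan is to reduce Conjecture \ref{conj:pos} to a structural statement about the single partition function $g^{c^{[m]}}_{\Delta}\left(m^n;\vec{Q}^{[m]}\right)$ together with a positivity statement that, at present, seems to require a new combinatorial model. First I would pass to the marginal of the joint law \eqref{joint-distr-col}: summing over all interlacing chains $\emptyset \prec c^{[1]} \prec \cdots \prec c^{[m]}$ with fixed final sequence $c^{[m]}$ and extracting the endpoint-only factors from the sum gives
\[
\mathbb{P}_{\rm col}\left(c^{[m]}\right)
=
(-1)^{nm}q^{\binom{nm+1}{2}}
\frac{(1-q)^{n\binom{m}{2}}}{(q;q)_n^{m^2}}
\,g^{c^{[m]}}_{\Delta}\!\left(m^n;\vec{Q}^{[m]}\right)\cdot
\mathcal{N}\!\left(c^{[m]}\right),
\]
where $\mathcal{N}\left(c^{[m]}\right):=\sum \prod_{i=1}^{m}\Psi\left(c^{[i-1]};c^{[i]}\right)\big|_{q\mapsto q^{-1}}$, the sum being over all chains ending at $c^{[m]}$ with $c^{[0]}=\emptyset$. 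Since each $\Psi\left(c^{[i-1]};c^{[i]}\right)$ is a single monomial in $q$, the quantity $\mathcal{N}\left(c^{[m]}\right)$ is a generating function of such chains weighted by a manifestly nonnegative statistic, so $\mathcal{N}\left(c^{[m]}\right)\in\mathbb{Z}_{\geq 0}[q^{\pm 1}]$. Using $\binom{m}{2}+\binom{m+1}{2}=m^2$ and $(q;q)_n=(1-q)^n[n]_q!$, Conjecture \ref{conj:pos} becomes the assertion that
\[
\mathcal{P}\!\left(c^{[m]}\right)=(-1)^{nm}q^{\binom{nm+1}{2}}\,
\frac{g^{c^{[m]}}_{\Delta}\!\left(m^n;\vec{Q}^{[m]}\right)}{(1-q)^{n\binom{m+1}{2}}}\cdot
\mathcal{N}\!\left(c^{[m]}\right)\in\mathbb{N}[q].
\]

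\noindent\emph{Controlling the $g$-factor.} The next step is to show that $(1-q)^{n\binom{m+1}{2}}$ divides $g^{c^{[m]}}_{\Delta}\left(m^n;\vec{Q}^{[m]}\right)$ in $\mathbb{Q}[q]$, and to locate the degree range of the quotient. I would argue by induction on the number of inversions of $c^{[m]}$, with base case the increasing sequence $1^m\cup\cdots\cup n^m$: substituting the values of $\vec{Q}^{[m]}$ into the factorisation \eqref{factored-g} one checks directly that $g^{1^m\cup\cdots\cup n^m}_{\Delta}\left(m^n;\vec{Q}^{[m]}\right)=\pm q^{d_0}(1-q)^{n\binom{m+1}{2}}$ for an explicit $d_0$, so the quotient is a signed monomial. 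For a general $c^{[m]}$ one applies the Hecke exchange relation \eqref{invT-g} along a reduced word taking $1^m\cup\cdots\cup n^m$ to $c^{[m]}$, working with $g^{\sigma}_{\Delta}\left(m^n;x_1,\dots,x_{nm}\right)$ as a polynomial in the $x_i$ and only specialising $x\mapsto\vec{Q}^{[m]}$ at the very end. The delicate point is to track the $(1-q)$-adic valuation of the specialised polynomial along the word; the Demazure--Lusztig operator $\tilde{T}_j f=\bigl((q-1)x_{j+1}f+(x_{j+1}-qx_j)\mathfrak{s}_j f\bigr)/(x_{j+1}-x_j)$ is regular on the diagonal $x_j=x_{j+1}$ but does not obviously preserve a prescribed $(1-q)$-valuation after specialisation, so some care (or an alternative argument via the expansion formula \eqref{Z-exp}) is needed. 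I expect this step to be technical but ultimately tractable; it would already establish that $\mathcal{P}\left(c^{[m]}\right)\in\mathbb{Z}[q]$, once one recalls that $\mathbb{P}_{\rm col}\left(c^{[m]}\right)$ is a genuine probability for $q\in(0,1)$ and that $\sum_{c^{[m]}}\mathbb{P}_{\rm col}\left(c^{[m]}\right)=1$ forces the Laurent polynomial $\mathcal{P}$ to have no negative exponents.

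\noindent\emph{The main obstacle: positivity.} Upgrading $\mathcal{P}\left(c^{[m]}\right)\in\mathbb{Z}[q]$ to $\mathbb{N}[q]$ is the real difficulty, and I do not expect the inductive/Hecke machinery above to suffice, since $\tilde{T}_j$ does not act positively on coefficients and since the rank-$2$ data in Figure \ref{fig:n=2} exhibit $\mathcal{P}$ with high degree and no product structure. The approach I would pursue is to realise $\mathcal{P}\left(c^{[m]}\right)$ directly as a lattice partition function with Boltzmann weights in $\mathbb{N}[q]$. Concretely, one would use the symmetry \eqref{gZ-sym2} to re-express $g^{c^{[m]}}_{\Delta}\left(m^n;\vec{Q}^{[m]}\right)$ through the vertex model $Z\left(\vec{Q}^{[m]};c^{[m]}\right)$ of \eqref{Z-def}, and then seek a gauge transformation of the weights \eqref{fund-weights} under which --- after the specialisation of all spectral parameters to powers of $q$ and the removal of the global factors $(1-q)^{n\binom{m+1}{2}}$ and $([n]_q!)^{-m^2}$ --- every vertex weight lies in $\mathbb{N}[q]$ (or in $\mathbb{N}[q]$ divided by a factor of $[n]_q!$ that telescopes against the column boundaries). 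The chain sum $\mathcal{N}\left(c^{[m]}\right)$ would then be folded into the same picture by stacking the $\Psi$-rows above $Z$, exactly as in the proof of Theorem \ref{thm:expand}, exhibiting $\mathcal{P}\left(c^{[m]}\right)$ as one partition function whose every ingredient is nonnegative. Proving that such a nonnegative gauge exists --- equivalently, finding the combinatorial interpretation of the coefficients of $\mathcal{P}$ that the present paper leaves open --- is where a genuinely new idea (a colour-merging-type positivity phenomenon, or an RSK-style bijection on the interlacing triangular arrays in $\mathcal{T}_m(n)$) appears to be required.
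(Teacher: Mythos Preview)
The statement you are addressing is Conjecture \ref{conj:pos}, which the paper explicitly leaves open: the authors say they ``were only able to observe this phenomenon on the few examples we tested on a computer'' and that ``the combinatorial meaning of the coefficients of the resulting polynomials also remains unclear.'' There is no proof in the paper to compare against.

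Your proposal is not a proof either, and you are candid about this: you isolate the marginal formula correctly from \eqref{joint-distr-col}, you verify the base case of the $(1-q)$-divisibility claim using \eqref{factored-g} (your computation there is right), and you sketch a plausible Hecke-inductive route to integrality $\mathcal{P}\in\mathbb{Z}[q]$ while flagging the specialisation step as delicate. You then correctly identify the upgrade to $\mathbb{N}[q]$ as the genuine obstacle and state that a new combinatorial model or positive gauge would be required. This assessment is in line with the paper's own position. One small caution: your argument that $\mathcal{P}$ has no negative exponents (``$\mathbb{P}_{\rm col}$ is a genuine probability for $q\in(0,1)$ and the sum-to-one forces no negative exponents'') does not actually work as stated, since a Laurent polynomial with negative-exponent terms can still take values in $[0,1]$ on $(0,1)$ and sum with others to $1$; a degree-tracking argument through the Hecke recursion, or a direct bound on the $q$-degree of $g^{c^{[m]}}_{\Delta}\bigl(m^n;\vec{Q}^{[m]}\bigr)$, would be needed instead.
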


\appendix

\section{Interlacing triangles and graph colourings}
\label{sec:app}

In this appendix we turn to the problem of enumerating the number of elements in the set $\mathcal{T}_N(n)$ from Definition \ref{def:interlace} (that is, computing the size of the support of $\mathbb{P}_{\rm col}$). This turns out to be a triviality for $n=1$ and $n=2$; for $n=3$ and $n=4$ we are able to conjecture a relation between the cardinality of $\mathcal{T}_N(n)$ and certain graph colourings. An elegant bijective proof of the $n=3$ conjecture was already given in \cite{GaetzGao}. The $n=4$ case remains open.

\medskip

For $n=1$, an interlacing triangular array consists of a single triangle filled with the number $1$; as there is only one such arrangement, it follows that $|\mathcal{T}_N(1)| = 1$ for all $N \geq 1$. 

\medskip

For $n=2$, we have the following elementary result:
\begin{prop}
\label{prop:n=2}
For all $N \geq 1$, $|\mathcal{T}_N(2)| = 2^N$.
\end{prop}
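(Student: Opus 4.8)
The plan is to give a direct combinatorial count of $\mathcal{T}_N(2)$ by analysing the interlacing constraint for rank $n=2$ layer by layer. Recall that an element of $\mathcal{T}_N(2)$ is equivalent to a string $\emptyset \prec c^{[1]} \prec c^{[2]} \prec \cdots \prec c^{[N]}$ of colour sequences, where $c^{[k]} \in \{1,2\}^{2k}$ contains each of the colours $1,2$ exactly $k$ times, and $c^{[k]} \prec c^{[k+1]}$ means the two sequences stack into an admissible diagram in the sense of \eqref{admiss-diag}. The first step is to understand the set of admissible $c^{[k]}$ and the interlacing relation concretely. For $n=2$, $c^{[k]}$ splits into two bundles of length $k$ (the colour-$1$ bundle slots and the colour-$2$ bundle slots in the diagram), and I would argue that each admissible $c^{[k]}$ is determined by a pair of binary words, one in each bundle, subject to the condition that no colour appears twice over any point of the thick horizontal line.

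First I would show that for $n=2$ the interlacing graph on $\bigsqcup_k \{c^{[k]}\}$ is a complete binary tree rooted at $\emptyset$ — this is exactly the structure observed in Figure \ref{fig:n=2}. Concretely: given $c^{[k]}$, I claim there are exactly two sequences $c^{[k+1]}$ with $c^{[k]} \prec c^{[k+1]}$. The interlacing condition \eqref{interlace-disc} forces, within each of the two bundles, that the $k$ positions of $c^{[k]}$ sit strictly between the $k+1$ positions of $c^{[k+1]}$; combined with the single-occupancy constraint along the horizontal line, each step of passing from level $k$ to level $k+1$ amounts to a binary choice (e.g. which of the two colours "wins" at the newly created interlacing slot, or equivalently whether the new bottom part of each bundle is coloured so as to place colour $1$ or colour $2$ leftmost). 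I would verify that both choices always yield a valid admissible diagram and that these are the only possibilities, by a short case analysis using the definition of admissibility in \eqref{admiss-diag}. It then follows by induction that $|\{c^{[k+1]} : c^{[k]} \prec c^{[k+1]}\}| = 2$ independently of $c^{[k]}$, hence $|\mathcal{T}_N(2)| = \prod_{k=0}^{N-1} 2 = 2^N$.

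An alternative, perhaps cleaner, route is a direct bijection $\mathcal{T}_N(2) \to \{1,2\}^N$: to an interlacing array assign the vector $(\epsilon_1,\dots,\epsilon_N)$ where $\epsilon_k$ records the binary choice made in passing from $c^{[k-1]}$ to $c^{[k]}$ (with $c^{[0]} = \emptyset$). Surjectivity and injectivity both reduce to the claim that the choice at each level is free and reconstructible, which is precisely what the "exactly two children" analysis above establishes. I would present whichever of these two formulations reads more transparently; the induction on the tree structure is probably the shorter writeup.

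The main obstacle is purely bookkeeping: making the "exactly two children" claim airtight requires pinning down what admissibility means for $n=2$ in coordinates and checking that the two candidate extensions $c^{[k+1]}$ are always admissible and that no third extension exists. This is elementary but must be done carefully, since the bundles interact through the horizontal line in \eqref{admiss-diag}; I would isolate this as a single lemma ("for $n=2$, every admissible $c^{[k]}$ has exactly two admissible successors") and prove it by tracking the at-most-one-of-each-colour condition at the one point where the two bundles meet. Everything else — the induction, the product $2^N$, the bijection with $\{1,2\}^N$ — is then immediate.
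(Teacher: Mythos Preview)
Your approach is correct but genuinely different from the paper's. The paper gives a global structural characterization rather than a layer-by-layer induction: it observes that for $n=2$ the interlacing constraint forces the entries to be constant along diagonals in the left triangle and along anti-diagonals in the right triangle; constraint \textbf{(a)} then determines the right triangle from the left one. Since the left triangle has $N$ diagonals, each freely labelled $1$ or $2$, one gets $2^N$ arrays immediately, with no induction or case analysis needed.

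Your recursive argument (every admissible $c^{[k]}$ has exactly two successors, hence a complete binary tree) is equivalent in spirit to the paper's bijection --- your binary choice at level $k$ corresponds to the label on the $k$-th diagonal --- but it buys a direct link to the tree structure of Figure~\ref{fig:n=2} and to the Markov-chain viewpoint used elsewhere in the paper. The cost is that the ``exactly two children'' lemma, while true, requires the bookkeeping you flag: without first knowing what the admissible $c^{[k]}$ look like, the local case analysis at the junction of the two bundles is fiddlier than the paper's one-line diagonal observation. If you want the shortest writeup, adopt the paper's diagonal characterization; if you want to emphasize the tree, your argument works but would benefit from first proving the constant-along-diagonals fact as a lemma, after which the two-successor claim becomes obvious.
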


\begin{proof}
The only triangular arrays in rank $2$ which respect the interlacing constraint {\bf (b)} are those in which numbers remain constant along diagonals in the left triangle, and along anti-diagonals in the right; further, once we choose the numbers assigned to diagonals in the left triangle, this completely determines the right one, in view of constraint {\bf (a)} (and the fact that numbers remain constant along its anti-diagonals). Hence there are exactly $2^N$ possibilities. 
\end{proof}

For $n=3$ and $n=4$ we have no direct results around enumeration. We do, however, make two conjectures relating the cardinality of the set $\mathcal{T}_N(n)$ to colourings of certain families of graphs. 

\begin{defn}
Given a graph $G$ and an integer $m \geq 1$, an $m$-colouring of $G$ is an assignment of a label $l \in [1,m]$ to each vertex $v \in G$ such that $l \not= l'$ if $v$ and $v'$ are connected by an edge.
\end{defn}

\begin{conj}
\label{conj:triangle}
Let $G^{\triangle}_N$ denote the triangular graph
\begin{align*}
\tikz{1.3}{
\node at (0,0) {$\bullet$};
\node at (0.5,0) {$\bullet$};
\node at (1,0) {$\bullet$};
\node at (1.5,0) {$\bullet$};
\node at (0.25,0.5) {$\bullet$};
\node at (0.75,0.5) {$\bullet$};
\node at (1.25,0.5) {$\bullet$};
\node at (0.5,1) {$\bullet$};
\node at (1,1) {$\bullet$};
\node at (0.75,1.5) {$\bullet$};
\draw (0,0) -- (1.5,0);
\draw (0.25,0.5) -- (1.25,0.5);
\draw (0.5,1) -- (1,1);
\draw (0.25,0.5) -- (0.5,0);
\draw (0.5,1) -- (1,0);
\draw (0.75,1.5) -- (1.5,0);
\draw (0,0) -- (0.75,1.5);
\draw (0.5,0) -- (1,1);
\draw (1,0) -- (1.25,0.5);
}
\end{align*}
where the number of vertices along one side of the triangle is equal to $N+1$. Let $\mathfrak{g}^{\triangle}_N(4)$ denote the number of $4$-colourings of $G^{\triangle}_N$.\footnote{The sequence $\mathfrak{g}^{\triangle}_N(4)$ appears as A153467 in the Online Encyclopaedia of Integer Sequences; {\tt https://oeis.org/A153467}.} We conjecture that
\begin{align*}
4\cdot|\mathcal{T}_N(3)| = \mathfrak{g}^{\triangle}_N(4), \qquad \forall\ N \geq 1.
\end{align*} 
\end{conj}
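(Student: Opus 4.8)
The plan is to establish a bijection, or at least a bijective-style counting argument, between $4$-colourings of $G^{\triangle}_N$ (up to a global cyclic symmetry, which accounts for the factor of $4$) and interlacing triangular arrays of rank $3$ and height $N$. The first step is to understand the combinatorial structure of $\mathcal{T}_N(3)$ more concretely: by Definition \ref{def:interlace}, an element is a triple of triangular patterns (one for each colour $i \in \{1,2,3\}$) whose row-$k$ data records, for each colour, the positions at which that colour appears among the $3k$ slots of $c^{[k]}$, and the interlacing constraint {\bf (b)} is precisely the statement that, for each fixed colour $a$, the positions of $a$ in row $k-1$ interlace those in row $k$. The key reformulation is to encode a colour sequence $c^{[k]} \in \{1,2,3\}^{3k}$, which contains each colour exactly $k$ times, by its ``colour-count profile'': reading left to right, one tracks how many of each colour have been seen. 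Because each of the $n=3$ bundles has width $k$, the admissibility diagram \eqref{admiss-diag} forces strong rigidity, and one should first prove a normal-form lemma describing exactly which profiles can occur.

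Next I would set up the target side. A $4$-colouring of the triangular graph $G^{\triangle}_N$ (the triangular section of the triangular lattice with $N+1$ vertices per side) can be analysed by the classical fact that proper $3$-colourings of triangulations of the plane correspond to ``heights'' / discrete harmonic surfaces, but for $4$ colours one instead uses the standard encoding via the Klein four-group $\mathbb{Z}_2 \times \mathbb{Z}_2$: assign to each vertex an element of $\{00,01,10,11\}$, and the properness condition becomes that along each edge the difference (in $\mathbb{Z}_2^2$) is nonzero, which around each triangular face must sum to zero — so the three edge-labels around a face are the three nonzero elements of $\mathbb{Z}_2^2$ in some order. This turns a $4$-colouring into a flow-type object on the edges of $G^{\triangle}_N$, and quotienting by the global translation action of $\mathbb{Z}_2^2$ (the source of the factor $4 = |\mathbb{Z}_2^2|$) gives an object supported on the $\binom{N}{2}$-or-so triangles. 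The plan is then to read this edge-flow data row by row and match it, term by term, with the interlacing-position data of the colour sequences $c^{[1]} \prec \cdots \prec c^{[N]}$, using the normal-form lemma from the previous paragraph to see that both sets are parametrised by the same combinatorial data (a sequence of interlacing integer tuples subject to identical bound constraints).

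The hard part will be producing the correct local dictionary between the two row-by-row descriptions and verifying that the interlacing constraint {\bf (b)} on $\mathcal{T}_N(3)$ is \emph{equivalent} to (not merely implied by) properness of the $4$-colouring of the extra layer of triangles added when passing from height $k-1$ to height $k$. In particular one must check that the ``each colour appears exactly $k$ times in row $k$'' constraint {\bf (a)} is automatically enforced by the $\mathbb{Z}_2^2$-flow being closed, and that no spurious configurations appear on either side. I expect the cleanest route is an explicit recursive bijection: given an interlacing array of height $k-1$ and a choice of how to extend it to height $k$ (which, by the normal form, amounts to choosing three interlacing integer vectors subject to fixed inequalities), show this is exactly the data of how to colour the $k$-th row of triangles in $G^{\triangle}_k$ compatibly with the $(k-1)$-st row, after fixing the global $\mathbb{Z}_2^2$-ambiguity once at the apex. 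If a fully explicit bijection proves cumbersome, a fallback is to show both $|\mathcal{T}_N(3)|$ and $\tfrac14 \mathfrak{g}^{\triangle}_N(4)$ satisfy the same transfer-matrix recursion in $N$ with the same initial condition — computing the transfer matrix for colour-sequence interlacing from the structure of \eqref{admiss-diag}, and the transfer matrix for strip colourings of the triangular lattice from the deletion–contraction / Tutte-polynomial formalism — and then checking they coincide. I would note that this conjecture has since been proved bijectively by Gaetz--Gao \cite{GaetzGao}, whose construction realises essentially this strategy, so the present sketch is primarily a guide to why the identity should hold rather than a claim of a new proof.
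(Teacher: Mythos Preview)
The paper does not prove this statement: it is stated as a conjecture, accompanied only by the $N=1$ example and the remark that a bijective proof was subsequently given by Gaetz--Gao \cite{GaetzGao}. There is therefore no proof in the paper to compare your proposal against.

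Your write-up is not a proof but a strategic outline, and you acknowledge as much in your final sentence. The $\mathbb{Z}_2^2$-encoding of $4$-colourings and the row-by-row transfer-matrix fallback are both reasonable heuristics, but the substantive work --- the ``normal-form lemma'' for $c^{[k]}$, the ``local dictionary'' between edge-flow data and interlacing positions, and the verification that constraint \textbf{(a)} is automatic --- is asserted rather than carried out. Since the paper itself defers the proof to \cite{GaetzGao}, the honest thing to say is simply that this is a conjecture in the paper, proved elsewhere, and that your sketch gestures at the kind of bijective argument one would want without supplying it.
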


\begin{ex}[$n=3$, $N=1$]
For $n=3$ and $N=1$, triangular tuples just correspond with arrangements of $\{1,2,3\}$ along a line; hence $|\mathcal{T}_1(3)| = |\mathfrak{S}_3| = 6$. On the other hand, the possible $4$-colourings of the graph $G^{\triangle}_1 = \tikz{1.3}{\node at (0,0) {$\bullet$}; \node at (0.5,0) {$\bullet$}; \node at (0.25,0.5) {$\bullet$}; \draw (0,0) -- (0.5,0) -- (0.25,0.5) -- (0,0);}$ (in which we fix the top vertex to have label $1$, which means an undercounting by an overall factor of $4$) are
\begin{align*}
\tikz{1.3}{\node at (0,0) {$\bullet$}; \node at (0.5,0) {$\bullet$}; \node at (0.25,0.5) {$\bullet$}; \draw (0,0) -- (0.5,0) -- (0.25,0.5) -- (0,0); \node[above] at (0.25,0.5) {1}; \node[below] at (0,0) {2}; \node[below] at (0.5,0) {3};}
\quad
\tikz{1.3}{\node at (0,0) {$\bullet$}; \node at (0.5,0) {$\bullet$}; \node at (0.25,0.5) {$\bullet$}; \draw (0,0) -- (0.5,0) -- (0.25,0.5) -- (0,0); \node[above] at (0.25,0.5) {1}; \node[below] at (0,0) {2}; \node[below] at (0.5,0) {4};}
\quad
\tikz{1.3}{\node at (0,0) {$\bullet$}; \node at (0.5,0) {$\bullet$}; \node at (0.25,0.5) {$\bullet$}; \draw (0,0) -- (0.5,0) -- (0.25,0.5) -- (0,0); \node[above] at (0.25,0.5) {1}; \node[below] at (0,0) {3}; \node[below] at (0.5,0) {4};}
\quad
\tikz{1.3}{\node at (0,0) {$\bullet$}; \node at (0.5,0) {$\bullet$}; \node at (0.25,0.5) {$\bullet$}; \draw (0,0) -- (0.5,0) -- (0.25,0.5) -- (0,0); \node[above] at (0.25,0.5) {1}; \node[below] at (0,0) {3}; \node[below] at (0.5,0) {2};}
\quad
\tikz{1.3}{\node at (0,0) {$\bullet$}; \node at (0.5,0) {$\bullet$}; \node at (0.25,0.5) {$\bullet$}; \draw (0,0) -- (0.5,0) -- (0.25,0.5) -- (0,0); \node[above] at (0.25,0.5) {1}; \node[below] at (0,0) {4}; \node[below] at (0.5,0) {2};}
\quad
\tikz{1.3}{\node at (0,0) {$\bullet$}; \node at (0.5,0) {$\bullet$}; \node at (0.25,0.5) {$\bullet$}; \draw (0,0) -- (0.5,0) -- (0.25,0.5) -- (0,0); \node[above] at (0.25,0.5) {1}; \node[below] at (0,0) {4}; \node[below] at (0.5,0) {3};}
\end{align*}
and indeed $4\cdot|\mathcal{T}_1(3)| = \mathfrak{g}^{\triangle}_1(4)$. 
\end{ex}

\begin{conj}
\label{conj:a5}
Let $G^{\varhexstar}_N$ denote the graph
\begin{align*}
\tikz{1.3}{
\node at (0,0) {$\bullet$};
\node at (0.5,0) {$\bullet$};
\node at (1,0) {$\bullet$};
\node at (1.5,0) {$\bullet$};
\node at (0,0.5) {$\bullet$};
\node at (0.5,0.5) {$\bullet$};
\node at (1,0.5) {$\bullet$};
\node at (1.5,0.5) {$\bullet$};
\node at (0,1) {$\bullet$};
\node at (0.5,1) {$\bullet$};
\node at (1,1) {$\bullet$};
\node at (1.5,1) {$\bullet$};
\node at (0,1.5) {$\bullet$};
\node at (0.5,1.5) {$\bullet$};
\node at (1,1.5) {$\bullet$};
\node at (1.5,1.5) {$\bullet$};
\draw (0.5,0) -- (0,0.5);
\draw (1,0) -- (0,1);
\draw (1.5,0) -- (0,1.5);
\draw (1.5,0.5) -- (0.5,1.5);
\draw (1.5,1) -- (1,1.5);
\draw (1,0) -- (1.5,0.5);
\draw (0.5,0) -- (1.5,1);
\draw (0,0) -- (1.5,1.5);
\draw (0,0.5) -- (1,1.5);
\draw (0,1) -- (0.5,1.5);
\draw (0,0) -- (0,1.5);
\draw (0.5,0) -- (0.5,1.5);
\draw (1,0) -- (1,1.5);
\draw (1.5,0) -- (1.5,1.5);
\draw (0,0) -- (1.5,0);
\draw (0,0.5) -- (1.5,0.5);
\draw (0,1) -- (1.5,1);
\draw (0,1.5) -- (1.5,1.5);
}
\end{align*}
where two vertices share an edge if they are connected via a king move on the chessboard (that is, they a connected via a unit horizontal, vertical, or diagonal step), and the number of vertices along one side of the square is equal to $N+1$. Let $\mathfrak{g}^{\varhexstar}_N(5)$ denote the number of $5$-colourings of $G^{\varhexstar}_N$.\footnote{The sequence $\mathfrak{g}^{\varhexstar}_N(5)$ appears as A068294 in the Online Encyclopaedia of Integer Sequences; {\tt https://oeis.org/A068294}.} We conjecture that
\begin{align*}
5 \cdot |\mathcal{T}_N(4)| = \mathfrak{g}^{\varhexstar}_N(5),
\qquad
\forall\ N \geq 1.
\end{align*}
\end{conj}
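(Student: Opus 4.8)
\emph{A proof strategy for Conjecture \ref{conj:a5}.} The plan is to construct an explicit bijection between $\mathcal{T}_N(4)$ and the set of proper $5$-colourings of $G^{\varhexstar}_N$ that assign a prescribed colour (say $1$) to one fixed corner vertex. Since the additive action of $\mathbb{Z}/5\mathbb{Z}$ on colour labels sends proper colourings to proper colourings and acts freely, the set of all proper $5$-colourings is partitioned into free orbits of size $5$, each containing exactly one corner-normalized representative; hence such a bijection immediately yields $5\cdot|\mathcal{T}_N(4)| = \mathfrak{g}^{\varhexstar}_N(5)$. This mirrors (and is meant to generalize) the rank $3$ argument of Gaetz--Gao \cite{GaetzGao}, where $\mathbb{Z}/4\mathbb{Z}$ plays the analogous role and the apex of the triangle is normalized, exactly as in the worked example after Conjecture \ref{conj:triangle}.

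The first step is to recast both sides in a common \textit{local} language. On the colouring side, a map from the $(N+1)\times(N+1)$ king grid to $[1,5]$ is a proper colouring if and only if the four entries of every axis-aligned $2\times 2$ sub-square are pairwise distinct (every edge of $G^{\varhexstar}_N$ lies in such a square); so $\mathfrak{g}^{\varhexstar}_N(5)$ counts ``locally rainbow'' fillings of the grid. On the array side, Definition \ref{def:interlace} identifies an element of $\mathcal{T}_N(4)$ with a string $\emptyset \prec c^{[1]} \prec \cdots \prec c^{[N]}$ of interlacing colour sequences, in which, for each fixed colour $a\in[1,4]$, the horizontal coordinates of the occurrences of $a$ form a Gelfand--Tsetlin pattern, and the four patterns are glued by the requirement that in each row $k$ their supports partition the $4k$ sites. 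The key structural step is to encode this data by the collection of \textit{colour interfaces} -- the boundaries, read along each row and between consecutive rows, that separate regions of different colours -- and to show that these interfaces are naturally indexed by the faces of an $(N+1)\times(N+1)$ grid, with the non-overlap constraint {\bf (a)} and the interlacing constraint {\bf (b)} translating precisely into the ``locally rainbow'' condition.

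The bijection itself I would define by descending induction on $N$, peeling off the top row $c^{[N]}$ exactly in the way that the kernel \eqref{col-markov} builds the array one row at a time, and checking at each step that a site of $c^{[k]}$, together with the adjacent sites of $c^{[k-1]}$ and $c^{[k+1]}$, determines and is determined by a single grid entry and its king-neighbours. Injectivity would follow from writing down the inverse map explicitly; surjectivity amounts to verifying that the array reconstructed from an arbitrary locally rainbow filling satisfies {\bf (a)} and {\bf (b)}. The base case $N=1$ (done in the excerpt: $|\mathcal{T}_1(4)|=|\mathfrak{S}_4|=24$ against a $5$-colouring of a single $K_4$) fixes the normalization, and a handful of further small-$N$ checks would guard against sign/orientation errors in the interface encoding.

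The main obstacle is the identification of the correct face structure. In rank $3$ the interfaces live on the triangular lattice (every elementary up- or down-triangle rainbow), whereas in rank $4$ they must live on the king graph (every $2\times 2$ square rainbow), and there is no a priori reason the combinatorial dimension should remain two, nor that the cell shape should pass from simplices to squares. Indeed the absence of any analogous match for $n\ge 5$ (see the discussion preceding Conjecture \ref{conj:a5}) strongly suggests that the rank $4$ coincidence is special rather than the $n=4$ instance of a uniform family, so the argument is likely to need a rank-specific combinatorial input -- most plausibly a sharp description of how four interlacing Gelfand--Tsetlin patterns can simultaneously tile each row -- rather than a mechanical extension of \cite{GaetzGao}. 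If a direct bijection proves elusive, the fallback is to compare row-to-row transfer operators: the operator generating $\mathcal{T}_{\bullet}(4)$ through successive interlacings versus the operator generating king-graph $5$-colourings through Smirnov-type admissible row configurations, and to produce an $N$-independent intertwiner between the two, which would again give the equality of cardinalities.
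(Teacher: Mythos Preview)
The paper does not prove this statement: Conjecture \ref{conj:a5} is explicitly presented as open, both in the introduction (``Conjecture \ref{conj:a5-intro} remains open'') and in the appendix, so there is no paper proof to compare your proposal against.

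Your write-up is accordingly not a proof but a research outline, and you are upfront about this (``A proof strategy'', ``The main obstacle is\ldots'', ``If a direct bijection proves elusive, the fallback is\ldots''). As a strategy it is reasonable and well-informed by the rank $3$ situation, but nothing in it rises to the level of an argument: the central step --- identifying the ``colour interfaces'' of a rank $4$ interlacing array with the cells of the king grid so that conditions {\bf (a)} and {\bf (b)} become the locally-rainbow $2\times 2$ condition --- is asserted as the ``key structural step'' without being carried out, and you yourself flag that there is no a priori reason the interface structure should be two-dimensional with square cells. The transfer-operator fallback is similarly only a suggestion. So the honest summary is that you have restated the problem in the form ``find a Gaetz--Gao style bijection or an intertwiner of transfer matrices'', which is exactly the status the paper leaves it in.
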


\bibliographystyle{alpha}
\bibliography{references}

\end{document}